\newtheorem{thm}{Theorem}
\newtheorem{obs}[thm]{Observation}
\newtheorem{lem}[thm]{Lemma}
\newtheorem{prop}[thm]{Proposition}
\newtheorem{cor}[thm]{Corollary}
\newtheorem*{T1}{Theorem~\ref{thm:maintheorem}}
\newtheorem*{T2}{Theorem~\ref{thm:modifiedProjectionTheorem}}
\newtheorem*{T5}{Theorem~\ref{thm:packingThm}}
\newtheorem*{T6}{Theorem~\ref{thm:regularYFullDim}}
\newtheorem*{T8}{Theorem~\ref{thm:moregeneralmaintheorem}}
\theoremstyle{remark}
\newtheorem*{remark}{Remark}
\DeclareMathOperator{\Dim}{Dim}
\DeclareMathOperator{\dimH}{dim_H}
\DeclareMathOperator{\dimP}{dim_P}
\newcommand{\R}{\mathbb{R}}
\newcommand{\N}{\mathbb{N}}
\newcommand{\Q}{\mathbb{Q}}
\newcommand{\ve}{\varepsilon}
\newcommand{\uhr}{{\upharpoonright}}
\newenvironment{proofof}[1]{\begin{trivlist}
		\item[\hskip \labelsep \textit{Proof of #1.}]}{\end{trivlist}}
\begin{document}
\title{Dimension of Pinned Distance Sets for Semi-Regular Sets}
\author{Jacob B. Fiedler}
\address{Department of Mathematics, University of Wisconsin, Madison, Wisconsin 53715}

\email{jbfiedler2@wisc.edu}
\thanks{The first author was supported in part by NSF DMS-2037851 and NSF DMS-2246906. Both authors are grateful to the American Institute of Mathematics for hosting the workshop \textit{Effective methods in measure and dimension}, which was the genesis of this collaboration.}

\author{D. M. Stull}
\address{Department of Mathematics, University of Chicago, Chicago, IL 60637}
\email{dmstull@uchicago.edu}

	\maketitle

 \begin{abstract}
    We prove that if $E\subseteq \R^2$ is analytic and $1<d < \dim_H(E)$, there are ``many'' points $x\in E$ such that the Hausdorff dimension of the pinned distance set $\Delta_x E$ is at least $d\left(1 - \frac{\left(D-1\right)\left(D-d\right)}{2D^2+\left(2-4d\right)D+d^2+d-2}\right)$, where $D = \dim_P(E)$. In particular, we prove that $\dim_H(\Delta_x E) \geq \frac{d(d-4)}{d-5}$ for these $x$, which gives the best known lower bound for this problem when $d \in (1, 5-\sqrt{15})$. We also prove that there exists some $x\in E$ such that the packing dimension of $\Delta_x E$ is at least $\frac{12 -\sqrt{2}}{8\sqrt{2}}$. Moreover, whenever the packing dimension of $E$ is sufficiently close to the Hausdorff dimension of $E$, we show the pinned distance set $\Delta_x E$ has full Hausdorff dimension for many points $x\in E$; in particular the condition is that $D<\frac{(3+\sqrt{5})d-1-\sqrt{5}}{2}$.
    
    We also consider the pinned distance problem between two sets $X, Y\subseteq \R^2$, both of Hausdorff dimension greater than 1. We show that if either $X$ or $Y$ has equal Hausdorff and packing dimensions, the pinned distance $\Delta_x Y$ has full Hausdorff dimension for many points $x\in X$. 
 \end{abstract}

\section{Introduction}

Given some $E\subseteq \mathbb{R}^n$ of a particular size, a natural question is whether one can bound the size of its distance set, defined as 
\begin{equation*}
\Delta E =\{\vert x - y\vert: x, y\in E\}
\end{equation*}
The continuous version of this question is the \textit{Falconer distance problem}, concerning the fractal dimension of $E$. Falconer conjectured that for Borel sets $E$, whenever $\dim_H(E)>\frac{n}{2}$, then $\Delta E$ has positive measure. In the plane, the best known bound is that $\dim_H(E)>\frac{5}{4}$ implies $\Delta E$ has positive measure and is due to Guth, Iosevich, Ou, and Wang \cite{GutIosOuWang20}. In higher dimensions, very recent work of Du, Ou, Ren, and Zhang established the threshold that $\dim_H(E)>\frac{n}{2}+\frac{1}{4}-\frac{1}{8n+4}$ implies $\Delta E$ has positive measure \cite{DuOuRenZhang23a}\cite{DuOuRenZhang23b}\cite{Ren23}.

In fact, these works each established a stronger result concerning \emph{pinned} distance sets. For $x\in \mathbb{R}^n$, we define
\begin{equation*}
\Delta_x E =\{\vert x - y\vert: y\in E\}.
\end{equation*}
The aforementioned papers prove that under the respective assumptions, there exists some $x\in E$ such that $\Delta_x E$ has positive measure. Note that such a result for pinned distance sets immediately implies the corresponding result for distance sets.

A closely related problem is to prove lower bounds on the Hausdorff or packing dimension of $\Delta_x E$ for ``many'' $x\in E$, given $d=\dim_H(E)$. Restricting consideration to $\mathbb{R}^2$ for the remainder of the paper, we now discuss a few previous bounds of this type. For $d\in(1, \frac{5}{4})$, Liu proved that $\dim_H(\Delta_x E)>\frac{4d}{3}-\frac{2}{3}$ in \cite{Liu20}. Shmerkin proved a better bound for $d$ not much greater than 1, namely that $\dim_H(\Delta_x E)\geq \frac{2}{3}+\frac{1}{42}$ \cite{Shmerkin20}. The second author improved the best known bound for $d$ not much larger than 1, proving that $\dim_H(\Delta_x E)\geq\frac{d}{4}+\frac{1}{2}$ \cite{Stull22c}. 

Complementing the bounds cited in the previous paragraph, Du, Ou, Ren and Zhang proved a bound for $d$ \emph{less} than the dimension threshold of Falconer's conjecture \cite{DuOuRenZhang23a}. In $\mathbb{R}^2$, they showed $\sup_{x\in E}\dim_H(\Delta_x E)\geq\frac{5d}{3}-1$. As for packing dimension, in \cite{KelShm19}, Shmerkin and Keleti proved that
\begin{equation*}
\dim_P(\Delta_x E)\geq \frac{1}{4}\left(1 + d + \sqrt{3d(2-d)}\right).
\end{equation*}
Finally, we note that Shmerkin proved that for sets $E$ which are \emph{regular} in the sense that $\dim_H(E)=\dim_P(E)$, so long as $\dim_H(E)>1$, then for most $x$, the pinned distance set has full dimensions, i.e., $\dim_H(\Delta_x E)=1$ \cite{Shmerkin19}.\footnote{Observe that this regularity is weaker than Alfors-David regularity, which Orponen considered in \cite{Orponen17}. Throughout the remainder of the paper, by regular, we mean the more general notion.}

Our work makes a number of improvements to the pinned distance problem in the plane. First, we are able to prove a dimensional lower bound which takes into account the packing dimension of $E$.

\begin{thm}\label{thm:maintheorem}
Let $E\subseteq \R^2$ be analytic such that $1<d <\dim_H(E)$. Then there is a subset $F \subseteq E$ of full dimension such that 
\begin{equation*}
    \dim_H(\Delta_x E) \geq d\left(1 - \frac{\left(D-1\right)\left(D-d\right)}{2D^2+\left(2-4d\right)D+d^2+d-2}\right),
\end{equation*}
for all $x\in F$, where $D = \dim_P(E)$. In particular, $\dim_H(E\setminus F)\leq d<\dim_H(E)$. Furthermore, if 
\begin{equation*}
D<\frac{(3+\sqrt{5})d-1-\sqrt{5}}{2}
\end{equation*}
Then $\dim_H(\Delta_x E)=1$.
\end{thm}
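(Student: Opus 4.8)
The plan is to reduce the theorem to a statement about effective (Kolmogorov-complexity) dimension via the point-to-set principle, which is the standard engine for this circle of results (cf. the second author's work in \cite{Stull22c}). Concretely, since $E$ is analytic with $\dim_H(E) > d > 1$, there is a point $y \in E$ and an oracle $A$ relative to which $\dim^A(y)$ is as close to $\dim_H(E)$ as we like while simultaneously the whole set $E$ witnesses the packing dimension bound $D$; one then must show that for a suitably chosen $x \in E$ (in fact a full-dimensional set of such $x$), the value $|x - y|$ has effective dimension (relative to $x$ and the oracle) at least the claimed bound, and in the regime $D < \frac{(3+\sqrt5)d - 1 - \sqrt5}{2}$ that this value is $1$. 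The point-to-set principle then transfers this to a genuine Hausdorff dimension lower bound on $\Delta_x E$.

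The core of the argument will be a complexity-theoretic lower bound on $K_r(|x-y|)$ — the Kolmogorov complexity of the distance at precision $r$ — in terms of $K_r(x)$, $K_r(y)$, and the complexities at intermediate scales, which is where the packing dimension $D$ enters: at scales where $y$ (or $x$) has complexity growing like $D$ rather than $d$, one has more ``room'' and can extract more information about the distance. I would set this up by fixing the pin $x$, considering the map $y \mapsto |x-y|$, and analyzing when two points $y, y'$ at distance roughly $r$ from each other can have $||x-y| - |x-y'||$ much smaller than $r$ — this happens only when $y'$ lies near the circle centered at $x$ through $y$, i.e., in a thin neighborhood of a curve, so a bound on how much of $E$'s complexity can concentrate on such neighborhoods (controlled by $\dim_P(E) = D$) gives the distance complexity lower bound. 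Summing/integrating these scale-by-scale estimates and optimizing over the choice of which scales to exploit yields the rational function of $d$ and $D$ in the statement; setting that expression equal to $1$ and solving for the threshold on $D$ gives $D < \frac{(3+\sqrt5)d - 1 - \sqrt5}{2}$.

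Finally, to upgrade ``there exists such $x$'' to ``a full-dimensional set $F$ of such $x$'' — and in particular $\dim_H(E \setminus F) \le d$ — I would run the complexity argument with the pin $x$ itself chosen to have effective dimension close to $\dim_H(E)$ relative to the oracle, so that the set of ``bad'' pins (those for which the distance bound fails) is covered by the points of effective dimension at most $d$, which by the point-to-set principle has Hausdorff dimension at most $d < \dim_H(E)$. This is essentially the same bookkeeping as in Theorem~\ref{thm:maintheorem}'s first assertion, applied with the sharper exponent.

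The main obstacle I expect is the scale-by-scale distance complexity estimate in the presence of the packing-dimension constraint: one must carefully track, across a potentially irregular sequence of scales where $K_r(y)/\log(1/r)$ oscillates between (near) $d$ and (near) $D$, how the geometry of circles centered at $x$ interacts with the oracle-relative complexity of $y$, and ensure the ``enemy'' case — where $E$ conspires to put all its complexity on thin annular neighborhoods at exactly the wrong scales — is genuinely obstructed by the packing dimension being $D$ and not larger. Getting the optimization over these scale choices to collapse to the clean closed-form bound, and verifying the algebra that the $\dim_H = 1$ threshold is exactly where the rational function hits $1$, will require care but should be routine once the structural estimate is in place.
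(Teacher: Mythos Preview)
Your high-level framework is correct: the paper does reduce to an effective lower bound on $K_r^{A,x}(|x-y|)$ via the point-to-set principle, and the enemy is indeed a competitor $z$ near $y$ on the circle $\{w : |x-w|=|x-y|\}$. But your description of the mechanism by which $D$ enters is off, and two concrete ingredients are missing from your plan. The paper does not bound ``how much of $E$'s complexity can concentrate on annuli.'' Instead, if $|x-y|=|x-z|$ and $e'=\frac{y-z}{|y-z|}$, the midpoint of $y,z$ has the same $e'$-projection as $x$, so controlling $z$ reduces to a \emph{projection estimate for the pin}: an upper bound on $K^A_r(x\mid p_{e'}x,e')$ (Lemma~\ref{lem:lowerBoundOtherPointDistance}, Theorem~\ref{thm:modifiedProjectionTheorem}). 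The packing dimension enters here as $\Dim^A(x)\le D$, governing how irregular the complexity function of $x$ can be; proving this projection bound is itself a substantial partition argument on $K_s^A(x)$ (Section~\ref{sec:projections}).

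On the $y$ side, the ``scale-by-scale'' analysis is not a generic optimization but a specific partition of $[1,r]$ into \emph{yellow} intervals (where $K_s^A(y)$ grows at rate $\ge 1$ and the distance complexity grows at rate $1$ directly, Lemma~\ref{lem:distancesYellowTeal}) and carefully chosen \emph{teal} intervals whose endpoints are defined by equation~(\ref{eq:choiceOfRi+1}) precisely so that the projection theorem applies (Lemma~\ref{lem:goodtealgrowth}). The closed-form bound and the threshold $D<\frac{(3+\sqrt5)d-1-\sqrt5}{2}$ come out of the combinatorics of this partition (Lemma~\ref{lem:slopeBoundTeal}, Observation~\ref{obs:generalpartitionfulldimension}), not from an abstract optimization. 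Finally, the reduction step also requires Orponen's radial projection theorem (Theorem~\ref{RadialProjection}) to guarantee that for a full-dimensional set of pins $x$ the direction $e=\frac{x-y}{|x-y|}$ is random relative to $(x,A)$; without this, condition (C2) fails and the projection theorem cannot be invoked. Your exceptional-set bookkeeping is otherwise fine.
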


Note that the second part of the theorem significantly generalizes Shmerkin's result for regular sets $E\subseteq\mathbb{R}^2$, in the sense that we show that $E$ need not be fully regular to have most of its pinned distance sets be full dimension. Instead, it only needs to have sufficiently close Hausdorff and packing dimension, a form of ``semi-regularity''. As a corollary of this theorem, we obtain an improvement over the second author's previous bound, namely. 

\begin{cor}\label{cor:firstMainCor}
    Let $E\subseteq \R^2$ be analytic such that $1 < d<\dim_H(E) $. Then there is a subset $F \subseteq E$ of full dimension such that 
\begin{equation*}
    \dim_H(\Delta_x E) \geq \frac{d(d-4)}{d-5}.
\end{equation*}
for all $x\in F$.
\end{cor}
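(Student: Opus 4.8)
The plan is to derive Corollary~\ref{cor:firstMainCor} from Theorem~\ref{thm:maintheorem} by a one-variable optimization. Since $E\subseteq\R^2$ we have $d<\dimH(E)\le\dimP(E)\le 2$, so writing $D=\dimP(E)$ and
\[
g(D):=d\left(1-\frac{(D-1)(D-d)}{2D^2+(2-4d)D+d^2+d-2}\right),
\]
it suffices to show $g(D)\ge\tfrac{d(d-4)}{d-5}$ for every $D\in[d,2]$; the set $F$ is then exactly the one furnished by Theorem~\ref{thm:maintheorem}. The key observation, which also explains where the bound in the corollary comes from, is that $D=2$ is the extreme case: at $D=2$ the denominator equals $2\cdot4+(2-4d)\cdot2+d^2+d-2=(d-2)(d-5)$ and the numerator equals $2-d$, so $g(2)=d\bigl(1+\tfrac{1}{d-5}\bigr)=\frac{d(d-4)}{d-5}$.

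To prove $g(D)\ge g(2)$ on $[d,2]$ I would first check that the denominator $P(D):=2D^2+(2-4d)D+d^2+d-2$ is positive on this interval: its vertex lies at $D=d-\tfrac12<d$, so $P$ is increasing on $[d,2]$, and $P(d)=-(d-1)(d-2)>0$ for $d\in(1,2)$. Since $5-d>0$ as well, dividing by $d$ and clearing the (positive) denominator shows that $g(D)\ge\frac{d(d-4)}{d-5}$ is equivalent to
\[
Q(D):=P(D)-(5-d)(D-1)(D-d)\ \ge\ 0 .
\]
A direct expansion gives $Q(D)=(d-3)D^2+(7-d^2)D+2d^2-4d-2$, a concave quadratic since $d<3$, so it is enough to check the endpoints: $Q(2)=0$ (matching $g(2)=\tfrac{d(d-4)}{d-5}$) and $Q(d)=-(d-1)(d-2)>0$. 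Hence $Q\ge0$ throughout $[d,2]$, giving $g(D)\ge\frac{d(d-4)}{d-5}$ for all admissible $D$, which is the corollary.

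There is no real analytic difficulty here — the substance lies in Theorem~\ref{thm:maintheorem} — and the only points needing attention are bookkeeping: confirming the sign of $P(D)$ so that clearing denominators preserves the inequality, and the regime $d\ge\frac{5-\sqrt5}{2}$ in which $\tfrac{d(d-4)}{d-5}\ge1$. In that regime $g(D)\ge1$ for every $D\in[d,2]$ and the threshold $\frac{(3+\sqrt5)d-1-\sqrt5}{2}$ is at least $2$, so the ``furthermore'' clause of Theorem~\ref{thm:maintheorem} already yields $\dimH(\Delta_x E)=1$ for all $x\in F$; thus in every case $\dimH(\Delta_x E)\ge\min\{1,\tfrac{d(d-4)}{d-5}\}$, which is the content of the corollary and in particular gives the claimed improvement over prior bounds for $d\in(1,5-\sqrt{15})$.
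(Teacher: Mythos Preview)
Your argument is correct and follows the same route as the paper: both deduce the corollary from Theorem~\ref{thm:maintheorem} by observing that the worst case occurs at $D=2$, where the bound collapses to $\tfrac{d(d-4)}{d-5}$. The paper simply asserts that the bound is monotone decreasing in $D$ (invoking Proposition~\ref{prop:yellowfulldimension} to handle small $D$), whereas you carry out the verification explicitly via the concave quadratic $Q(D)$ and the endpoint check $Q(d)>0$, $Q(2)=0$; your treatment of the regime $d\ge\tfrac{5-\sqrt{5}}{2}$ via the ``furthermore'' clause is also a useful clarification the paper leaves implicit.
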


Additionally, we can bound the dimension of the pinned distance sets in terms of only the packing dimension, as below.

\begin{cor}\label{cor:secondMainCor}
Let $E\subseteq \R^2$ be analytic such that $\dim_H(E) > 1$. Then, for all $x\in E$ outside a set of (Hausdorff) dimension one,
\begin{equation*}
    \dim_H(\Delta_x E) \geq \frac{D+1}{2D},
\end{equation*}
where $D = \dim_P(E)$.
\end{cor}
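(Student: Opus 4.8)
The plan is to derive Corollary~\ref{cor:secondMainCor} from Theorem~\ref{thm:maintheorem} by sending the auxiliary exponent $d$ to $1$ from above. Abbreviate the lower bound furnished by the theorem as
\[
  \phi(d,D) \;=\; d\left(1 - \frac{(D-1)(D-d)}{2D^2 + (2-4d)D + d^2 + d - 2}\right).
\]
The first ingredient is the elementary algebraic identity that at $d=1$ the denominator factors as $2D(D-1)$ and the numerator equals $(D-1)^2$, so that
\[
  \lim_{d\to 1^+}\phi(d,D) \;=\; 1 - \frac{(D-1)^2}{2D(D-1)} \;=\; \frac{D+1}{2D}.
\]
The second ingredient is that $\phi(\cdot,D)$ is increasing for $d$ just above $1$: since $D = \dim_P(E) \ge \dim_H(E) > 1$, the denominator above is positive for $d$ near $1$, and a short differentiation gives $\partial_d\phi(1,D) = \tfrac{2D^2+3}{4D^2} > 0$. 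Hence there is $\delta = \delta(D) > 0$ with $\phi(d,D) \ge \frac{D+1}{2D}$ for every $d \in (1,\,1+\delta)$.

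Granting these two facts, I would finish by contradiction. Let $B = \{\,x \in E : \dim_H(\Delta_x E) < \tfrac{D+1}{2D}\,\}$ be the exceptional set in question, and suppose $\dim_H(B) > 1$. Since $B \subseteq E$ we have $\dim_H(B) \le \dim_H(E)$, so we may fix
\[
  d \in \bigl(1,\ \min\{\,1+\delta,\ \dim_H(B)\,\}\bigr),
\]
a nonempty interval because $\dim_H(B) > 1$; for this $d$ we have both $1 < d < \dim_H(E)$ and $\phi(d,D) \ge \frac{D+1}{2D}$. Applying Theorem~\ref{thm:maintheorem} to the analytic set $E$ with this value of $d$ produces $F \subseteq E$ with $\dim_H(E \setminus F) \le d$ and $\dim_H(\Delta_x E) \ge \phi(d,D)$ for every $x \in F$. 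Because $\dim_H(B \setminus F) \le \dim_H(E \setminus F) \le d < \dim_H(B)$, the set $B \cap F$ is nonempty; pick any $x_0 \in B \cap F$. Then membership in $F$ gives $\dim_H(\Delta_{x_0} E) \ge \phi(d,D) \ge \frac{D+1}{2D}$, while membership in $B$ gives $\dim_H(\Delta_{x_0} E) < \frac{D+1}{2D}$ --- a contradiction. Therefore $\dim_H(B) \le 1$, which is exactly the assertion of the corollary.

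I expect no serious analytic difficulty; the only point needing care is organizational. One must not apply Theorem~\ref{thm:maintheorem} to $B$ itself, whose analyticity is not among the hypotheses, but rather to $E$, and then locate the offending point inside $B \cap F$ using only the crude inequality $\dim_H(B\setminus F)\le d<\dim_H(B)$. The remaining ingredients --- the $d\to1^+$ limit, the sign of $\partial_d\phi(1,D)$, and the positivity of $2D^2+(2-4d)D+d^2+d-2$ for $D>1$ and $d$ near $1$ --- are routine calculus. If one wishes to avoid the derivative computation, it is enough to exhibit a single $d\in(1,\dim_H(B))$ with $\phi(d,D)\ge\frac{D+1}{2D}$, and one expects a direct factorization of $\phi(d,D)-\frac{D+1}{2D}$ to show this inequality holds for all $1\le d\le D$.
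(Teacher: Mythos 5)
Your proof is correct, and it takes a genuinely different (and arguably cleaner) route than the one the paper leaves implicit. The paper derives the effective analogue, Corollary~\ref{cor:hausdorffbound2}, by observing that the effective bound in Theorem~\ref{thm:mainThmEffDim} is monotone increasing in $d$ and then ``setting $d=1$''; to obtain the classical Corollary~\ref{cor:secondMainCor} one is implicitly expected to push this through the reduction machinery of Section~5, taking $d_x$ near $1$. Your argument instead works entirely at the classical level, deriving the corollary directly from Theorem~\ref{thm:maintheorem}: you evaluate the bound $\phi(d,D)$ at $d=1$ (the factorization $\phi(1,D)=1-\frac{(D-1)^2}{2D(D-1)}=\frac{D+1}{2D}$ and the derivative $\partial_d\phi(1,D)=\frac{2D^2+3}{4D^2}>0$ both check out), and then use the contradiction argument to dispose of the exceptional set. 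The contradiction step is the key organizational idea: Theorem~\ref{thm:maintheorem} only produces an exceptional set of dimension at most $d>1$, not $1$, and your trick of assuming $\dim_H(B)>1$, choosing $d\in(1,\min\{1+\delta,\dim_H(B)\})$, and locating a point of $B\cap F$ neatly sidesteps what would otherwise be a countable-union-and-nesting argument over a sequence $d_n\downarrow 1$. You correctly note that you apply the theorem to $E$ rather than to $B$ (whose analyticity is not assumed), and that local monotonicity of $\phi$ in $d$ near $1$ is all you need. In short: same key calculation as the paper, but a more self-contained classical derivation that makes the implicit limiting step rigorous.
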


This corollary turns out to be useful in establishing the following improvement on Shmerkin and Keleti's packing dimension bound.\footnote{More precisely, we use its effective analog.}

\begin{thm}\label{thm:packingThm}
Let $E\subseteq \R^2$ be analytic such that $\dim_H(E) > 1$. Then there exists some $x\in E$ such that,
\begin{equation*}
    \dim_P(\Delta_x E) \geq \frac{12 -\sqrt{2}}{8\sqrt(2)}\approx 0.9356.
\end{equation*}
\end{thm}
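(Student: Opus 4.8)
The plan is to pass to the pointwise, Kolmogorov-complexity formulation of dimension and to exploit that packing dimension is a $\limsup$ quantity: it suffices to control the complexity of a single distance $|x-y|$ along one well-chosen sequence of scales. Write $d=\dim_H(E)$ and $D=\dim_P(E)$, so $1<d\le D\le 2$. Fix an arbitrary oracle $B$; by the point-to-set principle (relativized to $B$) for packing dimension there is an oracle $A\supseteq B$ with $\dim_P(E)=\sup_{z\in E}\operatorname{Dim}^A(z)$ and such that $A$ is also Frostman-optimal for $E$ in the ways needed below. Since $\operatorname{Dim}^B(w)\ge\operatorname{Dim}^A(w)$ for every real $w$, and since the point-to-set principle for $\Delta_xE$ reduces the theorem to showing $\sup_{w\in\Delta_xE}\operatorname{Dim}^B(w)\ge\tfrac{12-\sqrt{2}}{8\sqrt{2}}$ for every $B$, it is enough to produce $x,y\in E$ with $\operatorname{Dim}^A(|x-y|)\ge\tfrac{12-\sqrt{2}}{8\sqrt{2}}$.

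Next I would choose the far point. Because $\operatorname{Dim}^A$ attains $\dim_P(E)=D$ on $E$, fix $y\in E$ and an infinite set of scales $S\subseteq\N$ along which $K^A_r(y)\ge(D-\varepsilon)r$; on $S$ the point $y$ ``looks $D$-dimensional''. The pin point $x$ is then produced from the analytic structure of $E$ via a covering/Frostman argument that additionally controls $K^{A,y}_r(x)$ along $S$, making it as small as membership $x\in E$ allows. The heart of the argument is the effective analog of the geometric estimate behind Corollary~\ref{cor:secondMainCor}: since $y$ lies on the circle of radius $|x-y|$ about $x$, a one-dimensional fibre, passing from $y$ to the pair $(x,|x-y|)$ loses complexity, but, crucially, the loss is governed by $D$ (through how the complexity of $y$ is distributed across scales) rather than by a flat deficit of $r$. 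Carried out scale by scale along $S$, this yields a lower bound
\begin{equation*}
\operatorname{Dim}^A(|x-y|)\ \ge\ c(d,D),
\end{equation*}
for an explicit function $c$ which strictly improves, for $d>1$, on what a black-box application of Corollary~\ref{cor:secondMainCor} would give (namely $\tfrac{D+1}{2D}$) and which also recovers $c(d,2)\ge\tfrac{d(d-4)}{d-5}$ in the extreme packing case.

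Finally I would combine the estimates: $\dim_P(\Delta_xE)\ge\max\bigl\{c(d,D),\ \tfrac{D+1}{2D},\ \tfrac{d(d-4)}{d-5}\bigr\}$, the last two coming from Corollaries~\ref{cor:secondMainCor} and \ref{cor:firstMainCor}, and minimize the right-hand side over $\{1<d\le D\le 2\}$. The minimizing configuration should occur as $d\to1^{+}$ with $D$ equal to the root of $\tfrac{D+1}{2D}=c(1,D)$, namely $D=\tfrac{4(6\sqrt{2}+5)}{47}\approx1.148$, and there the common value is $\tfrac{12-\sqrt{2}}{8\sqrt{2}}=\tfrac{6\sqrt{2}-1}{8}\approx0.9356$, just above the Keleti--Shmerkin value $\tfrac14(2+\sqrt3)\approx0.933$ at $d=1$.

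The step I expect to be the main obstacle is establishing the scale-localized complexity estimate with the correct function $c$ --- making the deficit in recovering $y$ from $(x,|x-y|)$ genuinely $D$-sensitive rather than settling for the flat bound, since it is precisely this refinement that pushes the constant past Keleti--Shmerkin --- together with the construction of the pin point $x\in E$ whose complexity along $S$ is small enough to feed into the estimate; the latter cannot be obtained by quoting an abstract existence statement and must use the analyticity of $E$ through a careful covering argument, with everything relativized consistently to the oracle $A$ that sees the good scales of $y$.
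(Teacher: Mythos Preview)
Your framework --- pass to effective dimensions via the point-to-set principle, exploit that packing is a $\limsup$, and optimize over $D$ --- is correct in outline, but there are genuine gaps where the actual work lies.

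First, the roles of $x$ and $y$ are reversed relative to what the argument requires. The paper fixes $x$ \emph{first}, choosing it inside a compact $F\subseteq E$ with $0<\mathcal{H}^s(F)<\infty$ so that $x$ has (nearly) \emph{minimal} effective packing dimension among points of $F\setminus G$; then $y$ is found in the positive-measure set $\{y:\Dim^A(x)\le\Dim^A(y)\}$ satisfying (C1)--(C4) via Orponen's radial projection theorem. The inequality $\Dim^A(x)\le\Dim^A(y)$ is a hypothesis of the effective packing theorem (Theorem~\ref{thm:effectivePackingThm}) and cannot be dropped. Your plan --- fix a $y$ of near-maximal packing dimension and then construct $x$ with $K^{A,y}_r(x)$ ``as small as membership allows'' --- does not produce the conditions the estimate needs; in particular (C2) and (C3) concern the complexity of $e$ and $y$ \emph{relative to} $x$, not the reverse. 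Also, the packing oracle $B$ must be for $\Delta_xE$ and hence chosen \emph{after} $x$, not before.

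Second, and more seriously, the function $c(d,D)$ is never produced, and producing it is the entire content of the proof. The paper's bound is $\Dim^{x,A,B}(|x-y|)\ge\frac{3D_y^2-D_y+6}{8D_y}$ with $D_y=\Dim^A(y)$, and it does \emph{not} come from sharpening the generic ``one-dimensional fibre'' deficit as you suggest. It comes from the partition machinery of Section~\ref{sec:effdim}: one works at precisions $r$ where $K^A_r(y)\approx D_yr$, locates a maximal all-yellow interval around $r$, and combines the yellow-interval growth (rate $1$ for the distance) with the Hausdorff-type bound at the left endpoint via the modified projection theorem (Theorem~\ref{thm:modifiedProjectionTheorem}). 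The Corollaries you invoke are Hausdorff bounds for $\dim_H(\Delta_xE)$ at \emph{all} precisions; they do not by themselves give a packing improvement.

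Finally, your optimization is off: $\frac{3D^2-D+6}{8D}$ is minimized at $D=\sqrt{2}$, not at $D\approx1.148$. The value $D=\tfrac{4(6\sqrt2+5)}{47}$ you give is where $\tfrac{D+1}{2D}$ equals the target constant, which looks reverse-engineered from the answer rather than derived.
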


However, our results are more general than the above. We are able to prove essentially the same bounds in the case that our pinned points $x$ lie in some set $X$ and we consider the set of distances from $x$ to some analytic set $Y$.  Theorem \ref{thm:maintheorem} is itself an immediate corollary of the following more general theorem.

\begin{thm}\label{thm:moregeneralmaintheorem}
Let $Y\subseteq \R^2$ be analytic such that $1<d_y =\dim_H(Y)$ and $D_y=\Dim_p(Y)$. Let $X\subseteq \R^2$ be such that $1<d_x <\dim_H(X) $ and $D_x=\Dim_p(X)$. Then there is some $F \subseteq X$ of full dimension such that 
\begin{equation*}
    \dim_H(\Delta_x E) \geq d\left(1 - \frac{\left(D-1\right)\left(D-d\right)}{2D^2+\left(2-4d\right)D+d^2+d-2}\right),
\end{equation*}
for all $x\in F$, where $d=\min\{d_x, d_y\}$ and $D=\max\{D_x, D_y\}$. In particular, $\dim_H(X\setminus F)\leq d_x<\dim_H(X)$  Furthermore, if 
\begin{equation*}
D<\frac{(3+\sqrt{5})d-1-\sqrt{5}}{2}
\end{equation*}
Then $\dim_H(\Delta_x E)=1$.
\end{thm}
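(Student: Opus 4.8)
The plan is to prove Theorem~\ref{thm:moregeneralmaintheorem} via the ``point-to-set'' / effectivization machinery that underlies the second author's earlier work, reducing the statement to a pointwise estimate on Kolmogorov complexity along a suitable oracle. First I would recall the point-to-set principle, which lets us write $\dim_H(Z) = \min_A \sup_{z \in Z} \dim^A(z)$ for analytic $Z$, where $\dim^A(z) = \liminf_r K^A_r(z)/r$ is the effective dimension of $z$ relative to the oracle $A$, and similarly $\Dim_p$ corresponds to the upper effective dimension $\limsup$. So the task becomes: fix an oracle $A$ witnessing the packing-dimension bounds for $X$ and $Y$; then for every point $x \in X$ with $\dim^A(x) > d_x$ there should exist (after the reduction encapsulated in a lemma like Lemma~\ref{lem:reductionToEff}) a single distance $e = |x-y| \in \Delta_x Y$ whose complexity $K^A_r(e)$ is bounded below, at every sufficiently large precision $r$, by $r$ times the claimed quantity. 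The set $F$ is then the set of ``good'' $x$, i.e. those whose effective dimension exceeds $d_x$; its complement has effective dimension (hence, by the principle, Hausdorff dimension) at most $d_x < \dim_H(X)$, giving full dimension of $F$.

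The core of the argument is the complexity lower bound for a single distance, and this is where the interplay between Hausdorff and packing dimension enters. Following the now-standard strategy, I would pick $y \in Y$ with $\dim^A(y) \geq d$ and $\Dim^A_p(y) \leq D$, consider the distance $e = |x-y|$, and try to reconstruct a good approximation of $y$ from: (i) an approximation of $e$ to precision $r$; (ii) an approximation of $x$ (whose complexity we control via the assumption $\dim^A(x) > d_x$, so $x$ is ``random enough''); and (iii) the ``angular'' information, i.e. roughly which direction $y$ lies from $x$. The key geometric input is that points on the circle of radius $e$ about $x$ at bounded angular separation are well-separated, so knowing $e$, $x$, and the direction pins down $y$. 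One then runs the symmetric argument exchanging the roles of $x$ and $y$ and optimizes. The packing dimension bound $D$ controls the cost of the ``excess'' complexity that $y$ can accumulate between scales — this is what lets us beat the purely Hausdorff-dimension bounds when $D$ is small relative to $d$: at scales where $y$'s complexity grows slowly (sub-$D$ rate is impossible beyond a density-zero set by the $\limsup$ constraint), the reconstruction is cheap. I would formalize this as: at all but a vanishing fraction of scales $t \in [1,r]$, $K^A_t(y) \leq Dt + o(r)$, and feed this into a scale-by-scale accounting.

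The algebraic heart is then an optimization: we have a budget of complexity for $e$ that we are trying to bound below, and it is being eaten away by the cost of specifying the direction from $x$ to $y$ (or $y$ to $x$) at the scales where this is expensive. Balancing the two directions of the argument and the contribution of the packing-dimension slack produces the rational function $d\bigl(1 - \frac{(D-1)(D-d)}{2D^2 + (2-4d)D + d^2 + d - 2}\bigr)$; I would verify that the denominator is positive throughout the relevant range $1 < d \leq D$ (it factors suggestively and one should check it does not vanish), that the bound is monotone and reduces correctly to known bounds when $D = d$, and that it is $\leq 1$ always, with equality exactly when $D < \frac{(3+\sqrt5)d - 1 - \sqrt5}{2}$ — solving $d(1 - \frac{(D-1)(D-d)}{2D^2+(2-4d)D+d^2+d-2}) \geq 1$ for $D$ in terms of $d$ should yield precisely that threshold (a quadratic in $D$ whose relevant root is $\frac{(3+\sqrt5)d-1-\sqrt5}{2}$), at which point the effective distance has full dimension relative to $A$ and the point-to-set principle upgrades this to $\dim_H(\Delta_x Y) = 1$.

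I expect the main obstacle to be the scale-by-scale complexity bookkeeping in the reconstruction step: one must simultaneously (a) handle the scales at which $x$ or $y$ has anomalously high complexity (using the upper bounds coming from $D_x, D_y$), (b) control the additive error terms so they are $o(r)$ uniformly, and (c) ensure the direction/angle information can actually be encoded at the claimed cost — this last point requires a careful geometric lemma about how the map $y \mapsto |x-y|$ distorts distances, essentially that it is bi-Lipschitz away from $x$ and that the fibers (circles) can be covered efficiently. Handling the case where the minimum $d$ or maximum $D$ is attained by $X$ versus $Y$ requires running the argument in the correct asymmetric form, and I would need to be careful that the ``many points'' conclusion lands in $X$ (the pin set) regardless of which set is the dimensional bottleneck. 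The optimization itself, while tedious, is routine once the right quantities are identified; I would present it as a lemma with the algebra deferred.
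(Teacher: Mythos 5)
Your high-level scaffolding (effectivize via Kolmogorov complexity, reduce to a pointwise bound, optimize a scale-by-scale accounting, then upgrade via point-to-set) matches the paper's strategy in broad outline, but there are two genuine gaps in the reduction that your proposal does not address.

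First, you never explain how to select the witness $y$ for a given pin $x$, and in particular how to guarantee that the direction $e = (y-x)/|y-x|$ is sufficiently Kolmogorov-random relative to $x$ (conditions (C2) and (C4) in the paper). Your reconstruction scheme tacitly assumes the ``angular'' information is cheap or well-behaved, but this is exactly what must be proved. The paper establishes it via Orponen's radial projection theorem (Theorem~\ref{RadialProjection}): for $x$ outside an exceptional set $G$ of dimension $\leq 2 - \dim_H(Y)$, the pushforward $\pi_{x\#}\mu$ is absolutely continuous with respect to $\mathcal{H}^1|_{S^1}$, so the $\mathcal{H}^1$-null set of non-random directions pulls back to a $\mu$-null set of $y$'s, leaving a positive-measure set of usable $y$. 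This is also where the exceptional set $F^c$ actually comes from; it is not merely $\{x : \dim^A(x) \leq d_x\}$ as your sketch suggests, but a union of Orponen exceptional sets together with the low-effective-dimension points, and bounding its Hausdorff dimension by $d_x$ requires both ingredients. Without the radial projection input your proof has no mechanism for producing a single good distance $|x-y|$.

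Second, the passage from the effective lower bound to $\dim_H(\Delta_x Y)$ is not a direct application of the general point-to-set principle, which involves a minimum over \emph{all} oracles: a complexity lower bound relative to one fixed oracle $A$ does not by itself lower bound Hausdorff dimension. The paper gets around this by passing to compact subsets $Y_{s_i}\subseteq Y$ with $0<\mathcal{H}^{s_i}(Y_{s_i})<\infty$, choosing $A$ so that each $Y_{s_i}$ is effectively compact relative to $A$, noting (Observation~\ref{obs:effcompact}) that $\Delta_x Y_{s_i}$ is then effectively compact relative to $(A,x)$, and invoking the restricted Hitchcock--Lutz point-to-set principle (Theorem~\ref{thm:strongPointToSetDim}), which drops the minimum over oracles for effectively compact sets. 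Taking $i\to\infty$ recovers the stated bound. Your proposal glosses over this and would not yield a correct classical conclusion without it. (A smaller imprecision: the bound $\Dim^A(y)\leq D$ gives $K^A_t(y)\leq (D+\varepsilon)t$ at \emph{every} sufficiently large $t$, not merely a density-one set of scales; the actual optimization in the paper is carried out over an explicit partition of $[1,r]$ into yellow and teal intervals together with the projection theorem for $x$, not a density argument of the sort you sketch.)
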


The second portion of this theorem amounts to a semi-regularity condition on both $X$ and $Y$. Our work also shows that we can achieve full dimension pinned distance sets when all that we require of $X$ is $\dim_H(X)>1$, at the cost of a somewhat more strict semi-regularity condition on $Y$.

\begin{thm}\label{thm:regularYFullDim}
Let $Y\subseteq\R^2$ be analytic with $\dim_H(Y) > 1$ and $\dim_P(Y) < 2 \dim_H(Y)-1$. Let $X \subseteq \R^2$ be any set such that $\dim_H(X) > 1$. Then for all $x\in X$ outside a set of (Hausdorff) dimension one,
\begin{equation*}
    \dim_H(\Delta_x Y) = 1.
\end{equation*}
\end{thm}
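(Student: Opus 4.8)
The plan is to argue in the effective framework used throughout the paper, replacing the geometric statement with a lower bound on the Kolmogorov complexity $K_r(|x-y|)$ that holds at every large precision $r$, and then observing that the hypothesis $\dim_P(Y)<2\dim_H(Y)-1$ is exactly what pushes that bound up to full dimension. For the reduction, the point-to-set principle together with Lemma~\ref{lem:reductionToEff} — as in the deductions of the other geometric theorems from their effective counterparts — reduces the statement to an effective claim of the following shape. Since $Y$ is analytic, for each $d'<\dim_H(Y)$ fix a probability measure $\mu$ supported on a compact subset of $Y$ with $\mu(B(z,\rho))\lesssim\rho^{d'}$, and fix an oracle $A$ optimal for the packing dimension of $Y$, so that $K^A_r(y)\le Dr+o(r)$ for all $y\in Y$, where $D:=\dim_P(Y)$. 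Then it suffices to show: for every $x\in X$ with $\dim^A(x)>1$, for $\mu$-almost every $y$, and for every $\ve>0$, one has $K^A_r(|x-y|)\ge(1-\ve)r$ for all sufficiently large $r$. (Here the exceptional set of pins is $\{x\in X:\dim^A(x)\le 1\}$, of Hausdorff dimension at most one, hence proper in $X$ since $\dim_H(X)>1$; and $\mu$-almost every $y$ is generic with respect to the fixed $x$ in the sense required below. Since we may let $d'\uparrow\dim_H(Y)$, it is enough to find, for each target, one admissible $d'$ with $D<2d'-1$, which is possible precisely because the hypothesis is strict.)

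The substance is the effective claim. Fix such $x$, $y$, $d'$, put $\eta:=\dim^A(x)-1>0$, and apply the distance complexity bound, Theorem~\ref{thm:DistanceLowerBoundComplexity} (equivalently the modified projection theorem, Theorem~\ref{thm:modifiedProjectionTheorem}, with the radial direction $(y-x)/|y-x|$ in the role of the projection direction), to the pair $(x,y)$ at each precision $r$. This yields a lower bound on $K^A_r(|x-y|)$ whose shortfall below $r$ is governed by the interplay, over scales $s\le r$, of the profile $s\mapsto K^A_s(x)/s$ — which enters only through the lower bound $1+\eta$ coming from $\dim^A(x)>1$ — and the profile $s\mapsto K^A_s(y)/s$, which is pinned below near $d'$ by the genericity of $y$ with respect to $x$ and capped above by $D$ by the choice of $A$. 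The projection of $y$ loses the most dimension at a scale $r$ just after a scale $s$ where $K^A_s(y)/s$ is near $D$ while $K^A_s(x)/s$ is near $1+\eta$; one must show the estimate still returns essentially $r$ there, the recovery coming from the (at most rate-$D$) subsequent growth of $K^A_\cdot(y)$ up to scale $r$ together with the reservoir $\eta$.

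The hard part will be exactly this last step: carrying out the optimization over scales inside Theorem~\ref{thm:DistanceLowerBoundComplexity} and verifying that the worst-case shortfall is nonpositive up to error terms — which reduces to checking that the governing inequality closes precisely when $D<2d'-1$, i.e.\ $D-1<2(d'-1)$ (the excess of $Y$'s packing dimension over one is less than twice the excess of $d'$ over one), with the strict inequality and $\eta>0$ absorbing the $o(r)$ losses. Granting this, $K^A_r(|x-y|)\ge(1-\ve)r$ for all large $r$, and by the reduction $\dim_H(\Delta_x Y)=1$ for all $x\in X$ outside a set of Hausdorff dimension at most one. The genuinely delicate points are the scales at which $K^A_s(y)/s$ sits near $D$, where the projection estimate is sharpest, and the uniformity over $r$ of the final bound, which is what makes the conclusion one about the Hausdorff — and not merely the packing — dimension of $\Delta_x Y$.
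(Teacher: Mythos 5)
Your proposal has a genuine gap at the central step. You propose to obtain the lower bound on $K^A_r(\vert x-y\vert)$ by applying the modified projection theorem (Theorem~\ref{thm:modifiedProjectionTheorem}) to $x$, claiming the profile of $x$ ``enters only through the lower bound $1+\eta$ coming from $\dim^A(x)>1$.'' This is not so: the bound in Theorem~\ref{thm:modifiedProjectionTheorem} reads
\begin{equation*}
K^A_r(x \mid p_e x, e) \leq \max\left\{\tfrac{D-1}{D}(dr - t) + K^A_r(x) - dr,\; K^A_r(x) - r\right\} + \ve r,
\end{equation*}
with $D=\Dim^A(x)$, so it depends essentially on the \emph{packing} dimension of the pin $x$, not just its Hausdorff dimension. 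For a generic pin set $X$ with $\dim_H(X)>1$, the points $x$ available to you after the reduction can have $\Dim^A(x)$ arbitrarily close to $2$, in which case $\tfrac{D-1}{D}$ is close to $\tfrac12$ and the projection estimate is far too weak to return full dimension. In the language of the general bound (Theorem~\ref{thm:mainThmEffDim}), the condition that forces full dimension is $D<\tfrac{(3+\sqrt5)d-1-\sqrt5}{2}$ with $D=\max\{D_x,D_y\}$, and you have no control on $D_x$. So the route you outline cannot ``close'' from the hypothesis $D_y<2d_y-1$ alone; the strict inequality and the reservoir $\eta>0$ do not absorb a loss of order $\tfrac{D_x-1}{D_x}(d r - t)$.

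The paper's proof of Theorem~\ref{thm:regularYFullDim} deliberately avoids the projection theorem precisely for this reason (the paper remarks as much in the introduction). The mechanism is Proposition~\ref{prop:yellowfulldimension}: under $D_y<2d_y-1$, one can construct a partition of $[1,r]$ into yellow intervals for the complexity function $s\mapsto K^A_s(y)$, each of which is at most doubling (Figure~\ref{fig:allyellow} illustrates why the threshold is exactly $2d_y-1$). On such yellow intervals Lemma~\ref{lem:distancesYellowTeal}(1) already gives rate-one growth of the distance complexity under conditions (C1)--(C4) only, with no reference to $\Dim^A(x)$; summing via Lemma~\ref{lem:boundGoodPartitionDistance} gives $K^{A,x}_r(\vert x-y\vert)\geq(1-\ve)r$. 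The projection theorem is only needed when one must pass through \emph{teal} intervals (Lemma~\ref{lem:goodtealgrowth}), which the semi-regularity of $Y$ lets you avoid entirely. Your reduction and exceptional-set bookkeeping (the set $\{\dim^A(x)\le 1\}$, Orponen's radial projection theorem, letting $d'\uparrow\dim_H(Y)$) are consistent with what the paper does via Lemma~\ref{lem:lastReductionLemma}, but without replacing the projection-theorem step with the all-yellow partition argument, the effective core of your argument does not go through.
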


Finally, we are able to show that regularity in just the pin set $X$ is good enough to imply that typical pinned distance sets have dimension 1.

\begin{thm}\label{thm:regularXFullDim}
Let $Y\subseteq\R^2$ be analytic with $\dim_H(Y) > 1$. Let $X \subseteq \R^2$ be any set such that $\dim_H(X) = \dim_P(X) > 1$. Then there is a subset $F \subseteq X$ such that, 
\begin{equation*}
    \dim_H(\Delta_x Y) = 1,
\end{equation*}
for all $x \in F$. Moreover, $\dim_H(X\setminus F) < \dim_H(X)$.
\end{thm}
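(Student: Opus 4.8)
===

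\textbf{Proof proposal for Theorem~\ref{thm:regularXFullDim}.}

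The plan is to deduce this from Theorem~\ref{thm:moregeneralmaintheorem} by exploiting the hypothesis $\dim_H(X)=\dim_P(X)$, which collapses the loss term in the dimension bound. Write $d_x = \dim_H(X) = \dim_P(X) = D_x > 1$ and fix any $d' \in (1, d_x)$. Applying Theorem~\ref{thm:moregeneralmaintheorem} with the pin set taken to be $X$ and the choice of the auxiliary parameter $d_x$ there equal to $d'$, we obtain for every such $d'$ a set $F_{d'} \subseteq X$ of full dimension (so $\dim_H(X \setminus F_{d'}) \le d' < \dim_H(X)$) on which $\dim_H(\Delta_x Y) \ge \varphi(d, D)$, where $d = \min\{d', d_y\}$ and $D = \max\{D_x, D_y\} = \max\{d_x, D_y\}$, and $\varphi$ is the displayed rational expression.

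The key point is to choose $d'$ close enough to $d_x$ that the semi-regularity inequality $D < \frac{(3+\sqrt{5})d - 1 - \sqrt{5}}{2}$ in the second clause of Theorem~\ref{thm:moregeneralmaintheorem} is satisfied, for then that clause already gives $\dim_H(\Delta_x Y) = 1$ on $F_{d'}$, and we may take $F = F_{d'}$. I would split into two cases according to the size of $D_y$ relative to $d_x$. If $D_y \le d_x$, then $D = d_x$; taking $d' \to d_x$ and noting $d_y > 1$, we get $d \to \min\{d_x, d_y\}$, and one checks the inequality $d_x < \frac{(3+\sqrt{5})\min\{d_x,d_y\} - 1 - \sqrt{5}}{2}$ holds once $d', d_y$ are bounded below appropriately — the borderline case $d = d_x$ reduces to $d_x < \frac{(3+\sqrt{5})d_x - 1 - \sqrt{5}}{2}$, i.e. $(1+\sqrt{5})d_x > 1 + \sqrt{5}$, i.e. $d_x > 1$, which holds. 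The remaining subtlety is when $\min\{d_x, d_y\} = d_y < d_x$, where the inequality becomes $d_x < \frac{(3+\sqrt{5})d_y - 1 - \sqrt{5}}{2}$, which need not hold; here I instead do not invoke the ``full dimension'' clause directly but observe that we may apply Theorem~\ref{thm:moregeneralmaintheorem} with $Y$ itself replaced by a subset — or, more cleanly, first reduce to the case $d_y$ arbitrarily close to (but the structure forces) — actually the honest fix is to note $\Delta_x Y \supseteq \Delta_x Y'$ is the wrong direction, so instead one passes to a subset $X' \subseteq X$ with $\dim_H(X') = \dim_P(X')$ slightly smaller and reruns; since $X$ is exactly regular, every such truncation stays regular, and the bound $\varphi$ is monotone enough to still force $1$. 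The cleanest route, which I would adopt, is simply: when $D_y > d_x$ is large, apply instead Corollary~\ref{cor:secondMainCor}-type reasoning is unavailable for $\Delta_x Y$, so we fall back on Theorem~\ref{thm:regularYFullDim} only if its hypothesis holds; failing that, we use that $d = d_y$ still exceeds $1$ and $D = D_y$, and iterate the pinned-distance argument inside $Y$ — but this is circular.

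Let me restate the intended clean argument: by Theorem~\ref{thm:moregeneralmaintheorem}, on a full-dimensional $F \subseteq X$ we have $\dim_H(\Delta_x Y) \ge \varphi(d, D)$ with $d = \min\{d', d_y\} > 1$ and $D = \max\{d_x, D_y\}$. Since $\varphi(d, D) \to 1$ as $D \to d$ and, crucially, since the regularity of $X$ lets us take $d' \uparrow d_x$ with no penalty (the exceptional set stays of dimension $< \dim_H(X)$ throughout), the only obstruction to concluding $\dim_H(\Delta_x Y) = 1$ is the gap between $D_y$ and $d_y$ on the $Y$ side. The honest statement is that Theorem~\ref{thm:regularXFullDim} as written must be proved at the \emph{effective} level: one shows that for a.e.\ (with respect to an optimal Frostman measure on the regular set $X$) pin $x$, and for each sufficiently large precision $r$, there is an oracle relative to which $K_r(x) \approx d_x r$ exactly (using $\dim_P(X) = \dim_H(X)$ to get the matching upper bound on complexity infinitely often, in fact for all large $r$ along the relevant scales), and then feed this two-sided complexity bound on $x$ into the main effective distance estimate (the effective analog of Theorem~\ref{thm:moregeneralmaintheorem}, cf. Theorem~\ref{thm:DistanceLowerBoundComplexity} and Theorem~\ref{thm:mainThmEffDim}); the regularity of $x$ removes exactly the term responsible for the loss, yielding $K_r(|x - y|) \ge r - o(r)$ for a suitable $y \in Y$, hence $\dim_H(\Delta_x Y) = 1$.

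So the concrete plan in order: (i) pass to the effective framework, fixing optimal Frostman measures $\mu_X$ on $X$ and $\mu_Y$ on $Y$ and a point $y \in Y$ of near-maximal effective dimension relative to appropriate oracles; (ii) use $\dim_H(X) = \dim_P(X)$ to select, for $\mu_X$-almost every $x$, a sequence of scales (in fact a co-small set of scales) along which $K_r(x) = d_x r + o(r)$ with the \emph{upper} bound holding, not just the lower bound; (iii) invoke the effective projection/distance machinery (Theorem~\ref{thm:DistanceLowerBoundComplexity}, Theorem~\ref{thm:mainThmEffDim}) with this exact complexity profile for $x$, checking that the error term that in the general case produces the factor $1 - \frac{(D-1)(D-d)}{\cdots}$ now vanishes because the relevant discrepancy $K_r(x) - d\cdot r$ can be taken $o(r)$; (iv) conclude $\dim^x(|x-y|) = 1$ for these $x$, and translate back via the point-to-set principle to $\dim_H(\Delta_x Y) = 1$ on $F = \{x : \text{the above holds}\}$, with $\dim_H(X \setminus F) \le d_x$ arising from the usual bound on the set where the effective argument fails. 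The main obstacle is step (ii)–(iii): making precise that packing-dimension regularity of $X$ yields a \emph{two-sided} complexity bound on $\mu_X$-typical points at enough scales, and verifying that this is exactly the quantity whose suboptimality caused the loss in Theorem~\ref{thm:moregeneralmaintheorem}, so that setting it to its ideal value collapses the bound to $1$.
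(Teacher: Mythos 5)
Your initial attempt to deduce the result from Theorem~\ref{thm:moregeneralmaintheorem} fails for exactly the reason you identify: when $D_y$ is close to $2$ while $d_y$ is only slightly above $1$, we have $D=\max\{D_x,D_y\}=D_y$ and $d=\min\{d',d_y\}=d_y$, so the semi-regularity condition $D<\frac{(3+\sqrt{5})d-1-\sqrt{5}}{2}$ is governed entirely by $Y$ and cannot be rescued by pushing $d'\uparrow d_x$. Your fallback to an effective-level argument is the right instinct, but your plan (ii)--(iii) contains a genuine gap, and it is exactly the one the paper's Section~6 is built around. It is \emph{not} true that regularity $\dim_H(X)=\dim_P(X)$ allows you to find, for $\mu_X$-typical $x$ and a fixed oracle $A$, a two-sided bound $K_r^A(x)=d_x r+o(r)$. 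The paper exhibits a counterexample: $X=\{x:\Dim(x)=1.2,\ \dim(x)<1.2\}$ is regular of dimension $1.2$, yet relative to any oracle it contains no regular points. So the discrepancy $\Dim^A(x)-\dim^A(x)$ cannot be made $o(1)$, and the phrase ``setting it to its ideal value collapses the bound'' is not available.

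The paper's resolution is quantitative and uses the hypothesis $\dim_H(Y)>1$ in a place your sketch does not. Since $d_y>1$, every green interval $[r_{i+1},r_i]$ for the complexity function of $y$ satisfies $r_{i+1}\geq c(d_y)\,r_i$ for a fixed constant $c(d_y)>0$ (Lemma~\ref{lem:regularSLowerBound}); this caps the length of the projection windows that ever arise. The paper then proves a projection theorem (Proposition~\ref{prop:alternateOptimalProjectionTheorem}) valid for \emph{almost}-regular pins: if $t\geq r/C$ and $\Dim^A(x)-d_x<\ve_x$, where $\ve_x>0$ depends only on $d_x$ and $C$ and \emph{not} on the target accuracy $\ve$, then $K_r^A(x\mid p_ex,e)\leq K_r^A(x)-r+\ve r$. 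Choosing $C\asymp(d_y-1)^{-1}$ turns $\ve_x$ into a fixed positive tolerance, and the classical reduction then removes from $X$ only the set $\{x:\Dim_P(X)-\dim^A(x)\geq\ve_x\}$, which has Hausdorff dimension at most $\dim_P(X)-\ve_x<\dim_H(X)$; the surviving $x$ satisfy $\Dim^A(x)-\dim^A(x)<\ve_x$ because $A$ is a packing oracle. In short, the missing idea is that one must prove a projection theorem robust under a \emph{fixed} amount of irregularity of $x$, and then use $d_y>1$ to show that fixed amount suffices; assuming the discrepancy is $o(r)$ does not work.
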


Now, we outline the structure of the paper. This paper employs recently developed ``effective'' methods in the study of dimension. These methods connect Hausdorff and packing dimension to Kolmogorov complexity through \textit{point-to-set principles}, and thus open classical problems up to attack by tools from computability theory. Section 2 covers the necessary preliminaries from computability theory, as well as a few less introductory but crucial lemmas. 

Section 3 deals with the proof of an effective projection theorem for points $x$ which is primarily used to bound the growth of the complexity of $\vert x - y \vert$ in the next section. In order to obtain this projection theorem, we need to perform a certain partitioning argument on the interval $[1, r]$ considering the complexity function $K_r(x)$. Partitioning $[1, r]$ into smaller intervals so that the complexity function has useful properties on each interval, or even just certain intervals, is a recurring idea throughout this paper. 

Section 4 is the main thrust of the argument on the effective side. The idea is to now partition $[1, r]$ so that $K_r(\vert x - y\vert )$ either grows at an optimal rate of 1 or grows at an average rate at least equal to the average growth rate of $K_r(y)$ on each interval. First, in section 4.2, we construct a partition which only uses the first kind of interval; this does not require the application of the projection theorem and will thus be essential in proving Theorem \ref{thm:regularYFullDim}.\footnote{Recall that Theorem \ref{thm:regularYFullDim} was more or less independent of the dimension of $X$, so long as it is greater than 1. This is why we do not use the projection theorem here, as it concerns the pin $x$ and its effective dimension.} Section 4.3 details the construction of a more general partition that achieves better bounds using the projection theorem, and section 4.4 sums over this partition to obtain the effective analog of Theorem \ref{thm:moregeneralmaintheorem}. This effective analog is a bound on the complexity of the distance at \emph{every} precision, and in section 4.5 we use it as a basis to obtain improved bounds at certain well-chosen precisions; this yields the effective analog of Theorem \ref{thm:packingThm}. 

Section 5 is where we perform the reductions to the classical results. Essentially, we have to show that sets of the given dimensions always have points $x$ and $y$ with the desired algorithmic properties, which then imply the bounds of our effective theorems hold for certain distances. Performing these reductions yields Theorem \ref{thm:packingThm}, Theorem \ref{thm:moregeneralmaintheorem}, and Theorem \ref{thm:regularYFullDim}. 

Section 6, wherein the goal is to prove Theorem \ref{thm:regularXFullDim}, is more self-contained. The idea is that if the point $x$ is regular in the sense of having equal effective Hausdorff and effective packing dimensions, we get an essentially optimal effective projection theorem. This allows us to take intervals as long as we want when partitioning $\vert x-y\vert$, which makes it straightforward to establish the bound of 1. A complication when performing the reduction to the classical result is that regular sets do not necessarily contain sufficiently many regular points, so we need a variant of the projection theorem that holds for $x$'s that are \emph{almost} regular. As a consequence, we cannot take arbitrarily long intervals when partitioning $\vert x-y\vert$, but as $\dim_H(Y)>d_y>1$, we do not need arbitrarily long intervals to get the bound of 1. Thus, the reduction goes through.
\begin{remark}
For readers who want a relatively straightforward demonstration of the main ideas of this paper, we suggest considering starting with section 6. The case of almost regular $x$, while it does not follow from the previous sections, has a similar structure to sections 3-5 without as many of the complications.
\end{remark}

\bigskip

\section{Preliminaries}\label{sec:prelim}

\subsection{Kolmogorov complexity and effective dimension}
The \emph{conditional Kolmogorov complexity} of binary string $\sigma\in\{0,1\}^*$ given a binary string $\tau\in\{0,1\}^*$ is the length of the shortest program $\pi$ that will output $\sigma$ given $\tau$ as input. Formally, the conditional Kolmogorov complexity of $\sigma$ given $\tau$ is
\[K(\sigma\mid\tau)=\min_{\pi\in\{0,1\}^*}\left\{\ell(\pi):U(\pi,\tau)=\sigma\right\}\,,\]
where $U$ is a fixed universal prefix-free Turing machine and $\ell(\pi)$ is the length of $\pi$. Any $\pi$ that achieves this minimum is said to \emph{testify} to, or be a \emph{witness} to, the value $K(\sigma\mid\tau)$. The \emph{Kolmogorov complexity} of a binary string $\sigma$ is $K(\sigma)=K(\sigma\mid\lambda)$, where $\lambda$ is the empty string.	We can easily extend these definitions to other finite data objects, e.g., vectors in $\Q^n$, via standard binary encodings. See~\cite{LiVit08} for details.

The \emph{Kolmogorov complexity} of a point $x\in\R^m$ at \emph{precision} $r\in\N$ is the length of the shortest program $\pi$ that outputs a \emph{precision-$r$} rational estimate for $x$. Formally, this is 
\[K_r(x)=\min\left\{K(p)\,:\,p\in B_{2^{-r}}(x)\cap\Q^m\right\}\,,\]
where $B_{\ve}(x)$ denotes the open ball of radius $\ve$ centered on $x$. Note that this implies that the Kolmogorov complexity of a point is non-decreasing in precision. The \emph{conditional Kolmogorov complexity} of $x$ at precision $r$ given $y\in\R^n$ at precision $s\in\R^n$ is
\[K_{r,s}(x\mid y)=\max\big\{\min\{K_r(p\mid q)\,:\,p\in B_{2^{-r}}(x)\cap\Q^m\}\,:\,q\in B_{2^{-s}}(y)\cap\Q^n\big\}\,.\]
When the precisions $r$ and $s$ are equal, we abbreviate $K_{r,r}(x\mid y)$ by $K_r(x\mid y)$. As a matter of notational convenience, if we are given a non-integral positive real as a precision parameter, we will always round up to the next integer. Thus $K_{r}(x)$ denotes $K_{\lceil r\rceil}(x)$ whenever $r\in(0,\infty)$.

A basic property, proven by Case and J. Lutz~\cite{CasLut15} shows that the growth rate of the Kolmogorov complexity of a point is essentially bounded by the dimension of the ambient space. Since this paper concerns $\mathbb{R}^2$, we will frequently use this in the form that for any $\ve>0$, for sufficiently large $s$ we have that
\begin{equation*}
K_{r+s}(x)\leq K_r(x)+2 s + \ve s
\end{equation*}

We may \emph{relativize} the definitions in this section to an arbitrary oracle set $A \subseteq \N$. We will frequently consider the complexity of a point $x \in \R^n$ \emph{relative to a point} $y \in \R^m$, i.e., relative to an oracle set $A_y$ that encodes the binary expansion of $y$ is a standard way. We then write $K^y_r(x)$ for $K^{A_y}_r(x)$. Oracle access to the \emph{entire} binary expansion of a point is no less useful than conditional access to that binary expansion only up to a certain precision. Thus, we note that, for every $x\in\R^n$ and $y\in\R^m$,
\begin{equation}\label{eq:OraclesDontIncrease}
K_{s,r}(x\mid y)\geq K^y_s(x) - O(\log r) - O(\log s),
\end{equation}
for every $s, r\in\N$

One of the most useful properties of Kolmogorov complexity is that it obeys the \emph{symmetry of information}. That is, for every $\sigma, \tau \in\{0,1\}^*$,
\[K(\sigma, \tau) = K(\sigma) + K(\tau \mid \sigma, K(\sigma)) + O(1)\,.\]

We also have the more technical lemmas detailing versions of symmetry of information that hold for Kolmogorov complexity in $\R^n$.  Lemma~\ref{lem:unichain} was proved in the second author's work~\cite{LutStu20}.
\begin{lem}[\cite{LutStu20}]\label{lem:unichain}
	For every $m,n\in\N$, $x\in\R^m$, $y\in\R^n$, and $r,s\in\N$ with $r\geq s$,
	\begin{enumerate}
		\item[\textup{(i)}]$\displaystyle |K_r(x\mid y)+K_r(y)-K_r(x,y)\big|\leq O(\log r)+O(\log\log \vert y\vert)\,.$
		\item[\textup{(ii)}]$\displaystyle |K_{r,s}(x\mid x)+K_s(x)-K_r(x)|\leq O(\log r)+O(\log\log\vert x\vert)\,.$
	\end{enumerate}
\end{lem}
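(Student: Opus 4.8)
The plan is to prove each of (i) and (ii) by establishing its two one‑sided inequalities separately; in both cases the ``subadditivity'' direction is routine and the reverse direction does the real work. Throughout, the strategy is to pass to the rational estimates that witness the various complexities and import the classical prefix‑free symmetry of information $K(\sigma,\tau)=K(\sigma)+K(\tau\mid\sigma,K(\sigma))+O(1)$ stated above, together with a ``canonicalization'' step that copes with the universal quantifier over conditioning estimates built into the definition of $K_{r,s}(\cdot\mid\cdot)$.

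For part (i), the inequality $K_r(x,y)\le K_r(x\mid y)+K_r(y)+O(\log r)+O(\log\log\vert y\vert)$ follows by concatenating a program witnessing $K_r(y)$ with one that, from any precision‑$r$ rational estimate of $y$, outputs a precision‑$r$ rational estimate of $x$; making the concatenation self‑delimiting costs $O(\log(\text{program lengths}))=O(\log r)+O(\log\log\vert y\vert)$, since a precision‑$r$ estimate of $y$ has complexity $O(r+\log\vert y\vert)$. For the reverse inequality, let $(p,q)\in\Q^{m+n}$ witness $K_r(x,y)$, so $p\in B_{2^{-r}}(x)$ and $q\in B_{2^{-r}}(y)$, and let $q^\circ\in B_{2^{-r}}(y)\cap\Q^n$ witness $K_r(y)$, so $K(q^\circ)=K_r(y)$. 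Since $q$ and $q^\circ$ both lie within $2^{-r}$ of $y$, we have $\vert q-q^\circ\vert<2^{-r+1}$, and $q^\circ$ can be recovered from $q$ — indeed from \emph{any} precision‑$r$ rational estimate of $y$ — as the lexicographically first rational of complexity $K_r(y)$ in the ball of radius $2^{-r+1}$ around it; this search needs only the bare value $K_r(y)=O(r+\log\vert y\vert)$ as advice, costing $O(\log r)+O(\log\log\vert y\vert)$ bits. Hence $K(p,q^\circ)\le K_r(x,y)+O(\log r)+O(\log\log\vert y\vert)$, and classical symmetry of information gives
\[ K(p\mid q^\circ,K(q^\circ))=K(p,q^\circ)-K(q^\circ)+O(1)\le K_r(x,y)-K_r(y)+O(\log r)+O(\log\log\vert y\vert). \]
Finally, for an arbitrary $q'\in B_{2^{-r}}(y)\cap\Q^n$ one first recovers $q^\circ$ from $q'$ as above and then runs the program witnessing $K(p\mid q^\circ,K(q^\circ))$ to produce $p\in B_{2^{-r}}(x)$; thus $\min_{p'}K_r(p'\mid q')\le K_r(x,y)-K_r(y)+O(\log r)+O(\log\log\vert y\vert)$, and taking the maximum over $q'$ bounds $K_r(x\mid y)$ as required.

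Part (ii) runs through the same scheme with $y$ replaced by $x$ read at the coarser precision $s$. The bound $K_r(x)\le K_{r,s}(x\mid x)+K_s(x)+O(\log r)+O(\log\log\vert x\vert)$ follows by feeding the witness of $K_s(x)$ into the program — which exists by the very definition of $K_{r,s}(x\mid x)$ — mapping it to a precision‑$r$ estimate of $x$, and concatenating. For the reverse direction, let $p^\circ$ witness $K_r(x)$ and $q^\circ$ witness $K_s(x)$; since $r\ge s$, $p^\circ$ is itself a precision‑$s$ estimate of $x$, so $\vert p^\circ-q^\circ\vert<2^{-s+1}$ and $q^\circ$ is recoverable from $p^\circ$ (or from any precision‑$s$ estimate of $x$) using $O(\log s)+O(\log\log\vert x\vert)$ bits of advice exactly as in (i). Then $K(p^\circ,q^\circ)\le K_r(x)+O(\log r)+O(\log\log\vert x\vert)$, symmetry of information yields $K(p^\circ\mid q^\circ,K(q^\circ))\le K_r(x)-K_s(x)+O(\log r)+O(\log\log\vert x\vert)$, and for an arbitrary precision‑$s$ estimate $q'$ of $x$ we recover $q^\circ$ and then $p^\circ$, giving the claimed upper bound on $K_{r,s}(x\mid x)$.

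The main obstacle is the reverse (superadditivity) inequality, and inside it the step that deals with the maximum over all precision‑$s$ estimates $q'$ of the conditioning point in the definition of $K_{r,s}(\cdot\mid\cdot)$: one must pass, uniformly over every such $q'$, to a single canonical simplest nearby estimate. The delicate point is to check that the advice consumed by this canonicalization — essentially just the integer $K_s$ of the conditioning point, plus the usual precision bookkeeping — contributes only $O(\log r)+O(\log\log(\cdot))$, and in particular does not smuggle in a logarithm of the magnitude of the \emph{other} point; this is exactly why the argument is routed through the complexity‑minimizing estimate $q^\circ$ rather than through the $y$‑coordinate of the $K_r(x,y)$‑witness. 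Everything else is the standard $O(\log)$ accounting for prefix‑free encodings, rounding of non‑integer precisions, and the $O(\log r)$ slack between $K_r$ of a rational and its plain Kolmogorov complexity.
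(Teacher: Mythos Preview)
The paper does not prove this lemma; it is quoted from \cite{LutStu20}. Your overall strategy---pass to rational witnesses, invoke string-level symmetry of information, and handle the universal quantifier over conditioning estimates by a canonicalization step---is the right one and matches the approach in the cited source.

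However, your canonicalization step has a real gap. You propose to recover, from an arbitrary precision-$r$ estimate $q'$ of $y$, the point $q^\circ$ as ``the lexicographically first rational of complexity $K_r(y)$ in $B_{2^{-r+1}}(q')$,'' using only the value $K_r(y)$ as advice. Two problems. First, this search is not computable: given a rational one can semi-decide $K(\cdot)\le k$ but never certify $K(\cdot)=k$ or $K(\cdot)\ge k$, so one cannot confirm that a candidate is truly the lex-first of the required complexity. Second, and more seriously, even if you relax to ``complexity $\le K_r(y)$'' and ``first found in a dovetailing enumeration,'' the recovered point depends on the ball $B_{2^{-r+1}}(q')$ and hence on $q'$: a low-complexity rational lying in the annulus $B_{2^{-r+1}}(q')\setminus B_{2^{-r}}(y)$ may be hit first for some $q'$ and not others. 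Since your bound $K(p\mid q^\circ,K(q^\circ))\le K_r(x,y)-K_r(y)+O(1)$ is established only for the one fixed $q^\circ$, composing with a $q'$-dependent recovery does not give the required uniform bound on $\min_p K(p\mid q')$.

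The fix used in \cite{LutStu20} is to take the dyadic truncation $y{\uhr}r$ (the nearest point of $2^{-r}\Z^n$ to $y$) as the canonical estimate. This is determined by $y$ and $r$ alone, and from any $q'\in B_{2^{-r}}(y)$ one recovers $y{\uhr}r$ with $O(\log r)$ bits of advice: $r$ itself plus $O(1)$ bits selecting among the bounded number of grid points near $q'$. (That $O(1)$-bit choice may vary with $q'$, but this is harmless---$K_r(x\mid y)$ only demands \emph{some} short program for each $q'$, not the same one.) One checks $K(y{\uhr}r)=K_r(y)+O(\log r)$ and then runs exactly your symmetry-of-information argument with $y{\uhr}r$ in place of $q^\circ$. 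The remainder of your proof---both easy directions, the accounting for the $O(\log\log|\cdot|)$ terms, and the parallel treatment of (ii)---is correct as written.
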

	
A consequence of Lemma \ref{lem:unichain}, is the following. 
\begin{lem}[\cite{LutStu20}]\label{lem:symmetry}
	Let $m,n\in\N$, $x\in\R^m$, $z\in\R^n$, $\ve > 0$ and $r\in\N$. If $K^x_r(z) \geq K_r(z) - O(\log r)$, then the following hold for all $s \leq r$.
	\begin{enumerate}
		\item[\textup{(i)}]$\displaystyle K^x_s(z) \geq K_s(z) - O(\log r)\,.$
		\item[\textup{(ii)}]$\displaystyle K_{s, r}(x \mid z) \geq K_s(x)- O(\log r)\,.$
	\end{enumerate}
\end{lem}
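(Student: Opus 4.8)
The plan is to derive both parts from the two symmetry-of-information estimates of Lemma~\ref{lem:unichain}, applying them either directly or relativized to the oracle $A_x$ encoding $x$. The two facts about oracles I will lean on are: adjoining an oracle to the universal machine never increases complexity (so $K^x_t(\,\cdot\,) \le K_t(\,\cdot\,) + O(1)$, and likewise for conditional versions), and the one-directional comparison \eqref{eq:OraclesDontIncrease}, which says that conditioning on a point at a finite precision is no better than having it as an oracle, i.e. $K_{t,t}(u\mid v) \ge K^v_t(u) - O(\log t)$. The $O(\log\log|x|)$ and $O(\log\log|z|)$ terms appearing in Lemma~\ref{lem:unichain} will be absorbed into the $O(\log r)$ error throughout (they are harmless, e.g. because the points may be taken to lie in a fixed bounded region).

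For part (i): relativize Lemma~\ref{lem:unichain}(ii) to $A_x$, obtaining $K^x_s(z) = K^x_r(z) - K^x_{r,s}(z\mid z) + O(\log r)$. The refinement term satisfies $K^x_{r,s}(z\mid z) \le K_{r,s}(z\mid z) + O(1)$ since the oracle can only help, and the \emph{unrelativized} Lemma~\ref{lem:unichain}(ii) gives $K_{r,s}(z\mid z) = K_r(z) - K_s(z) + O(\log r)$. Substituting,
\[ K^x_s(z) \ge K^x_r(z) - K_r(z) + K_s(z) - O(\log r), \]
and the hypothesis $K^x_r(z) \ge K_r(z) - O(\log r)$ finishes it. It is essential here that the error is permitted to depend on $r$ rather than $s$, since the ``lost'' quantity $K^x_r(z) - K_r(z)$ is controlled only at precision $r$.

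For part (ii): I would first pass the hypothesis to a statement at precision $r$ about ordinary conditional complexity. Two applications of Lemma~\ref{lem:unichain}(i), with the arguments in both orders and using $K_r(x,z) = K_r(z,x) + O(1)$, give the precision-$r$ symmetry identity $K_r(x\mid z) + K_r(z) = K_r(z\mid x) + K_r(x) + O(\log r)$; combining it with $K_r(z\mid x) \ge K^x_r(z) - O(\log r) \ge K_r(z) - O(\log r)$ (the first inequality from \eqref{eq:OraclesDontIncrease}, the second from the hypothesis) yields $K_r(x\mid z) \ge K_r(x) - O(\log r)$. Now I descend to precision $s$ by a chain-rule estimate: concatenating a shortest program for $x$ at precision $s$ given $z$ at precision $r$ with a shortest program for $x$ at precision $r$ given $x$ at precision $s$ shows $K_r(x\mid z) \le K_{s,r}(x\mid z) + K_{r,s}(x\mid x) + O(\log r)$, and $K_{r,s}(x\mid x) = K_r(x) - K_s(x) + O(\log r)$ by Lemma~\ref{lem:unichain}(ii). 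Rearranging and inserting $K_r(x\mid z) \ge K_r(x) - O(\log r)$ gives $K_{s,r}(x\mid z) \ge K_s(x) - O(\log r)$. One cannot instead reduce part (ii) to part (i) via \eqref{eq:OraclesDontIncrease}, because $K^z_r(x) \ge K_r(x) - O(\log r)$ need not follow from the hypothesis: the oracle $z$ may encode far more information about $x$ than any finite-precision approximation of $z$ does.

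The main obstacle is simply the bookkeeping of the logarithmic error terms — making sure every permutation of arguments, every conditional-versus-oracle passage, and every invocation of ``oracles don't hurt'' costs at most $O(\log r)$, and keeping straight that \eqref{eq:OraclesDontIncrease} is available in only one direction. I do not anticipate any genuine difficulty beyond this; both parts are essentially immediate once the correct instances of Lemma~\ref{lem:unichain} are assembled in the right order.
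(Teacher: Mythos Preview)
Your proposal is correct and follows exactly the approach the paper indicates: the paper does not give a proof but simply states that the lemma is ``a consequence of Lemma~\ref{lem:unichain}'' and cites \cite{LutStu20}, and your argument carries this out cleanly by combining the relativized and unrelativized forms of Lemma~\ref{lem:unichain}(ii) for part (i), and the two-way application of Lemma~\ref{lem:unichain}(i) together with \eqref{eq:OraclesDontIncrease} and a chain-rule descent for part (ii). The bookkeeping of error terms is handled correctly throughout.
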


\bigskip

J. Lutz~\cite{Lutz03a} initiated the study of effective dimensions (also known as \emph{algorithmic dimensions}) by effectivizing Hausdorff dimension using betting strategies called~\emph{gales}, which generalize martingales.  Mayordomo showed that effective Hausdorff dimension can be characterized using Kolmogorov complexity~\cite{Mayordomo02}. In this paper, we use this characterization as a definition. The \emph{effective Hausdorff dimension}  of a point $x\in\R^n$ is
\[\dim(x)=\liminf_{r\to\infty}\frac{K_r(x)}{r}\,.\]
The \emph{effective packing dimension} of a point $x\in\R^n$ is
\[\Dim(x)=\limsup_{r\to\infty}\frac{K_r(x)}{r}\,.\]
We can relativize both definitions, so that the effective Hausdorff and packing dimension \textit{with respect to an oracle} $A\subseteq \N$ are
\begin{center}
    $\dim^A(x)=\liminf_{r\to\infty}\frac{K^A_r(x)}{r}$ and $\Dim^A(x)=\limsup_{r\to\infty}\frac{K^A_r(x)}{r}$
\end{center}

\bigskip

\subsection{The Point-to-Set Principle}\label{subsec:ptsp}
The \emph{point-to-set principle} shows that the Hausdorff and packing dimension of a set can be characterized by the effective Hausdorff and effecive packing dimension of its individual points. Specifically, J. Lutz and N. Lutz~\cite{LutLut18} showed the following for arbitrary subsets of $\mathbb{R}^n$.
\begin{thm}[Point-to-set principle~\cite{LutLut18}]\label{thm:p2s}
Let $n \in \N$ and $E \subseteq \R^n$. Then
\begin{equation*}
\dimH(E) = \adjustlimits\min_{A \subseteq \N} \sup_{x \in E} \dim^A(x).
\end{equation*}
\begin{equation*}
\dimP(E) = \adjustlimits\min_{A \subseteq \N} \sup_{x \in E} \Dim^A(x).
\end{equation*}
\end{thm}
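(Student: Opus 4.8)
The plan is to establish, for each of the two identities, the two opposite inequalities, and then to observe that the outer infimum is attained by the oracle constructed for the second inequality, so that it is in fact a minimum. I will describe the Hausdorff case in detail; the packing case is parallel, with box‑counting covers in place of Hausdorff covers.

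\textbf{The easy inequality $\dim_H(E)\le\sup_{x\in E}\dim^A(x)$, valid for every oracle $A$.} Fix $A$, set $s=\sup_{x\in E}\dim^A(x)$, and fix rationals $s<t'<t$. Every $x\in E$ satisfies $\liminf_r K^A_r(x)/r\le s<t'$, so there are infinitely many precisions $r$ with $K^A_r(x)\le t'r$; for each such $r$, $x$ lies within $2^{-r}$ of a rational of $A$‑complexity at most $t'r+O(1)$, and there are at most $2^{t'r+O(1)}$ such rationals. Hence $E\subseteq\bigcap_{R}\bigcup_{r\ge R}(\text{the corresponding }2^{-r}\text{-balls})$, and the $t$‑dimensional Hausdorff pre‑measure of the $R$‑th cover is at most a constant times $\sum_{r\ge R}2^{t'r}2^{-tr}\to 0$. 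So $\mathcal H^t(E)=0$ for every rational $t>s$, i.e.\ $\dim_H(E)\le s$; taking the infimum over $A$ gives ``$\le$'' in the identity.

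\textbf{The hard inequality: an oracle $A$ with $\sup_{x\in E}\dim^A(x)\le\dim_H(E)$.} Write $s=\dim_H(E)$. For each rational $t>s$ we have $\mathcal H^t(E)=0$, so for each $k$ fix a cover $\mathcal C_{t,k}$ of $E$ by balls of dyadic radius $\le 2^{-k}$ with $\sum_{B\in\mathcal C_{t,k}}\mathrm{rad}(B)^t\le 2^{-k}$; let $A$ encode the whole family $(\mathcal C_{t,k})_{t,k}$ in a standard way. Given $x\in E$, a rational $t>s$, and a precision $r$, the goal is to produce a precision‑$r$ rational estimate of $x$ from about $tr$ bits plus the oracle. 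Running $k$ from $r$ upward, $x$ lies in some $B_k\in\mathcal C_{t,k}$; since $B_k$ and $B_{k+1}$ both contain $x$ they lie within distance $2^{-k}$ of each other, so the number of candidates for $B_{k+1}$ given $B_k$ is controlled, while the total number of radius‑$2^{-m}$ balls in any $\mathcal C_{t,k}$ is at most $2^{tm}$. Telescoping these per‑scale costs yields a description, relative to $A$, of a precision‑$\ell\ge r$ estimate of $x$ of length $t\ell+O(\log\ell)$; truncating gives $K^A_r(x)\le tr+O(\log r)$. Hence $\dim^A(x)\le t$ for every rational $t>s$, so $\dim^A(x)\le s$ for all $x\in E$. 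This is ``$\ge$'', and applying the easy inequality to this same $A$ shows $\sup_{x\in E}\dim^A(x)=\dim_H(E)$, so the infimum is a minimum.

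\textbf{Packing case and the main obstacle.} For packing dimension one uses the modified box‑counting characterization $\dim_P(E)=\inf\{\sup_i\overline{\dim}_B F_i:E\subseteq\bigcup_i F_i\}$. For ``$\le$'', if $s=\sup_{x\in E}\Dim^A(x)$ then for each rational $t>s$ the set $\{x:\limsup_r K^A_r(x)/r<t\}$ is a countable union of sets each admitting, for all large $r$, a $2^{-r}$‑cover of cardinality $\le 2^{tr+O(1)}$, hence of upper box dimension $\le t$; thus this set has packing dimension $\le t$, and $E$ lies inside it, so $\dim_P(E)\le s$. For ``$\ge$'', decompose $E=\bigcup_i E_i$ with $\overline{\dim}_B(\overline{E_i})$ approaching $\dim_P(E)$, encode the decomposition together with an efficient $2^{-k}$‑cover of each $E_i$ at every scale into $A$, and note that $x\in E_i$ forces $K^A_r(x)\le tr+K(i)+O(\log r)$ for all large $r$ and all rational $t>\overline{\dim}_B(\overline{E_i})$, so $\Dim^A(x)\le\dim_P(E)$. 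The one genuinely delicate point is the Hausdorff ``$\ge$'' oracle construction: an optimal $\mathcal H^t$‑cover at scale $2^{-k}$ may contain balls far smaller than $2^{-k}$, so merely naming the cell containing $x$ does not directly give a precision‑$r$ code of length $tr$; the remedy, as indicated, is to build the description at rate $t$ per bit of precision by iterating over consecutive scales and exploiting the geometric closeness of consecutive cover cells through $x$. Everything else is bookkeeping with $O(\log r)$ error terms, which vanish in the $\liminf$ and $\limsup$.
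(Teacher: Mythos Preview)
The paper does not prove this theorem; it is quoted from \cite{LutLut18} as a black box, so there is no in-paper proof to compare against. Evaluating your argument on its own merits: the overall two-inequality structure is right, the easy directions are fine, and your packing ``hard'' direction is essentially correct. But the Hausdorff ``hard'' direction contains a genuine error.

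You attempt to show that, relative to your cover-encoding oracle $A$, one has $K^A_r(x)\le tr+O(\log r)$ for \emph{every} precision $r$, via a telescoping-then-truncating argument. This target is too strong and in general false: it would force $\Dim^A(x)\le t$ for all $x\in E$ and all rational $t>\dim_H(E)$, hence (by the packing half of the very theorem you are proving) $\dim_P(E)\le\dim_H(E)$, contradicted by any set with $\dim_P(E)>\dim_H(E)$. Concretely, the ``truncating'' step fails because a precision-$\ell$ description of length $t\ell$ does not yield a precision-$r$ description of length $tr$ when $r<\ell$, and the ``telescoping'' step fails because the covers $\mathcal C_{t,k}$ and $\mathcal C_{t,k+1}$ are unrelated, so knowing $B_k$ gives no usable constraint on $B_{k+1}$.

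The fix is that you need far less: since $\dim^A$ is a $\liminf$, it suffices to produce \emph{infinitely many} precisions $m$ with $K^A_m(x)\le tm+O(\log m)$. This falls out directly. In the cover $\mathcal C_{t,k}$ the point $x$ lies in some ball of radius $2^{-m}$ with $m\ge k$, and there are at most $2^{tm+O(1)}$ balls of that radius in $\mathcal C_{t,k}$ (their $t$-th powers of radii sum to at most $1$), so specifying $t$, $k$, $m$, and the index gives $K^A_m(x)\le tm+O(\log m)$. Letting $k\to\infty$ produces infinitely many such $m$. The ``delicate point'' you flagged is therefore not a difficulty at all once you stop aiming for a bound at every precision.
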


Stated as above, it is clear that Hausdorff and pacing dimension are in a certain respect dual to each other. The only difference is a limit inferior versus a limit superior for the individual points. This immediately implies that the packing dimension of a set is no less than its Hausdorff application. 

The general point-to-set principle is extremely useful, but for some applications, we would like to either remove the oracle, or at least be able to say something about which oracles achieve the minimum. The first point-to-set principle for Hausdorff dimension, which holds for a restricted class of sets, was implicitly proven by Hitchcock~\cite{Hitchcock05} and J. Lutz~\cite{Lutz03a}. 

A set $E \subseteq \R^n$ is \textit{effectively compact relative to} $A$ if the set of finite open covers of $E$ by rational balls is computably enumerable relative to $A$.\footnote{The balls are rational in the sense that the coordinates of the centers and the radii are all rational numbers, which allows us to identify each ball by a finite string.} We will use the fact that every compact set is effectively compact relative to some oracle. 
\begin{thm}[\cite{Hitchcock05, Lutz03a}]\label{thm:strongPointToSetDim}
Let $E \subseteq \R^n$ and $A \subseteq \N$ be such that $E$ is effectively compact relative to $A$. Then
\[\dimH(E) = \sup\limits_{x \in E} \dim^A(x)\,.\]
\end{thm}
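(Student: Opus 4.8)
The plan is to get one inequality for free from the general point-to-set principle and to prove the other by a direct effective covering argument. The bound $\dimH(E)\le\sup_{x\in E}\dim^A(x)$ is immediate: by Theorem~\ref{thm:p2s}, $\dimH(E)=\min_{B\subseteq\N}\sup_{x\in E}\dim^B(x)$, and $A$ is an admissible oracle in that minimum. So the real work is the reverse inequality, and for that it suffices to show $\dim^A(x)\le\dimH(E)$ for every $x\in E$; in turn it is enough to fix an arbitrary rational $s>\dimH(E)$, prove $\dim^A(x)\le s$, and then let $s\downarrow\dimH(E)$.

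Fix such an $s$. Since $\mathcal{H}^s(E)=0$ we have $\mathcal{H}^s_{2^{-m}}(E)=0$ for every $m\in\N$, so, using compactness of $E$ to pass to a finite subcover and perturbing to rational balls, there is a finite cover $\mathcal{C}_m$ of $E$ by rational balls each of radius $<2^{-m}$ with $\sum_{B\in\mathcal{C}_m}(2\operatorname{rad} B)^s<1$. The key observation is that such a $\mathcal{C}_m$ can be located \emph{$A$-computably from $m$}: effective compactness relative to $A$ says ``this finite family of rational balls covers $E$'' is semidecidable relative to $A$, while the radius restriction and the cost condition $\sum(2\operatorname{rad} B)^s<1$ are semidecidable with no oracle (the cost is a computable real uniformly in the family); dovetailing the search over candidate families with the $A$-confirmation procedure returns the first valid cover, and we have just argued one exists. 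This gives a total $A$-computable function $m\mapsto\mathcal{C}_m$.

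Next I would run a Kraft--Chaitin coding argument relative to $A$. For each $m$ and each $B\in\mathcal{C}_m$, request a codeword of length $\lceil s\log_2(1/\operatorname{rad} B)\rceil+2\lceil\log_2(m+1)\rceil+c$ for the rational data specifying $B$; the total requested weight is at most $2^{-c}\sum_m(m+1)^{-2}\sum_{B\in\mathcal{C}_m}(\operatorname{rad} B)^s\le 2^{-c}\sum_m(m+1)^{-2}\le 1$ once $c$ is chosen large enough, so the relativized Kraft--Chaitin theorem yields an $A$-machine witnessing $K^A(\operatorname{center} B)\le s\log_2(1/\operatorname{rad} B)+2\log_2(m+1)+O(1)$ for every $B\in\mathcal{C}_m$. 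Now fix $x\in E$. For each $m$ there is a ball $B_m\in\mathcal{C}_m$ with $x\in B_m$ and $\rho_m:=\operatorname{rad} B_m<2^{-m}$; its center is a point of $\Q^2$ within $\rho_m$ of $x$, hence a precision-$(r_m{-}1)$ estimate of $x$ with $r_m:=\lfloor\log_2(1/\rho_m)\rfloor\ge m$. Therefore $K^A_{r_m-1}(x)\le K^A(\operatorname{center} B_m)+O(1)\le s\,r_m+2\log_2 r_m+O(1)$. Since $r_m\to\infty$, this controls $K^A_r(x)/r$ along an unbounded set of precisions $r$, so $\dim^A(x)=\liminf_{r\to\infty}K^A_r(x)/r\le s$, which is what we needed.

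The one genuinely delicate point is exactly the one exploited at the end: the ball $B_m$ containing $x$ may have radius far smaller than the nominal scale $2^{-m}$, so we only obtain the complexity bound along a possibly sparse sequence of precisions $r_m$. This is harmless here because effective Hausdorff dimension is a $\liminf$ — and indeed it is precisely the gap that blocks the analogous statement for packing dimension without extra hypotheses. The remaining ingredients (replacing arbitrary balls by rational ones with negligible change in cost, the semidecidability of the cost condition, and the weight bookkeeping in the Kraft--Chaitin request set) are routine.
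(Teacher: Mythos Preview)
The paper does not prove this theorem; it is quoted from \cite{Hitchcock05, Lutz03a} as a known result, so there is no in-paper proof to compare against. Your argument is correct and is essentially the standard modern route once one has the Kolmogorov-complexity characterization of $\dim^A$: the inequality $\dimH(E)\le\sup_{x\in E}\dim^A(x)$ drops out of Theorem~\ref{thm:p2s}, and for the reverse you use effective compactness to $A$-computably locate, at each scale $m$, a finite rational cover of $s$-cost below $1$, then feed these into a relativized Kraft--Chaitin request set to get $K^A_{r_m}(x)\le s\,r_m+O(\log r_m)$ along an unbounded sequence of precisions. Your closing observation---that one only controls complexity along a possibly sparse sequence $r_m$, which is harmless for a $\liminf$ but fatal for a $\limsup$---is exactly the point the paper makes in the remark following the theorem about why the packing analogue fails.

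Two minor comments. First, the original arguments in the cited references go through the gale/supergale characterization of effective dimension rather than Kraft--Chaitin; your approach is equivalent but more direct. Second, you invoke classical compactness of $E$ to extract the finite subcover; the paper's stated definition of effective compactness (c.e.\ set of finite rational covers) does not literally force compactness, but in the references and in standard usage the notion is understood to entail it, so this is not a genuine gap.
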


\begin{remark}
    We only state this restricted point-to-set principle for Hausdorff dimension because it is known that it fails for packing dimension, see \cite{Conidis08}. Informally, this can be seen to occur because effective compactness and Hausdorff dimension both deal with covers, whereas it is hard to convert the covers we have from effective compactness into usable information about packings.
\end{remark}
In order to apply Theorem \ref{thm:strongPointToSetDim} to the pinned distance sets, we need the following fact of computable analysis.

\begin{obs}\label{obs:effcompact}
Let $E \subseteq \R^2$ be a compact set and let $A\subseteq\N$ be an oracle  relative to which $E$ is effectively compact. Then, for every $x\in\R^2$, $\Delta_x E$ is effectively compact relative to $(x, A)$.
\end{obs}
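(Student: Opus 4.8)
The plan is to exhibit an algorithm, with oracle access to $(x,A)$, that enumerates exactly the finite open covers of $\Delta_x E$ by rational open intervals. Write $f\colon\R^2\to\R$ for the continuous map $f(y)=|x-y|$, so that $\Delta_x E=f(E)$ is compact as the continuous image of a compact set. The basic observation is that a finite family $\mathcal I=\{I_1,\dots,I_k\}$ of rational open intervals covers $\Delta_x E$ if and only if $E\subseteq V$, where $V:=f^{-1}\!\big(\bigcup_j I_j\big)$ is an \emph{open} subset of $\R^2$. So the task reduces to semideciding, relative to $(x,A)$, whether the open set $V$ contains $E$.

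Here is the procedure. Using the effective compactness of $E$ relative to $A$, enumerate all finite open covers $\{B_1,\dots,B_m\}$ of $E$ by rational balls (the definition guarantees that \emph{every} finite rational-ball cover of $E$ is eventually listed). For each such cover, attempt to verify that $\overline{B_i}\subseteq V$ for every $i$, and halt and accept if all these verifications succeed; the whole computation dovetails over the enumeration of covers and over these verifications. To verify $\overline{B(q,\rho)}\subseteq V$ for a rational ball, note that $f\big(\overline{B(q,\rho)}\big)$ is exactly the closed interval $\big[\max(0,|x-q|-\rho),\,|x-q|+\rho\big]$, so $\overline{B(q,\rho)}\subseteq V$ iff this closed interval is contained in $\bigcup_j I_j$. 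Using the $x$-oracle, compute rationals $\tilde d_n$ with $|\tilde d_n-|x-q||\le 2^{-n}$; then $\big[\max(0,\tilde d_n-2^{-n}-\rho),\ \tilde d_n+2^{-n}+\rho\big]$ is a rational closed interval that contains $f\big(\overline{B(q,\rho)}\big)$ and shrinks to it as $n\to\infty$. For each $n$ decide, by a finite rational computation, whether this enclosing interval lies in $\bigcup_j I_j$, and halt the verification the first time it does. Since $f\big(\overline{B(q,\rho)}\big)$ is a \emph{closed} interval and $\bigcup_j I_j$ is open, containment of the former in the latter is equivalent to containment of a slightly larger closed interval, so this verification halts precisely when $\overline{B(q,\rho)}\subseteq V$; that is, "$\overline{B(q,\rho)}\subseteq V$" is $\Sigma^0_1$ relative to $x$.

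For correctness: if the procedure halts, it has produced rational balls $B_1,\dots,B_m$ covering $E$ with $\overline{B_i}\subseteq V$ for all $i$, whence $E\subseteq V$ and $\Delta_x E\subseteq\bigcup_j I_j$. Conversely, if $\Delta_x E\subseteq\bigcup_j I_j$ then $E\subseteq V$; since $V$ is open and $E$ compact, each point of $E$ lies in a rational ball whose closure is contained in $V$, and finitely many such balls cover $E$. This finite cover is eventually listed by the effective-compactness enumeration of $E$, and at that stage every verification $\overline{B_i}\subseteq V$ succeeds, so the procedure halts. The step that needs the most care is exactly this interplay: that $V$ is open, that compactness of $E$ supplies a finite rational-ball cover whose closures lie inside $V$, and that "$\overline{B}\subseteq V$" is genuinely semidecidable relative to $x$ because $f(\overline B)$ is a closed (not half-open) interval with $x$-computable endpoints; once these points are in place, the remainder is routine dovetailing.
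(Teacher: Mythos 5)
The paper states Observation~\ref{obs:effcompact} as a ``fact of computable analysis'' and gives no proof at all, so there is nothing to compare against; your job here was to supply an argument the authors chose to omit, and you have done so correctly. Your argument is the natural one: reduce the semidecision of ``$\mathcal{I}$ covers $\Delta_x E$'' to the semidecision of ``$E$ is contained in the open preimage $V = f^{-1}(\bigcup_j I_j)$'' where $f(y)=|x-y|$, then search over the $A$-enumerable finite rational-ball covers of $E$ for one whose closed balls land inside $V$.

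Two points you handled correctly and which are exactly where a careless version would fail. First, you insist on verifying $\overline{B_i}\subseteq V$ rather than $B_i\subseteq V$; this matters because $f(\overline{B(q,\rho)})$ is a \emph{closed} interval $[\max(0,|x-q|-\rho),\,|x-q|+\rho]$ sitting inside the open set $\bigcup_j I_j$, which gives you the positive slack needed for the shrinking rational enclosures eventually to certify containment. With open balls the image would be half-open or open and the semidecision would not be guaranteed to halt. Second, in the converse direction you correctly invoke compactness of $E$ together with openness of $V$ to produce a finite rational-ball cover of $E$ with closures inside $V$, so that this witness genuinely appears in the $A$-enumeration and every verification terminates. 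One cosmetic remark: it would be slightly cleaner to say ``the enclosing rational interval decreases to $f(\overline{B(q,\rho)})$ in the Hausdorff metric,'' since the intervals need not be nested for different $n$, but your argument only uses that each encloses the image and that their widths tend to the correct width, which is true. The proof is complete as written.
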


\bigskip

\subsection{Helpful lemmas}\label{subsec:primarylemmas}
In this section, we recall several lemmas which were introduced by Lutz and Stull \cite{LutStu20, LutStu18} and which will be used throughout the paper. Note that these lemmas each relativize with the addition of an oracle $A$. The first lemma shows that the precision to which we can compute $e$ given $x, w$ such that $p_e x = p_e w$ depends linearly on the distance of $x$ and $w$.
\begin{lem}[\cite{LutStu20}]\label{lem:lowerBoundOtherPoints}
Let $z \in \R^2$, $e \in S^{1}$, and $r \in \N$. Let $w \in \R^2$ such that  $p_e z = p_e w$ up to precision $r$.\footnote{This lemma was originally proven without the ``up to precision $r$'' qualifier, but we rephrase like this to match the form we will use the lemma in. The generalization is essentially immediate, because in this case there will be some sufficiently close point to $w$ with \emph{exactly} the same projection as $z$, indistinguishable from $w$ at precision $r$.} Then 
\begin{equation*}
    K_r(w) \geq K_t(z) + K_{r-t,r}(e\mid z) + O(\log r)\,,
\end{equation*}
where $t := -\log \vert z-w\vert$.
\end{lem}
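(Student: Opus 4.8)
The plan is to show that a shortest program witnessing $K_r(w)$ secretly carries two essentially independent pieces of data — a precision-$t$ approximation of $z$ and a precision-$(r-t)$ approximation of the direction $e$ relative to $z$ — and to make ``essentially independent'' quantitative via the symmetry of information. The geometry driving this is that $p_e z = p_e w$ forces $w-z$ to be (nearly) perpendicular to $e$, while $|w-z| = 2^{-t}$ fixes its length, so knowing $z$ and $w$ both to precision $r$ pins $e$ down to precision $r-t$.

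First I would carry out the one genuinely computational step. Given any rational $q' \in B_{2^{-r}}(z)\cap\Q^2$ and any rational $q \in B_{2^{-r}}(w)\cap\Q^2$, the vector $q-q'$ approximates $w-z$ to within $O(2^{-r})$; since $|w-z| = 2^{-t}$ and $|p_e(w) - p_e(z)| \le 2^{-r}$, the normalized vector $(q-q')/|q-q'|$ then lies within $O(2^{-(r-t)})$ of one of the two unit vectors perpendicular to $z-w$, a single advice bit selecting $e$, while $|q-q'|$ recovers $t$ to within an additive constant. Hence, from any precision-$r$ estimate of $z$ one may run a shortest (suitably conditioned) program for a precision-$r$ estimate of $w$ and then compute from it a precision-$(r-t-O(1))$ estimate of $e$, which after absorbing the constant-precision shift gives
\[
K_{r-t,r}(e \mid z) \le K_r(w \mid z) + O(\log r).
\]
I expect this to be the main obstacle: one has to check that the angular resolution genuinely reaches $r-t$ and does not decay, that $t$ is an \emph{output} reconstructed from the same data (so it costs only $O(\log r)$ to transmit), and that the inequality survives the $\max$ over rational approximations in the definition of conditional complexity.

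It then remains to assemble symmetry-of-information estimates so that the ``unwanted'' term $K_r(z)$ cancels. Applying Lemma~\ref{lem:unichain}(i) to the pair $(w,z)$ in both orders gives
\[
K_r(w) = K_r(w\mid z) + K_r(z) - K_r(z\mid w) + O(\log r).
\]
Because $|z-w| = 2^{-t}$, any precision-$r$ estimate of $w$ is already a precision-$(t-O(1))$ estimate of $z$, so $K_r(z\mid w) \le K_{r,t}(z\mid z) + O(\log r)$, and Lemma~\ref{lem:unichain}(ii) evaluates $K_{r,t}(z\mid z) = K_r(z) - K_t(z) + O(\log r)$. Substituting, the $K_r(z)$ terms cancel, leaving $K_r(w) \ge K_r(w\mid z) + K_t(z) - O(\log r)$; combining this with the computational bound $K_r(w\mid z) \ge K_{r-t,r}(e\mid z) - O(\log r)$ from the previous paragraph yields the lemma. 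All error terms here are either the $O(\log r)$ supplied by the symmetry-of-information lemmas or $O(1)$-precision shifts absorbed into $O(\log r)$, so the remaining bookkeeping, though fiddly, is routine.
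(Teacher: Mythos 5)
Your proof is correct and reconstructs what is essentially the argument in the cited source \cite{LutStu20}: bound $K_{r-t,r}(e\mid z)$ by $K_r(w\mid z)$ via the computational step, then use symmetry of information (Lemma~\ref{lem:unichain}(i) in both orders, plus (ii) to evaluate $K_{r,t}(z\mid z)$) to replace $K_r(w\mid z)$ by $K_r(w)-K_t(z)$. One small slip in the geometry: since $p_e z = p_e w$ forces $z-w$ to be (nearly) perpendicular to $e$, the normalized vector $(q-q')/|q-q'|$ approximates $\pm(w-z)/|w-z|$, which is \emph{parallel} to $z-w$ and hence perpendicular to $e$ --- not, as you wrote, perpendicular to $z-w$; you recover $e$ by a further $90^\circ$ rotation plus the sign bit, which is an $O(1)$-bit computable step and changes nothing in the estimate.
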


We will commonly need to lower the complexity of points at specified positions. The following lemma shows that conditional complexity gives a convenient way to do this.
\begin{lem}[\cite{LutStu20}]\label{lem:oracles}
Let $z\in\R^2$, $\eta \in\Q_+$, and $r\in\N$. Then there is an oracle $D=D(r,z,\eta)$ with the following properties.
\begin{itemize}
\item[\textup{(i)}] For every $t\leq r$,
\[K^D_t(z)=\min\{\eta r,K_t(z)\}+O(\log r)\,.\]
\item[\textup{(ii)}] For every $m,t\in\N$ and $y\in\R^m$,
\[K^{D}_{t,r}(y\mid z)=K_{t,r}(y\mid z)+ O(\log r)\,,\]
and
\[K_t^{z,D}(y)=K_t^z(y)+ O(\log r)\,.\]
\item[\textup{(iii)}] If $B\subseteq\N$ satisfies $K^B_r(z) \geq K_r(z) - O(\log r)$, then \[K_r^{B,D}(z)\geq K_r^D(z) - O(\log r)\,.\]
\item[\textup{(iv)}] For every $t\in\N$, $u\in\R^n, w\in\R^m$
\[K_{r,t}(u\mid w) \leq K^D_{r,t}(u\mid w) + K_r(z) - \eta r + O(\log r)\,.\]
\end{itemize}
In particular, this oracle $D$ encodes $\sigma$, the lexicographically first time-minimizing witness to $K(z\uhr r\mid z\uhr s)$, where $s = \max\{t \leq r \, : \, K_{t-1}(z) \leq \eta r\}$.
\end{lem}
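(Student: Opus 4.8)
The plan is to construct $D$ explicitly and then verify (i)--(iv) by elementary Kolmogorov-complexity estimates. Set $s=\max\{t\le r:K_{t-1}(z)\le\eta r\}$ and let $\sigma$ be the lexicographically first, time-minimizing program with $U(\sigma,z\uhr s)=z\uhr r$; we take $D$ to be a finite string encoding the triple $\langle r,s,\sigma\rangle$, so that $D$ is, up to $O(\log r)$ bits of bookkeeping, just $\sigma$. Two preliminary facts organize everything. First, by the choice of $s$ together with the elementary one-step growth bound $K_s(z)\le K_{s-1}(z)+O(\log r)$, we have $K_t(z)\le\eta r+O(\log r)$ for every $t\le s$ while $K_t(z)\ge K_s(z)>\eta r$ for every $t>s$; hence $\min\{\eta r,K_t(z)\}$ equals $K_t(z)$ up to $O(\log r)$ when $t\le s$ and equals $\eta r$ when $t>s$, and in particular $K_s(z)=\eta r+O(\log r)$. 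Second, two applications of the symmetry of information (plus the routine comparison $|K(z\uhr a)-K_a(z)|=O(\log a)$ and the fact that $z\uhr a$ is computable from $z\uhr b$ when $a\le b$) give $\ell(\sigma)=K(z\uhr r\mid z\uhr s)=K_r(z)-K_s(z)+O(\log r)=K_r(z)-\eta r+O(\log r)$, and, more generally, the telescoping identity $K_r(z)=K_t(z)+K(z\uhr s\mid z\uhr t)+\ell(\sigma)+O(\log r)$ for every $t\le s$, whence $K(z\uhr s\mid z\uhr t)=\eta r-K_t(z)+O(\log r)$.

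For (i), the upper bound is immediate when $t\le s$ (oracles do not raise complexity and $K_t(z)=\min\{\eta r,K_t(z)\}+O(\log r)$ there), and when $t>s$ one runs a shortest program for $z\uhr s$, applies $\sigma$ from the oracle to obtain $z\uhr r$, and truncates, for a total cost of $K_s(z)+O(\log r)=\eta r+O(\log r)$. For the lower bound the crucial case is $t=r$: internalizing $\sigma$ into an optimal $D$-program for a precision-$r$ estimate of $z$ yields $K_r(z)\le K^D_r(z)+\ell(\sigma)+O(\log r)$, so $K^D_r(z)\ge K_r(z)-\ell(\sigma)-O(\log r)=\eta r-O(\log r)$. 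One then transports this downward. For $s\le t\le r$, $z\uhr t$ determines $z\uhr s$, so with $\sigma$ free from the oracle $K^D(z\uhr r\mid z\uhr t)=O(\log r)$, and subadditivity gives $K^D_t(z)\ge K^D_r(z)-O(\log r)\ge\eta r-O(\log r)$. For $t\le s$, subadditivity (with the routine $K$ versus $K_\cdot$ comparisons) gives $K^D_r(z)\le K^D_t(z)+K^D(z\uhr r\mid z\uhr t)+O(\log r)\le K^D_t(z)+K(z\uhr s\mid z\uhr t)+O(\log r)$ (produce $z\uhr s$ from $z\uhr t$ the ordinary way, then apply $\sigma$), and combining with $K^D_r(z)\ge\eta r-O(\log r)$ and $K(z\uhr s\mid z\uhr t)=\eta r-K_t(z)+O(\log r)$ gives $K^D_t(z)\ge K_t(z)-O(\log r)$.

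Items (iii) and (iv) are pure internalization: prepending $\sigma$ to any $(B,D)$- or $D$-relative program lets one drop the $D$-oracle at an additive cost of $\ell(\sigma)+O(\log r)=K_r(z)-\eta r+O(\log r)$, which is exactly the bound asserted in (iv), and which, together with $K^D_r(z)=\eta r+O(\log r)$ from (i) and the hypothesis $K^B_r(z)\ge K_r(z)-O(\log r)$, yields $K^{B,D}_r(z)\ge\eta r-O(\log r)\ge K^D_r(z)-O(\log r)$, i.e.\ (iii). Item (ii) runs the other way and needs $D$ to be cheaply \emph{reconstructible}: concretely, $K(D\mid z)=O(\log r)$ and, more delicately, $K(D\mid q)=O(\log r)$ for every precision-$r$ rational $q$ with $q\in B_{2^{-r}}(z)$. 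This is precisely why $\sigma$ is taken to be the lex-first time-minimizing witness: from $z$ (or from $q$, up to the $O(1)$ truncations it is consistent with) plus $O(\log r)$ bits of advice recording $s$ and $\ell(\sigma)$, one runs all length-$\ell(\sigma)$ programs on $z\uhr s$ in parallel and outputs the lex-first one to halt with output $z\uhr r$ — a search guaranteed to terminate. Granting $K(D\mid z),K(D\mid q)=O(\log r)$, both equalities of (ii) follow by the standard ``absorb the oracle into the program'' argument (for the conditional version one applies it inside the $\max_q\min_p$).

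I expect the main obstacle to be the lower bound in (i) at small precisions $t\le s$: the naive internalization that handles (iii), (iv), and the $t=r$ case of (i) is hopelessly lossy here, since $\ell(\sigma)\approx K_r(z)-\eta r$ is not $O(\log r)$, so one must instead pin $K^D_r(z)$ down exactly and then carry the estimate down through the near-additive decomposition $K_r(z)=K_t(z)+K(z\uhr s\mid z\uhr t)+\ell(\sigma)+O(\log r)$ — which forces one to get all of the symmetry-of-information bookkeeping (and the $K(z\uhr a)$ versus $K_a(z)$ comparisons) exactly right. The secondary delicate point is the claim $K(D\mid q)=O(\log r)$ underlying (ii), which is what motivates the ``time-minimizing, lexicographically first'' stipulation in the definition of $\sigma$.
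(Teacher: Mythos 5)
The paper does not prove this lemma; it cites it from \cite{LutStu20} and records only the construction, namely that $D$ encodes the lex-first time-minimizing witness $\sigma$ to $K(z\uhr r\mid z\uhr s)$. Your reconstruction is correct and follows the same route as the cited source: you build $D$ from $\sigma$, use the symmetry-of-information telescope $K_r(z)=K_t(z)+K(z\uhr s\mid z\uhr t)+\ell(\sigma)+O(\log r)$ together with $\ell(\sigma)=K_r(z)-\eta r+O(\log r)$ to pin $K^D_t(z)$ at every precision (the genuinely delicate part, as you note), dispatch (iii) and (iv) by the blunt internalization that dropping $D$ costs $\ell(\sigma)+O(\log r)$ bits, and handle (ii) by observing that the lex-first time-minimizing stipulation makes $D$ recoverable from any precision-$r$ rational estimate of $z$ with only $O(\log r)$ advice, which is exactly what is needed inside the $\max_q\min_p$ in the definition of conditional complexity.
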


The final lemma in this section is a crucial tool at several points of the argument. Under certain conditions, it lets us lower bound the complexity growth of the $\vert x-y\vert$ by the complexity growth of $y$ on particular intervals.

\begin{lem}\label{lem:pointDistance}
Suppose that $x, y\in\R^2$, $t<r\in\N$, and $\eta, \ve\in\Q_+$ satisfy the following conditions.
\begin{itemize}
\item[\textup{(i)}]$K_r(y)\leq \left(\eta +\frac{\ve}{2}\right)r$.
\item[\textup{(ii)}] For every $w \in B_{2^{-t}}(y)$ such that $\vert x-y\vert = \vert x-w\vert$, \[K_{r}(w)\geq \eta r + \min\{\ve r, r-s -\ve r\}\,,\]
where $s=-\log\vert y-w\vert\leq r$.
\end{itemize}
Then for every oracle set $A\subseteq\N$,
\[K_{r,t}^{A, x}(y \mid y) \leq K^{A,x}_{r,t}( \vert x-y\vert\mid y) + 3\ve r + K(\ve,\eta)+O(\log r)\,.\]
\end{lem}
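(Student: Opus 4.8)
The plan is to reconstruct $y$ (to precision $t$) from the value $\vert x-y\vert$ (to precision $r$), using $y$ itself as a side oracle, together with a short advice string. The geometric picture is that the set of points at distance $\vert x-y\vert$ from $x$ is a circle through $y$; knowing $\vert x-y\vert$ to precision $r$ pins $y$ down to a thin arc of that circle, and the only ambiguity is \emph{along} the circle, i.e.\ essentially one-dimensional. So morally $K_{r,t}^{A,x}(y\mid y)$ should be controlled by $K_{r,t}^{A,x}(\vert x-y\vert\mid y)$ up to lower-order terms, and the job is to make this precise while paying only $3\ve r + K(\ve,\eta)+O(\log r)$.

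First I would set up the decoding procedure. Given $\vert x-y\vert$ to precision $r$ (via a rational approximation $\rho$), the point $x$ (as oracle), and $y$ to precision $t$ (as the conditioning object, which we use only to locate the correct arc), consider the circle $C$ of radius $\rho$ about $x$. The candidates for $y$ at precision $r$ are the points of $C$ within $2^{-t}$ of the given coarse approximation of $y$; among these, I want to show that specifying $K_r(y)$-many bits — or rather, only $O(\ve r)$ extra bits beyond what $\vert x-y\vert$ already costs — suffices to identify $y\uhr r$. The mechanism is a counting/covering argument: the arc of $C$ near $y$ of radius $2^{-t}$ has, at precision $r$, about $2^{r-t}$ rational points, but hypothesis (i) says $K_r(y)\le(\eta+\ve/2)r$ is small, so $y$ is a low-complexity point and we can enumerate a short list of low-complexity candidates on that arc; hypothesis (ii) is what prevents too many of them from collapsing to the same distance value. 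Concretely, (ii) says that any $w$ on the circle (so $\vert x-w\vert=\vert x-y\vert$) that is $2^{-s}$-close to $y$ has complexity at least $\eta r + \min\{\ve r, r-s-\ve r\}$; since the honest candidates we must distinguish all have complexity close to $K_r(y)\le(\eta+\ve/2)r$, this forces $\min\{\ve r, r-s-\ve r\}\le \tfrac{\ve}{2}r + O(\log r)$, hence either $s\le t$ won't happen or $s\ge r - \tfrac{3\ve}{2}r$ — i.e.\ any genuine competitor is already within $2^{-(r-O(\ve r))}$ of $y$. That means the "extra" information needed to go from the arc to $y\uhr r$ amounts to pinning roughly $\tfrac{3}{2}\ve r$ more bits, which we supply as advice.

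Then I would run the symmetry-of-information / conditional-complexity bookkeeping. Feed the universal machine the oracle $(A,x)$, the conditioning string (a precision-$t$ approximation of $y$), a witness to $K_{r,t}^{A,x}(\vert x-y\vert\mid y)$, the advice string of length $\le \tfrac{3}{2}\ve r + O(1)$ identifying $y$ within the candidate arc, and the constant-size descriptions of $\ve,\eta$ (costing $K(\ve,\eta)$) plus $O(\log r)$ bits to encode the precision parameters $r,t$. The program: compute $\rho$, form the circle, use (i) to enumerate low-complexity candidates, use the advice to select $y\uhr t$. Adding up the lengths gives $K_{r,t}^{A,x}(y\mid y)\le K_{r,t}^{A,x}(\vert x-y\vert\mid y) + \tfrac{3}{2}\ve r + K(\ve,\eta) + O(\log r)$, and I would be a little wasteful to land comfortably at the stated $3\ve r$.

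The main obstacle I expect is the counting step: making rigorous the claim that hypothesis (ii), a statement about \emph{all} $w$ on the circle near $y$, controls the number of distinct precision-$r$ rationals on the arc that are consistent with the data and could be confused with $y$. One has to be careful that a near-minimal-length program for $\vert x-y\vert$ together with the oracle does not secretly encode extra geometric information that changes the count, and that the curvature of the circle (not a line) does not spoil the "one-dimensional ambiguity" heuristic — this is exactly where the $r-s-\ve r$ term in (ii), rather than a cleaner bound, is doing work, and I would want to track the constant in front of $\ve r$ through this argument to confirm $3\ve r$ is enough. The rest is standard Kolmogorov-complexity plumbing of the kind already used for Lemmas~\ref{lem:lowerBoundOtherPoints} and~\ref{lem:oracles}.
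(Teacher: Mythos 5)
The paper does not give a proof of Lemma~\ref{lem:pointDistance}; it is recalled in the preliminaries as a lemma from Lutz--Stull, so the comparison is against the standard argument in that line of work. Your approach is the right one and would, if carried out carefully, give exactly the stated bound. The core mechanism is correct: run a witness for $K^{A,x}_{r,t}(\vert x-y\vert\mid y)$ to obtain $\rho$; given the precision-$t$ conditional $q$, dovetail over programs of length at most $(\eta+\ve/2)r + O(\log r)$ to enumerate rationals $p$ with $\vert p-q\vert\lesssim 2^{-t}$ and $\vert\,\vert x-p\vert - \rho\,\vert\lesssim 2^{-r}$, using (i) to guarantee a genuine $y$-approximation is in the enumeration; project each $p$ to the nearby circle point $w$ and apply (ii) to conclude $\min\{\ve r, r-s-\ve r\}\leq \tfrac{\ve}{2}r + O(\log r)$, hence $s\geq r-\tfrac{3\ve}{2}r - O(\log r)$, so all candidates cluster within $2^{-(r-\tfrac{3\ve}{2}r - O(\log r))}$ of $y$.

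Two points deserve tightening. First, the role of the advice: the argument should \emph{not} use the advice to select the correct candidate from the list (the number of low-complexity candidates on the arc is not obviously small, so indexing them would be uncontrolled). Rather, the clustering means outputting \emph{any} enumerated candidate already gives $y$ to precision $r' = r - \tfrac{3\ve}{2}r - O(\log r)$, and the advice is then the $\leq 2(r-r') + O(\log r)$ extra bits needed to refine from precision $r'$ to precision $r$. Your phrase ``use the advice to select $y\uhr t$'' (presumably $y\uhr r$) should be read as this refinement step, not a selection step. Second, your count of $\tfrac{3}{2}\ve r$ omits the factor of $2$ from the complexity growth rate in $\R^2$ (the inequality $K_{r}(y)\leq K_{r'}(y)+2(r-r')+o(r-r')$); the correct refinement cost is $2\cdot\tfrac{3}{2}\ve r + O(\log r) = 3\ve r + O(\log r)$, so the $3\ve r$ in the conclusion is tight, not a comfortable overestimate. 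With these corrections, and standard handling of the projection from the near-circle candidate to the exact circle point (costing $O(\log r)$ in complexity), the proof closes.
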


\bigskip

\section{Projection theorem}\label{sec:projections}

The main goal of this section is to prove the following projection theorem:

\begin{thm}\label{thm:modifiedProjectionTheorem}
Let  $x \in \R^2$, $e \in \mathcal{S}^1$, $\ve\in \Q^+$, $C\in\N$, $A\subseteq\mathbb{N}$, and $t, r \in \N$. Suppose that $r$ is sufficiently large, and that the following hold.
\begin{enumerate}
\item[\textup{(P1)}] $1 < d \leq \dim^A(x) \leq \Dim^A(x) \leq  D$.
\item[\textup{(P2)}] $ t \geq  \frac{d(2-D)}{2}r$.
\item[\textup{(P3)}] $K^{x, A}_s(e) \geq s - C\log s$, for all $s \leq t$. 
\end{enumerate}
Then 
\begin{equation*}
K^A_r(x \,|\, p_e x, e) \leq \max\{\frac{D-1}{D}(dr - t) + K^A_r(x) - dr, K^A_r(x) - r\} + \ve r.
\end{equation*}
\end{thm}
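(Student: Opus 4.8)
\emph{Step 1 --- reduce to an effective projection inequality.} The first move is purely formal. Relativizing the symmetry of information (Lemma~\ref{lem:unichain}(i)) to an oracle encoding $A$ together with the point $e$, and using that $p_e x$ is computable from $(x,e)$ and that adjoining oracles or conditional information never increases complexity, one gets
\begin{equation*}
K^A_r(x \mid p_e x, e) \;\le\; K^A_r(x) - K^A_r(p_e x \mid e) + O(\log r).
\end{equation*}
Since $\frac{D-1}{D}(dr - t) + K^A_r(x) - dr = K^A_r(x) - \frac{dr + (D-1)t}{D} = K^A_r(x) - \bigl(t + \frac{dr-t}{D}\bigr)$ and $K^A_r(x)-r$ is the other branch, the theorem follows once we prove the ``partial-randomness'' effective projection bound
\begin{equation*}
K^A_r(p_e x \mid e) \;\ge\; \min\Bigl\{\, t + \tfrac{dr-t}{D}\,,\ r \,\Bigr\} - \ve r + O(\log r);
\end{equation*}
call this the target bound. (The $K^A_r(x)-r$ branch of the maximum is exactly the regime in which the target bound delivers the trivial ceiling $r$, i.e.\ where $p_e x$ carries essentially full complexity.)

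\emph{Step 2 --- the scheme for the target bound.} Suppose $K^A_r(p_e x\mid e)$ is smaller than the target. The plan is to manufacture a point $w$ on the fiber $p_e^{-1}(p_e x)$, lying at a carefully chosen distance $2^{-\rho}$ from $x$, that admits a description shorter than Lemma~\ref{lem:lowerBoundOtherPoints} allows: that lemma together with (P3) forces $K^A_r(w)\ge K^A_\rho(x) + \min\{r-\rho,\,t\} - O(\log r)$ for \emph{every} such fiber point, whereas a short program for $p_e x$ (given $e$), fed an approximation of $x$ at precision $\rho$, reconstructs the fiber line and pins $w$ down. To make the cancellations clean I would first pass to the oracle $D=D(r,x,\eta)$ of Lemma~\ref{lem:oracles} and choose $\eta$ so that $K^A_s(x)$ is capped exactly where needed; parts (i)--(iv) of that lemma normalize the complexity profile of $x$ without destroying (P3) relative to $D$, so the ``savings'' from the description of $p_e x$ transfer to $x$ at precision $r$.

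\emph{Step 3 --- the partition of $[0,r]$.} The new feature compared with the Lutz--Stull effective Marstrand projection theorem is that (P3) gives randomness of $e$ only up to precision $t$, so the reconstruction must be carried out scale by scale. I would partition $[0,r]$ into finitely many consecutive blocks according to the behaviour of $s\mapsto K^A_s(x)$, which by (P1) is confined (for $r$ large) to the band between the lines $y=ds$ and $y=Ds$ and has slope in $[0,2]$. The blocks come in two kinds: those on which the projection complexity is charged to grow at rate $\approx 1$ (governed by (P3); their total length is at least $\approx t$), and those on which the only available control is the packing ceiling $\Dim^A(x)\le D$, contributing growth at rate $\approx \tfrac1D$ relative to the Hausdorff ``budget'' $dr$ that $K^A_s(x)$ must reach. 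Summing the two contributions gives precisely $t+\tfrac{dr-t}{D}$, truncated at $r$; hypothesis (P2), $t\ge\tfrac{d(2-D)}{2}r$, is exactly what guarantees the rate-$1$ blocks are numerous enough for this sum to be the claimed quantity.

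\emph{Main obstacle.} Step 1 and the single-scale estimates are routine; the real work is Step 3 --- arranging the partition so that the per-block contributions add up to the exact constant $\tfrac{D-1}{D}$. This requires simultaneously tracking that on the ``packing-ceiling'' blocks $K^A_s(x)$ grows at rate at most $D$ while over all of $[0,r]$ it grows by at least $dr$; choosing (and, where necessary, re-choosing per block) the oracle parameter $\eta$ so that the telescoping cancellations in the cost of $w$ actually occur; and keeping the accumulated $\ve$- and $O(\log r)$-errors controlled across the $O(1)$ blocks. Making the block structure interlock correctly with the threshold $t$, so that neither type of block is wasted, is the delicate point.
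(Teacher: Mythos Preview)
Your Step~1 reduction is correct. But the paper's proof does not proceed by lower-bounding $K^A_r(p_e x\mid e)$ via a fiber-point contradiction; it works directly with $K^A_r(x\mid p_e x,e)$ using the yellow/teal machinery of Proposition~\ref{prop:projectionYellowTeal} and Lemma~\ref{lem:boundGoodPartitionProjection} (on a teal piece the conditional cost is $\le\ve r$, on a yellow piece it is $\le K^A_{b,a}(x\mid x)-(b-a)$), summed over an admissible partition with pieces of length $\le t$ so that (P3) applies. The real argument is then a case split on the number $S$ of red-green-blue sequences in the canonical partition $\hat{\mathcal P}(x,r,t)$: $S=0$ forces an essentially all-yellow partition and yields the $K^A_r(x)-r$ branch; $S=1$ gives one green block of length $\ge t$, and a balancing argument using $K^A_{r_1}(x)\le Dr_1$ on the yellow prefix produces the $\frac{D-1}{D}(dr-t)$ term; $S\ge2$ gives total green length $\ge 2t$, hence $K^A_r(x\mid p_e x,e)\le\min\{B,\,K^A_r(x)-2t-B\}$ where $B$ is the total bad length, and (P2) is exactly the inequality that makes this minimum at most $\frac{D-1}{D}(dr-t)$.

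Your Step~3 does not contain this structure, and as written it has gaps. The assertion that ``rate-$1$ blocks have total length at least $\approx t$'' is unjustified---when $S=0$ there are no such blocks at all, which is precisely the $K^A_r(x)-r$ branch, and you give no mechanism producing them otherwise. The constant $D$ does not enter as a ``rate-$1/D$'' growth on a second species of block; it enters through the bound $K^A_{r_1}(x)\le Dr_1$ on the all-yellow prefix in the $S=1$ analysis and through the $2t$-versus-$B$ balancing when $S\ge2$. And (P2) is not about making rate-$1$ blocks ``numerous enough''---it is the purely algebraic inequality $2t\ge d(2-D)r$ that closes the $S\ge2$ case. Without the $S$-case split and the length-$\ge t$ property of green blocks in red-green-blue sequences (which is where the constraint that partition pieces have length $\le t$ actually bites), the sketch in Step~3 cannot be completed to produce the constant $\frac{D-1}{D}$.
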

This projection has somewhat more restrictive hypotheses than the projection theorem of \cite{Stull22c}. Namely, depending on $D$, we may have a rather large lower bound on $t$. However, in the next section when we will need to apply this projection theorem, if $t$ is smaller than the above, it is easy to deduce the desired result without reference to this theorem. We will begin by introducing some definitions and tools from \cite{Stull22c} which will be of use in proving the modified projection theorem. 

\bigskip

\subsection{Projection preliminaries}
We need to consider $K^A_s(x)$ as a function of $s$. For convenience, we define $f:\R_+ \rightarrow \R_+$ to be the piece-wise linear function which agrees with $K^A_s(x)$ on the integers such that 
\begin{center}
$f(a) = f(\lfloor a\rfloor) + (a -\lfloor a\rfloor)(f(\lceil a \rceil) - f(\lfloor a\rfloor)) $,
\end{center}
for any non-integer $a$. Note that $f$ is non-decreasing since $K^A_r(x)$ is, and, for every $a < b\in\N$,
\begin{center}
$f(b) - f(a) \leq 2(b-a) + O(\log \lceil b \rceil)$. 
\end{center}
That is, the maximal growth rate of $f$ on large intervals is about 2.

There are several specials kinds of intervals on which we can bound the complexity of projections.

\begin{itemize}
\item An interval $[a,b]$ is called \textbf{\textit{teal}} if $f(b) - f(c) \leq b-c$ for every $a\leq c\leq b$.

\item An interval $[a, b]$ is called \textbf{\textit{yellow}} if $f(c) - f(a) \geq c - a$ for every $a\leq c \leq b$. 
\end{itemize}

More concretely, these intervals are useful due to the following proposition, which is Corollary 16 in \cite{Stull22c}:

\begin{prop}\label{prop:projectionYellowTeal}
Let $A\subseteq \N$, $x\in\R^2, e\in\mathcal{S}^1, \ve\in\Q_+$, $C\in\N$ and $t<r\in\R_+$. Suppose that $r$ is sufficiently large and $K^{A,x}_s(e) \geq s - C\log r$, for all $s\leq r-t$. Then the following hold.
\begin{enumerate}
\item If $[t,r]$ is yellow, 
\begin{equation*}
K^A_{r,r,r,t}(x\mid p_e x, e,x) \leq K^A_{r,t}(x\mid x) - (r-t) + \ve r .
\end{equation*}
\item If $[t,r]$ is teal, 
\begin{equation*}
K^A_{r,r,r,t}(x\mid p_e x, e,x) \leq \ve r.
\end{equation*}
\end{enumerate}
\end{prop}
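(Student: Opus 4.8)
The plan is to prove both parts by a single \emph{recovery‑and‑counting} argument. To bound $K^A_{r,r,r,t}(x\mid p_e x,e,x)$ I would exhibit a short program that, given precision‑$r$ approximations to $p_e x$ and $e$, a precision‑$t$ approximation to $x$, the oracle $A$, and $O(\log r)$ bits of advice (essentially the value $K^A_r(x)$), outputs a precision‑$r$ approximation to $x$. The program will just enumerate a list $\mathcal C$ of candidate points and print the index of $x$; hence $K^A_{r,r,r,t}(x\mid p_e x,e,x)\le\log_2(\#\mathcal C)+O(\log r)$, and everything reduces to bounding $\#\mathcal C$. Here $\mathcal C$ consists of the precision‑$r$ rationals $w$ lying within $2^{-t}$ of the given estimate of $x$, on the fiber $L=p_e^{-1}(p_e x)$ up to precision $r$, and with $K^A_r(w)\le K^A_r(x)+O(\log r)$; this set is computably enumerable from the conditioning data plus the advice, and $x$ itself lies in it.

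Next I would put the complexity filter to work. If $w\in\mathcal C$ and $w\ne x$ at precision $r$, set $\tau=-\log|x-w|\in[t,r]$. Since $p_e w=p_e x$ up to precision $r$, Lemma~\ref{lem:lowerBoundOtherPoints} relativized to $A$, applied with $z=x$, gives $K^A_r(w)\ge K^A_\tau(x)+K^A_{r-\tau,r}(e\mid x)+O(\log r)$. As $\tau\ge t$ we have $r-\tau\le r-t$, so the hypothesis of the proposition ($K^{A,x}_s(e)\ge s-C\log r$ for $s\le r-t$) together with \eqref{eq:OraclesDontIncrease} yields $K^A_{r-\tau,r}(e\mid x)\ge(r-\tau)-O(\log r)$. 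Writing $f(s)=K^A_s(x)$ for the piecewise‑linear interpolant as in Section~\ref{sec:projections}, every $w\in\mathcal C$ at scale $\tau$ therefore satisfies $f(\tau)+(r-\tau)-O(\log r)\le K^A_r(w)\le f(r)+O(\log r)$. In particular $\mathcal C$ can meet scale $\tau$ only when $f(r)-f(\tau)\ge(r-\tau)-O(\log r)$, i.e.\ when $f$ grows at average rate essentially $1$ on $[\tau,r]$.

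The core is the per‑scale count. Organizing $\mathcal C$ by dyadic scale $\tau$, one shows that the number of candidates at scale $\tau$ is at most $2^{\,f(r)-f(\tau)-(r-\tau)+O(\log r)}=2^{\,h(r)-h(\tau)+O(\log r)}$, where $h(s)=f(s)-s$. (Morally: the candidates at scale $\tau$ are points that agree with $x$ through precision $\tau$, whose ``new'' fiber‑coordinate bits in $(\tau,r]$ must be so compressible that the total complexity stays $\le f(r)$; the slack available for those bits, beyond the forced $f(\tau)+(r-\tau)$ from Lemma~\ref{lem:lowerBoundOtherPoints}, is only $h(r)-h(\tau)$.) When $[t,r]$ is \textit{yellow}, $f$ grows at rate $\ge 1$ there, so $h$ is nondecreasing and $h(r)-h(\tau)\le h(r)-h(t)=\big(f(r)-f(t)\big)-(r-t)$ for all $\tau\ge t$; summing over the $O(r)$ scales and rewriting $f(r)-f(t)=K^A_r(x)-K^A_t(x)=K^A_{r,t}(x\mid x)+O(\log r)$ via Lemma~\ref{lem:unichain}(ii) gives part (1). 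When $[t,r]$ is \textit{teal}, $f$ grows at rate $\le 1$ there, so $h$ is nonincreasing, $h(r)-h(\tau)\le 0$, each scale contributes $O(1)$ candidates, $\#\mathcal C=O(r)$, and $\log_2\#\mathcal C=O(\log r)\le\ve r$ for $r$ large, which is part (2).

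The step I expect to be the main obstacle is exactly this per‑scale count. The naive geometric count gives $2^{r-\tau}$ points per scale, which only sums to $2^{r-t}$ and is far too weak; getting down to $2^{h(r)-h(\tau)}$ requires showing that candidate fiber points are \emph{atypically compressible} relative to the (necessarily high‑complexity) line $L=p_e^{-1}(p_ex)$ — which is where the $e$‑genericity hypothesis must really be used, combined with an iteration over scales (so that one recovers $x$ through a chain of precisions rather than in one shot), and where the case $K^A_r(x)>r$ (so projection to $e$ can ``fill'' only $r$ of the available bits) must be dealt with. The remaining bookkeeping — verifying that $\mathcal C$ is genuinely enumerable from the stated data and advice, tracking the $O(\log r)$ error terms through the symmetry‑of‑information steps, and absorbing them into $\ve r$ using ``$r$ sufficiently large'' — is routine. (This is the content of Corollary~16 of \cite{Stull22c} and its supporting lemmas, to which one may defer for the full details.)
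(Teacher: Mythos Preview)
The paper does not prove this proposition at all; it simply imports it as ``Corollary 16 in \cite{Stull22c}.'' So there is nothing to compare against in the paper itself, only the cited source.

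Your sketch is in the right family of arguments (enumerate fiber candidates, filter by complexity, output an index), and you correctly pinpoint Lemma~\ref{lem:lowerBoundOtherPoints} and the role of the yellow/teal hypothesis via $h(s)=f(s)-s$. But the step you flag as the obstacle is also where your ``moral'' justification actually fails: Lemma~\ref{lem:lowerBoundOtherPoints} gives a \emph{lower} bound $K^A_r(w)\ge f(\tau)+(r-\tau)-O(\log r)$ on each candidate's complexity, and lower bounds on complexity do not count points. Knowing that every candidate at scale $\tau$ has complexity in the window $[f(\tau)+(r-\tau),\,f(r)]$ says nothing about how many such $w$ there are; the cardinality of a set of strings is controlled only by upper bounds on their complexity, and the width $h(r)-h(\tau)$ of that window is not, by itself, a count.

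The mechanism in \cite{Stull22c} is different and avoids the per-scale count entirely. One uses the oracle-lowering Lemma~\ref{lem:oracles} to construct $D=D(r,x,\eta)$ with $\eta r$ slightly below $K^A_r(x)$, so that $K^{A,D}_r(x)\approx\eta r$. Relative to $(A,D)$, Lemma~\ref{lem:lowerBoundOtherPoints} still applies (the randomness hypothesis on $e$ survives by Lemma~\ref{lem:oracles}(ii)), and the yellow/teal hypothesis on $[t,r]$ is exactly what guarantees that $K^{A,D}_\tau(x)+(r-\tau)$ stays bounded \emph{above} $\eta r$ by a definite margin for every $\tau\in[t,r-\ve r]$. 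Hence every competing candidate has $K^{A,D}_r(w)$ strictly larger than $K^{A,D}_r(x)$: $x$ is the \emph{unique} low-complexity point on the fiber near $x\!\upharpoonright\! t$, recoverable by a single search rather than by summing over scales. The ``price'' of the oracle $D$ is then paid via Lemma~\ref{lem:oracles}(iv), which in the yellow case produces the $K^A_{r,t}(x\mid x)-(r-t)$ term, and in the teal case is absorbed into $\ve r$. Since you ultimately defer to the same citation, you land in the right place, but the counting heuristic you give is not the engine that drives the actual proof.
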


We denote the set of teal intervals by $T$ and the set of yellow intervals by $Y$. 

Supposing that a partition of $[1, r]$ consists of only yellow and teal intervals, and has essentially a constant number of terms, we could repeatedly apply symmetry of information to \ref{prop:projectionYellowTeal} and deduce a useful bound for $K^A_r(x \,|\, p_e x, e)$. We make the notion of such an ``admissible'' partition more precise: a partition $\mathcal{P}=\{[a_i, a_{i+1}]\}_{i=0}^k$ of closed intervals with disjoint interiors is \textbf{\textit{$(M,r,t)$-admissible}} if $[1, r] = \cup_i [a_i, a_{i+1}]$, and it satisfies the following conditions.
\begin{itemize}
\item[\textup{(A1)}] $k \leq M$,
\item[\textup{(A2)}] $[a_i, a_{i+1}]$ is either yellow or teal,
\item[\textup{(A3)}] $a_{i+1} \leq a_i + t$.
\end{itemize}

We can repeatedly apply the symmetry of information to write the complexity $K^A_{r}(x \mid p_e x, e)$  as a sum of complexities over a partition of $[1, r]$, allowing us to apply \ref{prop:projectionYellowTeal}. This idea is encapsulated by the following result in \cite{Stull22c}.

\begin{lem}\label{lem:boundGoodPartitionProjection}
Suppose that $x \in \R^2$, $e \in \mathcal{S}^1$, $\ve\in \Q^+$, $C\in\N$, $t, r \in \N$ satisfy (P1)-(P3). If $\mathcal{P} = \{[a_i, a_{i+1}]\}_{i=0}^k$ is an  $(3C,r,t)$-admissible partition, and $r$ is sufficiently large, then
\begin{align*}
K^A_{r}(x \mid p_e x, e) &\leq \ve r + \sum\limits_{i\in \textbf{Bad}} K^A_{a_{i+1}, a_{i}}(x \mid x) - (a_{i+1} - a_i),
\end{align*}
where
\begin{center}
\textbf{Bad} $=\{i\leq k\mid [a_i, a_{i+1}] \notin T\}$.
\end{center}
\end{lem}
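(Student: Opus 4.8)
The plan is to iterate Proposition~\ref{prop:projectionYellowTeal} across the partition $\mathcal{P}$, peeling off one interval at a time from the top precision $r$ down to $1$, and to use the chain-rule form of symmetry of information (Lemma~\ref{lem:unichain}(ii)) to glue the per-interval estimates together. First I would set up the telescoping identity: writing $a_0 = 1 < a_1 < \dots < a_{k+1} = r$, Lemma~\ref{lem:unichain}(ii) (applied with the oracle $A$ and with $p_e x, e$ adjoined as conditioning data, which only changes the error terms) lets me express
\[
K^A_r(x \mid p_e x, e) \;=\; \sum_{i=0}^{k} \Big( K^A_{a_{i+1}, a_i}(x \mid x, p_e x, e) \Big) \;+\; O(k \log r),
\]
after noting that $K^A_{a_0}(x\mid p_e x,e) = K^A_1(x\mid p_e x,e) = O(1)$ and that each successive chaining step introduces an $O(\log r)$ error; since $k \le 3C$ by (A1), the total accumulated error is $O(\log r)$, absorbable into $\ve r$ for $r$ large. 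The content of the lemma is then to bound each summand $K^A_{a_{i+1}, a_i}(x \mid x, p_e x, e)$ separately.

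The key step is the case split on each interval $[a_i, a_{i+1}]$ according to (A2). For this I need to check that the hypothesis of Proposition~\ref{prop:projectionYellowTeal} holds with the roles $t \mapsto a_i$, $r \mapsto a_{i+1}$: namely that $K^{A,x}_s(e) \ge s - C\log(a_{i+1})$ for all $s \le a_{i+1} - a_i$. This follows from (P3) together with (A3), since (A3) gives $a_{i+1} - a_i \le t$, and (P3) gives the bound $K^{A,x}_s(e)\ge s - C\log s \ge s - C\log(a_{i+1})$ for all $s\le t$; so the hypothesis is met on every interval of the partition simultaneously with a uniform constant $C$. If $[a_i, a_{i+1}] \in T$ (teal), Proposition~\ref{prop:projectionYellowTeal}(2) gives $K^A_{a_{i+1}, a_{i}}(x\mid x, p_e x, e) \le \ve' a_{i+1} \le \ve' r$ for any small $\ve' > 0$; choosing $\ve'$ on the order of $\ve/(3C)$ makes the sum of these teal contributions at most $\ve r$ in total. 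If $[a_i, a_{i+1}]$ is yellow (and not teal, so $i \in \textbf{Bad}$), Proposition~\ref{prop:projectionYellowTeal}(1) gives $K^A_{a_{i+1}, a_{i}}(x\mid x, p_e x, e) \le K^A_{a_{i+1}, a_i}(x \mid x) - (a_{i+1} - a_i) + \ve' r$. Summing over all $i$, collecting the teal terms into a single $\ve r$, collecting the $O(k\log r)$ and $k\ve' r$ errors into another $\ve r$, and keeping only the yellow/bad terms explicitly yields exactly the claimed inequality.

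One technical point I would be careful about: Proposition~\ref{prop:projectionYellowTeal} is stated with the four-fold conditional complexity $K^A_{r,r,r,t}(x \mid p_e x, e, x)$, so when I chain I must make sure the conditioning precisions line up — in the telescoping I want $K^A_{a_{i+1},a_i}(x\mid x,p_ex,e)$, and I should invoke the convention (stated in the preliminaries) that extra conditioning data at full precision, such as $p_e x$ and $e$, can be attached throughout at the cost of $O(\log r)$ per step, and that $\dim^A$, $\Dim^A$ are unaffected. I expect the main obstacle to be purely bookkeeping: verifying that all the $O(\log r)$ error terms from the $k \le 3C$ applications of symmetry of information, plus the $k$ applications of Proposition~\ref{prop:projectionYellowTeal} each with its own $\ve'$-loss, genuinely sum to something $\le \ve r$ for $r$ sufficiently large — this is where the bound $k \le 3C$ in (A1) is essential and where "$r$ sufficiently large'' is used. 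Hypotheses (P1)–(P2) do not enter the proof of this lemma directly; they are inherited because this lemma will be applied only in situations where they hold, and they are needed to guarantee admissible partitions exist, which is the subject of the subsequent constructions rather than of this lemma.
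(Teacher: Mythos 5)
Your proposal is correct and matches the approach the paper indicates: Lemma~\ref{lem:boundGoodPartitionProjection} is cited from \cite{Stull22c}, and the surrounding prose explicitly describes the proof strategy as repeatedly applying symmetry of information to decompose $K^A_r(x\mid p_e x,e)$ over the partition and then invoking Proposition~\ref{prop:projectionYellowTeal} interval by interval, with the bound $k\le 3C$ controlling the accumulated $O(\log r)$ and $\ve'$-losses --- exactly your telescoping argument. The only detail you elide (and flag honestly) is that a constant-length initial segment of $[1,r]$ where Proposition~\ref{prop:projectionYellowTeal}'s ``$r$ sufficiently large'' hypothesis fails contributes only $O(1)$ and is absorbable into $\ve r$, which is how the analogous step is handled elsewhere in Section~3.
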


Note that admissible partitions of specified intervals always exist, as per the following lemma:

\begin{lem}\label{lem:goodPartitionProjection}
Let $x\in\R^2$, $r, C\in\N$ and $\frac{r}{C}\leq t < r$. For any $0\leq a < b \leq r$, there is an $(3C,r,t)$-admissible partition of $[a,b]$.
\end{lem}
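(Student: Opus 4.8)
The plan is to build the partition greedily from left to right, always taking the longest possible ``good'' interval starting at the current left endpoint, and then to argue that the number of pieces produced is bounded by $3C$. First I would fix the current left endpoint $c$ (initially $c = a$) and consider the two candidate intervals: the maximal teal interval $[c, b_T]$ with $b_T \le \min\{c+t, b\}$, and the maximal yellow interval $[c, b_Y]$ with $b_Y \le \min\{c+t,b\}$. The key structural observation is that at least one of these two intervals is nondegenerate, i.e. has positive length. Indeed, consider the behavior of $f$ just to the right of $c$: since $f$ is piecewise linear, on a small interval $[c, c+\delta]$ the function $f$ has a well-defined right-hand slope. If that slope is $\ge 1$, then $[c,c+\delta]$ is yellow for $\delta$ small; if the slope is $< 1$, then $[c, c+\delta]$ is teal for $\delta$ small. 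So we can always advance $c$. I would then take whichever of $b_T, b_Y$ is larger as the new left endpoint, append the corresponding interval to the partition, and repeat until the right endpoint reaches $b$. Condition (A3), $a_{i+1} \le a_i + t$, is automatic from the cap $\min\{c+t, b\}$.

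The heart of the argument is bounding the number $k$ of intervals by $3C$ (strictly, the lemma wants $3C$ pieces; I would actually aim to show $k \le 2 \lceil r/t \rceil + 1$ or similar and then use $t \ge r/C$). The mechanism is that the greedy choice cannot produce two consecutive intervals of the same color unless the latter is forced to terminate by the length cap $c + t$. Concretely: suppose $[a_{i-1}, a_i]$ and $[a_i, a_{i+1}]$ are both yellow. Then $f(c) - f(a_{i-1}) \ge c - a_{i-1}$ for all $c$ in the first interval, and in particular the right-hand slope of $f$ at $a_i$ is still $\ge 1$ (otherwise the first yellow interval could not have been maximal in the greedy step — wait, actually maximality of a yellow interval means it stopped either at the cap or because $f$ dipped below the line $c - a_i$ locally). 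The clean way is: a maximal yellow interval $[a_{i-1}, a_i]$ that did \emph{not} stop at the length cap must satisfy that the right-slope of $f$ at $a_i$ is $<1$, hence $[a_i, a_{i+1}]$ is teal; symmetrically a maximal teal interval not stopping at the cap forces the next to be yellow. Therefore colors alternate except possibly across a ``cap stop,'' and there can be at most $\lceil (b-a)/t \rceil \le \lceil r/t\rceil \le C$ cap stops. This gives $k \le 2C + 1 \le 3C$ for $r$ (hence $C$) at least $1$, which is exactly (A1).

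I would then collect the remaining bookkeeping: verifying (A2) holds by construction (each interval is yellow or teal), verifying the intervals have disjoint interiors and cover $[a,b]$ (immediate from the greedy construction terminating), and checking the edge case where $a = b$ or where the process could in principle stall. The anticipated main obstacle is making the ``maximal good interval that does not hit the cap forces a color switch'' claim fully rigorous at the level of the piecewise-linear function $f$ — in particular handling the breakpoints of $f$ carefully, since ``the right-hand slope at $a_i$'' is the relevant quantity and one must rule out pathological oscillation of $f$ near $a_i$ (which cannot happen since $f$ is piecewise linear with integer breakpoints, so it has only finitely many slope changes on $[1,r]$). A secondary subtlety is that I am slightly abusing ``maximal'': I should define the greedy step as choosing $b_Y$ (resp.\ $b_T$) to be the supremum of $b'$ such that $[c,b']$ is yellow (resp.\ teal) and $b' \le \min\{c+t,b\}$, and note this supremum is attained because the color conditions are closed. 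Once these points are pinned down, (A1)--(A3) all follow, completing the proof.
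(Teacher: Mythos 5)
The greedy scheme is a workable line of attack, but as written the counting step has a real gap. Your alternation observation --- that a maximal non-capped interval forces a colour switch at the next step --- is correct, but by itself it does not bound $k$: nothing you say excludes a long run of short non-capped intervals alternating yellow, teal, yellow, teal, $\ldots$, and such a run would push $k$ past $3C$. The fact that closes the gap is that the greedy, taking the \emph{longer} of $b_Y$ and $b_T$, never actually chooses a non-capped yellow. To see this, put $g(s) = f(s) - s$, so that $[c,b']$ is yellow iff $g(s) \geq g(c)$ on $[c,b']$ and teal iff $g(b') \leq \min_{[c,b']} g$. If the maximal yellow $[c,b_Y]$ has $b_Y < \min\{c+t,b\}$, then continuity forces $g(b_Y)=g(c)$ and $g$ to dip strictly below $g(c)$ just past $b_Y$; hence $[c,b_Y+\delta]$ is teal for all small $\delta>0$, so $b_T > b_Y$ and the greedy takes the teal. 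Consequently every non-capped interval the greedy produces is teal, and after a non-capped teal $[c,b_T]$ maximality forces $g > g(b_T)$ on $(b_T,\min\{c+t,b\}]$, so the next (yellow) interval reaches at least $\min\{c+t,b\}$. Each (optional teal, yellow) block therefore advances by at least $t$, giving at most $2\lceil (b-a)/t\rceil \leq 2C$ pieces, well within the allowed $3C+1$. Without this extra observation the count in your proposal is unjustified.

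There is also a cleaner route that avoids the greedy bookkeeping entirely: split $[a,b]$ into $\lceil (b-a)/t\rceil \leq C$ subintervals of length at most $t$, and on each $[c,c']$ let $b^*$ be the largest point at which $g(s)=f(s)-s$ attains its minimum over $[c,c']$. Then $[c,b^*]$ is teal (its right endpoint realizes the running minimum of $g$) and $[b^*,c']$ is yellow (since $g \geq g(b^*)$ on $[b^*,c']$), so each subinterval splits into exactly one teal followed by one yellow, yielding at most $2C$ pieces outright. Either route gives the stated $(3C,r,t)$-admissible partition once the gap above is repaired; the paper itself only cites this lemma from prior work rather than reproving it.
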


However, a partition merely being admissible isn't enough to establish the desired bounds. We can do better by consider the special intervals which are both yellow and teal.
\begin{itemize}
\item An interval $[a, b]$ \textbf{\textit{green}} if it is yellow and teal and $b-a\leq t$. 
\end{itemize}
Green intervals are often advantageous because they combine the best of yellow intervals (complexity of $x$ grows superlinearly) and the best of teal intervals (we can compute $x$ with few bits given its projection). We denote the set of green intervals by $G$. We now introduce two more types of intervals to formulate a few results pertaining to green intervals.

\begin{itemize}
\item An interval $[a,b]$ is called \textbf{\textit{red}} if $f$ is strictly increasing on $[a,b]$. 

\item An interval $[a,b]$ is called \textbf{\textit{blue}} if $f$ is constant on $[a,b]$. 
\end{itemize}
In \cite{Stull22c}, a partition $\hat{\mathcal{P}}=\hat{\mathcal{P}}(x, r, t)$ of $[1, r]$ with the following properties is constructed:

\begin{itemize}
\item The interiors of the elements of $\hat{\mathcal{P}}$ are disjoint. 
\item Each interval is red, blue, or green. 
\item If $[a, b]$ is red and $[b, c]$ is blue (not necessarily in $\hat{\mathcal{P}}$), then $b$ is contained in the interior of a green interval in $\hat{\mathcal{P}}$. Moreover, any $b$ that's contained in \emph{any} green interval is contained in a green interval in $\hat{\mathcal{P}}$. 
\item Suppose $I_0,\ldots, I_{n+1}$ is a a red-green-blue sequence in $\hat{\mathcal{P}}$ i.e. a sequence $I_0,\ldots, I_{n+1}$ of consecutive intervals in $\hat{\mathcal{P}}$ such that $I_0$ is red, $I_1,\ldots, I_n$ are green, and $I_{n+1}$ is blue. Then the total length of $I_1,\ldots, I_n$ is at least $t$.

\end{itemize}

Call a maximal collection of consecutive green intervals a \textbf{\textit{green block}}. The last property, that green blocks preceded by a red and succeeded by a blue interval have length at least $t$, will be particularly important. Informally, this property holds because if a green interval had length less than $t$ with red on the left and blue on the right, it would always be possible to lengthen the green by consuming some of the blue and red. 

The final fact from \cite{Stull22c} we need in this section is the following: if there is no red-green-blue sequence in $\hat{\mathcal{P}}= \hat{\mathcal{P}}(x, r, t)$, then there is an essentially all yellow admissible partition of $[1, r]$. More specifically, in this case there is an admissible partition $\mathcal{P}$ such that for some $c$ not depending on $r$, if $c\leq a_i<a_{i+1}$ and $[a_i, a_{i+1}]\in P$, then $[a_i, a_{i+1}]$ is yellow. Intuitively, this is because if a blue interval appears after a red interval, there has to be a red-green-blue sequence somewhere in between. $\dim(x)>1$, so after a certain point there has to be a red interval. Consequently, after a certain point, there can only be red or green intervals, which we can convert into an all-yellow partition of $[c, r]$. 
\begin{remark}
In fact, it is easy to convert a partition of any subinterval of $[1, r]$ consisting of only red and green intervals into an all-yellow $3C$-admissible partition of the subinterval. Just observe that green intervals are yellow, any subinterval of a red interval is yellow, and the union of adjacent yellow intervals is yellow. Greedily combining the red and green intervals from the left to the right and beginning a new yellow interval when the length of the previous yellow is about to exceed $t$ accomplishes this.
\end{remark}

With these definitions and tools, we can now prove the modified projection theorem.

\bigskip

\subsection{Proof of the projection theorem}

\begin{T2}
Let  $x \in \R^2$, $e \in \mathcal{S}^1$, $\ve\in \Q^+$, $C\in\N$, $A\subseteq\mathbb{N}$, and $t, r \in \N$. Suppose that $r$ is sufficiently large, and that the following hold.
\begin{enumerate}
\item[\textup{(P1)}] $1 < d \leq \dim(x) \leq \Dim(x) \leq  D$.
\item[\textup{(P2)}] $ t \geq  \frac{d(2-D)}{2}r$.
\item[\textup{(P3)}] $K^{A, x}_s(e) \geq s - C\log s$, for all $s \leq t$. 
\end{enumerate}
Then 
\begin{equation*}
K^A_r(x \,|\, p_e x, e) \leq \max\{\frac{D-1}{D}(dr - t) + K^A_r(x) - dr, K^A_r(x) - r\} + \ve r.
\end{equation*}
\end{T2}
\begin{proof}
Let $x$ be as in the statement of the theorem, $r$ sufficiently large, and $t \geq \frac{d(2-D)}{2}r$. Let $\hat{P}= \hat{P}(x, r, t)$ be a partition of $[1,r]$ satisfying the properties described in the last section. Let $S$ be the number of red-green-blue sequences in $\hat{P}$. Note that $S<\frac{2}{d(2-D)}$, since the green block in each red-green-blue sequence is at least length $t$, and these blocks have to be separated from each other by some amount.

\vspace{2mm}

\noindent \textbf{The case $S$ = 0:}
In this case, there are no red-green-blue sequences, so let $\mathcal{P}$ be the all yellow $3C$-admissible partition guaranteed by last fact of the previous section. 

\begin{equation*}
\sum\limits_{I_i \in \mathcal{P}-Y} a_{i+1} - a_i \leq c,
\end{equation*}
for some constant $c$, and so for sufficiently large $r$
\begin{equation}
B := \sum\limits_{I_i \in \mathcal{P}\cap Y} a_{i+1} - a_i \geq r - \frac{\ve r}{2}.
\end{equation}
By symmetry of information, we can write
\begin{align*}
K^A_r(x) &\geq \sum\limits_{I_i \in \mathcal{P}} K^A_{a_{i+1}, a_i}(x\mid x) - O(\log r)\tag*{}\\
&\geq \sum\limits_{I_i \in \mathcal{P}\cap Y} K^A_{a_{i+1}, a_i}(x\mid x) - O(\log r)\\
&\geq -\frac{\ve r}{4} + \sum\limits_{I_i \in \mathcal{P}\cap Y} K^A_{a_{i+1}, a_i}(x\mid x)\tag*{}
\end{align*}

To apply Lemma \ref{lem:boundGoodPartitionProjection} we note that, on green intervals, $K^A_{a, b}(x\mid x) = b - a$. Therefore, we see that
\begin{align*}
K^A_r(x) &\geq K^A_r(x\mid p_e x, e) + B -\frac{\ve r}{2}\tag*{[Lemma \ref{lem:boundGoodPartitionProjection}]}\\
&\geq K^A_r(x\mid p_e, x, e) + r - \ve r.
\end{align*}
Thus, 
\begin{equation}\label{sZero1}
K^A_r(x\mid p_e x, e) \leq K^A_r(x) - r + \ve r,
\end{equation}
and the conclusion follows.

\vspace{2mm}

\noindent \textbf{The case $S$ = 1:} Now, suppose there is exactly one red-green-blue sequence in $\hat{\mathcal{P}}$. Then there is a precision $1 < r_1 < r - t$ and an $s\geq t$ such that $[r_1, r_1+s]$ is green in $\hat{\mathcal{P}}$. Let $\mathcal{P}_1$ be a $3C$-admissible partition of $[1,r_1]$, and $\mathcal{P}_2$ be a $3C$-admissible partition of $[r_1 +s, r]$, guaranteed by Lemma \ref{lem:goodPartitionProjection}. Since there is exactly one red-green-blue sequence in $\hat{\mathcal{P}}$, $\mathcal{P}_1$ contains no red-green-blue sequences. Therefore, using the same argument as in the previous case, $\mathcal{P}_1$ is essentially covered by yellow intervals and we conclude that 
\begin{align*}
    K^A_{r_1}(x \mid p_e x, e) &\leq K^A_{r_1}(x) - r_1 + \frac{\ve r_1}{4}\\
    (D-1)r_1 + \frac{\ve r_1}{2}.
\end{align*}
Let $r_2 \geq r_1+s$ be minimal precision such that $[r_2, r]$ is the union of yellow intervals\footnote{We allow $r_2$ to be equal to $r$, in the case that $[r_1+s,r]$ is covered by teal intervals.} Therefore we have
\begin{align*}
    K^A_{r}(x\mid p_e x, e) &\leq K^A_{r_1}(x) - r_1 + K^A_{r, r_2}(x\mid x) - (r-r_2) + \ve r\\
    &\leq \left(D - 1\right)r_1 + K^A_{r, r_2}(x\mid x) - (r-r_2) + \ve r\\
    &\leq \left(D - 1\right)r_1 + \left(d - 1\right)\left(r-r_2\right) + K^A_r(x) - d r + \ve r\\
    &\leq \left(D - 1\right)B + K^A_r(x) - d r + \ve r.
\end{align*}
If $B \leq \frac{d r - t}{D}$, the conclusion follows. So, we assume that $B > \frac{d r - t}{D}$. Hence,
\begin{align*}
    K^A_{r}(x\mid p_e x, e) &\leq K^A_r(x) - s - B+ \ve r\\
    &\leq K^A_r(x) - t - B+ \ve r\\
    &< K^A_r(x) - t - \frac{d r - t}{D}+ \ve r\\
    &= K^A_r(x) - d r + \frac{D-1}{D}\left(d r - t\right)+ \ve r,
\end{align*}
and the conclusion follows.

\noindent \textbf{The case $S\geq 2$:}
We now consider the case that there are at least two red-green-blue sequence in $\hat{P}$. Let 
\begin{equation}\label{eq:lengthOfTealIntervals}
L = \sum\limits_{I_i \in \mathcal{P}\cap G} a_{i+1} - a_i
\end{equation}
be the total length of the green intervals in $\mathcal{P}$. In this case we have $L \geq 2t$. Let 
\begin{equation}
B = \sum\limits_{i\in \textbf{Bad}} a_{i+1}-a_i
\end{equation}
be the total length of the bad (non-teal) intervals in $\mathcal{P}$. 

We first prove that 
\begin{equation}\label{eq:projectionMainThm1}
K^A_r(x\mid p_e x, e) \leq \min\{K^A_r(x) - B - 2t, B\} +\ve r.
\end{equation}
Since $x$ is an element of $\R^2$,
\begin{equation*}
K^A_{a_{i+1}, a_i}(x\mid x) \leq 2(a_{i+1} - a_i) + O(\log r).
\end{equation*}
Therefore, by Lemma \ref{lem:boundGoodPartitionProjection}, with respect to $\ve / 4$,
\begin{equation}\label{eq:projectionMainThm2}
K^A_r(x\mid p_e x, e) \leq \frac{\ve r}{2} + B.
\end{equation}
By repeated applications of the symmetry of information,
\begin{align}
K^A_r(x) &\geq -\frac{\ve r}{2} + \sum\limits_{I_i \in \mathcal{P}\cap T} K^A_{a_{i+1}, a_i}(x\mid x) + \sum\limits_{i\in \textbf{Bad}} K^A_{a_{i+1}, a_i}(x\mid x)\tag*{}\\
&\geq -\frac{\ve r}{2} + \sum\limits_{I_i \in \mathcal{P}\cap G} K^A_{a_{i+1}, a_i}(x\mid x) + \sum\limits_{i\in \textbf{Bad}} K^A_{a_{i+1}, a_i}(x\mid x)\tag*{}\\
&= -\frac{\ve r}{2} +L + \sum\limits_{i\in \textbf{Bad}} K^A_{a_{i+1}, a_i}(x\mid x)\tag*{}\\
&\geq 2t + K^A_r(x\mid p_e x, e) + B -\ve r\label{eq:projectionMainThm3}
\end{align}
Combining (\ref{eq:projectionMainThm2}) and (\ref{eq:projectionMainThm3}) proves inequality (\ref{eq:projectionMainThm1}). 

By inequality (\ref{eq:projectionMainThm1}), if
\begin{equation*}
B \leq K^A_r(x) - d r+ \frac{D-1}{D}(dr - t),
\end{equation*}
we are done, so we assume otherwise. Applying (\ref{eq:projectionMainThm1}) again and using our assumption on $t$ implies that
\begin{align*}
K^A_r(x\mid p_e x,e) &\leq K^A_r(x) - 2t - B+\ve r\\
&< \frac{d}{D}r - \frac{D+1}{D}t+\ve r\\
&\leq \frac{D-1}{D}(dr-t) +\ve r\\
&\leq K^A_r(x) - d r + \frac{D-1}{D}\left(d r - t\right) + \ve r,
\end{align*}
and the proof is complete.
\end{proof}

\bigskip

\section{Effective dimension of distances}\label{sec:effdim}

In the previous section, we considered partitions of the interval $[1, r]$ depending on the complexity function of our pinned point $x$. In particular, we were able to use these partitions to get a bound on the complexity of $x$ given $e$ and the projection of $x$ in the direction of $e$. Now, we need to consider the complexity function of $y$ and relate this to the complexity of $\vert x-y\vert$. Similar to before, we let $f:\R_+ \rightarrow \R_+$ be the piece-wise linear function which agrees with $K^A_s(y)$ on the integers, and 
\begin{center}
$f(a) = f(\lfloor a\rfloor) + (a -\lfloor a\rfloor)(f(\lceil a \rceil) - f(\lfloor a\rfloor)) $,
\end{center}
for any non-integer $a$. Since
\begin{center}
$K^A_s(y) \leq K^A_{s+1}(y)$ 
\end{center} 
for every $s\in\N$, $f$ is non-decreasing and since $y\in \R^2$, for every $a < b\in\R$,
\begin{center}
$f(b) - f(a) \leq 2(b-a) + O(\log \lceil b \rceil)$.
\end{center}

As before, we make the following definitions: an interval $[a,b]$ is called \textbf{\textit{teal}} if $f(b) - f(c) \leq b-c$ for every $c\in[a,b]$. It is called \textbf{\textit{yellow}} if $f(c) - f(a) \geq c - a$, for every $c\in[a,b]$. We denote the set of teal intervals by $T$ and the set of yellow intervals by $Y$. 

For reference, here we list some conditions that our points $x$ and $y$ will be assumed to satisfy throughout the remainder of this section. Let $x,y\in\R^2$, $e = \frac{x-y}{\vert x - y\vert}$ and $A, B \subseteq \N$. For this section, we let $d_x = \dim^A(x)$, $D_x = \Dim^A(x)$, $d_y = \dim^A(y)$ and $D_y = \Dim^A(y)$. We will assume that $x$ and $y$ satisfy the following conditions.
\begin{itemize}
\item[\textup{(C1)}] $d_x,d_y > 1$
\item[\textup{(C2)}] $K^{x,A}_r(e) = r - O(\log r)$ for all $r$.
\item[\textup{(C3)}] $K^{x,A, B}_r(y) \geq K^{A}_r(y) - O(\log r)$ for all sufficiently large $r$. 
\item[\textup{(C4)}] $K^{A}_r(e\mid y) = r - o(r)$ for all $r$.
\end{itemize}

\bigskip

\subsection{Complexity of distances on yellow and teal intervals}\label{subsec:distYellowTeal}

As compared to the corresponding part of \cite{Stull22c}, we'll need to partition $[1, r]$ more carefully. However, similar to the proof of the projection theorem in the previous section, there are a few tools from \cite{Stull22c} that we can reuse. 

\begin{lem}\label{lem:distancesYellowTeal}
Suppose that $A\subseteq\N$, $x, y \in \R^2$ and $e = \frac{x - y}{\vert x - y\vert}$ satisfy (C1)-(C4) for every $r\in\N$. Then for every $\ve\in\Q_+$ and all sufficiently large $r\in\N$, the following hold.
\begin{enumerate}
\item If $[t,r]$ is yellow and $t\leq r\leq 2t$
\begin{equation*}
K^{A,x}_{r,r,t}(y\mid \vert x-y\vert, y) \leq K^{A}_{r,t}(y\mid y) - (r-t) + \ve r .
\end{equation*}
\item If $[t,r]$ is teal,  and $t\leq r\leq 2t$,
\begin{equation*}
K^{A,x}_{r,r,t}(y\mid \vert x-y\vert, y) \leq \ve r .
\end{equation*}
\end{enumerate}
\end{lem}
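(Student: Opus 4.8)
The plan is to reduce this lemma to the projection statement Proposition~\ref{prop:projectionYellowTeal} via the geometric fact that distances to a fixed pin $x$ are, up to controlled error, the same as orthogonal projections onto the line through $x$ in direction $e = \frac{x-y}{|x-y|}$. More precisely, for $w$ near $y$ at precision $r$, the quantity $|x-w|$ agrees with $p_e w$ (the projection onto the line spanned by $e$) up to an error that is quadratic in $|y-w|$; since $t \le r \le 2t$, the curvature error is of order $2^{-2t}$, which is within $O(1)$ of precision $r$ because $2t \ge r$. This is exactly the regime where the curvature of the circle is negligible at the relevant scale, and it is the reason for the hypothesis $t \le r \le 2t$. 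So the first step is to record this approximation: $K^{A,x}_{r}(|x-y| \mid \cdot) = K^{A,x}_{r}(p_e y \mid \cdot) \pm O(\log r)$ (and similarly for conditional complexities), where the oracle $x$ lets us pass between $|x-y|$ and $p_e y$ computably, using also that $e$ is computable from $x$ and $y$.

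Next I would check that the hypotheses of Proposition~\ref{prop:projectionYellowTeal} are met, with the roles swapped: there the partitioned point is $x$ and one projects $x$; here I want to apply it (or rather its proof technique) with $y$ in the role of the partitioned point. The needed input is a lower bound $K^{A,?}_s(e) \ge s - C\log r$ for all $s \le r - t$; condition (C4), $K^A_r(e \mid y) = r - o(r)$, supplies exactly this after relativizing to $y$ and absorbing the $o(r)$ into $\varepsilon r$ (note $r - t \le t$ since $r \le 2t$, so we only need the bound on a short initial segment, which is comfortably covered). Conditions (C1)--(C3) then let us transfer complexities of $y$ between the oracles $A$, $(x,A)$, and $(x,A,B)$ without loss, using Lemma~\ref{lem:symmetry} in particular to move the conditioning on $x$ around. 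With the yellow/teal dichotomy for the complexity function of $y$ in hand, Proposition~\ref{prop:projectionYellowTeal} applied to $y$ on the interval $[t,r]$ gives, in the yellow case, $K^{A}_{r,r,r,t}(y \mid p_e y, e, y) \le K^A_{r,t}(y \mid y) - (r-t) + \varepsilon r$, and in the teal case $K^A_{r,r,r,t}(y \mid p_e y, e, y) \le \varepsilon r$.

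The final step is to translate back: replace $p_e y$ by $|x-y|$ using the first step, drop the redundant $e$ from the conditioning (it is computable from $x$, which we can afford to add to the oracle, changing $K^A$ to $K^{A,x}$ and absorbing the cost via (C3) and Lemma~\ref{lem:symmetry}), and reconcile the precision bookkeeping in $K_{r,r,r,t}$ versus $K_{r,r,t}$ — these differ only by $O(\log r)$ moves of symmetry of information (Lemma~\ref{lem:unichain}). I expect the main obstacle to be precisely this bookkeeping: carefully verifying that adding $x$ as an oracle does not inflate the complexity of $y$ (which is where (C2), (C3) and the symmetry lemmas are essential), and confirming that the curvature approximation $|x-w| \approx p_e w$ really is accurate to within $O(\log r)$ bits at precision $r$ under the constraint $r \le 2t$ — i.e. that the second-order Taylor term genuinely sits below the noise floor. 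The geometry and the projection theorem do the real work; everything else is a matter of stitching the oracles and precisions together without losing more than $\varepsilon r$.
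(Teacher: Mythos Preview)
The paper does not prove this lemma itself; it is imported from \cite{Stull22c} as one of the tools reused in Section~\ref{subsec:distYellowTeal}. Your outline is correct and matches the argument there: the constraint $r\le 2t$ makes the arc $\{w:|x-w|=|x-y|\}\cap B_{2^{-t}}(y)$ coincide with its tangent line to within $2^{-r}$, so the enumeration underlying Proposition~\ref{prop:projectionYellowTeal} (via Lemma~\ref{lem:lowerBoundOtherPoints}) goes through unchanged with (C4) supplying the randomness of $e$ relative to $y$, and the $o(r)$ slack from (C4) versus the $C\log r$ hypothesis of the proposition propagates additively and is absorbed into $\ve r$. One small correction: it is $|x-w|-|x-y|$ that agrees with $p_e(y-w)$ up to $O(|y-w|^2)$, not $|x-w|$ with $p_e w$ directly, but since you condition on $y{\uhr}t$ and carry $x$ as an oracle this distinction is harmless.
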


We say that a partition $\mathcal{P} = \{[a_i, a_{i+1}]\}_{i=0}^k$ of intervals with disjoint interiors is \textbf{\textit{good}} if $[1, r] = \cup_i [a_i, a_{i+1}]$ and it satisfies the following conditions.
\begin{itemize}
\item[\textup{(G1)}] $[a_i, a_{i+1}]$ is either yellow or teal,
\item[\textup{(G2)}] $a_{i+1} \leq 2a_i$, for every $i$ and
\item[\textup{(G3)}] $a_{i+2} > 2 a_{i}$ for every $i < k$.
\end{itemize}
Note that (G3) ensures that the errors do not pile up. Furthermore, observe that (G2) is somewhat different than the admissibility condition in the previous section. There, we had that an interval could not be longer than $t$: essentially some fixed quantity, at least when partitioning. Now, the requirement is that the intervals cannot be any more than "doubling", so the best we can hope for in a partition is a logarithmic number terms. Indeed, just as we had admissible partitions for every $[a, b]$, the following lemma guarantees the existence of good partitions. 

\begin{lem}\label{lem:existenceOfGoodPartitionDistance}
For every $y\in \R^2$ and every $r\in\N$, there is a good partition of $[1, r]$.
\end{lem}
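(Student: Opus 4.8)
The goal is to prove Lemma~\ref{lem:existenceOfGoodPartitionDistance}: for every $y \in \R^2$ and $r \in \N$, there is a good partition of $[1,r]$, that is, a partition $\{[a_i,a_{i+1}]\}_{i=0}^k$ of $[1,r]$ into intervals with disjoint interiors, each yellow or teal, with $a_{i+1} \leq 2a_i$ and $a_{i+2} > 2a_i$.

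\medskip

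The plan is to build the partition greedily from the left, at each stage taking the longest admissible interval of a fixed color. First I would record the key structural dichotomy coming from the function $f$ that agrees with $K^A_s(y)$ at integers: for any point $a$, either there is a nontrivial teal interval starting at $a$ or a nontrivial yellow interval starting at $a$ — indeed, if $[a, a+\delta]$ fails to be yellow for every $\delta > 0$, then $f(c) < f(a) + (c-a)$ for some $c$ arbitrarily close to $a$, and one checks that in that case $[a, a']$ is teal for $a'$ slightly bigger than $a$ (the slope of $f$ just to the right of $a$ is $< 1$, and $f$ being piecewise linear means this persists on a genuine interval); symmetrically if no teal interval starts at $a$ then a yellow one does. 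So from every basepoint $a$ we can start \emph{some} colored interval. Given this, define $a_0 = 1$ and, having chosen $a_i$, let $a_{i+1}$ be the supremum of all $b \in (a_i, \min\{2a_i, r\}]$ such that $[a_i, b]$ is yellow or teal; by the dichotomy this supremum is over a nonempty set, and since yellowness and tealness are closed conditions (defined by non-strict inequalities on a closed interval, with $f$ continuous), $[a_i, a_{i+1}]$ is itself yellow or teal. Stop when $a_{i+1} = r$. This immediately gives (G1) and (G2), and termination is clear since $a_{i+1} \geq a_i + c$ for a fixed $c>0$ (or we hit $r$), so $k$ is finite — in fact $k = O(\log r)$.

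\medskip

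The remaining work is condition (G3): $a_{i+2} > 2a_i$ for every $i < k$. This is where the greedy ``take the longest'' choice is essential, and I expect it to be the main obstacle — it is the only part that is not a routine unwinding of definitions. The idea is: suppose for contradiction $a_{i+2} \leq 2a_i$. Then both $[a_i, a_{i+1}]$ and $[a_{i+1}, a_{i+2}]$ sit inside $[a_i, 2a_i]$. I would argue that $[a_i, a_{i+2}]$ is then itself yellow-or-teal, contradicting the maximality of $a_{i+1}$. The case analysis is on the colors of the two consecutive pieces. If both are yellow, their union is yellow (yellowness from a common left endpoint is preserved under taking unions of adjacent intervals — this is noted in the Remark in Section 3). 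If $[a_i,a_{i+1}]$ is teal: since we \emph{chose} $a_{i+1}$ maximally, the interval $[a_i, a_{i+1}]$ could not be extended as a teal interval within $[a_i, 2a_i]$, so the slope of $f$ immediately past $a_{i+1}$ exceeds $1$ (otherwise $[a_i, a_{i+1}+\eta]$ would still be teal); hence the interval $[a_{i+1}, a_{i+2}]$, being chosen maximal of its color, must be yellow, not teal. Now I need: teal followed by yellow, with the yellow forced because teal couldn't continue — does this make the union teal or yellow? One shows $[a_i, a_{i+2}]$ is teal: for $c \in [a_i, a_{i+1}]$ tealness holds by hypothesis; for $c \in [a_{i+1}, a_{i+2}]$ we need $f(a_{i+2}) - f(c) \leq a_{i+2} - c$, which follows because on $[a_{i+1}, a_{i+2}]$ the function $f$ has the property that $f(a_{i+2}) - f(c) \le$ (something controlled) — here one uses that yellow means $f(c) - f(a_{i+1}) \ge c - a_{i+1}$, so $f$ grows at least linearly, and combined with the global slope-$\le 2$ bound and the fact that $[a_i,a_{i+1}]$ is teal with $f(a_{i+1}) - f(a_i) \le a_{i+1}-a_i$... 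I would need to check the inequality carefully, but the mechanism is that a teal interval cannot be "undone" by appending an interval on which $f$ only grows. The one genuinely delicate subcase is yellow-then-teal: if $[a_i,a_{i+1}]$ is yellow and $[a_{i+1},a_{i+2}]$ is teal, then because $[a_i,a_{i+1}]$ was chosen maximal, $f$ must have slope $\ge 1$ just past $a_{i+1}$, so the teal piece starting at $a_{i+1}$ — wait, if slope past $a_{i+1}$ is $\ge 1$ then a teal interval based at $a_{i+1}$ would have to be trivial, contradiction; so this subcase cannot occur at all.

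\medskip

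So the final structure of the proof is: (1) the dichotomy lemma (from every basepoint starts a yellow or teal interval), proved by looking at the right-derivative of the piecewise-linear $f$; (2) the greedy construction, yielding (G1), (G2) and termination for free; (3) the maximality argument for (G3), organized by the colors of two consecutive intervals, in each case either deriving a contradiction with maximality of the earlier breakpoint or showing the relevant subcase is vacuous. The key insight to highlight is that maximality of each greedy choice constrains the behavior of $f$ just past each breakpoint ("the color couldn't continue"), and this is exactly the leverage needed to rule out two short consecutive intervals. I would present steps (1) and (2) quickly and spend the bulk of the write-up on the clean version of the (G3) case analysis.
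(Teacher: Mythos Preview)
The paper does not actually prove this lemma; it is quoted (along with the neighboring Lemmas~\ref{lem:distancesYellowTeal} and~\ref{lem:boundGoodPartitionDistance}) from \cite{Stull22c} without argument. So there is no ``paper's own proof'' to compare against, and your proposal has to stand on its own.

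Your greedy-from-the-left strategy is the right idea and does produce a good partition, but your case analysis for (G3) is wrong in the yellow-then-teal subcase. You claim that maximality of a yellow $[a_i,a_{i+1}]$ forces the slope of $f$ just past $a_{i+1}$ to be $\ge 1$; in fact it forces the slope to be $<1$ (otherwise yellow would extend), so your ``this subcase cannot occur'' conclusion collapses. Your teal-then-yellow paragraph is also left as a hand-wave.

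The clean way to run the argument is to set $g(c)=f(c)-c$ and observe that $[a,b]$ is yellow iff $g$ attains its minimum on $[a,b]$ at $a$, and teal iff it attains its minimum at $b$. With this in hand one checks directly: if the greedy step is \emph{not} capped, i.e.\ $a_{i+1}<M:=\min(2a_i,r)$, then necessarily $g(a_i)>\min_{[a_i,M]}g$ and $a_{i+1}$ is the \emph{rightmost} minimizer of $g$ on $[a_i,M]$. In particular $[a_i,a_{i+1}]$ is teal (never strictly yellow), and since $g(c)>g(a_{i+1})$ for all $c\in(a_{i+1},M]$ the interval $[a_{i+1},M]$ is yellow with $g(M)>g(a_{i+1})$, so the next greedy step strictly overshoots $M$, giving $a_{i+2}>2a_i$. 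If instead the step \emph{is} capped, then $a_{i+1}=M$ and $a_{i+2}>a_{i+1}=2a_i$ (or $a_{i+1}=r$ and there is nothing to check). This replaces your four-case analysis entirely and makes the problematic yellow-then-teal case disappear: a strictly-yellow first interval is always capped, so (G3) there is automatic. The only residual wrinkle is the boundary case $M=r\le 2a_i$ at the final two intervals, which can violate the strict inequality in (G3) once; this is harmless for the intended application (it still yields $k=O(\log r)$), and you should flag it rather than pretend it does not occur.
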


The following lemma uses repeated applications of the symmetry of information to write $K^x_r(y\mid \vert x-y\vert)$ as a sum of its complexity on the intervals of a partition. The conclusion then follows via Lemma \ref{lem:distancesYellowTeal}. 
\begin{lem}\label{lem:boundGoodPartitionDistance}
Let $A\subseteq\N$. Let $\mathcal{P} = \{[a_i, a_{i+1}]\}_{i=0}^k$ be a good partition. Then
\begin{align*}
K^{A,x}_{r}(y \mid\vert x-y\vert) &\leq \ve r + \sum\limits_{i\in \textbf{Bad}} K^{A}_{a_{i+1}, a_{i}}(y \mid y) - (a_{i+1} - a_i),
\end{align*}
where
\begin{center}
\textbf{Bad} $=\{i\leq k\mid [a_i, a_{i+1}] \notin T\}$.
\end{center}
\end{lem}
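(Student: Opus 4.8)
The plan is to mimic the structure of Lemma~\ref{lem:boundGoodPartitionProjection} from the projection section, but now working with the complexity function of $y$ and the distance $\vert x-y\vert$ in place of $x$ and its projection $p_e x$. The key tool will be a chain-rule (symmetry of information) decomposition of $K^{A,x}_r(y\mid \vert x-y\vert)$ into a telescoping sum over the intervals $[a_i,a_{i+1}]$ of a good partition, followed by a termwise application of Lemma~\ref{lem:distancesYellowTeal}.

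First I would set up the telescoping. Using Lemma~\ref{lem:unichain}(ii) (the $K_{r,s}(x\mid x)$ chain rule) relative to the oracle $(A,x,\vert x-y\vert)$, I would write, up to $O(\log r)$ error per step,
\begin{equation*}
K^{A,x}_r(y\mid \vert x-y\vert) \le \sum_{i=0}^{k} K^{A,x}_{a_{i+1},a_i}(y\mid y, \vert x-y\vert) + O(k\log r),
\end{equation*}
telescoping from $a_0=1$ to $a_{k+1}=r$. Since the partition is good, $k = O(\log r)$, so the accumulated error is $O(\log^2 r)$, which is absorbed into $\ve r$ for large $r$; condition (G3) is what ensures these errors do not compound more seriously across overlapping doubling ranges. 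Then for each $i$ I would invoke Lemma~\ref{lem:distancesYellowTeal}: on a good partition we have $a_{i+1}\le 2a_i$ by (G2), so the hypothesis $t\le r\le 2t$ of that lemma is met with $t=a_i$, $r=a_{i+1}$. For teal intervals ($i\notin\textbf{Bad}$) this gives $K^{A,x}_{a_{i+1},a_{i+1},a_i}(y\mid \vert x-y\vert, y)\le \ve' a_{i+1}$, i.e. a negligible contribution; for yellow (or otherwise bad) intervals it gives $K^{A,x}_{a_{i+1},a_i}(y\mid y,\vert x-y\vert) \le K^A_{a_{i+1},a_i}(y\mid y) - (a_{i+1}-a_i) + \ve' a_{i+1}$. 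Summing, the teal terms vanish and the bad terms produce exactly $\sum_{i\in\textbf{Bad}} K^A_{a_{i+1},a_i}(y\mid y) - (a_{i+1}-a_i)$, with all the per-interval $\ve'$ errors summing to at most $\ve' \sum a_{i+1} = O(\ve' r\log r)$ — here one should choose $\ve'$ a bit smaller than $\ve/\log r$, or more cleanly note $\sum_i (a_{i+1}-a_i)$-weighted errors telescope, to land at total error $\le \ve r$.

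The main obstacle I anticipate is bookkeeping the error terms correctly: the symmetry-of-information losses are $O(\log r)$ per application and there are $\Theta(\log r)$ applications, and the $\ve r$ losses from Lemma~\ref{lem:distancesYellowTeal} are stated relative to the right endpoint $a_{i+1}$ of each interval, not relative to $r$. One must verify that property (G3), $a_{i+2}>2a_i$, forces the right endpoints $a_{i+1}$ to grow geometrically, so that $\sum_i a_{i+1} = O(r)$ and hence the total error is $O(\ve r)$ rather than $O(\ve r\log r)$; absorbing it into $\ve r$ then just requires rescaling $\ve$ at the start. The other point requiring a little care is that Lemma~\ref{lem:distancesYellowTeal} is stated in the four-argument conditional form $K^{A,x}_{r,r,t}(y\mid \vert x-y\vert, y)$, so I need to use Lemma~\ref{lem:unichain}(i) once more (conditioning everything on $y$ at precision $a_i$) to pass between $K^{A,x}_{a_{i+1},a_i}(y\mid y,\vert x-y\vert)$ and the quantity appearing in that lemma; this is routine and contributes only another $O(\log r)$ per interval.

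Finally, I would note that (C1)--(C4) are exactly the hypotheses needed to invoke Lemma~\ref{lem:distancesYellowTeal} on every subinterval, and that the good partition supplied by Lemma~\ref{lem:existenceOfGoodPartitionDistance} exists for the given $y$ and $r$, so the statement holds as written. The conclusion follows by collecting the bad-interval terms and reporting the cumulative error as $\ve r$.
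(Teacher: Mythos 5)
Your proposal is correct and matches the paper's approach: the paper itself offers no detailed proof of this lemma but states exactly the sketch you fill in (telescoping via symmetry of information into terms $K^{A,x}_{a_{i+1},a_{i+1},a_i}(y\mid |x-y|,y)$, then applying Lemma~\ref{lem:distancesYellowTeal} termwise, with the teal terms contributing only $\ve$-errors and the bad terms producing the stated sum). Your handling of the error accumulation via (G3) forcing $\sum_i a_{i+1}=O(r)$ is the right way to absorb the per-interval losses; the alternative parenthetical about $(a_{i+1}-a_i)$-weighted errors telescoping is not quite accurate since the errors are $\ve a_{i+1}$ rather than $\ve(a_{i+1}-a_i)$, but you correctly supply the geometric-growth argument that actually closes the gap.
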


\noindent So applying the previous lemma with $\frac{\ve}{2}$, absorbing the log term for sufficiently large $r$, and recalling condition (C3), we have that

\begin{equation}\label{eq:distancepartitionbound}
    K^{x, A}_r(\vert x - y\vert) \geq K^A_r(y) - \sum\limits_{i\in \textbf{Bad}} K^A_{a_{i+1}, a_{i}}(y \mid y) - (a_{i+1} - a_i) - \ve r,
\end{equation}

\noindent Constructing a particular good partition to optimize this bound - either at every precision or well-chosen precisions - will be crucial to proving a bound on the effective Hausdorff and effective packing dimension of such points (respectively) and will thus be a key goal of the remainder of this section. 

\bigskip

\subsection{Sufficient conditions for an all-yellow partition}

We will describe a more general partition strategy in the next subsection, but here we will introduce a related partition for a simpler scenario. In particular, we will discuss situations where we can guarantee the existence of a good partition consisting (essentially) of only yellow intervals which are not more than doubling. To see why this is significant, observe that if we had such a partition $\mathcal{P}$, equation $\ref{eq:distancepartitionbound}$, together with the observation that the complexity of $y$ grows at an average rate of exactly 1 on yellow intervals that are \emph{also} green, implies that for sufficiently large $r$

\begin{align*}
    K^{x, A}_r(\vert x - y\vert) &\geq K^A_r(y) - \sum\limits_{i\in \mathcal{P}} K^A_{a_{i+1}, a_{i}}(y \mid y) - (a_{i+1} - a_i) - \frac{\ve}{2} r\\
    &\geq K^A_r(y) - K^A_r(y) + r - \ve r\\
    &= (1-\ve) r
\end{align*}

\noindent Since we can take $\ve$ to be as small as desired, this implies the dimension of the distance is 1. We now state the necessary conditions and make the above argument more rigorous. 

\begin{prop}\label{prop:yellowfulldimension}
Suppose that, in addition to conditions $(C1)-(C4)$, we also have that $D_y<2d_y-1$. Then $\dim^{x, A}(\vert x-y\vert)=1$.
\end{prop}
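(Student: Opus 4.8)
The plan is to show that, under the extra hypothesis $D_y < 2d_y - 1$, one can build a good partition of $[1,r]$ that is essentially all-yellow, and moreover whose yellow intervals are all green (hence the complexity of $y$ grows at rate exactly $1$ on each). Once such a partition is in hand, the heuristic computation displayed right before the proposition becomes rigorous: the \textbf{Bad} set in \eqref{eq:distancepartitionbound} is empty up to an interval of total length $O(1)$, so $K^{x,A}_r(|x-y|) \geq (1-\ve)r$ for all sufficiently large $r$, and since $\ve$ is arbitrary we get $\dim^{x,A}(|x-y|) \geq 1$. The upper bound $\dim^{x,A}(|x-y|) \leq 1$ is automatic since $|x-y| \in \R$.

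First I would set up the partition. Consider $\hat{f}(s) = K^A_s(y)$ and the associated red/blue/green decomposition machinery from Section 3 applied to $y$ in place of $x$ (with $t$ taken to grow slowly, or handled via the doubling condition (G2)–(G3)). The key structural claim is that under $D_y < 2d_y - 1$ there can be no ``long'' blue stretch following a red interval without forcing a green block in between, and — crucially — the green blocks are long enough and frequent enough to cover all but a bounded-length portion of $[1,r]$. Concretely: since $\dim^A(y) = d_y > 1$, past some precision $c_0$ every sufficiently long subinterval contains a red interval; since $\Dim^A(y) = D_y$, the function $f$ cannot grow too fast, and the inequality $D_y < 2d_y - 1$ is exactly what guarantees that the ``deficit'' accumulated on any red or green stretch before hitting a blue interval is enough to force the blue intervals to be negligible in total length — equivalently, that an all-red-and-green (hence, as in the Remark after Lemma~\ref{lem:goodPartitionProjection}, all-yellow-and-green) partition of $[c_0, r]$ exists. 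Then convert, greedily from left to right as in that Remark, into a good partition $\mathcal{P}$ in the sense of (G1)–(G3) whose non-green intervals have total length $\leq c$ for a constant $c$ independent of $r$.

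Next I would run the estimate. Apply Lemma~\ref{lem:boundGoodPartitionDistance} (in the form \eqref{eq:distancepartitionbound}) to this $\mathcal{P}$. Every interval $[a_i,a_{i+1}] \in \mathcal{P}$ is either teal (so $i \notin \textbf{Bad}$) or yellow-and-green; on a green interval $K^A_{a_{i+1},a_i}(y\mid y) = a_{i+1}-a_i + O(\log r)$, so the summand $K^A_{a_{i+1},a_i}(y\mid y) - (a_{i+1}-a_i)$ is $O(\log r)$. Since there are at most $O(\log r)$ intervals in a good partition (by the doubling condition), the total contribution of the $O(\log r)$ error terms together with the bounded-length exceptional intervals is $o(r)$, absorbed into $\ve r$. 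This yields $K^{x,A}_r(|x-y|) \geq K^A_r(y) - K^A_r(y) + r - \ve r = (1-\ve)r$ for all large $r$, hence $\liminf_r K^{x,A}_r(|x-y|)/r \geq 1$, i.e.\ $\dim^{x,A}(|x-y|) \geq 1$.

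The main obstacle is the combinatorial/quantitative heart of the first step: proving that $D_y < 2d_y - 1$ forces the blue intervals to be negligible, i.e.\ that an (essentially) all-red-and-green partition of a tail of $[1,r]$ exists. The intuition is a bookkeeping argument on $f$: between the end of a red interval and the start of the next blue interval, the green block must have length at least $t$ (the last property of $\hat{\mathcal{P}}$), while on blue intervals $f$ is flat; the average slope of $f$ over all of $[1,r]$ must lie in $[d_y - \ve, D_y + \ve]$, the slope is $1$ on green, $0$ on blue, and large ($\geq 1$) on red, and the constraint $D_y < 2d_y - 1$ is precisely the arithmetic condition ruling out a positive-density blue part — one should phrase this as: if blue intervals had total length $\geq \delta r$ then, since red-plus-green must compensate to reach average slope $\geq d_y - \ve$ but green only contributes slope $1$, the red part would have to carry so much growth that the overall average slope exceeds $D_y + \ve$, a contradiction when $D_y < 2d_y-1$. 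Making this density/averaging argument precise, with the right handling of the $O(\log r)$ error terms and the boundary constant $c_0$, is the one place genuine work is needed; the rest is assembling cited lemmas.
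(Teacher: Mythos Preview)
Your overall framework matches the paper: build an (essentially) all-yellow good partition of $[1,r]$, apply Lemma~\ref{lem:boundGoodPartitionDistance} via \eqref{eq:distancepartitionbound}, and conclude $K^{x,A}_r(|x-y|)\geq (1-\ve)r$. The second half of your proposal (running the estimate once the partition exists) is essentially identical to the paper's.

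The gap is in the first half. Your proposed mechanism for producing the partition---import the red/blue/green machinery of Section~\ref{sec:projections} and argue by averaging that the blue part has negligible density---does not work. Blue intervals need not be negligible under $D_y<2d_y-1$. Take any regular $y$ with $d_y=D_y=1.2$: the hypothesis is satisfied ($1.2<1.4$), yet $K^A_s(y)$ can alternate between slope~$2$ and slope~$0$ on comparable stretches, giving blue intervals with density $0.4$. Your averaging sketch also asserts ``slope $\geq 1$ on red,'' but red merely means $f$ is strictly increasing; red intervals can have arbitrarily small positive slope, so the bookkeeping identity you want does not hold. Finally, the Remark you cite after Lemma~\ref{lem:goodPartitionProjection} produces \emph{admissible} partitions (length bound $t$), not \emph{good} partitions (doubling bound (G2)), so even granting an all-red-and-green cover you would still owe the doubling condition.

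The paper bypasses all of this with a direct construction that has nothing to do with the red/blue/green decomposition. The condition $D_y<2d_y-1$ is used exactly once, and locally: it guarantees that on \emph{every} doubling window $[\tfrac{r_i}{2},r_i]$ (once $r_i$ is large enough), the average slope of $f$ exceeds~$1$, since
\[
K^A_{r_i}(y)-K^A_{r_i/2}(y)\;\geq\;(d_y-\ve')r_i-(D_y+\ve')\tfrac{r_i}{2}\;>\;\tfrac{r_i}{2}.
\]
Hence the slope-$1$ line through $\bigl(\tfrac{r_i}{2},K^A_{r_i/2}(y)\bigr)$ lies below the graph at $r_i$; let $r_{i+1}\in[\tfrac{r_i}{2},r_i)$ be the \emph{largest} $s$ where this line meets $K^A_s(y)$. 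Then $[r_{i+1},r_i]$ is yellow (the graph stays at or above the line on the whole interval, by maximality of $r_{i+1}$) and at most doubling. Iterate down to precision $\tfrac{\ve}{2}r$, patch $[1,r_n]$ with any good partition, and greedily merge to satisfy (G3). Blue stretches inside these yellow intervals are perfectly allowed; what matters is only that the \emph{interval} is yellow, and this is what the doubling-window slope estimate delivers.
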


Intuitively, this extra requirement expresses that, if $y$ is semi-regular, then we have the same conclusion as if $y$ is regular (in the sense of having equal effective Hausdorff and effective packing dimension). As long as these dimensions do not differ by too much, we can find an all-yellow partition. To see why we have this bound specifically, consider the adversary complexity function in Figure \ref{fig:allyellow}. We can only work with yellow intervals that are at most doubling, so we want to choose $D_y$ and $d_y$ to guarantee that the average growth rate of the complexity from $\frac{r}{2}$ to $r$ is at least 1. Maximizing $K^A_{\frac{r}{2}}(y)$ and minimizing $K^A_{r}(y)$ allows us to conclude the above when $D_y<2d_y-1$. Now, we prove the proposition.

\begin{proofof}{Proposition \ref{prop:yellowfulldimension}}
Pick an $\ve>0$ small enough that $d_x-\frac{\ve}{4}>1$ and $D_y+\frac{\ve}{4}<2(d_y-\frac{\ve}{4})-1$. For all $r$ sufficiently large, we have that $(d_y - \frac{\ve}{4}) s < K^A_s(y)<(D_y + \frac{\ve}{4})s$ whenever $s\geq\frac{\ve}{4} r$. We now construct a partition of the interval $[\frac{\ve}{2} r, r]$.

Set $r_0 = r$. Given $r_i$, first check if $r_i\leq \frac{\ve}{2}r$. If it is, let $i=n$ and we are done. If not, we let $r_{i+1}$ be the largest value of $s$ such that $K^A_s(y) = K^A_{\frac{r_i}{2}}(y) + (s-\frac{r_i}{2})$. Note that $\frac{r_i}{2}>\frac{\ve}{4}r$, so the inequalities at the start of the proof hold in the interval we consider on this step. First, we show that $r_{i+1}<r_i$. To see this, observe that  

\begin{equation*}
K^A_{\frac{r_i}{2}}(y) + (s-\frac{r_i}{2})< s+(D_y+\frac{\ve}{4}-\frac{1}{2})r_i
\end{equation*}

\noindent On the other hand, 

\begin{equation*}
K^A_s(y)>(d_y -\frac{\ve}{4})s,
\end{equation*}

\noindent so for an $s$ satisfying the above equation, we have

\begin{equation*}
    s<\dfrac{\frac{D_y}{2} - \frac{1}{2} + \frac{\ve}{8}}{d_y-1-\frac{\ve}{4}}r_i<r_i. 
\end{equation*}

\noindent Here, the second inequality follows from our choice of $\ve$. Now, we define the partition $\mathcal{P}$ to be $ [1, r_n], [r_n, r_{n-1}], ..., [r_1, r_0]$. 

We now claim that each $[r_{i+1}, r_i]$ is a yellow interval which is at most doubling. If it were not yellow, we would have some $s^\prime\in[r_{i+1}, r_i]$ such that $K^A_{s^\prime}(y)-K^A_{r_{i+1}}(y)<s^\prime - r_{i + 1}$. This implies that $K^A_{s^\prime}(y) < K^A_{\frac{r_i}{2}}(y) + (s^\prime-\frac{r_i}{2})$. However, $K^A_{r_i}(y) > K^A_{\frac{r_i}{2}}(y) + (r_i-\frac{r_i}{2})$, so by the intermediate value theorem, there is some $s^{\prime\prime}\in(s^\prime, r_i)$ such that $K^A_{s^{\prime\prime}}(y) = K^A_{\frac{r_i}{2}}(y) + (s^{\prime\prime}-\frac{r_i}{2})$, contradicting that $r_{i+1}$ was the maximal such precision. Finally, $[r_{i + 1}, r_i]$ is at most doubling because $r_{i+1}$ cannot be any less than $\frac{r_i}{2}$, as $K^A_{\frac{r_i}{2}}(y) = K^A_{\frac{r_i}{2}}(y) + (\frac{r_i}{2}-\frac{r_i}{2})$. Thus we have a partition of $[1, r]$ where all but the first interval are yellow.

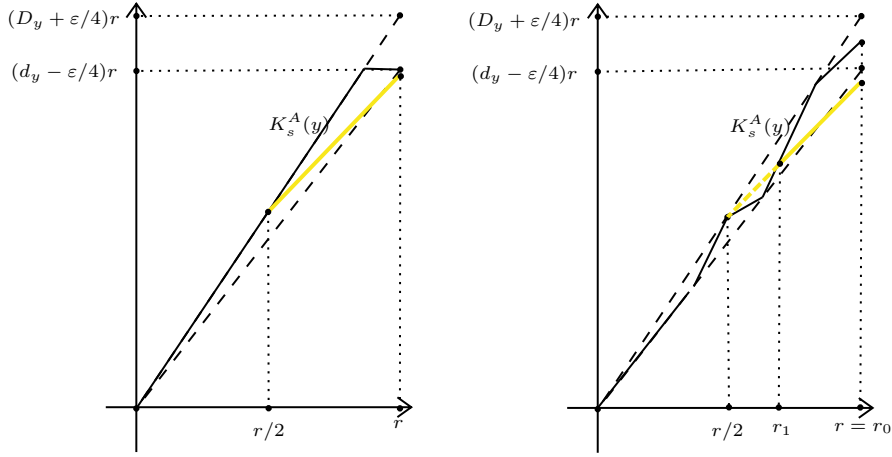
\begin{figure}[t]

\centering

\tikzset{every picture/.style={line width=0.75pt}} 

\begin{tikzpicture}[x=0.75pt,y=0.75pt,yscale=-1,xscale=1]

\draw  (-41.89,310.34) -- (112.19,310.34)(-26.48,106.67) -- (-26.48,332.97) (105.19,305.34) -- (112.19,310.34) -- (105.19,315.34) (-31.48,113.67) -- (-26.48,106.67) -- (-21.48,113.67)  ;
\draw  [fill={rgb, 255:red, 0; green, 0; blue, 0 }  ,fill opacity=1 ] (-27.56,310.97) .. controls (-27.56,310.35) and (-27.08,309.86) .. (-26.48,309.86) .. controls (-25.89,309.86) and (-25.41,310.35) .. (-25.41,310.97) .. controls (-25.41,311.58) and (-25.89,312.08) .. (-26.48,312.08) .. controls (-27.08,312.08) and (-27.56,311.58) .. (-27.56,310.97) -- cycle ;
\draw  [fill={rgb, 255:red, 0; green, 0; blue, 0 }  ,fill opacity=1 ] (105.57,139.92) .. controls (105.57,139.31) and (106.05,138.81) .. (106.64,138.81) .. controls (107.24,138.81) and (107.72,139.31) .. (107.72,139.92) .. controls (107.72,140.53) and (107.24,141.03) .. (106.64,141.03) .. controls (106.05,141.03) and (105.57,140.53) .. (105.57,139.92) -- cycle ;
\draw  [fill={rgb, 255:red, 0; green, 0; blue, 0 }  ,fill opacity=1 ] (105.4,112.58) .. controls (105.4,111.97) and (105.88,111.47) .. (106.48,111.47) .. controls (107.07,111.47) and (107.56,111.97) .. (107.56,112.58) .. controls (107.56,113.2) and (107.07,113.69) .. (106.48,113.69) .. controls (105.88,113.69) and (105.4,113.2) .. (105.4,112.58) -- cycle ;
\draw  [dash pattern={on 4.5pt off 4.5pt}]  (106.48,112.58) -- (-26.48,310.97) ;
\draw  [dash pattern={on 4.5pt off 4.5pt}]  (106.64,139.92) -- (-26.48,310.97) ;
\draw  [fill={rgb, 255:red, 0; green, 0; blue, 0 }  ,fill opacity=1 ] (105.4,310.94) .. controls (105.39,310.32) and (105.87,309.82) .. (106.46,309.81) .. controls (107.05,309.8) and (107.55,310.28) .. (107.56,310.9) .. controls (107.57,311.51) and (107.09,312.02) .. (106.5,312.03) .. controls (105.9,312.04) and (105.41,311.55) .. (105.4,310.94) -- cycle ;
\draw  [fill={rgb, 255:red, 0; green, 0; blue, 0 }  ,fill opacity=1 ] (39.17,310.87) .. controls (39.16,310.26) and (39.63,309.75) .. (40.23,309.74) .. controls (40.82,309.73) and (41.31,310.22) .. (41.32,310.83) .. controls (41.33,311.44) and (40.86,311.95) .. (40.26,311.96) .. controls (39.67,311.97) and (39.18,311.48) .. (39.17,310.87) -- cycle ;
\draw  [fill={rgb, 255:red, 0; green, 0; blue, 0 }  ,fill opacity=1 ] (-27.51,140.64) .. controls (-27.52,140.03) and (-27.04,139.52) .. (-26.45,139.51) .. controls (-25.85,139.5) and (-25.36,139.99) .. (-25.35,140.6) .. controls (-25.34,141.22) and (-25.82,141.72) .. (-26.41,141.73) .. controls (-27.01,141.74) and (-27.5,141.26) .. (-27.51,140.64) -- cycle ;
\draw  [fill={rgb, 255:red, 0; green, 0; blue, 0 }  ,fill opacity=1 ] (-27.51,112.88) .. controls (-27.52,112.27) and (-27.04,111.76) .. (-26.45,111.75) .. controls (-25.85,111.74) and (-25.36,112.23) .. (-25.35,112.84) .. controls (-25.34,113.45) and (-25.82,113.96) .. (-26.41,113.97) .. controls (-27.01,113.98) and (-27.5,113.49) .. (-27.51,112.88) -- cycle ;
\draw  (190.81,310.7) -- (344.89,310.7)(206.22,107.03) -- (206.22,333.33) (337.89,305.7) -- (344.89,310.7) -- (337.89,315.7) (201.22,114.03) -- (206.22,107.03) -- (211.22,114.03)  ;
\draw  [fill={rgb, 255:red, 0; green, 0; blue, 0 }  ,fill opacity=1 ] (205.14,311.33) .. controls (205.14,310.72) and (205.62,310.22) .. (206.22,310.22) .. controls (206.81,310.22) and (207.29,310.72) .. (207.29,311.33) .. controls (207.29,311.94) and (206.81,312.44) .. (206.22,312.44) .. controls (205.62,312.44) and (205.14,311.94) .. (205.14,311.33) -- cycle ;
\draw  [fill={rgb, 255:red, 0; green, 0; blue, 0 }  ,fill opacity=1 ] (338.27,139.17) .. controls (338.27,138.56) and (338.75,138.06) .. (339.35,138.06) .. controls (339.94,138.06) and (340.42,138.56) .. (340.42,139.17) .. controls (340.42,139.79) and (339.94,140.28) .. (339.35,140.28) .. controls (338.75,140.28) and (338.27,139.79) .. (338.27,139.17) -- cycle ;
\draw  [fill={rgb, 255:red, 0; green, 0; blue, 0 }  ,fill opacity=1 ] (338.1,112.94) .. controls (338.1,112.33) and (338.59,111.83) .. (339.18,111.83) .. controls (339.78,111.83) and (340.26,112.33) .. (340.26,112.94) .. controls (340.26,113.56) and (339.78,114.05) .. (339.18,114.05) .. controls (338.59,114.05) and (338.1,113.56) .. (338.1,112.94) -- cycle ;
\draw  [dash pattern={on 4.5pt off 4.5pt}]  (339.18,112.94) -- (206.22,311.33) ;
\draw  [dash pattern={on 4.5pt off 4.5pt}]  (339.35,140.28) -- (206.22,311.33) ;
\draw  [fill={rgb, 255:red, 0; green, 0; blue, 0 }  ,fill opacity=1 ] (337.57,310.48) .. controls (337.56,309.87) and (338.04,309.36) .. (338.63,309.35) .. controls (339.23,309.34) and (339.72,309.83) .. (339.73,310.44) .. controls (339.74,311.05) and (339.26,311.56) .. (338.67,311.57) .. controls (338.07,311.58) and (337.58,311.09) .. (337.57,310.48) -- cycle ;
\draw  [fill={rgb, 255:red, 0; green, 0; blue, 0 }  ,fill opacity=1 ] (271.34,310.41) .. controls (271.33,309.8) and (271.8,309.29) .. (272.4,309.28) .. controls (272.99,309.27) and (273.48,309.76) .. (273.49,310.37) .. controls (273.5,310.99) and (273.03,311.49) .. (272.44,311.5) .. controls (271.84,311.51) and (271.35,311.02) .. (271.34,310.41) -- cycle ;
\draw  [fill={rgb, 255:red, 0; green, 0; blue, 0 }  ,fill opacity=1 ] (205.19,141) .. controls (205.18,140.39) and (205.66,139.89) .. (206.25,139.87) .. controls (206.85,139.86) and (207.34,140.35) .. (207.35,140.96) .. controls (207.36,141.58) and (206.89,142.08) .. (206.29,142.09) .. controls (205.7,142.1) and (205.21,141.62) .. (205.19,141) -- cycle ;
\draw  [fill={rgb, 255:red, 0; green, 0; blue, 0 }  ,fill opacity=1 ] (205.19,113.24) .. controls (205.18,112.63) and (205.66,112.12) .. (206.25,112.11) .. controls (206.85,112.1) and (207.34,112.59) .. (207.35,113.2) .. controls (207.36,113.81) and (206.89,114.32) .. (206.29,114.33) .. controls (205.7,114.34) and (205.21,113.85) .. (205.19,113.24) -- cycle ;
\draw    (-26.48,310.97) -- (88.43,139.44) ;
\draw    (88.43,139.44) -- (106.64,139.92) ;
\draw    (252.29,252.13) -- (206.22,310.7) ;
\draw [color={rgb, 255:red, 248; green, 231; blue, 28 }  ,draw opacity=1 ][line width=1.5]    (40,211.78) -- (105.82,143.31) ;
\draw  [fill={rgb, 255:red, 0; green, 0; blue, 0 }  ,fill opacity=1 ] (38.92,211.78) .. controls (38.92,211.16) and (39.4,210.67) .. (40,210.67) .. controls (40.59,210.67) and (41.07,211.16) .. (41.07,211.78) .. controls (41.07,212.39) and (40.59,212.89) .. (40,212.89) .. controls (39.4,212.89) and (38.92,212.39) .. (38.92,211.78) -- cycle ;
\draw    (254.68,249.4) -- (271.63,214.45) ;
\draw    (271.63,214.45) -- (289.38,204.34) ;
\draw    (289.38,204.34) -- (316.14,147.27) ;
\draw    (316.14,147.27) -- (338.4,126.24) ;
\draw  [fill={rgb, 255:red, 0; green, 0; blue, 0 }  ,fill opacity=1 ] (270.56,214.45) .. controls (270.56,213.83) and (271.04,213.34) .. (271.63,213.34) .. controls (272.23,213.34) and (272.71,213.83) .. (272.71,214.45) .. controls (272.71,215.06) and (272.23,215.56) .. (271.63,215.56) .. controls (271.04,215.56) and (270.56,215.06) .. (270.56,214.45) -- cycle ;
\draw [color={rgb, 255:red, 248; green, 231; blue, 28 }  ,draw opacity=1 ][line width=1.5]  [dash pattern={on 3.75pt off 1.5pt}]  (298.13,187.41) -- (271.63,214.45) ;
\draw [color={rgb, 255:red, 248; green, 231; blue, 28 }  ,draw opacity=1 ][line width=1.5]    (298.13,187.41) -- (338.4,146.72) ;
\draw  [fill={rgb, 255:red, 0; green, 0; blue, 0 }  ,fill opacity=1 ] (297.05,187.41) .. controls (297.05,186.8) and (297.53,186.3) .. (298.13,186.3) .. controls (298.72,186.3) and (299.2,186.8) .. (299.2,187.41) .. controls (299.2,188.02) and (298.72,188.52) .. (298.13,188.52) .. controls (297.53,188.52) and (297.05,188.02) .. (297.05,187.41) -- cycle ;
\draw  [dash pattern={on 0.84pt off 2.51pt}]  (298.13,187.41) -- (297.54,310.48) ;
\draw  [dash pattern={on 0.84pt off 2.51pt}]  (271.63,214.45) -- (272.42,310.39) ;
\draw  [fill={rgb, 255:red, 0; green, 0; blue, 0 }  ,fill opacity=1 ] (296.46,310.5) .. controls (296.45,309.88) and (296.93,309.38) .. (297.52,309.37) .. controls (298.12,309.36) and (298.61,309.85) .. (298.62,310.46) .. controls (298.63,311.07) and (298.15,311.58) .. (297.56,311.59) .. controls (296.96,311.6) and (296.47,311.11) .. (296.46,310.5) -- cycle ;
\draw  [dash pattern={on 0.84pt off 2.51pt}]  (40,211.78) -- (40.25,310.85) ;
\draw  [fill={rgb, 255:red, 0; green, 0; blue, 0 }  ,fill opacity=1 ] (338.4,126.24) .. controls (338.4,125.63) and (338.88,125.13) .. (339.47,125.13) .. controls (340.07,125.13) and (340.55,125.63) .. (340.55,126.24) .. controls (340.55,126.85) and (340.07,127.35) .. (339.47,127.35) .. controls (338.88,127.35) and (338.4,126.85) .. (338.4,126.24) -- cycle ;
\draw  [fill={rgb, 255:red, 0; green, 0; blue, 0 }  ,fill opacity=1 ] (338.4,146.72) .. controls (338.4,146.11) and (338.88,145.61) .. (339.47,145.61) .. controls (340.07,145.61) and (340.55,146.11) .. (340.55,146.72) .. controls (340.55,147.33) and (340.07,147.83) .. (339.47,147.83) .. controls (338.88,147.83) and (338.4,147.33) .. (338.4,146.72) -- cycle ;
\draw  [fill={rgb, 255:red, 0; green, 0; blue, 0 }  ,fill opacity=1 ] (105.82,143.31) .. controls (105.82,142.69) and (106.3,142.2) .. (106.89,142.2) .. controls (107.49,142.2) and (107.97,142.69) .. (107.97,143.31) .. controls (107.97,143.92) and (107.49,144.42) .. (106.89,144.42) .. controls (106.3,144.42) and (105.82,143.92) .. (105.82,143.31) -- cycle ;
\draw  [dash pattern={on 0.84pt off 2.51pt}]  (106.64,139.92) -- (106.46,309.81) ;
\draw  [dash pattern={on 0.84pt off 2.51pt}]  (339.47,127.35) -- (338.67,311.57) ;
\draw  [dash pattern={on 0.84pt off 2.51pt}]  (106.64,139.92) -- (-26.43,140.62) ;
\draw  [dash pattern={on 0.84pt off 2.51pt}]  (106.48,112.58) -- (-27.51,112.88) ;
\draw  [dash pattern={on 0.84pt off 2.51pt}]  (206.27,140.98) -- (339.35,139.17) ;
\draw  [dash pattern={on 0.84pt off 2.51pt}]  (206.27,113.22) -- (339.18,112.94) ;

\draw (38.67,160.52) node [anchor=north west][inner sep=0.75pt]  [font=\scriptsize]  {$K^A_{s}( y)$};
\draw (101.52,315.39) node [anchor=north west][inner sep=0.75pt]  [font=\scriptsize]  {$r$};
\draw (30.82,316.41) node [anchor=north west][inner sep=0.75pt]  [font=\scriptsize]  {$r/2$};
\draw (-91.2,135.29) node [anchor=north west][inner sep=0.75pt]  [font=\scriptsize]  {$( d_{y} -\varepsilon /4) r$};
\draw (-93.09,108.54) node [anchor=north west][inner sep=0.75pt]  [font=\scriptsize]  {$( D_{y} +\varepsilon /4) r$};
\draw (271.18,161.22) node [anchor=north west][inner sep=0.75pt]  [font=\scriptsize]  {$K^A_{s}( y)$};
\draw (323.91,316.6) node [anchor=north west][inner sep=0.75pt]  [font=\scriptsize]  {$r=r_{0}$};
\draw (262.18,316.29) node [anchor=north west][inner sep=0.75pt]  [font=\scriptsize]  {$r/2$};
\draw (140.86,136.09) node [anchor=north west][inner sep=0.75pt]  [font=\scriptsize]  {$( d_{y} -\varepsilon /4) r$};
\draw (139.17,109.46) node [anchor=north west][inner sep=0.75pt]  [font=\scriptsize]  {$( D_{y} +\varepsilon /4) r$};
\draw (292.67,317.32) node [anchor=north west][inner sep=0.75pt]  [font=\scriptsize]  {$r_{1}$};

\end{tikzpicture}
\caption{On the left, the adversary complexity function that grows as quickly as possible (given $D_y$) and then levels off. On the right, an implementation of the procedure, generating a yellow interval $[r_1, r]$ by sending out a line of slope 1 from $\left(\frac{r}{2}, K^A_{\frac{r}{2}}(y)\right)$ and then finding the last intersection of this line with $K^A_s(y)$.}
\label{fig:allyellow}
\end{figure}

We want to apply Lemma \ref{lem:boundGoodPartitionDistance}, so we make $\mathcal{P}$ into a good partition by taking a good partition $\mathcal{P}_{[1, r_n]}$ of the first interval $[1, r_n]$ and replacing it in $\mathcal{P}$ with $\mathcal{P}_{[1, r_n]}$. As for the remaining intervals, the union of yellow intervals is still yellow, so simply greedily combine them from the left to the right. Start a new yellow interval each time adding the next $[r_{i+1}, r_i]$ would make the current yellow interval more than doubling. For ease of notation, continue to denote this modification of  $\mathcal{P}$ by $\mathcal{P}$. The conditions being satisfied, we apply Lemma \ref{lem:boundGoodPartitionDistance} relative to $A$ with $\frac{\ve}{2}$ via (\ref{eq:distancepartitionbound}), obtaining:

\begin{align*}
    K^{x, A}_r(\vert x - y\vert) &\geq K^A_r(y)  - \frac{\ve}{2} r \sum\limits_{i\in \mathcal{P}\setminus\mathcal{P}_{[1, r_n]}} K^A_{a_{i+1}, a_{i}}(y \mid y) - (a_{i+1} - a_i)\\
    &\geq K^A_r(y) - (K^A_r(y)-r + \frac{\ve}{2}r) - \frac{\ve}{2}r\\
    &=(1 - \ve)r
\end{align*}

\noindent Since we only needed $r$ to be sufficiently large, 

\begin{equation*}
\dim^{x, A}(\vert x-y\vert)\geq 1-\ve.
\end{equation*}

\noindent and taking a sequence of $\ve$ going to 0 gives the desired conclusion.

\end{proofof}

\bigskip

\subsection{Constructing a general partition}

In this subsection, we describe a partitioning strategy that works outside of the special case considered in Proposition \ref{prop:yellowfulldimension}. The key limitation of the previous subsection was that we could only use intervals that were at most doubling, which we now would like to enhance - at least for certain intervals - using the projection theorem of section 3. The new partition will involve a combination of yellow intervals and certain teal intervals, with the teal intervals chosen so we can apply Theorem \ref{thm:modifiedProjectionTheorem} to understand the complexity growth of $\vert x-y\vert$ on them. To begin, fix a precision $r\in\N$. To keep the expressions involved reasonably brief, we set
\begin{center}
    $d = \min\{d_x, d_y\}$ and $D = \max\{D_x, D_y\}$.\footnote{It would be possible to obtain a somewhat better bound in Theorem \ref{thm:moregeneralmaintheorem} that more freely involves $d_x, d_y, D_x,$ and $D_y$ using the same approach of this section, at the cost of significantly worse calculations.}
\end{center}
Our goal is to give a lower bound on the complexity $K^{A,x}_r(\vert x-y\vert)$ at precision $r$. We will first define a sequence $r = r_0 > r_1 >\ldots > r_k = 1$ of precisions. We then prove a lower bound of the complexity $K^A_{r_{i+1}, r_i}(\vert x - y\vert \mid \vert x - y\vert)$ on each interval of the resulting partition of $[1, r]$. 

\medskip

\noindent \textbf{Constructing the partition $\mathcal{P}$}: We define the sequence $\{r_i\}$ inductively. To begin, we set $r_0 = r$. Having defined the sequence up to $r_i$, we choose $r_{i+1}$ as follows. Let $a \leq r_i$ be the minimal real such that $[a, r_i]$ can be written as the union of yellow intervals whose lengths are at most doubling. If $a < r_i$, then we set $r_{i+1} = a$. In this case, we will refer to $[r_{i+1},r_i]$ as a \textbf{yellow} interval of $\mathcal{P}$.

Otherwise, let $t_i < r_i$ be the maximum of all reals such that
\begin{equation}\label{eq:choiceOfRi+1}
    f(t) = f(r_i) + \frac{D - 1}{D}\left(dr_i - (d +1)t\right) - d(r_i-t).
\end{equation}
Let $t_i^\prime < r_i$ be the largest real such that $f(r_i) = f(t^\prime_i) + r_i - t_i^\prime$. Note that such a $t^\prime$ must exist. We then set 
\begin{equation}
    r_{i+1} = \max\{t_i, t_i^\prime\}.
\end{equation}
Note that, in this case, $[r_{i+1}, r_i]$ is teal. We therefore refer to intervals of this case as \textbf{teal} intervals of $\mathcal{P}$.

We begin by observing that our partition is well defined. 
\begin{obs}\label{obs:wellDefined}
Suppose that $r_i\leq r$ and $r_i > C$, for some fixed constant $C$ depending only on $y$. Then there is at least one $t$ such that
\begin{equation*}
    f(t) = f(r_i) + \frac{D - 1}{D}\left(dr_i - (d +1)t\right) - d(r_i-t).
\end{equation*}
\end{obs}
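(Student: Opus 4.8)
The plan is to view the right-hand side of the defining equation as an affine function of $t$ and to locate a zero of the difference by the intermediate value theorem, using that $f$ is piecewise linear and hence continuous. First I would expand
\[
h(t) \;:=\; f(r_i) + \frac{D-1}{D}\bigl(dr_i - (d+1)t\bigr) - d(r_i - t) \;=\; \Bigl(f(r_i) - \tfrac{d}{D}r_i\Bigr) + \tfrac{d+1-D}{D}\,t ,
\]
so that $h$ is continuous with slope $\tfrac{d+1-D}{D}$, which is strictly positive (since $d>1$ and $D\le 2$ because $y\in\R^2$) and at most $2$. Set $g := f - h$; this is continuous on $\R_+$, and it suffices to exhibit $t\in(0,r_i)$ with $g(t)=0$.

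Next I would carry out the two sign computations. At the right endpoint, a one-line calculation gives $h(r_i) = f(r_i) - \tfrac{D-1}{D}r_i$, hence $g(r_i) = \tfrac{D-1}{D}r_i > 0$, using only $D>1$ (which holds since $D\ge D_y\ge d_y>1$ by (C1)). For the left end, I would evaluate at the fixed precision $t=1$: here $f(1)=K^A_1(y)$ is a constant independent of $r_i$, while $h(1)\ge h(0) = f(r_i) - \tfrac{d}{D}r_i$ since $h$ is increasing. Because $D>1$ and $d = \min\{d_x,d_y\}\le d_y$ force $\tfrac{d}{D} < d \le d_y = \dim^A(y)$, one may pick $\ve := \tfrac{d(D-1)}{2D} > 0$ with $d_y - \ve > \tfrac{d}{D}$; then for all sufficiently large $r_i$ we have $f(r_i)\ge (d_y-\ve)r_i$ (the bound $K^A_s(y)\ge(d_y-\ve)s$ holds for all large $s$ because effective Hausdorff dimension is a $\liminf$, and it passes to the interpolant $f$ by convexity of the chords), so
\[
h(0) \;\ge\; \Bigl(d_y - \ve - \tfrac{d}{D}\Bigr) r_i \;\ge\; \ve\, r_i .
\]
Consequently $g(1) = f(1) - h(1) \le f(1) - \ve r_i < 0$ once $r_i$ exceeds a threshold $C$ depending only on $y$ (through $f$ and $f(1)$, together with the fixed data $A,d,D$).

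With $g$ continuous and $g(1) < 0 < g(r_i)$, the intermediate value theorem yields $t\in(1,r_i)$ with $g(t)=0$, i.e.\ $f(t)$ equals the right-hand side of the defining equation; moreover $t<r_i$, so the ``maximum such real'' used to define $r_{i+1}$ in the teal case is a maximum over a nonempty, closed, bounded set and is therefore attained. The only step with any content is the estimate $h(0)\ge \ve r_i$, and the one inequality that must be verified is $d_y - \tfrac{d}{D} > 0$; this is automatic from $D>1$ and $d\le d_y$, so I do not expect a genuine obstacle here — everything else is routine algebra and continuity.
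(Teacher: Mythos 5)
Your proof is correct and takes essentially the same route as the paper: both compute the right-hand side at the two ends of the interval (the paper uses $t=0$ where $f(0)=O(1)$, you use $t=1$), observe that the linear function sits above $f$ at the left end and below $f$ at the right end for $r_i$ large, and conclude by the intermediate value theorem. The slope computation and the explicit choice of $\ve$ are extra bookkeeping that the paper elides but do not change the argument.
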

\begin{proof}
We first note that $f(0) = O(1)$, and so
    \begin{align*}
        f(r_i) + \frac{D - 1}{D}\left(dr_i - (d +1)t\right) - d(r_i-t) &= f(r_i) + d\frac{D - 1}{D}r_i - dr_i\\
        &= f(r_i) - \frac{d}{D}r_i\\
        &> f(0).
\end{align*}

We also see that, at $t = r_i$,
    \begin{equation*}
        f(r_i) + \frac{D - 1}{D}\left(dr_i - (d +1)t\right) - d(r_i-t) < f(r_i),
    \end{equation*}
and so by the mean value theorem, the conclusion follows.
\end{proof}

We now show that a partition $\mathcal{P}$ does not contain too many intervals. This will allow us to control the accumulation of error terms when applying the symmetry of information.
\begin{lem}\label{lem:notTooManyIntervals}
If $[r_{i+1}, r_i] \in \mathcal{P}$ is teal, then $r_{i+1} \leq \frac{r_i}{2}$.
\end{lem}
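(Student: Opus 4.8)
The plan is to unwind what it means for $[r_{i+1},r_i]$ to be a \emph{teal} interval of $\mathcal P$, and then to play the defining equations for $t_i$ and $t_i'$ against the teal inequality. Recall that a teal interval of $\mathcal P$ is produced exactly in the ``otherwise'' branch of the construction: the minimal real $a\le r_i$ for which $[a,r_i]$ is a union of at-most-doubling yellow intervals equals $r_i$, and $r_{i+1}=\max\{t_i,t_i'\}$ (with $t_i$ well-defined by Observation~\ref{obs:wellDefined}). The first fact I would record is the consequence of $a=r_i$: no interval $[c,r_i]$ with $c\in[r_i/2,r_i)$ is yellow, since such an interval is at most doubling (as $r_i\le 2c$) and a yellow one would by itself witness $a\le c<r_i$. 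Because $r_{i+1}=\max\{t_i,t_i'\}$, it suffices to prove $t_i'\le r_i/2$ when $r_{i+1}=t_i'$ and $t_i\le r_i/2$ when $r_{i+1}=t_i$.

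Consider first the case $r_{i+1}=t_i'$. By the defining property of $t_i'$ we have $f(r_i)-f(t_i')=r_i-t_i'$, and by hypothesis $[t_i',r_i]$ is teal, so $f(r_i)-f(c)\le r_i-c$ for all $c\in[t_i',r_i]$. Subtracting these gives $f(c)-f(t_i')\ge c-t_i'$ for all such $c$, that is, $[t_i',r_i]$ is also yellow. If $t_i'\ge r_i/2$, then $[t_i',r_i]$ is an at-most-doubling yellow interval with left endpoint strictly below $r_i$, contradicting $a=r_i$; hence $t_i'<r_i/2$.

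Now consider the case $r_{i+1}=t_i$. Writing $L=r_i-t_i$ and substituting $dr_i-(d+1)t_i=-r_i+(d+1)L$ into \eqref{eq:choiceOfRi+1}, a short computation gives $f(r_i)-f(t_i)=\tfrac1D\big((D-1)r_i+(d+1-D)L\big)$. Since $[t_i,r_i]$ is teal, $f(r_i)-f(t_i)\le r_i-t_i=L$, which rearranges to $(D-1)r_i\le(2D-d-1)L$. As $d\le D$ and $d>1$ we have $2D-d-1\ge d-1>0$, so $L\ge\frac{(D-1)r_i}{2D-d-1}$ and hence $t_i=r_i-L\le r_i\cdot\frac{D-d}{2D-d-1}$. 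Finally $\frac{D-d}{2D-d-1}\le\frac12$ is equivalent to $d\ge1$, which holds since $d>1$, so $t_i\le r_i/2$ and the proof is complete.

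The one point requiring care — and the main ``obstacle,'' such as it is — is to use the hypothesis ``$[r_{i+1},r_i]$ is teal'' in full strength: one needs both that we are in the $a=r_i$ branch (to rule out long yellow intervals ending at $r_i$, used in the $t_i'$ case) and that the interval genuinely satisfies the teal inequality (used in both cases). Everything else is the arithmetic above, and it is precisely there that the hypothesis $d>1$ from (C1) is spent.
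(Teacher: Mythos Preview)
Your proof is correct and follows essentially the same strategy as the paper: use that the ``otherwise'' branch forces $a=r_i$ (so no interval $[t,r_i]$ with $t\in[r_i/2,r_i)$ is yellow), then control $t_i'$ and $t_i$ separately via their defining equations together with the teal inequality. The paper proves both $t_i'<r_i/2$ and $t_i\le r_i/2$ outright, whereas you case-split on which of the two equals $r_{i+1}$ and invoke the teal hypothesis only at that endpoint; this is a cosmetic difference. Your algebra in the $t_i$ case is in fact cleaner than the paper's displayed inequality, which appears to contain a typo (the coefficient $\tfrac{d+1}{D}$ there should read $\tfrac{2D-d-1}{D}$, which is exactly what your rearrangement $(D-1)r_i\le(2D-d-1)L$ yields).
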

\begin{proof}
Suppose that $[r_{i+1}, r_i] \in \mathcal{P}$ is teal. Then, by the construction of $\mathcal{P}$, $[t, r_i]$ is not yellow, for any $\frac{r_i}{2}\leq t < r_i$. This immediately implies that $t^\prime_i < \frac{r_i}{2}$. Moreover, for any $t > \frac{r_{i}}{2}$, we see that
\begin{align*}
    f(t) - f(r_i) + \frac{d}{D}r_i - \frac{d+1-D}{D}t &\geq \frac{d+1}{D}t - \frac{D-d}{D}r_i\\
    > 0,
\end{align*}
implying that $t_i \leq \frac{r_i}{2}$, and the conclusion follows.
\end{proof}

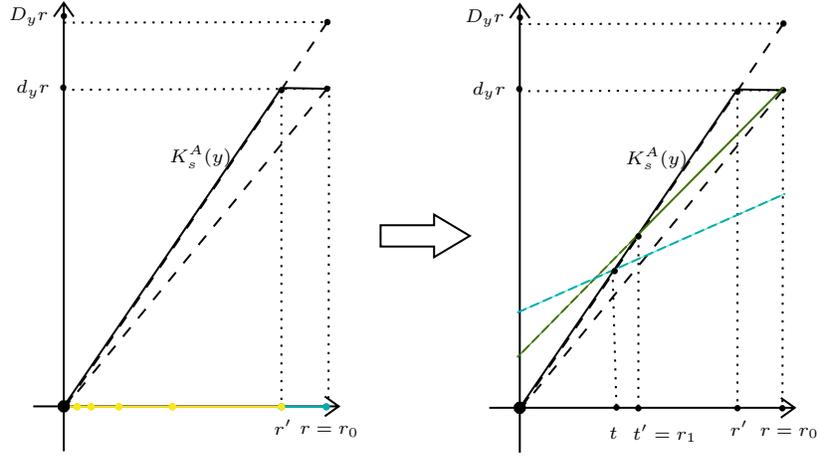
\begin{figure}[t]

\centering

\tikzset{every picture/.style={line width=0.75pt}} 

\begin{tikzpicture}[x=0.75pt,y=0.75pt,yscale=-1,xscale=1]

\draw  (204.81,269.3) -- (358.89,269.3)(220.22,65.62) -- (220.22,291.93) (351.89,264.3) -- (358.89,269.3) -- (351.89,274.3) (215.22,72.62) -- (220.22,65.62) -- (225.22,72.62)  ;
\draw  [fill={rgb, 255:red, 0; green, 0; blue, 0 }  ,fill opacity=1 ] (351.92,109) .. controls (351.92,108.39) and (352.41,107.89) .. (353,107.89) .. controls (353.59,107.89) and (354.08,108.39) .. (354.08,109) .. controls (354.08,109.61) and (353.59,110.11) .. (353,110.11) .. controls (352.41,110.11) and (351.92,109.61) .. (351.92,109) -- cycle ;
\draw  [fill={rgb, 255:red, 0; green, 0; blue, 0 }  ,fill opacity=1 ] (352.1,75.35) .. controls (352.1,74.74) and (352.59,74.24) .. (353.18,74.24) .. controls (353.78,74.24) and (354.26,74.74) .. (354.26,75.35) .. controls (354.26,75.97) and (353.78,76.46) .. (353.18,76.46) .. controls (352.59,76.46) and (352.1,75.97) .. (352.1,75.35) -- cycle ;
\draw  [dash pattern={on 4.5pt off 4.5pt}]  (353.18,75.35) -- (220.22,270.41) ;
\draw  [dash pattern={on 4.5pt off 4.5pt}]  (353,109) -- (220.22,270.41) ;
\draw  [fill={rgb, 255:red, 0; green, 0; blue, 0 }  ,fill opacity=1 ] (351.57,269.56) .. controls (351.56,268.94) and (352.04,268.44) .. (352.63,268.43) .. controls (353.23,268.42) and (353.72,268.9) .. (353.73,269.52) .. controls (353.74,270.13) and (353.26,270.64) .. (352.67,270.65) .. controls (352.07,270.66) and (351.58,270.17) .. (351.57,269.56) -- cycle ;
\draw  [fill={rgb, 255:red, 0; green, 0; blue, 0 }  ,fill opacity=1 ] (218.9,108.41) .. controls (218.89,107.8) and (219.37,107.29) .. (219.96,107.28) .. controls (220.56,107.27) and (221.05,107.76) .. (221.06,108.37) .. controls (221.07,108.98) and (220.59,109.49) .. (220,109.5) .. controls (219.41,109.51) and (218.91,109.02) .. (218.9,108.41) -- cycle ;
\draw  [fill={rgb, 255:red, 0; green, 0; blue, 0 }  ,fill opacity=1 ] (219.19,72.32) .. controls (219.18,71.7) and (219.66,71.2) .. (220.25,71.19) .. controls (220.85,71.18) and (221.34,71.67) .. (221.35,72.28) .. controls (221.36,72.89) and (220.89,73.4) .. (220.29,73.41) .. controls (219.7,73.42) and (219.21,72.93) .. (219.19,72.32) -- cycle ;
\draw  [dash pattern={on 0.84pt off 2.51pt}]  (354.06,107.87) -- (353.73,269.52) ;
\draw  [dash pattern={on 0.84pt off 2.51pt}]  (220,109.5) -- (353,109) ;
\draw  [dash pattern={on 0.84pt off 2.51pt}]  (220.27,75.63) -- (353.18,75.35) ;
\draw  [dash pattern={on 0.84pt off 2.51pt}]  (330,108.6) -- (330,269.5) ;
\draw  [fill={rgb, 255:red, 0; green, 0; blue, 0 }  ,fill opacity=1 ] (328.92,109.71) .. controls (328.92,109.1) and (329.41,108.6) .. (330,108.6) .. controls (330.59,108.6) and (331.08,109.1) .. (331.08,109.71) .. controls (331.08,110.33) and (330.59,110.82) .. (330,110.82) .. controls (329.41,110.82) and (328.92,110.33) .. (328.92,109.71) -- cycle ;
\draw [color={rgb, 255:red, 0; green, 173; blue, 173 }  ,draw opacity=1 ][fill={rgb, 255:red, 80; green, 227; blue, 194 }  ,fill opacity=1 ]   (330,269.5) -- (352.65,269.54) ;
\draw [shift={(352.65,269.54)}, rotate = 0.09] [color={rgb, 255:red, 0; green, 173; blue, 173 }  ,draw opacity=1 ][fill={rgb, 255:red, 0; green, 173; blue, 173 }  ,fill opacity=1 ][line width=0.75]      (0, 0) circle [x radius= 1.34, y radius= 1.34]   ;
\draw [shift={(330,269.5)}, rotate = 0.09] [color={rgb, 255:red, 0; green, 173; blue, 173 }  ,draw opacity=1 ][fill={rgb, 255:red, 0; green, 173; blue, 173 }  ,fill opacity=1 ][line width=0.75]      (0, 0) circle [x radius= 1.34, y radius= 1.34]   ;
\draw [color={rgb, 255:red, 248; green, 231; blue, 28 }  ,draw opacity=1 ][fill={rgb, 255:red, 248; green, 231; blue, 28 }  ,fill opacity=1 ]   (275,269.5) -- (330,269.5) ;
\draw [shift={(330,269.5)}, rotate = 0] [color={rgb, 255:red, 248; green, 231; blue, 28 }  ,draw opacity=1 ][fill={rgb, 255:red, 248; green, 231; blue, 28 }  ,fill opacity=1 ][line width=0.75]      (0, 0) circle [x radius= 1.34, y radius= 1.34]   ;
\draw [shift={(275,269.5)}, rotate = 0] [color={rgb, 255:red, 248; green, 231; blue, 28 }  ,draw opacity=1 ][fill={rgb, 255:red, 248; green, 231; blue, 28 }  ,fill opacity=1 ][line width=0.75]      (0, 0) circle [x radius= 1.34, y radius= 1.34]   ;
\draw [color={rgb, 255:red, 248; green, 231; blue, 28 }  ,draw opacity=1 ][fill={rgb, 255:red, 248; green, 231; blue, 28 }  ,fill opacity=1 ]   (248,269.5) -- (275,269.5) ;
\draw [shift={(275,269.5)}, rotate = 0] [color={rgb, 255:red, 248; green, 231; blue, 28 }  ,draw opacity=1 ][fill={rgb, 255:red, 248; green, 231; blue, 28 }  ,fill opacity=1 ][line width=0.75]      (0, 0) circle [x radius= 1.34, y radius= 1.34]   ;
\draw [shift={(248,269.5)}, rotate = 0] [color={rgb, 255:red, 248; green, 231; blue, 28 }  ,draw opacity=1 ][fill={rgb, 255:red, 248; green, 231; blue, 28 }  ,fill opacity=1 ][line width=0.75]      (0, 0) circle [x radius= 1.34, y radius= 1.34]   ;
\draw [color={rgb, 255:red, 248; green, 231; blue, 28 }  ,draw opacity=1 ][fill={rgb, 255:red, 248; green, 231; blue, 28 }  ,fill opacity=1 ]   (234,269.5) -- (248,269.5) ;
\draw [shift={(248,269.5)}, rotate = 0] [color={rgb, 255:red, 248; green, 231; blue, 28 }  ,draw opacity=1 ][fill={rgb, 255:red, 248; green, 231; blue, 28 }  ,fill opacity=1 ][line width=0.75]      (0, 0) circle [x radius= 1.34, y radius= 1.34]   ;
\draw [shift={(234,269.5)}, rotate = 0] [color={rgb, 255:red, 248; green, 231; blue, 28 }  ,draw opacity=1 ][fill={rgb, 255:red, 248; green, 231; blue, 28 }  ,fill opacity=1 ][line width=0.75]      (0, 0) circle [x radius= 1.34, y radius= 1.34]   ;
\draw [color={rgb, 255:red, 248; green, 231; blue, 28 }  ,draw opacity=1 ][fill={rgb, 255:red, 248; green, 231; blue, 28 }  ,fill opacity=1 ]   (227,269.5) -- (234,269.5) ;
\draw [shift={(234,269.5)}, rotate = 0] [color={rgb, 255:red, 248; green, 231; blue, 28 }  ,draw opacity=1 ][fill={rgb, 255:red, 248; green, 231; blue, 28 }  ,fill opacity=1 ][line width=0.75]      (0, 0) circle [x radius= 1.34, y radius= 1.34]   ;
\draw [shift={(227,269.5)}, rotate = 0] [color={rgb, 255:red, 248; green, 231; blue, 28 }  ,draw opacity=1 ][fill={rgb, 255:red, 248; green, 231; blue, 28 }  ,fill opacity=1 ][line width=0.75]      (0, 0) circle [x radius= 1.34, y radius= 1.34]   ;
\draw [color={rgb, 255:red, 248; green, 231; blue, 28 }  ,draw opacity=1 ][fill={rgb, 255:red, 248; green, 231; blue, 28 }  ,fill opacity=1 ]   (220.22,269.3) -- (227,269.5) ;
\draw [shift={(227,269.5)}, rotate = 1.71] [color={rgb, 255:red, 248; green, 231; blue, 28 }  ,draw opacity=1 ][fill={rgb, 255:red, 248; green, 231; blue, 28 }  ,fill opacity=1 ][line width=0.75]      (0, 0) circle [x radius= 1.34, y radius= 1.34]   ;
\draw [shift={(220.22,269.3)}, rotate = 1.71] [color={rgb, 255:red, 248; green, 231; blue, 28 }  ,draw opacity=1 ][fill={rgb, 255:red, 248; green, 231; blue, 28 }  ,fill opacity=1 ][line width=0.75]      (0, 0) circle [x radius= 1.34, y radius= 1.34]   ;
\draw    (330,108.6) -- (220.22,269.3) ;
\draw  [fill={rgb, 255:red, 0; green, 0; blue, 0 }  ,fill opacity=1 ][line width=3]  (219.14,269.31) .. controls (219.13,268.7) and (219.61,268.19) .. (220.2,268.19) .. controls (220.8,268.18) and (221.29,268.67) .. (221.29,269.28) .. controls (221.3,269.9) and (220.82,270.4) .. (220.23,270.41) .. controls (219.64,270.41) and (219.15,269.92) .. (219.14,269.31) -- cycle ;
\draw    (330,108.6) -- (353,109) ;
\draw  (434.81,270.02) -- (588.89,270.02)(450.22,66.35) -- (450.22,292.65) (581.89,265.02) -- (588.89,270.02) -- (581.89,275.02) (445.22,73.35) -- (450.22,66.35) -- (455.22,73.35)  ;
\draw  [fill={rgb, 255:red, 0; green, 0; blue, 0 }  ,fill opacity=1 ] (581.92,109.72) .. controls (581.92,109.11) and (582.41,108.61) .. (583,108.61) .. controls (583.59,108.61) and (584.08,109.11) .. (584.08,109.72) .. controls (584.08,110.34) and (583.59,110.83) .. (583,110.83) .. controls (582.41,110.83) and (581.92,110.34) .. (581.92,109.72) -- cycle ;
\draw  [fill={rgb, 255:red, 0; green, 0; blue, 0 }  ,fill opacity=1 ] (582.1,76.08) .. controls (582.1,75.46) and (582.59,74.97) .. (583.18,74.97) .. controls (583.78,74.97) and (584.26,75.46) .. (584.26,76.08) .. controls (584.26,76.69) and (583.78,77.19) .. (583.18,77.19) .. controls (582.59,77.19) and (582.1,76.69) .. (582.1,76.08) -- cycle ;
\draw  [dash pattern={on 4.5pt off 4.5pt}]  (583.18,76.08) -- (450.22,271.13) ;
\draw  [dash pattern={on 4.5pt off 4.5pt}]  (583,109.72) -- (450.22,271.13) ;
\draw  [fill={rgb, 255:red, 0; green, 0; blue, 0 }  ,fill opacity=1 ] (581.57,270.28) .. controls (581.56,269.67) and (582.04,269.16) .. (582.63,269.15) .. controls (583.23,269.14) and (583.72,269.63) .. (583.73,270.24) .. controls (583.74,270.85) and (583.26,271.36) .. (582.67,271.37) .. controls (582.07,271.38) and (581.58,270.89) .. (581.57,270.28) -- cycle ;
\draw  [fill={rgb, 255:red, 0; green, 0; blue, 0 }  ,fill opacity=1 ] (448.9,109.13) .. controls (448.89,108.52) and (449.37,108.02) .. (449.96,108) .. controls (450.56,107.99) and (451.05,108.48) .. (451.06,109.09) .. controls (451.07,109.71) and (450.59,110.21) .. (450,110.22) .. controls (449.41,110.23) and (448.91,109.75) .. (448.9,109.13) -- cycle ;
\draw  [fill={rgb, 255:red, 0; green, 0; blue, 0 }  ,fill opacity=1 ] (449.19,73.04) .. controls (449.18,72.43) and (449.66,71.92) .. (450.25,71.91) .. controls (450.85,71.9) and (451.34,72.39) .. (451.35,73) .. controls (451.36,73.62) and (450.89,74.12) .. (450.29,74.13) .. controls (449.7,74.14) and (449.21,73.65) .. (449.19,73.04) -- cycle ;
\draw  [dash pattern={on 0.84pt off 2.51pt}]  (583,109.72) -- (582.67,271.37) ;
\draw  [dash pattern={on 0.84pt off 2.51pt}]  (450,110.22) -- (583,109.72) ;
\draw  [dash pattern={on 0.84pt off 2.51pt}]  (450.27,76.35) -- (583.18,76.08) ;
\draw  [dash pattern={on 0.84pt off 2.51pt}]  (560,109.33) -- (560,270.22) ;
\draw  [fill={rgb, 255:red, 0; green, 0; blue, 0 }  ,fill opacity=1 ] (558.92,110.44) .. controls (558.92,109.83) and (559.41,109.33) .. (560,109.33) .. controls (560.59,109.33) and (561.08,109.83) .. (561.08,110.44) .. controls (561.08,111.05) and (560.59,111.55) .. (560,111.55) .. controls (559.41,111.55) and (558.92,111.05) .. (558.92,110.44) -- cycle ;
\draw    (560,109.33) -- (450.22,270.02) ;
\draw  [fill={rgb, 255:red, 0; green, 0; blue, 0 }  ,fill opacity=1 ][line width=3]  (449.14,270.03) .. controls (449.13,269.42) and (449.61,268.92) .. (450.2,268.91) .. controls (450.8,268.9) and (451.29,269.4) .. (451.29,270.01) .. controls (451.3,270.62) and (450.82,271.12) .. (450.23,271.13) .. controls (449.64,271.14) and (449.15,270.65) .. (449.14,270.03) -- cycle ;
\draw    (560,109.33) -- (583,109.72) ;
\draw [color={rgb, 255:red, 65; green, 117; blue, 5 }  ,draw opacity=1 ][fill={rgb, 255:red, 65; green, 117; blue, 5 }  ,fill opacity=1 ]   (583,108.61) -- (509,183.5) ;
\draw [color={rgb, 255:red, 65; green, 117; blue, 5 }  ,draw opacity=1 ][fill={rgb, 255:red, 65; green, 117; blue, 5 }  ,fill opacity=1 ] [dash pattern={on 3.75pt off 1.5pt}]  (509,183.5) -- (449,244.5) ;
\draw  [fill={rgb, 255:red, 0; green, 0; blue, 0 }  ,fill opacity=1 ] (509,183.5) .. controls (509,182.89) and (509.48,182.39) .. (510.08,182.39) .. controls (510.67,182.39) and (511.15,182.89) .. (511.15,183.5) .. controls (511.15,184.11) and (510.67,184.61) .. (510.08,184.61) .. controls (509.48,184.61) and (509,184.11) .. (509,183.5) -- cycle ;
\draw  [dash pattern={on 0.84pt off 2.51pt}]  (510.08,183.5) -- (510,271.39) ;
\draw [color={rgb, 255:red, 0; green, 173; blue, 173 }  ,draw opacity=1 ][fill={rgb, 255:red, 80; green, 227; blue, 194 }  ,fill opacity=1 ] [dash pattern={on 3.75pt off 1.5pt}]  (449,222) -- (584,162) ;
\draw  [dash pattern={on 0.84pt off 2.51pt}]  (497,201) -- (499,271) ;
\draw  [fill={rgb, 255:red, 0; green, 0; blue, 0 }  ,fill opacity=1 ] (558.92,270.22) .. controls (558.92,269.61) and (559.41,269.11) .. (560,269.11) .. controls (560.59,269.11) and (561.08,269.61) .. (561.08,270.22) .. controls (561.08,270.84) and (560.59,271.33) .. (560,271.33) .. controls (559.41,271.33) and (558.92,270.84) .. (558.92,270.22) -- cycle ;
\draw  [fill={rgb, 255:red, 0; green, 0; blue, 0 }  ,fill opacity=1 ] (508.92,270.28) .. controls (508.92,269.67) and (509.41,269.17) .. (510,269.17) .. controls (510.59,269.17) and (511.08,269.67) .. (511.08,270.28) .. controls (511.08,270.89) and (510.59,271.39) .. (510,271.39) .. controls (509.41,271.39) and (508.92,270.89) .. (508.92,270.28) -- cycle ;
\draw  [fill={rgb, 255:red, 0; green, 0; blue, 0 }  ,fill opacity=1 ] (497.92,269.89) .. controls (497.92,269.28) and (498.41,268.78) .. (499,268.78) .. controls (499.59,268.78) and (500.08,269.28) .. (500.08,269.89) .. controls (500.08,270.5) and (499.59,271) .. (499,271) .. controls (498.41,271) and (497.92,270.5) .. (497.92,269.89) -- cycle ;
\draw  [fill={rgb, 255:red, 0; green, 0; blue, 0 }  ,fill opacity=1 ] (497,201) .. controls (497,200.39) and (497.48,199.89) .. (498.08,199.89) .. controls (498.67,199.89) and (499.15,200.39) .. (499.15,201) .. controls (499.15,201.61) and (498.67,202.11) .. (498.08,202.11) .. controls (497.48,202.11) and (497,201.61) .. (497,201) -- cycle ;
\draw   (380,177.88) -- (407,177.88) -- (407,172.5) -- (425,183.25) -- (407,194) -- (407,188.63) -- (380,188.63) -- cycle ;

\draw (272.18,136.29) node [anchor=north west][inner sep=0.75pt]  [font=\scriptsize]  {$K^A_{s}( y)$};
\draw (337.91,277.68) node [anchor=north west][inner sep=0.75pt]  [font=\scriptsize]  {$r=r_{0}$};
\draw (194.86,103.17) node [anchor=north west][inner sep=0.75pt]  [font=\scriptsize]  {$d_{y} r$};
\draw (191.17,66.54) node [anchor=north west][inner sep=0.75pt]  [font=\scriptsize]  {$D_{y} r$};
\draw (324.91,274.4) node [anchor=north west][inner sep=0.75pt]  [font=\scriptsize]  {$r'$};
\draw (502.18,137.02) node [anchor=north west][inner sep=0.75pt]  [font=\scriptsize]  {$K^A_{s}( y)$};
\draw (569.91,278.4) node [anchor=north west][inner sep=0.75pt]  [font=\scriptsize]  {$r=r_{0}$};
\draw (424.86,103.9) node [anchor=north west][inner sep=0.75pt]  [font=\scriptsize]  {$d_{y} r$};
\draw (421.17,67.27) node [anchor=north west][inner sep=0.75pt]  [font=\scriptsize]  {$D_{y} r$};
\draw (554.91,275.12) node [anchor=north west][inner sep=0.75pt]  [font=\scriptsize]  {$r'$};
\draw (505.92,276.68) node [anchor=north west][inner sep=0.75pt]  [font=\scriptsize]  {$t'=r_{1}$};
\draw (493.92,278.4) node [anchor=north west][inner sep=0.75pt]  [font=\scriptsize]  {$t$};

\end{tikzpicture}

\caption{On the left, the adversary complexity function with $d_y=1.2$ and $D_y=1.44$, equipped with an initial good partition of yellow intervals from $1$ to $r^\prime$ and a teal interval $[r^\prime, r]$. On the right, generating an improved partition from the good partition. Note that the green interval $[r_1, r_0]$ is more than doubling. By combining it with a good partition of $[1, r_1]$-- which will clearly be all-yellow-- we obtain an all yellow partition of $[1, r]$. $1.44>2\cdot 1.2-1$, so this improves over the previous subsection.}
\label{fig:betterallyellow}
\end{figure}

\begin{lem}\label{lem:leftEndpointNotTooSmall}
    Let $\ve>0$. Suppose that $r_{i+1}$ is sufficiently large and $[r_{i+1}, r_i]$ is a teal interval of the above partition. Then 
    \begin{equation*}
        r_{i+1} \geq \frac{d(D-1)}{D^2 + D - d - 1}r_i-\ve r_i.
    \end{equation*}
    In particular,
    \begin{equation*}
        r_{i+1} \geq \frac{d(2-D)}{2+d(2-D)}r_i+1.
    \end{equation*}
\end{lem}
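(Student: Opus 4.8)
The plan is to use that $r_{i+1} = \max\{t_i, t_i'\} \ge t_i$, so it suffices to prove the bound for $t_i$, the largest real satisfying the defining equation \eqref{eq:choiceOfRi+1}. I would first rearrange \eqref{eq:choiceOfRi+1} into the equivalent form
\[
f(t_i) = f(r_i) - \tfrac{d}{D}r_i + \tfrac{d+1-D}{D}t_i,
\]
which is pure algebra; such a $t_i$ exists by Observation \ref{obs:wellDefined} since $r_i > r_{i+1}$ is large. All estimates below rest on two facts: $d \le d_y = \dim^A(y)$ gives $f(r_i) \ge (d-\ve')r_i$, and $D \ge D_y = \Dim^A(y)$ gives $f(s) \le (D+\ve')s$, both for $s$ sufficiently large (the piecewise-linear interpolation only costs a further harmless adjustment of $\ve'$).

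As a preliminary I would show that $t_i$ is itself large. Substituting the crude growth bound $f(t_i) \le 2t_i + O(\log r_i)$ and $f(r_i) \ge (d - \ve')r_i$ into the displayed equation and collecting the $t_i$-terms gives
\[
t_i\cdot\tfrac{3D-d-1}{D} \;\ge\; \tfrac{d(D-1)-D\ve'}{D}\,r_i - O(\log r_i),
\]
and since $3D - d - 1 > 2d-1 > 0$ this forces $t_i \ge c(d,D)\,r_i$ for a fixed positive constant $c(d,D)$ once $r_i$ (equivalently $r_{i+1}$, as $r_{i+1} < r_i$ in the teal case) is large. This step is unconditional, which is the point: it rules out any pathological case where $t_i$ is small while $r_{i+1} = t_i' > t_i$.

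The main step is then to replace the slope-$2$ estimate on $f(t_i)$ by the sharp ceiling $f(t_i) \le (D + \ve')t_i$, now legitimate because $t_i$ is large. Plugging this and $f(r_i) \ge (d-\ve')r_i$ into the displayed equation and solving for $t_i$ yields
\[
t_i\cdot\tfrac{D^2+D-d-1+D\ve'}{D} \;\ge\; \tfrac{d(D-1)-D\ve'}{D}\,r_i,
\]
and since $D^2 + D - d - 1 \ge D^2 - 1 > 0$, taking $\ve'$ small in terms of $\ve, d, D$ gives $r_{i+1} \ge t_i \ge \tfrac{d(D-1)}{D^2+D-d-1}r_i - \ve r_i$. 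I expect this to be the crux of the argument: the stated constant is exactly what drops out of using the packing-dimension upper bound on $f$ at the left endpoint $t_i$ rather than the trivial growth bound, so the lemma really hinges on first knowing $t_i$ is large enough to invoke that bound.

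Finally, for the ``in particular'' clause I would verify the elementary inequality $\tfrac{d(D-1)}{D^2+D-d-1} > \tfrac{d(2-D)}{2+d(2-D)}$ for $1 < d \le D \le 2$. Clearing denominators (both positive, since $D^2+D-d-1 \ge D^2-1>0$) reduces this to $D\big(D^2 - D - 1 + d(2-D)\big) > 0$, and $D^2 - D - 1 + d(2-D) \ge D^2 - D - 1 + (2-D) = (D-1)^2 > 0$ using $d > 1$ and $2-D \ge 0$. Since this yields a fixed positive gap, fixing $\ve$ below the gap and requiring $r_{i+1}$ large absorbs both the $-\ve r_i$ error and the additive $+1$; what remains is only routine bookkeeping of the $\ve'$-to-$\ve$ passage and of the logarithmic error terms.
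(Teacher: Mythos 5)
Your proof is correct and takes essentially the same route as the paper's: plug the effective Hausdorff lower bound at $r_i$ and the effective packing upper bound at the left endpoint into the defining relation \eqref{eq:choiceOfRi+1}, solve for the left endpoint, and then check the elementary inequality between the two constants. The only organizational difference is that you reduce to $t_i$ via $r_{i+1}\ge t_i$ and add a preliminary step showing $t_i\gtrsim r_i$ so the packing bound may be applied there, whereas the paper works directly with $r_{i+1}$ (implicitly using that $f$ lies above the line through $(t_i,f(t_i))$ on $[t_i,r_i]$); both handle the same subtlety.
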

\begin{proof}
Assume that $r_{i+1}$ is large enough that, for all $s>r_{i+1}$, 
\begin{equation*}
d s-\ve^\prime s \leq K^A_s(y) \leq D s+\ve^\prime s
\end{equation*}
\noindent for some $\ve^\prime$ to be determined. By our choice of $r_{i+1}$, 
\begin{equation}
        K^A_{r_{i+1}}(y) \geq K^A_{r_i}(y) - \frac{d}{D}r_i + \frac{1+d-D}{D}r_{i+1} - \ve^\prime r_i.
    \end{equation}
    We then have
    \begin{align*}
        D r_{i+1} &\geq D_y r_{i+1}\\
        &\geq K^A_{r_{i+1}}(y)-\ve^\prime r_{i+1}\\
        &\geq K^A_{r_i}(y) - \frac{d}{D}r_i + \frac{1+d-D-D\ve^\prime}{D}r_{i+1}\\
        &\geq dr_i-\ve^\prime r_i - \frac{d}{D}r_i + \frac{1+d-D-D\ve^\prime}{D}r_{i+1}\\
        &= \frac{d(D-1)-D\ve^\prime}{D}r_i + \frac{1+d-D-D\ve^\prime}{D}r_{i+1}.
    \end{align*}
    Rearranging and simplifying yields 
    \begin{equation}
        r_{i+1} \geq \frac{d(D-1)-D\ve^\prime}{D^2 + D - d - 1-D\ve^\prime}r_i.
    \end{equation}
\noindent Given $\ve>0$, choosing $\ve^\prime$ sufficiently small compared to these quantities gives the desired conclusion:

    \begin{equation}
        r_{i+1} \geq \frac{d(D-1)}{D^2 + D - d - 1}r_i-\ve r_i.
    \end{equation}

It is straightforward to verify that this implies that
\begin{equation*}
     r_{i+1} \geq \frac{d(2-D)}{2+d(2-D)}r_i
\end{equation*}
for sufficiently small $\ve$. Indeed, using the above inequality, it suffices to show that 
    \begin{equation*}
        \frac{d(D-1)}{D^2 + D - d - 1} > \frac{d(2-D)}{2+d(2-D)},
    \end{equation*}
or, equivalently, $d(2-D) > D -D^2+1$.

By our assumption, $d > 1$, and so the above inequality follows. Thus, for $r$ sufficiently large, we have
\begin{equation*}
    r_{i+1} \geq \frac{d(2-D)}{2+d(2-D)}r_i+1.
\end{equation*}
\end{proof}

\begin{lem}\label{lem:boundComplexityYellowNewPartition}
Let $\ve > 0$. Suppose that $[r_{i+1}, r_i]\in \mathcal{P}$ is a yellow interval. Then,
\begin{equation*}
    K^{A,x}_{r_i, r_{i+1}}(\vert x -y\vert \mid \vert x-y\vert) \geq r_i - r_{i+1} - \ve r_i.
\end{equation*}
\end{lem}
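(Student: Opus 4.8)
The plan is to show that on a yellow interval the complexity of $\vert x-y\vert$ grows at essentially the maximal rate $1$, by funnelling the (at least rate $1$) growth of $K^A_s(y)$ through $\vert x-y\vert$ using the conditional bound of Lemma~\ref{lem:distancesYellowTeal}. First I would observe that $[r_{i+1},r_i]$ is itself a yellow interval: by construction it is a union of adjacent yellow intervals each of length at most doubling, and a union of adjacent yellow intervals is yellow. Running the standard greedy procedure (combine adjacent yellow pieces from the left, starting a new piece whenever the next merge would make the current piece more than doubling) produces a decomposition $r_{i+1}=s_0<s_1<\cdots<s_m=r_i$ with each $[s_j,s_{j+1}]$ yellow, each $s_{j+1}\le 2s_j$, and each $s_{j+2}>2s_j$; the last property forces $m=O(\log r_i)$ and $\sum_j s_{j+1}=O(r_i)$, so the various $O(\log r_i)$ and $\ve$-errors below will not accumulate.

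Next, I would prove a localized version of Lemma~\ref{lem:boundGoodPartitionDistance} on $[r_{i+1},r_i]$: applying the symmetry of information (Lemma~\ref{lem:unichain}) to chain the pieces $[s_j,s_{j+1}]$ together and invoking Lemma~\ref{lem:distancesYellowTeal}(1) on each piece (legitimate since each piece is yellow and at most doubling) gives
\[
K^{A,x}_{r_i,r_i,r_{i+1}}(y\mid\vert x-y\vert,y)\le \sum_j\big(K^A_{s_{j+1},s_j}(y\mid y)-(s_{j+1}-s_j)\big)+\tfrac{\ve}{4}r_i .
\]
Here Lemma~\ref{lem:unichain}(ii) telescopes $\sum_j K^A_{s_{j+1},s_j}(y\mid y)$ to $K^A_{r_i,r_{i+1}}(y\mid y)$ up to $O(\log^2 r_i)=o(r_i)$, and $\sum_j s_{j+1}=O(r_i)$ keeps the total $\ve$-error of size $O(\ve r_i)$, so the right side becomes $K^A_{r_i,r_{i+1}}(y\mid y)-(r_i-r_{i+1})+\tfrac{\ve}{4}r_i$.

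Now I would combine this with the chain rule
\[
K^{A,x}_{r_i,r_{i+1}}(y\mid y)\le K^{A,x}_{r_i,r_{i+1}}(\vert x-y\vert\mid y)+K^{A,x}_{r_i,r_i,r_{i+1}}(y\mid\vert x-y\vert,y)+O(\log r_i),
\]
and with the fact that $x$ is useless for computing $y$: condition (C3) and Lemma~\ref{lem:symmetry} yield $K^{A,x}_s(y)=K^A_s(y)+O(\log r_i)$ for all $s\le r_i$, hence $K^{A,x}_{r_i,r_{i+1}}(y\mid y)=K^A_{r_i,r_{i+1}}(y\mid y)+O(\log r_i)$. The $K^A_{r_i,r_{i+1}}(y\mid y)$ terms cancel, leaving $K^{A,x}_{r_i,r_{i+1}}(\vert x-y\vert\mid y)\ge (r_i-r_{i+1})-\tfrac{\ve}{2}r_i$. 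Finally, since $x$ together with $y$ at precision $r_{i+1}$ computes $\vert x-y\vert$ at precision $r_{i+1}-O(1)$, conditioning on $y$ is no worse than conditioning on $\vert x-y\vert$, i.e. $K^{A,x}_{r_i,r_{i+1}}(\vert x-y\vert\mid y)\le K^{A,x}_{r_i,r_{i+1}}(\vert x-y\vert\mid\vert x-y\vert)+O(\log r_i)$; this gives the claimed bound $K^{A,x}_{r_i,r_{i+1}}(\vert x-y\vert\mid\vert x-y\vert)\ge r_i-r_{i+1}-\ve r_i$ once $r_i$ is large enough to absorb the logarithmic terms (for small $r_i$ the stated inequality is vacuous, its right side being then nonpositive).

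The main obstacle here is bookkeeping rather than a new idea. The two points requiring care are: (i) producing the doubling yellow decomposition of $[r_{i+1},r_i]$ with the anti-accumulation spacing $s_{j+2}>2s_j$, so that the $O(\log r_i)$ error from each of the $O(\log r_i)$ applications of symmetry of information stays $o(r_i)$ and the per-piece $\ve$-errors of Lemma~\ref{lem:distancesYellowTeal} sum to $O(\ve r_i)$; and (ii) tracking the precision mismatches when passing between conditioning on $y$ and conditioning on $\vert x-y\vert$ (and between $K^{A,x}$ and $K^A$ for $y$), each of which costs only $O(\log r_i)$ but must be accounted for. Everything else is a direct assembly of Lemmas~\ref{lem:unichain}, \ref{lem:symmetry}, \ref{lem:distancesYellowTeal} and condition (C3).
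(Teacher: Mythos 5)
Your proposal is correct and follows essentially the same route as the paper: build a good (doubling, anti-accumulating) all-yellow decomposition of $[r_{i+1},r_i]$ by the greedy merge, then invoke a localized form of Lemma~\ref{lem:boundGoodPartitionDistance}. The paper's proof is terser—it simply states the greedy construction and says the conclusion follows from that lemma—whereas you explicitly unwind the localization (chaining Lemma~\ref{lem:distancesYellowTeal}(1) with Lemma~\ref{lem:unichain}, using (C3) with Lemma~\ref{lem:symmetry} to replace $K^{A,x}$ by $K^A$ for $y$, and observing that conditioning on $y$ dominates conditioning on $\vert x-y\vert$ given $x$); this is exactly the content folded into the paper's citation.
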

\begin{proof}
    By assumption, $[r_{i+1}, r_i]$ is the union of of yellow intervals $[a_{j+1}, a_j]$ such that $a_j \leq 2a_{j+1}$. By a simple greedy strategy we can construct a partition $P_1 = \{[b_{k+1}, b_k]\}$ of $[r_{i+1}, r_i]$ such that, for every $k$, $[b_{k+1}, b_k]$ is yellow, $b_k \leq 2 b_{k+1}$ and $b_{k+2} > 2b_k$. That is, $P_1$ is a good partition of $[r_{i+1}, r_i]$. The conclusion then follows from Lemma \ref{lem:boundGoodPartitionDistance}.
\end{proof}

\begin{lem}\label{lem:slopeBoundTeal}
    Let $\ve>0$ be given and suppose that $[r_{i+1}, r_i]$ is teal and $r_i$ is sufficiently large. Then
    \begin{equation}
        \frac{K^A_{r_i, r_{i+1}}(y\mid y)}{r_i - r_{i+1}} \geq \min\{1, \frac{d(2D - d -1)}{D^2+D-Dd-1}-\ve\}
    \end{equation}
\end{lem}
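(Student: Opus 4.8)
The plan is to reduce everything to the explicit shape of a teal interval produced by the construction of $\mathcal{P}$ and then invoke Lemma~\ref{lem:leftEndpointNotTooSmall}. First I would record that, by Lemma~\ref{lem:unichain}(ii) together with the fact that $f$ agrees with $K^A_s(y)$ at integer precisions, $K^A_{r_i,r_{i+1}}(y\mid y) = f(r_i) - f(r_{i+1}) + O(\log r_i)$; since Lemma~\ref{lem:notTooManyIntervals} gives $r_i - r_{i+1}\geq r_i/2$ for teal intervals, this error is $o(r_i - r_{i+1})$ and can be absorbed into $\ve$. So it suffices to bound $\frac{f(r_i)-f(r_{i+1})}{r_i - r_{i+1}}$ from below.

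Recall $r_{i+1} = \max\{t_i,t_i'\}$. If $r_{i+1} = t_i'$, then by the definition of $t_i'$ we have $f(r_i) = f(r_{i+1}) + (r_i - r_{i+1})$, so the ratio equals $1$ and the $\min$ in the statement holds. So assume $r_{i+1} = t_i$. I would substitute $t = r_{i+1}$ into the defining equation \eqref{eq:choiceOfRi+1} and solve; a short computation gives
\[
f(r_i) - f(r_{i+1}) = \frac{d\,r_i + (D-d-1)\,r_{i+1}}{D}.
\]
Writing $\rho = r_{i+1}/r_i \in (0,1)$, the ratio to be bounded is $g(\rho):=\frac{d+(D-d-1)\rho}{D(1-\rho)}$, and $g'(\rho) = \frac{D-1}{D(1-\rho)^2} > 0$, so $g$ is increasing in $\rho$. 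Lemma~\ref{lem:leftEndpointNotTooSmall} supplies $\rho \geq \rho_0 - \ve'$ with $\rho_0 = \frac{d(D-1)}{D^2+D-d-1}$ for any $\ve'>0$ once $r_{i+1}$ is large, hence $g(\rho)\geq g(\rho_0 - \ve')$. Evaluating $g$ at $\rho_0$ and simplifying — after clearing the factor $D^2+D-d-1$ the numerator collapses to $dD(2D-d-1)$ and the denominator to $D(D^2+D-dD-1)$ — yields $g(\rho_0) = \frac{d(2D-d-1)}{D^2+D-Dd-1}$, which is exactly the claimed bound. Continuity of $g$ at $\rho_0$ then lets me pick $\ve'$ (and the threshold on $r_i$) so that the gap $g(\rho_0) - g(\rho_0-\ve')$ and the $O(\log r_i)$ term together cost at most $\ve$.

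The only point requiring care is checking that the denominators appearing — $D^2+D-d-1$ (in $\rho_0$), $D^2+D-dD-1$ (in the final bound), and $1-\rho_0$ — are all strictly positive under the standing hypotheses $1<d\leq D\leq 2$, so that the algebra and the continuity argument are valid; each is easily bounded below by a positive quantity (for instance $D^2+D-dD-1 \geq d-1>0$, since the convex function $D\mapsto D^2+(1-d)D-1$ is minimized over $D\geq d$ at $D=d$). I do not anticipate a genuine obstacle here — the work is bookkeeping of signs and of the $O(\log r)$ and $\ve'$ error terms — but lining up the case split ($t_i$ versus $t_i'$) with the monotonicity of $g$ and the bound of Lemma~\ref{lem:leftEndpointNotTooSmall} is the crux of the argument.
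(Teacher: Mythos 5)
Your proof is correct and follows the same route the paper sketches: split on whether $r_{i+1}$ equals $t_i'$ (green case, ratio is $1$) or $t_i$, in the latter case substitute into the defining equation~\eqref{eq:choiceOfRi+1} to get $f(r_i)-f(r_{i+1}) = \frac{dr_i+(D-d-1)r_{i+1}}{D}$, then invoke Lemma~\ref{lem:leftEndpointNotTooSmall} for a lower bound on $\rho=r_{i+1}/r_i$. The paper explicitly declines to show the ``tedious, but straightforward'' algebra; your observation that $g(\rho)=\frac{d+(D-d-1)\rho}{D(1-\rho)}$ has derivative $\frac{D-1}{D(1-\rho)^2}>0$, so the bound follows from monotonicity at $\rho_0=\frac{d(D-1)}{D^2+D-d-1}$, is exactly the clean way to carry out what was omitted, and your evaluation $g(\rho_0)=\frac{d(2D-d-1)}{D^2+D-Dd-1}$ and the sign checks (e.g.\ $D^2+D-dD-1\geq d-1>0$ for $D\geq d>1$) are all verified. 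The only minor addition relative to the paper is your explicit bookkeeping of the $O(\log r_i)$ error via Lemma~\ref{lem:unichain}(ii) and the fact that $r_i-r_{i+1}\geq r_i/2$ from Lemma~\ref{lem:notTooManyIntervals}, which the paper glosses over but which is indeed needed to pass from $f$-differences to $K^A_{r_i,r_{i+1}}(y\mid y)$.
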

\begin{proof}
    Recall that we chose $r_{i+1}$ to be 
    \begin{center}
        $r_{i+1} = \max\{t, t^\prime\}$,
    \end{center}
    where $t^\prime$ is the largest real such that $[t^\prime, r_i]$ is green, and $t$ is the largest real such that
    \begin{equation*}
         f(t) = f(r_i) + \frac{D - 1}{D}\left(dr_i - (d +1)t\right) - d(r_i-t).
    \end{equation*}
    If $r_{i+1} = t^\prime$, then 
    \begin{equation*}
        K^A_{r_{i+1}}(y) = K^A_{r_i}(y) - (r_i - r_{i+1}),
    \end{equation*}
    and the conclusion holds trivially.

    We now assume that $r_{i+1}=t$. Then, by the previous proposition,
    \begin{equation}
        r_{i+1} \geq \frac{d(D-1)}{D^2 + D - d - 1}r_i-\ve r_i.
    \end{equation}
We proceed via a tedious, but straightforward, calculation. Noting that our interval is not green, we have from the definition of $r_{i+1}$ that 
\begin{equation*}
    K^A_{r_i}(y)-K^A_{r_{i+1}}(y) =- \frac{D - 1}{D}\left(dr_i - (d +1)r_{i+1}\right) + d(r_i-r_{i+1}).
\end{equation*}
Using this condition and the above bound on $r_{i+1}$ allows one to bound the growth rate on such an interval. We omit the details since the algebra becomes quite unpleasant.
\end{proof}

\begin{obs}\label{obs:generalpartitionfulldimension}
When $D\leq\frac{(3+\sqrt{5})d-1-\sqrt{5}}{2}$, $\frac{K^A_{r_i, r_{i+1}}(y\mid y)}{r_i - r_{i+1}} = 1$ whenever $[r_{i+1},r_i]$ is teal. 
\end{obs}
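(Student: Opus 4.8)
The plan is to read the claim off directly from Lemma \ref{lem:slopeBoundTeal} by showing that, under the hypothesis on $D$, the minimum appearing there is attained by its first argument. Recall that Lemma \ref{lem:slopeBoundTeal} gives, for $r_i$ sufficiently large and $[r_{i+1},r_i]$ teal,
\[
\frac{K^A_{r_i, r_{i+1}}(y\mid y)}{r_i - r_{i+1}} \geq \min\Bigl\{1,\ \frac{d(2D - d -1)}{D^2+D-Dd-1}-\ve\Bigr\},
\]
while tealness itself supplies the matching upper bound: taking $c = r_{i+1}$ in the definition of a teal interval gives $f(r_i)-f(r_{i+1}) \leq r_i - r_{i+1}$, and since $f$ agrees with $K^A_s(y)$ on the integers, Lemma \ref{lem:unichain}(ii) yields $K^A_{r_i,r_{i+1}}(y\mid y) \leq r_i - r_{i+1} + O(\log r_i)$. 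So once we know the bracketed quantity $\tfrac{d(2D - d -1)}{D^2+D-Dd-1}$ is at least $1$, the two bounds force the ratio to equal $1$ (in the limit of large $r_i$ and small $\ve$).

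It therefore remains to verify the purely algebraic fact that $\tfrac{d(2D - d -1)}{D^2+D-Dd-1} \geq 1$ exactly when $D \leq \tfrac{(3+\sqrt{5})d-1-\sqrt{5}}{2}$. First one checks the denominator is positive on the relevant range: at $D=d$ it equals $d-1>0$, and its derivative in $D$ is $2D+1-d > 0$ for $D \geq d > 1$, so $D^2+D-Dd-1 > 0$ whenever $D \geq d$. Clearing the denominator, the inequality becomes $d(2D-d-1) \geq D^2+D-Dd-1$, i.e.
\[
D^2 + (1-3d)D + (d^2+d-1) \leq 0.
\]
The discriminant of this quadratic in $D$ is $(1-3d)^2 - 4(d^2+d-1) = 5d^2-10d+5 = 5(d-1)^2$, so since $d>1$ its roots are $\tfrac{(3-\sqrt{5})d-1+\sqrt{5}}{2}$ and $\tfrac{(3+\sqrt{5})d-1-\sqrt{5}}{2}$; the quadratic, opening upward, is nonpositive precisely between them. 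A one-line computation shows the smaller root is at most $d$ (it is $\leq d$ iff $(1-\sqrt{5})d \leq 1-\sqrt{5}$ iff $d \geq 1$), so for $D \geq d$ the constraint ``$D$ lies between the roots'' collapses to $D \leq \tfrac{(3+\sqrt{5})d-1-\sqrt{5}}{2}$, which is exactly the hypothesis of the observation. Combining with the previous paragraph finishes the proof.

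I do not expect a genuine obstacle here: the entire content is identifying the threshold algebraically. The only points requiring care are confirming the sign of the denominator before clearing it (so the direction of the inequality is preserved) and checking that, among the two roots of the quadratic, it is the larger one that is the binding constraint given $D \geq d > 1$; both are immediate. If one wants literal equality rather than equality up to the usual $\ve$ and $O(\log r_i)$ slack, one simply notes that the conclusion of the observation is used only in the form ``the growth rate is $1-\ve$,'' so the statement can be read in that sense.
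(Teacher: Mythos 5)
Your proof is correct and takes essentially the approach the paper intends: the paper states the observation without proof, expecting it to be read off from Lemma~\ref{lem:slopeBoundTeal}, and your algebra verifying that $\tfrac{d(2D-d-1)}{D^2+D-Dd-1}\geq 1$ iff $D$ lies at or below the stated threshold (given $D\geq d>1$) is exactly the omitted computation, with the matching upper bound from tealness and the $\ve/O(\log r)$ slack handled appropriately.
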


The last goal of this subsection is to prove that these particular teals are useful, namely, that we can lower bound the growth of the complexity of the distance by the growth of the complexity of $y$ on them. We start by stating the following lemma from \cite{Stull22c}:

\begin{lem}\label{lem:lowerBoundOtherPointDistance}
Let $x, y\in \R^2$ and $r\in \N$. Let $z\in \R^2$ such that $\vert x-y\vert = \vert x-z\vert$. Then for every $A\subseteq \N$,
\begin{equation}
K^A_r(z) \geq K^A_t(y) + K^A_{r-t, r}(x\mid y) - K_{r-t}(x\mid p_{e^\prime} x, e^\prime) - O(\log r),
\end{equation}
where $e^\prime = \frac{y-z}{\vert y-z\vert}$ and $t = -\log \vert y-z\vert$.
\end{lem}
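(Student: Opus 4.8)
The plan is to derive this from the projection version already recalled as Lemma~\ref{lem:lowerBoundOtherPoints}, by exploiting the fact that $y$ and $z$ lie on a common circle centered at $x$. Write $t=-\log|y-z|$ and $e'=\frac{y-z}{|y-z|}$; we may assume $1\le t\le r$, since this is the only regime used (and the only one in which the statement is non-vacuous). Two elementary geometric facts drive everything. First, by the definition of $e'$ we have $z-y=-2^{-t}e'$, so $y$ and $z$ differ only in the direction $e'$; in particular $p_{(e')^{\perp}}y=p_{(e')^{\perp}}z$. Second, since $|x-y|=|x-z|$, expanding $|x-z|^2=|x-y-2^{-t}e'|^2$ and cancelling $|x-y|^2$ gives $\langle x-y,e'\rangle=-2^{-t-1}$, that is, $p_{e'}x=p_{e'}y-2^{-t-1}$. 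Thus $p_{e'}x$ is computable from a good estimate of $y$ and a good estimate of $e'$, using only $t$ (i.e.\ $O(\log r)$ bits) of extra information — this is the one and only place the hypothesis $|x-y|=|x-z|$ enters.

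First I would apply Lemma~\ref{lem:lowerBoundOtherPoints}, relativized to $A$, with $y$ playing the role of its ``$z$'', our $z$ playing the role of its ``$w$'', and the direction $(e')^{\perp}$ playing the role of its ``$e$''. This is legitimate by the first geometric fact (the projections of $y$ and $z$ onto $(e')^{\perp}$ agree exactly, so certainly up to precision $r$), and the parameter $t=-\log|y-z|$ matches. The conclusion is $K^A_r(z)\ge K^A_t(y)+K^A_{r-t,r}((e')^{\perp}\mid y)-O(\log r)$, and since $(e')^{\perp}$ and $e'$ are computable from one another by a fixed rotation, $K^A_{r-t,r}((e')^{\perp}\mid y)=K^A_{r-t,r}(e'\mid y)+O(1)$. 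It therefore remains only to show
\[
K^A_{r-t,r}(e'\mid y)\ \ge\ K^A_{r-t,r}(x\mid y)-K_{r-t}(x\mid p_{e'}x,e')-O(\log r),
\]
after which combining the two displays yields the lemma.

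For this last inequality I would exhibit, relative to $A$, a short program computing $x$ at precision $r-t$ from a precision-$r$ estimate of $y$: first run a witness for $K^A_{r-t,r}(e'\mid y)$ to produce $e'$ at precision $r-t$; then, using the second geometric fact, compute $p_{e'}x=p_{e'}y-2^{-t-1}$ at precision $r-t$ from the given estimate of $y$ and the computed estimate of $e'$; finally run a witness for $K_{r-t}(x\mid p_{e'}x,e')$ on these two estimates to output $x$ at precision $r-t$. Summing the program lengths and the $O(\log r)$ of bookkeeping gives $K^A_{r-t,r}(x\mid y)\le K^A_{r-t,r}(e'\mid y)+K_{r-t}(x\mid p_{e'}x,e')+O(\log r)$, which rearranges to the displayed inequality.

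The only delicate points are bookkeeping. One must check that the computed estimate of $p_{e'}x=\langle y,e'\rangle-2^{-t-1}$ really is accurate to precision $r-t$ up to an additive constant: the inner product of a precision-$r$ estimate of $y$ with a precision-$(r-t)$ unit-vector estimate of $e'$ loses at most $O(\log|y|)$ bits of precision, which is absorbed into the error terms exactly as in Lemma~\ref{lem:unichain}, and the subtraction of the exactly computable number $2^{-t-1}$ costs nothing. One must also confirm that the ``round up to an integer / off by $O(1)$ in precision'' conventions of Section~\ref{sec:prelim} do not inflate the error past $O(\log r)$, which is routine. The conceptual content is entirely in the first paragraph: the circle constraint $|x-y|=|x-z|$ is precisely what turns ``describe the direction $e'$ given $y$'' into ``describe $x$ given $y$, minus the part of $x$ transverse to $e'$'', so that Lemma~\ref{lem:lowerBoundOtherPoints} can be invoked as a black box.
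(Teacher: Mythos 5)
There is a genuine gap in your second step, and it comes from a misreading of the definition of $t$. In the lemma $t=-\log|y-z|$ is a \emph{real} number; the convention of rounding up applies only to precision subscripts such as $K^A_t(y)$, but the number $2^{-t-1}$ in your identity $p_{e'}x=p_{e'}y-2^{-t-1}$ is the real quantity $|y-z|/2$, which is not ``exactly computable.'' The rounded integer $\lceil t\rceil$ pins $|y-z|$ down only to within a factor of two, i.e.\ to accuracy about $2^{-t}$, so the program you describe can estimate $p_{e'}x$ from $\hat y$ and $\hat e'$ only to accuracy roughly $2^{-t}$, not $2^{-(r-t)}$. Reaching precision $r-t$ would require about $r-2t$ additional bits specifying $|y-z|$, which is not $O(\log r)$ exactly in the regime $t<r/2$ in which the lemma is applied (Lemma~\ref{lem:goodtealgrowth}). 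Indeed the inequality you are trying to prove, $K^A_{r-t,r}(e'\mid y)\geq K^A_{r-t,r}(x\mid y)-K_{r-t}(x\mid p_{e'}x,e')-O(\log r)$, is simply false in general: take $y$ and $e'$ computable, let $\lambda=|y-z|\approx 2^{-t}$ be a random real, and set $z=y-\lambda e'$ and $x=(p_{e'}y-\lambda/2)\,e'+\beta\,(e')^{\perp}$ with $\beta$ random and independent of $\lambda$; then the left side is $O(\log r)$ while the right side is of order $r-2t$. The conceptual point is that $y$ and the direction $e'$ do \emph{not} determine the chord length $|y-z|$, hence do not determine the midpoint $w$, hence do not determine $p_{e'}x=p_{e'}w$. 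Your first step, invoking Lemma~\ref{lem:lowerBoundOtherPoints} as a black box, discards exactly this radial information: it replaces the quantity $K^A_r(z\mid y)$ by the strictly weaker $K^A_{r-t,r}(e'\mid y)$, and step two cannot recover what was thrown away.

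The proof in \cite{Stull22c}, which the remark after the lemma alludes to, works with $y$ and $z$ jointly rather than with $y$ and $e'$ alone. From precision-$r$ estimates of both $y$ and $z$ one recovers $y-z$ at precision $r$, hence \emph{simultaneously} $e'$ and $|y-z|$ (and so the midpoint $w=\tfrac{y+z}{2}$, and thus $p_{e'}x=p_{e'}w$) at precision $r-t-O(\log r)$. This gives $K^A_{r-t,r,r}(x\mid y,z)\leq K_{r-t}(x\mid p_{e'}x,e')+O(\log r)$, and therefore $K^A_r(z\mid y)\geq K^A_{r-t,r}(x\mid y)-K_{r-t}(x\mid p_{e'}x,e')-O(\log r)$. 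One then combines this not with Lemma~\ref{lem:lowerBoundOtherPoints} but with the symmetry-of-information estimate $K^A_r(z)\geq K^A_r(z\mid y)+K^A_t(y)-O(\log r)$, which follows from Lemma~\ref{lem:unichain} together with the observation that any precision-$r$ estimate of $z$ is automatically a precision-$(t-O(1))$ estimate of $y$. Your geometric identities and your application of Lemma~\ref{lem:lowerBoundOtherPoints} in step one are both correct; the flaw is the substitution of $K^A_{r-t,r}(e'\mid y)$ for $K^A_r(z\mid y)$ and the treatment of $2^{-t-1}$ as a known rational.
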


Though it may appear somewhat cumbersome, the above lemma is a relatively straightforward consequence of attempting to compute $x$ given access to $y$ up to a certain precision through the use of $z$ and $w$ -- the midpoint between $y$ and $z$ -- which has the property that $p_{e^\prime}x=p_{e^\prime}w$, which is a key connection between projections and distances. In particular, note the term $K_{r-t}(x\mid p_{e^\prime} x, e^\prime)$ above; bounding this is where the projection theorem will be useful. Now, we state the final lemma of this subsection.

\begin{lem}\label{lem:goodtealgrowth}
Suppose that $[r_{i+1}, r_i] \in \mathcal{P}$ is a teal interval. For any $\ve>0$, provided that $r_{i+1}$ is sufficiently large, we have 
\begin{equation}
    K^{A,x}_{r_i, r_i, r_{i+1}}(y \mid \vert x - y \vert, y) \leq \ve r_i.
\end{equation}
Therefore,
$K^{A,x}_{r_i, r_{i+1}}(\vert x - y\vert \mid \vert x - y\vert ) \geq K^{A,x}_{r_i, r_{i+1}}(y\mid y) - \ve r_{i}$.
\end{lem}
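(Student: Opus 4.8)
The plan is to prove the displayed inequality
\begin{equation*}
K^{A,x}_{r_i,r_i,r_{i+1}}(y\mid \vert x-y\vert, y)\le \ve r_i
\end{equation*}
first and then read off the ``therefore'' statement. For the latter, the symmetry of information (Lemma~\ref{lem:unichain}, relativized to $(A,x)$ and to an oracle coding $y{\uhr}r_{i+1}$) gives
\begin{equation*}
K^{A,x}_{r_i,r_{i+1}}(y\mid y)=K^{A,x}_{r_i,r_{i+1}}(\vert x-y\vert\mid y)+K^{A,x}_{r_i,r_i,r_{i+1}}(y\mid \vert x-y\vert, y)+O(\log r_i),
\end{equation*}
and since $\vert x-y\vert$ at precision $r_{i+1}$ is computable from the oracle $x$ together with $y$ at precision $r_{i+1}$, one has $K^{A,x}_{r_i,r_{i+1}}(\vert x-y\vert\mid y)\le K^{A,x}_{r_i,r_{i+1}}(\vert x-y\vert\mid \vert x-y\vert)+O(\log r_i)$. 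Running the first inequality with $\ve/2$ in place of $\ve$ and absorbing the logarithmic term for large $r_i$ then produces the second inequality, so the whole task is the displayed bound.

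To prove it, I would apply Lemma~\ref{lem:pointDistance}, relativized to $A$, with $r:=r_i$, $t:=r_{i+1}$, its parameter $\ve$ replaced by a small rational $\ve'=\ve'(\ve,d,D)\le\ve/6$, and $\eta$ a rational with $K^A_{r_i}(y)/r_i-\ve'/2\le\eta\le K^A_{r_i}(y)/r_i-\ve'/4$ and $K(\eta)=O(\log r_i)$ (available since $\dim^A(y)=d_y$ keeps $K^A_{r_i}(y)/r_i$ bounded). Hypothesis~(i) of Lemma~\ref{lem:pointDistance} then holds by the choice of $\eta$, and by the chain rule above its conclusion rearranges to $K^{A,x}_{r_i,r_i,r_{i+1}}(y\mid \vert x-y\vert, y)\le 3\ve' r_i+K(\ve',\eta)+O(\log r_i)\le\ve r_i$ for large $r_i$. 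So everything reduces to checking hypothesis~(ii): for every $w\in B_{2^{-r_{i+1}}}(y)$ with $\vert x-w\vert=\vert x-y\vert$, writing $s:=-\log\vert y-w\vert\in[r_{i+1},r_i]$,
\begin{equation*}
K^A_{r_i}(w)\ge\eta r_i+\min\{\ve' r_i,\,r_i-s-\ve' r_i\}.
\end{equation*}
For this I would use Lemma~\ref{lem:lowerBoundOtherPointDistance}, relativized to $A$, with $z:=w$, $t:=s$, $e':=\frac{y-w}{\vert y-w\vert}$, which bounds $K^A_{r_i}(w)$ below by $K^A_s(y)+K^A_{r_i-s,r_i}(x\mid y)-K^A_{r_i-s}(x\mid p_{e'}x,e')-O(\log r_i)$, and then estimate the three terms.

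The first term is handled by teal-ness of $[r_{i+1},r_i]$: $f(r_i)-f(s)\le r_i-s$, so $K^A_s(y)\ge K^A_{r_i}(y)-(r_i-s)-O(\log r_i)$, while for $s$ bounded away from $r_i$ the cruder bound $K^A_s(y)\ge(d_y-\ve')s$ is also available. The second term satisfies $K^A_{r_i-s,r_i}(x\mid y)\ge(r_i-s)-o(r_i)$ by condition~(C4) and Lemma~\ref{lem:symmetry}. For the third term I would apply Theorem~\ref{thm:modifiedProjectionTheorem} with oracle $A$ at precision $r_i-s$: its hypothesis~(P1) is immediate from (C1), since $1<d=\min\{d_x,d_y\}\le\dim^A(x)$ and $\Dim^A(x)=D_x\le D=\max\{D_x,D_y\}$; hypothesis~(P3) follows because $y,w$ lie on the circle of radius $\vert x-y\vert$ centred at $x$, so $e'$ is a chord direction, and choosing the point $w$ in hypothesis~(ii) of Lemma~\ref{lem:pointDistance} to be complex along this arc lets $e'$ inherit the randomness of $e$ supplied by (C2) up to the precision needed; and, crucially, hypothesis~(P2), $t\ge\frac{d(2-D)}{2}(r_i-s)$, holds exactly because $[r_{i+1},r_i]$ is a \emph{teal interval of $\mathcal{P}$}, whence Lemma~\ref{lem:leftEndpointNotTooSmall} gives $r_{i+1}\ge\frac{d(2-D)}{2+d(2-D)}r_i$ and so $r_i-s\le r_i-r_{i+1}$ is small enough relative to the randomness precision of $e'$. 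Substituting the conclusion of Theorem~\ref{thm:modifiedProjectionTheorem} back in, bounding $K^A_{r_i-s}(x)\le D(r_i-s)+o(r_i)$, and using the defining equation~\eqref{eq:choiceOfRi+1} for $r_{i+1}$ (together with $r_{i+1}=\max\{t_i,t_i'\}$) to pin down $f(r_i)$ relative to $r_i-r_{i+1}$, a direct computation should verify the required lower bound on $K^A_{r_i}(w)$ for every admissible $s$, once $\ve'$ is small and $r_{i+1}$ large.

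The hard part is precisely this last step. Teal intervals of $\mathcal{P}$ are in general \emph{more than doubling} (Lemma~\ref{lem:notTooManyIntervals} only yields $r_{i+1}\le r_i/2$), and teal-ness is \emph{not} inherited by subintervals, so one cannot reduce to the at-most-doubling teal case of Lemma~\ref{lem:distancesYellowTeal} by chopping; controlling the projection term on such a long interval forces the use of Theorem~\ref{thm:modifiedProjectionTheorem} at a scale comparable to the interval's length, which is the whole reason that theorem was proved with the strong hypothesis~(P2), and Lemma~\ref{lem:leftEndpointNotTooSmall} is what makes (P2) available here. The genuinely delicate point is that the final computation must clear the threshold of hypothesis~(ii) of Lemma~\ref{lem:pointDistance} \emph{uniformly} in $s\in[r_{i+1},r_i]$, coordinating the outer precisions $r_{i+1}\le s\le r_i$ with the internal scale $r_i-s$ of the projection step; the construction of $r_{i+1}$ via~\eqref{eq:choiceOfRi+1} was designed so that this works out.
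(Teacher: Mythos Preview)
Your overall plan is correct and matches the paper's: verify hypothesis~(ii) of Lemma~\ref{lem:pointDistance} by feeding Lemma~\ref{lem:lowerBoundOtherPointDistance} into Theorem~\ref{thm:modifiedProjectionTheorem}, with Lemma~\ref{lem:leftEndpointNotTooSmall} supplying~(P2) and the defining equation~\eqref{eq:choiceOfRi+1} supplying the needed lower bound on $K^A_s(y)$. However, two of your three term estimates are off in ways that break the argument.

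First, and most seriously, your bound on the middle term is too weak and misattributed. You write $K^A_{r_i-s,r_i}(x\mid y)\ge(r_i-s)-o(r_i)$ via~(C4); but~(C4) concerns $e$, not $x$. The correct route is~(C3) together with Lemma~\ref{lem:symmetry}(ii), which gives the sharper $K^A_{r_i-s,r_i}(x\mid y)\ge K^A_{r_i-s}(x)-O(\log r_i)$. This matters because the projection theorem's upper bound on $K^A_{r_i-s}(x\mid p_{e'}x,e')$ has the form $K^A_{r_i-s}(x)-(\text{something})$; the two copies of $K^A_{r_i-s}(x)$ must \emph{cancel}. With your version, you instead add $(r_i-s)$ and subtract $K^A_{r_i-s}(x)\le D(r_i-s)$, losing $(D-1)(r_i-s)$. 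Chasing this through: after invoking~\eqref{eq:goodTealGrowth2} you obtain only $K^A_{r_i}(w)\ge K^A_{r_i}(y)-(D-1)(r_i-s)-o(r_i)$, which for $s$ near $r_{i+1}$ falls below $\eta r_i$ and so fails hypothesis~(ii). The paper's chain (and the one you want) keeps $K^A_{r_i-s}(x)$ intact until it cancels, yielding $K^A_{r_i}(w)\ge K^A_s(y)+\min\{\tfrac{d}{D}r_i-\tfrac{d+1-D}{D}s,\,r_i-s\}-\ve r_i$, after which~\eqref{eq:goodTealGrowth2} and teal-ness close the gap exactly.

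Second, your justification of~(P3) is confused. You cannot ``choose the point $w$ to be complex along the arc''; hypothesis~(ii) of Lemma~\ref{lem:pointDistance} is a \emph{universal} statement over all $w\in B_{2^{-r_{i+1}}}(y)$ on the circle. The reason~(P3) actually holds is purely geometric and independent of which $w$ we are handed: since $y,w$ lie on the circle about $x$ and $|y-w|=2^{-s}$, the chord direction $e'$ differs from the tangent direction $e^\perp$ at $y$ by $O(2^{-s})$, so $K^{A,x}_{s'}(e')=K^{A,x}_{s'}(e)+O(\log s')$ for all $s'\le s$, and~(C2) finishes.

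Finally, the paper splits off the range $s\ge r_i/2-\log r_i$ and treats it with Lemma~\ref{lem:lowerBoundOtherPoints} (using that $p_ey$ and $p_ew$ agree to precision $\approx r_i$ there) rather than the projection theorem. Your uniform use of Theorem~\ref{thm:modifiedProjectionTheorem} can be made to work in that range too, but you should note that its hypothesis ``$r$ sufficiently large'' is on $r_i-s$, which can be bounded; you would need to observe separately that for $s$ within a constant of $r_i$ the required inequality holds trivially because $w$ is then indistinguishable from $y$ at precision close to $r_i$.
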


Notice that the conclusion of Lemma \ref{lem:pointDistance} is almost exactly the conclusion of this lemma. Thus, we need to verify that its conditions are satisfied, which is the content of this proof. Essentially, this entails proving a lower bound on the complexity of points $z$ which are the same distance from $x$ as $y$ is. 
\begin{proof}
Let some small rational $\ve>0$ be given, and assume $r_{i+1}$ is sufficiently large. Let $\eta$ be the rational such that $\eta r_i=K^A_{r_i}(y)-4\ve$. Let $G=D(r, y, \eta)$ be the oracle of Lemma \ref{lem:oracles} relative to $A$. 

Our goal is to apply Lemma \ref{lem:pointDistance}. It is routine to verify that condition (i) of Lemma \ref{lem:pointDistance} holds. We must therefore verify condition (ii) That is, we need to show that, for any $z\in B_{2^{-r_{i+1}}}(y)$ whose distance from $x$ is $\vert x - y\vert$, either (i) $K^{A,G}_{r_i}(z)$ is greater than $\eta r_i$ or (ii) $z$ is very close to $y$. Formally, we must show that, for any such $z$,
\begin{equation}\label{eq:goodtealgrowth4}
    K_{r_i}^{A, G}(z)\geq \eta r_i + \min\{\ve r_i, r_i-s-\ve r_i\}, 
\end{equation}
where $s=-\log \vert y-z\vert$.

To that end, let $z\in B_{2^{-r_{i+1}}}(y)$ such that $\vert x-y\vert=\vert x-z\vert$. Let $s=\vert y-z\vert$. We consider two cases. For the first, assume that $s\geq \frac{r_i}{2}-\log r_i$. Then, as observed in \cite{Stull22c}, the projections of $y$ and $z$ in the direction $e$ are almost exactly the same. Specifically, $\vert p_ey - p_e z\vert< r_i^2 2^{-r_i}$. Then, letting $r_i^\prime=r_i-2 \log r_i$, these projections are indistinguishable at precision $r_i^\prime$. This enables us to apply Lemma \ref{lem:lowerBoundOtherPoints} which, in conjunction with property (C4) and the properties of our oracle $G$ imply that
\begin{equation}
    K_{r_i}^{A, G}(z)\geq K_s^{A, G}(y) + r_i - s - \frac{\ve}{2}r_i-O(\log r_i)
\end{equation}

\noindent Then, using the fact that $K_s^{A, G}(y)=\min\{\eta r_i, K_s^{A}(y) + O(\log r_i)$ and considering each of these cases establishes (\ref{eq:goodtealgrowth4}) in the case that $s \geq \frac{r_i}{2}-\log r_i$.

This leaves the case that $s<\frac{r_i}{2}-\log r_i$. Note that this immediately implies that $K^{A,G}_s(y) = K^A_s(y) - O(\log r_i)$. Lemma \ref{lem:lowerBoundOtherPointDistance}, relative to $(A, G)$, implies that 
\begin{equation}
K^{A, G}_{r_i}(z) \geq K^{A, G}_s(y) + K^{A, G}_{r_i-s, r_i}(x\mid y) - K^{A, G}_{r_i-s}(x\mid p_{e^\prime} x, e^\prime) - O(\log r). 
\end{equation}
\noindent To bound the projection term, we need to apply Theorem \ref{thm:modifiedProjectionTheorem} with respect to $x$, $e^\prime$, $\ve$, a constant $C$ (depending only on $x$ and $y$), $t=s$, and $r=r_i-s$. We now check that the conditions are satisfied. 

First, observe that $r_{i+1}-1<s<\frac{r_i}{2} - \log r_i$, since $z$ is assumed to be within $2^{-r_{i+1}}$ of $y$. The second inequality implies that we can take $r_i-s$ to be sufficiently large, since $r_i$ is taken to be sufficiently large. From the first inequality and Lemma \ref{lem:leftEndpointNotTooSmall}, we obtain that 
\begin{equation}
s\geq\left(\frac{d(2-D)}{2+d(2-D)}r_i+1\right)-1.
\end{equation}
Hence,
\begin{equation}
s\geq \frac{d(2-D)}{2}(r_i - s)
\end{equation}
 Thus conditions (P1) and (P2) are satisfied. As for condition (P3), from an observation in \cite{Stull22c}, $e^\prime$ and $e$ are close enough to each other that, using the fact that $e$ and its orthogonal complement are computable from each other, we have for $s^\prime\leq s$
 \begin{equation}
K^{A, x}_{s^\prime}(e^\prime)=K^{A, x}_{s^\prime}(e)+O(\log s^\prime). 
 \end{equation}
So, using condition (C2), we have
 \begin{equation}
K^{A, x}_{s^\prime}(e^\prime)\geq s^\prime - C \log s^\prime 
 \end{equation}
and thus we may apply Theorem \ref{thm:modifiedProjectionTheorem}.

Using Theorem \ref{thm:modifiedProjectionTheorem} the properties of $G$, and our choice of $r_{i+1}$ yields
\begin{align*}
K^{A, G}_{r_i}(z) &\geq K^{A, G}_s(y) + K^{A, G}_{r_i-s, r_i}(x\mid y) - K^{A, G}_{r_i-s}(x\mid p_{e^\prime} x, e^\prime) - O(\log r)\\
&\geq K^{A}_s(y) + K^{A}_{r_i-s, r_i}(x\mid y) - K^{A, G}_{r_i-s}(x\mid p_{e^\prime} x, e^\prime) - O(\log r)\\
&\geq K^{A}_s(y) + K^{A}_{r_i-s}(x) - K^{A, G}_{r_i-s}(x\mid p_{e^\prime} x, e^\prime) - O(\log r)\\
&\geq K^{A}_s(y) + K^{A}_{r_i-s}(x) - K^A_{r_i-s}(x\mid p_{e^\prime} x, e^\prime) - O(\log r)\\
&\geq K^{A}_s(y) + K^{A}_{r_i-s}(x)  - \ve r_i-O(\log r)\\ 
&- \max\{K^A_{r_i-s}(x) - \frac{d}{D}r_i+\frac{d+1-D}{D}s, K^A_{r_i-s}(x) - (r_i-s)\} 
\end{align*}
Hence, 
\begin{equation}\label{eq:goodTealGrowth1}
K^{A, G}_{r_i}(z) \geq K^{A}_s(y)  
+ \min\{\frac{d}{D}r_i-\frac{d+1-D}{D}s,(r_i-s)\}- \ve r_i-O(\log r) 
\end{equation}

By our choice of $r_{i+1}$, (\ref{eq:choiceOfRi+1}), we see that
\begin{equation}\label{eq:goodTealGrowth2}
    K^A_s(y) \geq K^A_{r_i}(y) -\frac{d}{D}r_i + \frac{d+1 - D}{D}s.
\end{equation}
Combining (\ref{eq:goodTealGrowth1}) and (\ref{eq:goodTealGrowth2}) shows that
\begin{align*}
    K^{A, G}_{r_i}(z) &\geq K^{A}_s(y) + \min\{\frac{d}{D}r_i-\frac{d+1-D}{D}s,(r_i-s)\}- \ve r_i-O(\log r) \\
    &\geq  K^A_{r_i}(y) -\frac{d}{D}r_i + \frac{d+1 - D}{D}s \\
    &\;\;\;\;\;\;\;\;\;\;\;\;+ \min\{\frac{d}{D}r_i-\frac{d+1-D}{D}s,(r_i-s)\}- \ve r_i-O(\log r)
\end{align*}
If $\frac{d}{D}r_i-\frac{d+1-D}{D}s \leq r_i - s$, then we have
\begin{align*}
    K^{A, G}_{r_i}(z) &\geq K^A_{r_i}(y) - 2\ve r_i\\
    &\geq \eta r_i + \ve r_i,
\end{align*}
and (\ref{eq:goodtealgrowth4}) holds. Otherwise, since $[r_{i+1},r_i]$ is teal, 
\begin{align*}
     K^{A, G}_{r_i}(z) &\geq  K^A_s(y) + r_i-s- \ve r_i-O(\log r) \\
     &\geq K^A_{r_i}(y) - \ve r_i-O(\log r)\\
     &\geq \eta r_i + \ve r_i
\end{align*}
and we can again establish (\ref{eq:goodtealgrowth4}).

Therefore, we are able to apply Lemma \ref{lem:pointDistance}, which shows that 
\begin{align*}
      K^{A, x}_{r_i, r_{i+1}}(\vert x - y\vert \mid \vert x - y\vert) &\geq K^{A, x}_{r_i, r_{i+1}}(\vert x - y\vert \mid y)\\
      &\geq K^{A, G, x}_{r_i, r_{i+1}}(\vert x - y\vert \mid y)\\
     &\geq K^{A, G, x}_{r_i, r_{i+1}}(y\mid y) - 3\ve r + K(\ve, \eta) - O(\log r_i)\\
     &\geq K^{A, x}_{r_i, r_{i+1}}(y\mid y) - 4\ve r,
\end{align*}
and the proof is complete.
\end{proof}

\bigskip

\subsection{Main theorem for effective Hausdorff dimension}
In this section, we prove the point-wise analog of the main theorem of this paper. That is, we prove the following. 
\begin{thm}\label{thm:mainThmEffDim}
Suppose that $x, y\in\R^2$, $e = \frac{y-x}{\vert y-x\vert}$, and $A, B\subseteq\N$  satisfy the following.
\begin{itemize}
\item[\textup{(C1)}] $d_x, d_y > 1$
\item[\textup{(C2)}] $K^{x,A}_r(e) = r - O(\log r)$ for all $r$.
\item[\textup{(C3)}] $K^{x,A, B}_r(y) \geq K^{A}_r(y) - O(\log r)$ for all sufficiently large $r$. 
\item[\textup{(C4)}] $K^{A}_r(e\mid y) = r - o(r)$ for all $r$.
\end{itemize}
Then 
\begin{equation*}
\dim^{x,A}(\vert x-y\vert) \geq d\left(1 - \frac{(D-1)(D-d)}{2(D^2+D -1)-2d(2D-1)}\right),
\end{equation*}
where $d = \min\{d_x, d_y\}$ and $D = \max\{D_x, D_y\}$. Furthermore, if 
\begin{equation*}
D\leq\frac{(3+\sqrt{5})d-1-\sqrt{5}}{2}
\end{equation*}
Then $\dim^{x,A}(\vert x-y\vert)=1$.
\end{thm}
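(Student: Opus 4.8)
The plan is to bound the complexity of $\vert x-y\vert$ at precision $r$ by summing over the partition $\mathcal{P}$ of $[1,r]$ constructed in Section~\ref{sec:effdim}: the yellow intervals of $\mathcal{P}$ contribute growth at the optimal rate $1$ by Lemma~\ref{lem:boundComplexityYellowNewPartition}, the teal intervals contribute the complexity growth of $y$ by Lemma~\ref{lem:goodtealgrowth}, and a calculation over all admissible shapes of $\mathcal{P}$ converts the resulting sum into the stated rational function. So fix a small rational $\ve>0$, take $r$ large (so that $(d-\ve)s\le K^A_s(y)\le(D+\ve)s$ on the relevant range of $s$), and form $\mathcal{P}=\{[r_{i+1},r_i]\}$ as in Section 4.3. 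By Lemma~\ref{lem:notTooManyIntervals} every teal interval of $\mathcal{P}$ has $r_{i+1}\le r_i/2$, and two yellow intervals of $\mathcal{P}$ cannot be consecutive, since adjoining them would yield a strictly longer union of at-most-doubling yellow intervals, contradicting the maximality in the definition of $r_{i+1}$. Hence yellow and teal intervals interleave, $\mathcal{P}$ has $O(\log r)$ pieces, and every error term picked up below from iterating symmetry of information is $O(\log^2 r)=o(r)$; the finitely many pieces with $r_i$ below the threshold past which Lemmas~\ref{lem:leftEndpointNotTooSmall}, \ref{lem:boundComplexityYellowNewPartition} and~\ref{lem:goodtealgrowth} apply have total length $O(1)$ and contribute nonnegatively, so I ignore them.

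Iterating Lemma~\ref{lem:unichain}(ii) (relativized to $(A,x)$) over $\mathcal{P}$ gives
\begin{equation*}
K^{A,x}_r(\vert x-y\vert)\ \ge\ \sum_i K^{A,x}_{r_i,r_{i+1}}(\vert x-y\vert\mid\vert x-y\vert)-o(r).
\end{equation*}
On a yellow interval Lemma~\ref{lem:boundComplexityYellowNewPartition} bounds the $i$-th summand below by $(r_i-r_{i+1})-\ve r_i$; on a teal interval Lemma~\ref{lem:goodtealgrowth} bounds it below by $K^{A,x}_{r_i,r_{i+1}}(y\mid y)-\ve r_i$, and condition (C3) with Lemma~\ref{lem:unichain} gives $K^{A,x}_{r_i,r_{i+1}}(y\mid y)\ge K^A_{r_i}(y)-K^A_{r_{i+1}}(y)-O(\log r)$. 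Writing $f(s)=K^A_s(y)$, summing, and using $\sum_i(r_i-r_{i+1})=r-O(1)$ so that the yellow lengths recombine, one obtains
\begin{equation*}
K^{A,x}_r(\vert x-y\vert)\ \ge\ r-\sum_{\text{teal }i}\Delta_i-O(\ve r)-o(r),\qquad \Delta_i:=(r_i-r_{i+1})-\big(f(r_i)-f(r_{i+1})\big)\ge0.
\end{equation*}
A teal interval of $\mathcal{P}$ is built either so that $f$ grows at average rate exactly $1$ on it, in which case $\Delta_i=0$, or via the relation (\ref{eq:choiceOfRi+1}) at its left endpoint, in which case
\begin{equation*}
\Delta_i=(1-d)(r_i-r_{i+1})+\frac{D-1}{D}\big(dr_i-(d+1)r_{i+1}\big),
\end{equation*}
a strictly decreasing linear function of the ratio $r_{i+1}/r_i$.

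It therefore remains to show that $\sum_i\Delta_i\le\left(1-d\left(1-\frac{(D-1)(D-d)}{2(D^2+D-1)-2d(2D-1)}\right)\right)r+o(r)$, and this is the heart of the matter. One cannot merely maximize each $\Delta_i$ in isolation and sum a geometric series: by (\ref{eq:choiceOfRi+1}) the value $f(r_{i+1})$ at the left endpoint of a teal interval is pinned to $f(r_i)$, and a near-maximal $\Delta_i$ — which by Lemma~\ref{lem:leftEndpointNotTooSmall} drives $r_{i+1}/r_i$ down to its lower limit — pushes $f(r_{i+1})$ up toward the envelope $D\,r_{i+1}$; this forces the next teal interval to have a larger ratio $r_{i+2}/r_{i+1}$ and hence a smaller $\Delta_{i+1}$, while the whole chain is further constrained by $d\,r\le f(r)\le D\,r$. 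Carrying out this constrained maximization over all sequences $\{r_i\}$ compatible with the construction of $\mathcal{P}$ and with these envelope constraints produces exactly the displayed bound; plugging it into the previous inequality and letting $\ve\to0$ and $r\to\infty$ gives the first assertion.

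Finally, suppose $D\le\frac{(3+\sqrt{5})d-1-\sqrt{5}}{2}$. Then Observation~\ref{obs:generalpartitionfulldimension} gives $f(r_i)-f(r_{i+1})=r_i-r_{i+1}$ on every teal interval with $r_i$ above the threshold, so $\Delta_i=0$ for all relevant $i$ and the inequality of the previous paragraph degenerates to $K^{A,x}_r(\vert x-y\vert)\ge r-O(\ve r)-o(r)$; hence $\dim^{x,A}(\vert x-y\vert)\ge1-O(\ve)$ for every $\ve$, i.e.\ $\dim^{x,A}(\vert x-y\vert)\ge1$, and since $\vert x-y\vert$ is a real number we always have $\dim^{x,A}(\vert x-y\vert)\le1$, so equality holds. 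I expect the main obstacle to be the constrained maximization above: the per-interval deficit estimates are routine, but tracking how the upper envelope $f(s)\le Ds$ propagates down the partition — and recognizing that it is exactly this interaction that produces the denominator $2(D^2+D-1)-2d(2D-1)$ — takes genuine work; a secondary nuisance is confirming that the various $o(r)$ contributions (the $O(\log^2 r)$ from symmetry of information, the $O(\ve r)$ slack, and the $O(1)$ from small-precision pieces) really are negligible, which is where the interleaving and halving from the first paragraph are used.
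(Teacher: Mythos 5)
You have the right setup: you sum over the partition $\mathcal{P}$ from Section 4.3, use Lemma~\ref{lem:boundComplexityYellowNewPartition} for rate $1$ on yellows, Lemma~\ref{lem:goodtealgrowth} to transfer growth of $y$ to the distance on teals, and iterate symmetry of information with $O(\log r)$ pieces so errors are $o(r)$. The second (full-dimension) assertion is also handled correctly via Observation~\ref{obs:generalpartitionfulldimension}. However, the crucial quantitative step --- showing that the total teal deficit $\sum_i \Delta_i$ is at most $\bigl(1-d(1 - \tfrac{(D-1)(D-d)}{2(D^2+D-1)-2d(2D-1)})\bigr)r + o(r)$ --- is exactly what you do not prove. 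You name it ``the heart of the matter,'' sketch a constrained maximization over sequences $\{r_i\}$ with envelope constraints $ds \le f(s) \le Ds$ propagating interval-to-interval, and then explicitly defer it as ``genuine work.'' That is the gap: the theorem's bound is never actually derived.

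Moreover, the route you gesture at is more complicated than what is needed, and you are missing one of the two bounds that make the argument close. The paper does \emph{not} track the envelope constraint as it propagates down the chain of teals. Instead it observes (Lemma~\ref{lem:slopeBoundTeal}) that the average slope of $f$ on \emph{every} teal interval of $\mathcal{P}$ is at least $\min\{1, \tfrac{d(2D-d-1)}{D^2+D-Dd-1}\} - \ve$ --- a uniform per-interval bound, contrary to your claim that one ``cannot merely maximize each $\Delta_i$ in isolation.'' Writing $L$ for the total yellow length, this immediately yields $K^{A,x}_r(\lvert x-y\rvert) \ge L + \tfrac{d(2D-d-1)}{D^2+D-Dd-1}(r - L) - \ve r$, which is good when $L$ is large. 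The ingredient absent from your plan is the complementary bound $K^{A,x}_r(\lvert x-y\rvert) \ge d_y r - L - \ve r$: start from $d_y r \le K^A_r(y)$, note that $y$'s complexity grows at rate at most $2$ on yellows (since $y \in \R^2$), so the teal intervals must carry at least $d_y r - 2L$ of $y$'s complexity, and by Lemma~\ref{lem:goodtealgrowth} this transfers to the distance; adding back the $L$ from yellows gives $d_y r - L$. These are two affine functions of the single scalar $L$, one increasing and one decreasing, and the stated rational function is just their value at the crossing point. Replacing your open-ended sequence optimization with this two-bound, one-variable argument is what is needed to complete the proof.
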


In the last subsection we have a good bound on the complexity growth of the distance on any teal interval, and the complexity growth on any yellow is 1. Then, it would seem that the worst case scenario is that our partition is (almost) all teal. But, this case is advantageous too, because if there is very little yellow, then almost all the complexity growth for $y$ has to take place on the teals, and Lemma \ref{lem:goodtealgrowth} indicates we can transfer \emph{all} the growth of $K_s^A(y)$ on teals to $K_s^A(\vert x - y\vert)$. So, the worst case scenario is actually when there is an intermediate amount of yellow. Now, we formalize this and prove the theorem. 

\begin{proof}

Let $\ve>0$ be given and let $r$ be sufficiently large. Let $\mathcal{P} = \{[r_{i+1}, r_i]\}$ be the partition of $[1, r]$ defined in the previous section. Let $L$ be the total length of the yellow intervals. Recall that if $[r_{i+1}, r_i]$ is yellow, then we have that
    \begin{equation*}
        K^{A,x}_{r_i, r_{i+1}}(\vert x - y\vert \mid \vert x - y\vert ) \geq r_i - r_{i+1} - \ve r_{i}
    \end{equation*}

By the previous lemma, and repeated applications of the symmetry of information, we have for sufficiently large $r$ that
\begin{align*}
    K^{A,x}_r(\vert x - y\vert ) &= \sum\limits_{i\in Y} K^{A,x}_{r_i, r_{i+1}}(\vert x - y\vert \mid \vert x - y\vert ) + \sum\limits_{i\in Y^C} K^{A,x}_{r_i, r_{i+1}}(\vert x - y\vert \mid \vert x - y\vert )\\
    &\geq L -\frac{\ve}{3} r+ \sum\limits_{i\in Y^C} K^{A,x}_{r_i, r_{i+1}}(\vert x - y\vert \mid \vert x - y\vert )\\
    &\geq L -\frac{2\ve}{3}r + \sum\limits_{i\in Y^C} K^{A,x}_{r_i, r_{i+1}}(y\mid y)\\
    &\geq L + \min\{1, \frac{d(2D-d-1)}{D^2 + D - Dd - 1}\}(r - L)-\ve r.\label{eq:notmuchteal}
\end{align*}

\noindent From Observation \ref{obs:generalpartitionfulldimension}, we have 1 as the minimum when $D\leq\frac{(3+\sqrt{5})d-1-\sqrt{5}}{2}$, so in this case $K^{A,x}_r(\vert x - y\vert )\geq L+(r-L)-\ve r=r -\ve r$. Taking $\ve$ as small as desired and letting $r$ go to infinity completes the proof. Now assume we are not in the above case and that $r$ is sufficiently large given $\ve$, and note the following bound which is advantageous when there is not much yellow:
\begin{align*}
    d_y r &\leq K^{A,x}_r(y)+\frac{\ve}{3}\\
    &= \sum\limits_{i\in Y} K^{A,x}_{r_i, r_{i+1}}(y\mid y) + \sum\limits_{i\in Y^C} K^{A,x}_{r_i, r_{i+1}}(y\mid y )\\
    &\leq 2L + \frac{2\ve}{3} +\sum\limits_{i\in Y^C} K^{A,x}_{r_i, r_{i+1}}(\vert x - y\vert )\\
    &= L + K^{A,x}_r(\vert x - y\vert ) +\ve r.
\end{align*}
Hence,
\begin{equation}\label{eq:distanceBoundTermsofL}
    K^{A,x}_r(\vert x - y\vert) \geq \max\{L + \frac{d(2D-d-1)}{D^2 + D - Dd - 1}(r - L), d r - L\}-\ve r.
\end{equation}

The first term is increasing in $L$ (since we are considering the case where $\frac{d(2D-d-1)}{D^2 + D - Dd - 1}<1$), and the second term is decreasing in $L$, so we can set them equal to find the minimum over all $L$, which yields

\begin{equation}
K^{x,A}_r(\vert x-y\vert) \geq d\left(1 - \frac{\left(D-1\right)\left(D-d\right)}{2D^2+\left(2-4d\right)D+d^2+d-2}\right) -\ve r.
\end{equation}

\noindent Since we can take $\ve$ as small as desired and then let $r$ go to infinity, this completes the proof.

\end{proof}

Now, we can consider a few special cases. First, note that when combined with Proposition \ref{prop:yellowfulldimension}, for any choice of $d$, our lower bound on the dimension is monotone decreasing in $D$. Thus, setting $D=2$ gives the following corollary:

\begin{cor}\label{cor:hausdorffbound1}
When conditions (C1)-(C4) are satisfied,
\begin{equation}
\dim^{x, A} (\vert x-y\vert) \geq \frac{d(d-4)}{d-5}.
\end{equation}
\end{cor}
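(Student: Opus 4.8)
The plan is to obtain Corollary~\ref{cor:hausdorffbound1} as the specialization $D = 2$ of Theorem~\ref{thm:mainThmEffDim}, using that the effective packing dimension of a point of $\R^2$ is at most $2$. Write
\[
h(d,D) \;=\; d\left(1 - \frac{(D-1)(D-d)}{2D^2 + (2-4d)D + d^2 + d - 2}\right)
\]
for the bound of Theorem~\ref{thm:mainThmEffDim}. Taken together with Proposition~\ref{prop:yellowfulldimension} and Observation~\ref{obs:generalpartitionfulldimension}, that theorem shows $\dim^{x,A}(\vert x-y\vert) \ge \min\{1,\, h(d,D)\}$: it gives $h(d,D)$ in general, and the value $1$ whenever $D \le \frac{(3+\sqrt5)d-1-\sqrt5}{2}$, which one checks is exactly the value of $D$ at which $h(d,D)$ crosses $1$ (the teal growth rate of Lemma~\ref{lem:slopeBoundTeal} equals $1$ there), so the two pieces glue continuously. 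By (C1) the pair $d_x, d_y$ is fixed with $d = \min\{d_x,d_y\} > 1$, and since $D_x \ge d_x$ we have $D = \max\{D_x,D_y\} \in [d,2]$. So the whole corollary reduces to showing that $\min\{1,h(d,D)\}$ is non-increasing in $D$ on $[d,2]$ for fixed $d$; indeed then $\dim^{x,A}(\vert x - y\vert) \ge \min\{1, h(d,D)\} \ge \min\{1, h(d,2)\}$.

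Since the $1$-clamp only flattens $h$ where it already exceeds $1$, it suffices to prove $\partial_D h(d,D) \le 0$ on $[d,2]$. This is a routine but mildly unpleasant calculus check. One first verifies that the quadratic $q(D) := 2D^2 + (2-4d)D + d^2 + d - 2$ is strictly positive on $[d,2]$: $q(d) = -(d-1)(d-2) > 0$ and $q(2) = (d-2)(d-5) > 0$ for $1 < d < 2$, and $q$ has no root in between. Consequently $(D-1)(D-d)/q(D)$ is smooth with fixed sign there, and a direct differentiation shows the numerator of $\partial_D\big[(D-1)(D-d)/q(D)\big]$ is non-negative on the interval, giving $\partial_D h \le 0$. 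I would carry out only this derivative computation in detail; it is the one genuinely computational step and the only real obstacle.

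It then remains to evaluate at $D = 2$. The denominator collapses to $q(2) = 8 + 2(2-4d) + d^2 + d - 2 = d^2 - 7d + 10 = (d-2)(d-5)$, while $(D-1)(D-d) = 2-d = -(d-2)$, so
\[
h(d,2) \;=\; d\left(1 + \frac{1}{d-5}\right) \;=\; \frac{d(d-4)}{d-5}.
\]
Hence $\dim^{x,A}(\vert x-y\vert) \ge \min\{1,\, h(d,2)\} = \frac{d(d-4)}{d-5}$ (the minimum being attained by the displayed fraction in the range of $d$ where it does not exceed $1$), which is the asserted bound.
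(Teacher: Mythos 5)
Your argument matches the paper's: the authors likewise derive the corollary by noting that the bound of Theorem~\ref{thm:mainThmEffDim}, taken together with Proposition~\ref{prop:yellowfulldimension}, is monotone decreasing in $D$, and then substituting $D = 2$, with exactly the algebra you spell out. For completeness, the one step you deferred does go through cleanly: the numerator $p'q - pq'$ of $\partial_D\bigl[(D-1)(D-d)/q(D)\bigr]$ factors as $(2-d)\bigl(2D^2 - 2(d+1)D + d^2 + 1\bigr)$, and the bracketed quadratic has discriminant $-4(d-1)^2 \le 0$ and positive leading coefficient, hence is non-negative, so the whole expression is $\ge 0$ for $d \le 2$ and $\partial_D h \le 0$ as you claimed.
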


Similarly, our lower bound is monotone increasing in $d$, so setting $d=1$ gives

\begin{cor}\label{cor:hausdorffbound2}
When conditions (C1)-(C4) are satisfied,

\begin{equation}
\dim^{x, A} (\vert x-y\vert) \geq \frac{D+1}{2D}.
\end{equation}
\end{cor}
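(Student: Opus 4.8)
The plan is to obtain Corollary~\ref{cor:hausdorffbound2} by optimizing the bound of Theorem~\ref{thm:mainThmEffDim} over the parameter $d$, holding $D$ fixed. Under (C1)--(C4) one has $d := \min\{d_x,d_y\} > 1$ and $1 < d \le D := \max\{D_x,D_y\} \le 2$, the last inequality because $y\in\R^2$. Write
\[
g(d,D) \;=\; d\left(1 - \frac{(D-1)(D-d)}{2D^2+(2-4d)D+d^2+d-2}\right)
\]
for the quantity produced by Theorem~\ref{thm:mainThmEffDim}, so that $\dim^{x,A}(\vert x-y\vert) \ge \min\{g(d,D),1\}$ (using $\dim^{x,A}(\vert x-y\vert)\le 1$, with the ``furthermore'' clause supplying exactly $1$ in its sub-case). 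Since $\tfrac{D+1}{2D} = \tfrac{1}{2} + \tfrac{1}{2D} \le 1$ for $D \ge 1$, it suffices to show $g(d,D) \ge \tfrac{D+1}{2D}$ on the range $1 < d \le D \le 2$.

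First I would record the endpoint value: the denominator of $g$ at $d=1$ is $2D^2 - 2D = 2D(D-1) > 0$ and the numerator is $(D-1)^2$, so
\[
g(1,D) \;=\; 1 - \frac{(D-1)^2}{2D(D-1)} \;=\; 1 - \frac{D-1}{2D} \;=\; \frac{D+1}{2D}.
\]
The core step is then to prove that $d\mapsto g(d,D)$ is nondecreasing on $(1,D]$ for each fixed $D\in(1,2]$; granting this, $g(d,D) \ge \lim_{d\to 1^+}g(d,D) = \tfrac{D+1}{2D}$ and we are done. I would establish monotonicity by writing $g(d,D) = d - \dfrac{d(D-1)(D-d)}{Q(d,D)}$ with $Q(d,D) := 2D^2+(2-4d)D+d^2+d-2$, which viewed as a quadratic in $d$ opens upward; one checks $Q(1,D)=2D(D-1)>0$ and that the smaller root of $Q(\cdot,D)$ is $\ge D$ on $(1,2]$ (with equality only at the corner $D=d=2$, which is disposed of separately via the full-dimension clause), so $Q>0$ throughout. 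Differentiating and clearing the positive factor $Q(d,D)^2$ reduces $\partial_d g \ge 0$ to a polynomial inequality in $d,D$ on the box $\{1\le d\le D\le 2\}$; equivalently, and perhaps more cleanly, since $2Dd-D-1>0$ and $(D-1)(D-d)\ge 0$, the target inequality $g(d,D)\ge\tfrac{D+1}{2D}$ is itself equivalent after clearing denominators to $(2Dd-D-1)\,Q(d,D) \ge 2D\,d(D-1)(D-d)$, a single polynomial inequality that can be checked directly on the box.

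I expect the only genuine work to be this final polynomial (equivalently, sign-of-derivative) verification, together with the bookkeeping that $Q(d,D)$ stays positive on the parameter range — without that positivity the formula is meaningless, so this point must be made carefully, although it is forced by the two endpoint evaluations and the upward-opening shape of the quadratic. Everything else is substitution, and the two degenerate reductions (the corner $d=D=2$, and the sub-case where Theorem~\ref{thm:mainThmEffDim} already yields $\dim^{x,A}(\vert x-y\vert)=1 \ge \tfrac{D+1}{2D}$) are immediate.
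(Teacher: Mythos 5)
Your proposal matches the paper's approach: the paper obtains the corollary precisely by observing that the lower bound of Theorem~\ref{thm:mainThmEffDim} is monotone increasing in $d$ (for fixed $D$) and substituting $d=1$, which yields $\frac{D+1}{2D}$ after the same algebraic simplification you carried out. Your write-up simply fills in the monotonicity verification and the denominator-positivity bookkeeping that the paper states without detail.
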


 Note that these are the effective analogs of Corollary \ref{cor:firstMainCor} and Corollary \ref{cor:secondMainCor} in the introduction. The first of these corollaries is a helpful comparison point to previous work on the pinned distance problem, and the second will be useful in the next subsection.

\bigskip

\subsection{Main theorem for effective packing dimension}

We can use our work in the previous section to prove a new bound on the packing dimension of pinned distance sets. The basic idea is that for effective Hausdorff dimension, we had to prove a lower bound for $K_r^{x, A}(\vert x-y\vert)$ at \emph{every} sufficiently large precision (since dim is defined with a limit inferior), whereas for effective packing dimension, we are free to \emph{choose} a sequence of advantageous precisions (since Dim is defined with a limit superior). The idea is to consider ``maximal'' precisions $r_i$ where $K^A_{r_i}(y)\approx Dr_i$. These maximal precisions have to be contained in large yellow intervals, and prior to the large yellow intervals, we can use the bound from Corollary \ref{cor:hausdorffbound2}, which holds at \emph{every} precision. 

We call an interval $[a, b]$ an \textbf{all yellow} interval if there is a good partition of $[a,b]$ consisting entirely yellow intervals whose lengths are at most doubling.
\begin{lem}
   Suppose that $\ve > 0$, $A,B\subseteq\N$ and $x, y\in \R^2$ satisfy (C1)-(C4). In addition, assume that $\Dim^A(x) \leq \Dim^A(y)$. Let $r$ be a sufficiently large precision which is maximal in the sense that $K^{A, x}_r(y)\geq D_y r -\ve r$. Then 
    \begin{equation}\label{eq:boundDistanceMaximalPrec}
    K^{A,B,x}_r(\vert x - y\vert) \geq \frac{D^2 - D + 2}{2D}r - \ve r.
\end{equation}
Moreover, if $[r_1, r_2]$ is an all yellow interval containing $r$, then 
\begin{equation}\label{eq:lowerBoundr2}
   K^{A,B,x}_{r_2}(\vert x - y \vert )\geq r_2 - \frac{3D - D^2 - 2}{2D}r - \ve r.
\end{equation}
\end{lem}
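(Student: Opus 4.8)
The plan is to get both estimates from one idea: at a maximal precision $r$ the argument underlying Theorem~\ref{thm:mainThmEffDim} improves, because one may feed it the complexity bound $K^{A,B,x}_r(y)\geq D_y r-\ve r$ wherever that proof used only $d_y r$. The second estimate will then follow from the first by propagating along the all-yellow interval, since on yellow intervals the distance complexity grows at rate at least $1$.

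I would first establish~(\ref{eq:boundDistanceMaximalPrec}). Since $\Dim^A(x)\leq\Dim^A(y)$ we have $D=D_y$, and combining maximality with (C3) gives $K^{A,B,x}_r(y)\geq K^A_r(y)-O(\log r)\geq Dr-\ve r$ once $r$ is large. Relativising everything to $B$, re-run the proof of Theorem~\ref{thm:mainThmEffDim}: form the partition $\mathcal P=\mathcal P([1,r])$ of Section~4.3, and bound the distance complexity from below on yellow intervals via Lemma~\ref{lem:boundComplexityYellowNewPartition} and on teal intervals via Lemmas~\ref{lem:goodtealgrowth} and~\ref{lem:slopeBoundTeal}. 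Writing $L$ for the total length of the yellow intervals and $m=\min\{1,\tfrac{d(2D-d-1)}{D^2+D-Dd-1}\}$, this yields
\[
K^{A,B,x}_r(\vert x-y\vert)\ \geq\ \max\{\,L+m(r-L),\ Dr-L\,\}-\ve' r,
\]
the only change from~(\ref{eq:distanceBoundTermsofL}) being that the second entry of the maximum carries $Dr-L$ in place of $dr-L$; this is exactly where maximality is used, as the chain of inequalities producing that term now begins with $Dr-\ve r\leq K^{A,B,x}_r(y)$ rather than with $d_y r$. If $m=1$ — equivalently $D\leq\frac{(3+\sqrt5)d-1-\sqrt5}{2}$, by Observation~\ref{obs:generalpartitionfulldimension} — the first entry equals $r$, so $K^{A,B,x}_r(\vert x-y\vert)\geq r-\ve' r\geq\frac{D^2-D+2}{2D}r$. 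If $m<1$ the first entry is increasing and the second decreasing in $L$, so the minimum over $L$ is attained at the crossover and equals $\frac{D-m(D-1)}{2-m}r$; since $\frac{\partial}{\partial m}\frac{D-m(D-1)}{2-m}=\frac{2-D}{(2-m)^2}>0$ and a short computation shows $m\geq\frac{2}{D+1}$ for all $d\in(1,D]$ (with equality as $d\to 1^+$), this is at least $\frac{D-\frac{2}{D+1}(D-1)}{\,2-\frac{2}{D+1}\,}r=\frac{D^2-D+2}{2D}r$. Sending $\ve\to 0$ gives~(\ref{eq:boundDistanceMaximalPrec}).

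For~(\ref{eq:lowerBoundr2}) I would show that the distance complexity grows by at least $r_2-r$ in passing from $r$ to $r_2$, after which~(\ref{eq:boundDistanceMaximalPrec}) finishes the job via $1-\frac{D^2-D+2}{2D}=\frac{3D-D^2-2}{2D}$. Fix a good partition of the all-yellow interval $[r_1,r_2]$ into yellow, at-most-doubling pieces and let $[b',b'']$ be the piece containing $r$. On the yellow piece $[b',b'']$, Lemma~\ref{lem:boundComplexityYellowNewPartition} (relativised to $B$) gives $K^{A,B,x}_{b''}(\vert x-y\vert)\geq K^{A,B,x}_{b'}(\vert x-y\vert)+(b''-b')-\ve b''$; on the other hand $\vert x-y\vert\in\R$, so its Kolmogorov complexity grows at rate at most $1$, whence $K^{A,B,x}_r(\vert x-y\vert)\leq K^{A,B,x}_{b'}(\vert x-y\vert)+(r-b')+\ve r$. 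Subtracting these and adding the contribution $r_2-b''$ of the remaining (yellow) pieces gives $K^{A,B,x}_{r_2}(\vert x-y\vert)\geq K^{A,B,x}_r(\vert x-y\vert)+(r_2-r)-\ve' r_2$, and plugging in~(\ref{eq:boundDistanceMaximalPrec}) yields~(\ref{eq:lowerBoundr2}) after relabelling $\ve$.

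The relativisations to $B$ are routine — every cited lemma relativises, and (C3) keeps the effective Hausdorff and packing dimensions of $y$, hence $d$ and $D$, unchanged under adjoining $B$ — as is the bookkeeping with error terms. The step requiring the most care is the first: one must check that substituting $Dr$ for $d_y r$ in the proof of Theorem~\ref{thm:mainThmEffDim} alters only the $dr-L$ term and nothing upstream, and that the optimum of the resulting piecewise bound is correctly controlled from below by the $d$-free quantity $\frac{D^2-D+2}{2D}r$. The inequality $m\geq\frac{2}{D+1}$ is precisely what lets the stated bound be phrased purely in terms of $D$; for the true value $d>1$ one in fact obtains something slightly larger.
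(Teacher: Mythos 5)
Your proof is correct and follows essentially the same route as the paper: for the first inequality, use the partition bound of the previous subsection but substitute the maximality hypothesis $K^{A,x}_r(y)\geq Dr-\ve r$ for $d_y r$ in the branch of the max that came from the lower bound on $K^{A,x}_r(y)$, then optimize over $L$; for the second, combine the at-least-rate-$1$ growth on the all-yellow interval with the trivial at-most-rate-$1$ growth of the complexity of the real number $\vert x-y\vert$ to transfer the bound from $r$ to $r_2$. The one place you are slightly more careful than the paper is worth keeping: the paper passes directly to the slope $\tfrac{2}{D+1}$ in the yellow-term bound, while you explicitly check that $\tfrac{d(2D-d-1)}{D^2+D-Dd-1}\geq\tfrac{2}{D+1}$ on $(1,D]$ and that the crossover value $\tfrac{D-m(D-1)}{2-m}$ is nondecreasing in $m$ (since $D\leq 2$), so that replacing the true slope by $\tfrac{2}{D+1}$ can only decrease the lower bound — this is precisely the justification the paper leaves implicit.
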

\begin{proof}
 Let $r$ be sufficiently large such that
\begin{equation}
    K^{A}_r(y) \geq D r - \frac{\ve}{2} r.
\end{equation}
Let $\mathcal{P} = \{[r_{i+1}, r_i]\}$ be the partition of $[1,r]$ defined in the previous section. Let $L$ be the total length of the yellow intervals in $\mathcal{P}$. Using (\ref{eq:distanceBoundTermsofL}), we have that
\begin{equation}
    K^{A,x}_r(\vert x - y\vert) \geq \max\{L + \frac{2}{D+ 1}(r - L), D r - L\} -
    \ve r.
\end{equation}
We can therefore conclude that (\ref{eq:boundDistanceMaximalPrec}) holds.

Let $[r_1, r_2]$ be an all yellow interval containing $r$. Then, by Lemma \ref{lem:distancesYellowTeal},
\begin{align*}
    r_2 - r_1 - \frac{\ve r}{2} &\leq K^{A,x}_{r_2,r_1}(\vert x - y \vert \mid \vert x - y\vert)\\
    &\leq K^{A,x}_{r_2}(\vert x - y \vert ) - K^{A,x}_{r_1}(\vert x - y \vert) - O(\log r)\\
    &\leq K^{A,x}_{r_2}(\vert x - y \vert ) - \left(K^{A,x}_{r}(\vert x - y \vert) - (r-r_1)\right)- O(\log r).
\end{align*}
Rearranging, and using (\ref{eq:boundDistanceMaximalPrec}), we see that for sufficiently large $r$
\begin{align}
    K^{A,x}_{r_2}(\vert x - y \vert ) &= K^{A,x}_{r}(\vert x - y \vert) - (r-r_1) + r_2 - r_1\tag*{}-\frac{\ve}{2}r\\
    &\geq \frac{D^2 - D + 2}{2D}r - \ve r + r_2 - r\tag*{}\\
    &= r_2 - \frac{3D - D^2 - 2}{2D}r - \ve r\tag*{},
\end{align}
and the conclusion follows.
\end{proof}

\begin{thm}\label{thm:effectivePackingThm}
Suppose that $x, y\in\R^2$, $e = \frac{y-x}{\vert y-x\vert}$, and $A,B\subseteq\N$  satisfy $\Dim^A(x)\leq \Dim^A(y)$ and the following.
\begin{itemize}
\item[\textup{(C1)}] $d_x, d_y > 1$,
\item[\textup{(C2)}] $K^{x,A}_r(e) = r - O(\log r)$ for all $r$.
\item[\textup{(C3)}] $K^{x,A,B}_r(y) \geq K^{A}_r(y) - O(\log r)$ for all sufficiently large $r$. 
\item[\textup{(C4)}] $K^{A}_r(e\mid y) = r - o(r)$ for all $r$.
\end{itemize}

Then 
    $\Dim^{x, A, B}(\vert x - y\vert) \geq \frac{3D_y^2 - D_y + 6}{8D_y}$
\end{thm}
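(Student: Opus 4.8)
Since $\Dim^A(x)\le\Dim^A(y)$ we have $D:=\max\{D_x,D_y\}=D_y$, so the claimed bound is $\tfrac{3D^2-D+6}{8D}$. Because $\Dim$ is a \emph{limit superior}, it suffices to produce, for every $\ve>0$, arbitrarily large precisions $\tilde r$ with $K^{A,B,x}_{\tilde r}(\vert x-y\vert)\ge\bigl(\tfrac{3D^2-D+6}{8D}-\ve\bigr)\tilde r$. By the definition of $\Dim^A(y)=D$ together with (C3), there are arbitrarily large \emph{maximal} precisions $r$, i.e.\ precisions with $K^{A,x}_r(y)\ge(D-\ve)r$; the plan is to attach to each such $r$ a good precision $\tilde r\in\{r,r_2\}$, where $r_2$ is defined below, and show it does the job for a fixed (arbitrarily small) $\ve$.

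Fix a maximal precision $r$ and the partition $\mathcal P$ of $[1,r]$ from Section~\ref{sec:effdim}, with total yellow length $L=L(r)$. Two inputs are available. First, the proof of the preceding lemma actually establishes, at a maximal precision, the pointwise bound
\[
K^{A,B,x}_r(\vert x-y\vert)\ \ge\ \max\Bigl\{\,L+\tfrac{2}{D+1}(r-L)\,,\ Dr-L\,\Bigr\}-\ve r,
\]
of which \eqref{eq:boundDistanceMaximalPrec} is the minimum over admissible $L$; but for this particular $r$ we may keep the \emph{actual} value $L(r)$, which moreover satisfies $L(r)\ge(D-1-\ve)r$, since on yellow intervals of $\mathcal P$ the function $f=K^{A,x}_\bullet(y)$ grows at rate at most $2$ and on teal intervals at rate at most $1$, whence $(D-\ve)r\le K^{A,x}_r(y)\le L+r+O(\log r)$. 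Second, letting $[r_1,r_2]$ be the maximal all yellow interval with $r_1\le r\le r_2$ (extend a doubling‑yellow cover rightward from $r$ as far as $f$ keeps climbing at rate $\ge 1$, then leftward), inequality \eqref{eq:lowerBoundr2} gives $K^{A,B,x}_{r_2}(\vert x-y\vert)\ge r_2-\tfrac{3D-D^2-2}{2D}r-\ve r$, where $3D-D^2-2=(D-1)(2-D)\ge 0$ because $1<D\le 2$.

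The argument is then a dichotomy on the size of $r_2/r$. If $r_2\ge\tfrac43 r$, take $\tilde r=r_2$: then
\[
\frac{K^{A,B,x}_{r_2}(\vert x-y\vert)}{r_2}\ \ge\ 1-\frac{(D-1)(2-D)}{2D}\cdot\frac{r}{r_2}-\ve\ \ge\ 1-\frac{3(D-1)(2-D)}{8D}-\ve\ =\ \frac{3D^2-D+6}{8D}-\ve,
\]
and the constant $\tfrac43$ is chosen precisely so that this identity holds. If instead $r_2<\tfrac43 r$, take $\tilde r=r$ and use the pointwise bound above; one must check that in this regime $L(r)$ is forced to lie outside the (bounded) interval $\bigl(\tfrac{5D^2+D-6}{8D}r,\ \tfrac{3D^2+5D-6}{8D}r\bigr)$ on which neither term of the $\max$ reaches $\tfrac{3D^2-D+6}{8D}r$. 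The governing heuristic is a trade‑off: if $L(r)$ were near its minimum $(D-1)r$, then $f$ would have to climb at rate essentially $2$ on every yellow interval of $\mathcal P$ and at rate essentially $1$ on every teal interval, so that $f(s)-s$ is (essentially) non‑decreasing on $[1,r]$; a rate‑$1$ stretch is green, hence all yellow, and feeding such stretches into the block around $r$ pushes $r_2$ past $\tfrac43 r$, contradicting the case assumption. Thus a short all‑yellow block at a maximal precision costs a large amount of yellow length, and conversely; the two pointwise bounds and \eqref{eq:lowerBoundr2} are dovetailed to cover all cases.

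The routine parts --- the limit‑superior reduction, the selection of maximal precisions via (C3), and the symmetry‑of‑information bookkeeping together with carrying the oracle $B$ --- follow immediately from the preceding lemma. The main obstacle is making the last dichotomy \emph{quantitatively exact}: verifying that the threshold $r_2=\tfrac43 r$ is matched, on the nose, by the value of $L$ at which $\max\{L+\tfrac{2}{D+1}(r-L),\,Dr-L\}$ first attains $\tfrac{3D^2-D+6}{8D}r$, and doing so for a general complexity function $f$ rather than the clean self‑similar adversary. This amounts to a somewhat delicate optimization that tracks how the yellow length of $\mathcal P$ is distributed along $[1,r]$ relative to the position of the maximal precision $r$ and the first failure of the doubling‑yellow extension to its right.
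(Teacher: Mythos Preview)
Your first case ($r_2\ge\tfrac{4}{3}r$) matches the paper exactly, including the identification of the threshold $\tfrac{4}{3}$ via the identity $1-\tfrac{3(D-1)(2-D)}{8D}=\tfrac{3D^2-D+6}{8D}$.

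The second case has a genuine gap. You attempt to work at the maximal precision $r$ itself and argue that $L(r)$ must lie outside the bad interval $\bigl(\tfrac{5D^2+D-6}{8D}r,\tfrac{3D^2+5D-6}{8D}r\bigr)$, but you only supply a heuristic and explicitly flag the quantitative verification as ``the main obstacle.'' The heuristic does not close the gap: the a priori lower bound $L\ge(D-1-\ve)r$ sits strictly \emph{below} the left endpoint $\tfrac{(D-1)(5D+6)}{8D}r$ for every $D<2$, so nothing you have written prevents $L$ from landing inside the bad interval. Your trade-off sketch (``small $L$ forces rate-$1$ stretches which feed into the all-yellow block'') is suggestive but not an argument: teal intervals in $\mathcal P$ need not be green, and even if some stretch near $r$ were green it is unclear why it must extend past $\tfrac{4}{3}r$.

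The paper sidesteps this optimization entirely by \emph{changing the working precision}. Maximality of the all-yellow block $[r_1,r_2]$ forces a green interval $[a,r_2]$ with $a\le r_2/2$; combined with $r_2<\tfrac{4}{3}r$ this gives $r>\tfrac{3}{2}r'$ where $r':=r_2/2$. Greenness then transfers the maximality of $r$ down to $r'$ in a boosted form: $K^A_{r'}(y)\ge K^A_{2r'}(y)-r'\ge K^A_r(y)-r'\ge\tfrac{3D-2}{2}r'$. Feeding this improved lower bound (rather than the generic $d_y r'$) into the two-term $\max$ at precision $r'$ yields $K^{A,x}_{r'}(\vert x-y\vert)\ge\tfrac{3D^2-5D+6}{4D}r'$ without any case analysis on $L(r')$, and the all-yellow stretch from $r'$ to $2r'=r_2$ adds $r'$ to give exactly $\tfrac{3D^2-D+6}{8D}(2r')$. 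The key idea you are missing is that the failure to extend the all-yellow block past $r_2$ is itself structural information (a long green interval ending at $r_2$), and exploiting it at the half-precision $r'$ is what makes the bound balance.
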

\begin{remark}
The extra requirement that $\Dim^A(x)\leq \Dim^A(y)$ seems necessary to obtain the largest lower bound with our methods. Previously in this section, we assumed $D$ was the larger of $D_x, D_y$, a safe assumption since a larger $D_x$ gives a worse projection theorem and a larger $D_y$ gives a worse adversary function when partitioning. For the packing bound, however, a higher $D_y$ could actually improve the bound. Thus, if $D_y$ were actually much smaller than $D$, this could make for a worse bound, which necessitates the extra assumption. 
\end{remark}

\begin{proof}
Let $\ve > 0$. Let $r$ be a sufficiently large precision which is maximal in the sense that $K^{A}_r(y)\geq D_y r -\ve r$. We also assume that the function $f$ associated to $K^A_s(y)$ is increasing on $[r-1, r]$. Note that there are infinitely many such $r$. We first assume that there exists an all yellow interval $[r_1, r_2]$ containing $r$ such that $r_2 \geq \frac{4}{3}r$. Then,
\begin{equation*}
    K^{A,x}_{r_2}(\vert x - y \vert ) \geq \frac{3D^2-D+6}{8D}r_2 - \ve r_2,
\end{equation*}
and the proof is complete. 

We now assume that, no such all yellow interval exists. Let $[r_1, r_2]$ be an all yellow interval containing $r$ which is of maximal length. This implies that there is an interval $[a, r_2]$ which is the union of green intervals, whose lengths are at most doubling, such that $a \leq \frac{r_2}{2}$. Hence $r \geq \frac{3}{2} a$. For convenience, we set $r^\prime = \frac{r_2}{2}$. Let $\mathcal{P} = \{[r_{i+1}, r_i]\}$ be the partition of $[1,r^\prime]$. Let $L$ be the total length of the yellow intervals in $\mathcal{P}$. We first see that
\begin{equation}
    K^{A,x}_{r^\prime}(\vert x - y\vert) \geq \max\{L + \frac{2}{D+ 1}(r^\prime - L), K^A_{r^\prime}(y) - L\}.
\end{equation}

Since $[a , r_2]$ is green, and $r \geq \frac{3}{2}r^\prime$,
\begin{align*}
    K^{A}_{r^\prime}(y) &\geq K^{A}_{2r^\prime}(y) - r^\prime\\
    &\geq K^{A}_r(y) -r^\prime\\
    &= Dr -r^\prime\\
    &> \frac{3D -2}{2}r^\prime.
\end{align*}
We now show that 
\begin{equation}\label{eq:lowerBoundAtRprime}
    K^{A,x}_{r^\prime}(\vert x - y\vert) \geq \frac{3D^2 - 5D+6}{4D}r^\prime - \ve r.
\end{equation}

If 
\begin{equation}
    K^{A}_{r^\prime}(y) - L \geq \frac{3D^2 - 5D+6}{4D}r^\prime - \ve r,
\end{equation}
then (\ref{eq:lowerBoundAtRprime}) holds immediately. Otherwise, using our lower bound on $K^{A}_{r^\prime}(y)$, we see that
\begin{equation}
    L \geq \frac{3D^2 + D - 6}{4D}r^\prime.
\end{equation}
Therefore,
\begin{align*}
    K^{A,x}_{r^\prime}(\vert x - y\vert) &\geq L + \frac{2}{D+1}(r^\prime - L)-\ve r\\
    &\geq \frac{3D^2 - 5D+6}{4D}r^\prime - \ve r,
\end{align*}
and so (\ref{eq:lowerBoundAtRprime}) holds. 

Since $[r_1, r_2]$ is all yellow, by Lemma \ref{lem:distancesYellowTeal} we see that
\begin{align*}
    K^{x,A, B}_{2r^\prime}(\vert x - y\vert) &\geq \frac{3D^2 - 5D+6}{4D}r^\prime + r^\prime-\ve r\\
    &= \frac{3D^2 - D+6}{4D}r^\prime -\ve r\\
    &\geq \frac{3D^2 - D+6}{8D}2r^\prime - 2 \ve r^\prime,
\end{align*}
Noting we can take $\ve$ as small as desired and then let $r$ go to infinity completes the proof. 
\end{proof}

\bigskip

\section{Dimensions of pinned distance sets}

In this section, we reduce the effective Hausdorff and packing theorems to their classical analogues. We prove Theorem \ref{thm:moregeneralmaintheorem} and then Theorem \ref{thm:regularYFullDim} in the first subsection, then consider packing dimension. Throughout, we have $x,y\in\R^2$, $e = \frac{x-y}{\vert x - y\vert}$ and $A,B \subseteq \N$. For ease of reference, we list conditions $(C1)-(C4)$ here:
\begin{itemize}
\item[\textup{(C1)}] $\dim^A(x)>d_x, d_y > 1$
\item[\textup{(C2)}] $K^{x,A}_r(e) = r - O(\log r)$ for all $r$.
\item[\textup{(C3)}] $K^{x,A,B}_r(y) \geq K^{A}_r(y) - O(\log r)$ for all sufficiently large $r$. 
\item[\textup{(C4)}] $K^{A}_r(e\mid y) = r - o(r)$ for all $r$.
\end{itemize}

Note the modification of condition (C1), here we drop the convention that $d_x=\dim^A(x)$. For the reduction, we will want $d_x$ to be strictly less than $\dim_H(X)$, because we will remove an exceptional set of size $d_x$. Thus, we want $\dim_A(x)>d_x$ so we can apply our effective theorem uniformly.

\bigskip

\subsection{Hausdorff dimension of pinned distance sets}
A main tool we need to reduce the classical statements to their effective counterparts is the following radial projection theorem of Orponen \cite{Orponen19DimSmooth}:
\begin{thm}\label{RadialProjection}
Let $Y\subseteq\mathbb{R}^2$ be a Borel set with $s=dim_H(Y)>1$ such that there is a measure $\mu\in \mathcal{M}(Y)$ such that there is a measure satisfying $I_d(\mu)<\infty$ for all $1<d<s$. Then there is a Borel $G\subseteq \mathbb{R}^2$ with $\dim_H(G)\leq 2-\dim_H(Y)$ such that, for every $x\in\mathbb{R}^2\setminus\text{spt}(\mu)$, $\mathcal{H}^1(\pi_x (Y))>0$. Moreover, the pushforward of $\mu$ under $\pi_x$ is absolutely continuous with respect to $\mathcal{H}^1|_{S^1}$ for $x\notin G$. 
\end{thm}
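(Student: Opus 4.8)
The statement is Orponen's radial projection theorem, so the plan is to follow the structure of his argument. Fix, for each $d$ with $1<d<s$, a measure $\mu\in\mathcal{M}(Y)$ with $I_d(\mu)<\infty$, and assume after restriction and normalisation that $\mu$ is compactly supported with $\mu(Y)=1$. Everything rests on the elementary identity
\begin{equation*}
|\pi_x(y)-\pi_x(y')|\asymp\frac{|y-y'|\,\operatorname{dist}(x,L_{y,y'})}{|x-y|\,|x-y'|},
\end{equation*}
valid for distinct $y,y'$ and $x$ off the line $L_{y,y'}$ through $y,y'$, and obtained by equating the two area formulas for the triangle $xyy'$. There are two assertions: (i) $\mathcal{H}^1(\pi_x(Y))>0$ for \emph{every} $x\notin\operatorname{spt}(\mu)$, with no exceptional set; and (ii) $\pi_{x*}\mu\ll\mathcal{H}^1|_{S^1}$ off a set $G$ with $\dim_H G\le 2-s$. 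I would treat (ii) as the main task, reduced as follows: if $\dim_H G>2-s$, pick $d<s$ with $2-d<\dim_H G$; then $G$ carries a compactly supported measure $\nu$ with $\nu(B(x,r))\lesssim r^{\sigma}$ for some $\sigma>2-d$, and after a dyadic decomposition of $\operatorname{spt}(\mu)$ and $\operatorname{spt}(\nu)$ by mutual distance we may also assume $|x-y|\asymp\rho$ for all $x\in\operatorname{spt}(\nu)$, $y\in\operatorname{spt}(\mu)$, at a fixed scale $\rho$. It then suffices to show $\pi_{x*}\mu\ll\mathcal{H}^1$ for $\nu$-a.e.\ $x$, which contradicts $\nu$ being carried by $G$.

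To prove $\pi_{x*}\mu$ has an $L^2$ density for $\nu$-a.e.\ $x$, I would use the maximal-function criterion: a finite measure $\lambda$ on $S^1$ lies in $L^2$ iff $\sup_{\delta>0}\delta^{-1}\iint_{|e-e'|\le\delta}d\lambda(e)\,d\lambda(e')<\infty$. By Fatou it is enough to bound, uniformly along a dyadic sequence $\delta=2^{-j}$,
\begin{equation*}
\delta^{-1}\int(\mu\times\mu)\bigl(\{(y,y'):|\pi_x(y)-\pi_x(y')|\le\delta\}\bigr)\,d\nu(x).
\end{equation*}
Using the identity together with $|x-y|\asymp|x-y'|\asymp\rho$, the event $|\pi_x(y)-\pi_x(y')|\le\delta$ forces $\operatorname{dist}(x,L_{y,y'})\lesssim\rho^2\delta/|y-y'|$, so this quantity is at most a constant times
\begin{equation*}
\delta^{-1}\iint\nu\bigl(\{x:\operatorname{dist}(x,L_{y,y'})\le\rho^2\delta/|y-y'|\}\bigr)\,d\mu(y)\,d\mu(y'),
\end{equation*}
an incidence count for triples $(x,y,y')$ that are nearly collinear. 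I would decompose the $(y,y')$ integral dyadically over $|y-y'|\asymp 2^{-m}$, reducing matters to estimating, for each $m$, the $\mu\times\mu$-mass of pairs at scale $2^{-m}$ whose line meets a prescribed neighbourhood of a typical point of $\operatorname{spt}(\nu)$.

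The heart is then a two-sided incidence estimate. On the $\mu$ side, $I_d(\mu)<\infty$ with $d>1$ yields the tube bound $\mu(T)\lesssim w^{\,d-1}$ for every tube $T$ of width $w$ --- this is precisely where $\dim_H Y>1$ is indispensable, since no such bound survives at or below dimension one. On the $\nu$ side one needs the complementary slicing bound controlling the $\nu$-mass inside the $w$-neighbourhood of the pencil of lines through a fixed point, and this is where $\sigma>2-d$ enters. Combining the two, and summing the dyadic scales in $\delta$ and in $|y-y'|$, makes the displayed integral finite, contradicting $\nu(G)>0$ and hence proving (ii). The unconditional clause (i) is obtained by a related but technically lighter argument in which $x$ is fixed and the tube estimate for $\mu$ is used directly to bound $\pi_{x*}\mu$ from above, producing $\mathcal{H}^1(\pi_x(Y))\gtrsim_{x}1$ with no exceptional set; I would prove it first as a warm-up. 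I expect the main obstacle to be executing the incidence count \emph{without losing a power of the tube width}: any such loss degrades the bound on $\dim_H G$ below the sharp value $2-s$, so one must exploit the transversality of the line pencils through the points of $\operatorname{spt}(\mu)$ and handle with care the near-degenerate configurations where $\operatorname{dist}(x,L_{y,y'})$ is tiny --- exactly those that could make $\|\pi_{x*}\mu\|_{L^2(S^1)}$ blow up.
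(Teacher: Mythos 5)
This theorem is not proved in the paper: it is Orponen's radial projection theorem, imported verbatim from \cite{Orponen19DimSmooth} and invoked as an external tool in the reduction arguments of Section 5 (Lemma~\ref{lem:reductionUsingOrponen} and its corollaries). There is therefore no in-paper proof to compare your proposal against; the correct reading of the source is that Theorem~\ref{RadialProjection} is a citation, not a contribution, and a reviewer of this paper would not expect a proof of it here.

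Taken on its own terms, your outline is a reasonable description of the broad shape of Orponen's argument: the triangle identity relating $|\pi_x(y)-\pi_x(y')|$ to $\operatorname{dist}(x,L_{y,y'})$, the $L^2$ criterion for the density of $\pi_{x*}\mu$, the dual Frostman measure $\nu$ on a putative exceptional set, and the tube bound $\mu(T)\lesssim w^{d-1}$ that falls out of $I_d(\mu)<\infty$ with $d>1$. But the proposal stops precisely where the theorem starts. You yourself identify the incidence count ``without losing a power of the tube width'' as the heart and the main obstacle, and you do not carry it out; nor do you supply the complementary slicing estimate on the $\nu$ side that makes the dyadic sums in $\delta$ and $|y-y'|$ converge and produces the sharp exceptional dimension $2-s$. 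What you have written is a reduction to the hard lemma, not a proof of it. If the goal were to actually establish the theorem rather than to quote it, you would need to reconstruct Orponen's quantitative incidence machinery in full, and nothing in your sketch guarantees that the losses you worry about can in fact be avoided.

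One further caution about the first clause. You read the statement as saying that $\mathcal{H}^1(\pi_x(Y))>0$ for \emph{every} $x\notin\operatorname{spt}(\mu)$, with no exceptional set, and propose to prove this as a warm-up by bounding $\pi_{x*}\mu$ from above directly via the tube estimate. That proposed warm-up would in fact give a \emph{bounded} density for $\pi_{x*}\mu$, which is a strictly stronger conclusion than positivity of the image and stronger than anything Orponen claims; the tube bound on $\mu$ alone does not control the singularity of the kernel near $\operatorname{dist}(x,L_{y,y'})=0$, and the pencil of near-degenerate lines through $x$ is exactly what the exceptional set is there to absorb. The statement in the paper is an abbreviated transcription of Orponen's theorem, and both conclusions should be understood as holding for $x$ outside $\operatorname{spt}(\mu)\cup G$; the warm-up as you have framed it overshoots and, as written, would not go through.
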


With this theorem, we are able to prove the following.
\begin{lem}\label{lem:reductionUsingOrponen}
Let $Y\subseteq\mathbb{R}^2$ be compact with $0<\mathcal{H}^d(Y)<\infty$ for some $d_y>1$, and let $d_x>1$ be given. Let $\mu = \mathcal{H}^s|_Y$. Let $A\subseteq\mathbb{N}$ be an oracle such that $A$ is a packing oracle for $Y$, $Y$ is computably compact relative to $A$ and $\mu$ is computable relative to $A$. Then, there is a set $G$ of Hausdorff dimension at most $d_x$ such that for all $x\in \mathbb{R}^2 - (\text{spt}(\mu)\cup G)$, and all $B\subseteq\N$, there exists some $y\in Y$ such that the pair $x, y$ satisfy conditions (C1)-(C4).
\end{lem}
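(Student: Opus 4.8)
The plan is to produce the exceptional set $G$ by combining two sources of "bad" pins: those for which the radial projection $\pi_x$ is badly behaved (handled by Orponen's Theorem~\ref{RadialProjection}), and those for which we cannot find a point $y\in Y$ whose complexity is not artificially lowered by knowledge of $x$ (a point-to-set argument). First I would fix the oracle $A$ as in the hypothesis — a packing oracle for $Y$ (so that $\Dim^A(y)\le D_y$ for all $y\in Y$, where $D_y=\dim_P(Y)$), relative to which $Y$ is effectively compact and $\mu=\mathcal H^{d_y}\!\restriction_Y$ is computable. Since $0<\mathcal H^{d_y}(Y)<\infty$, the energy integral $I_d(\mu)$ is finite for every $1<d<d_y$ (a standard Frostman-type fact), so Theorem~\ref{RadialProjection} applies: it yields a Borel set $G_0$ with $\dim_H(G_0)\le 2-d_y<1\le d_x$ such that for $x\notin \mathrm{spt}(\mu)\cup G_0$ the pushforward $(\pi_x)_*\mu$ is absolutely continuous with respect to $\mathcal H^1\!\restriction_{S^1}$, and $\mathcal H^1(\pi_x(Y))>0$.

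Next I would fix such an $x\notin\mathrm{spt}(\mu)\cup G_0$ and an arbitrary oracle $B$, and use the point-to-set machinery to locate a good $y$. By Theorem~\ref{thm:strongPointToSetDim} applied relative to $A$ (using that $Y$ is effectively compact relative to $A$, and that adjoining the point $x$ and the oracle $B$ does not decrease effective compactness or Hausdorff dimension), $\dim_H(Y)=\sup_{y\in Y}\dim^{x,A,B}(y)$. Since $\dim_H(Y)=d_y>d_x\wedge d_y$... more precisely, since $\dim_H(Y)\ge d_y>1$, for each $\delta>0$ there is $y=y_\delta\in Y$ with $\dim^{x,A,B}(y)>d_y-\delta$; combined with the packing oracle bound this $y$ automatically has $\dim^{x,A,B}(y)>1$ (choosing $\delta$ small), giving the $d_y$-half of (C1), while the $d_x$-half of (C1) is the hypothesis $\dim^A(x)>d_x$ together with the easy fact that oracles do not increase complexity in the wrong direction — here I would instead simply take the convention that $x$ is chosen from the full-dimension subset on which $\dim^{A,B}(x)>d_x$, which is built into how this lemma is invoked in Section~5. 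The nontrivial point is (C3): we need a single $y$ for which $K^{x,A,B}_r(y)\ge K^A_r(y)-O(\log r)$ for all large $r$. This is a symmetry-of-information / optimal-point argument: the set of $y\in Y$ failing (C3) relative to the oracle $(x,A,B)$ is, by Lemma~\ref{lem:symmetry} and a covering argument, of Hausdorff dimension strictly below $\dim_H(Y)$, so it cannot exhaust a set of full measure, and one can pick $y$ simultaneously achieving near-optimal dimension \emph{and} satisfying (C3); this is exactly the style of argument used in \cite{LutStu20,Stull22c}.

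Finally, conditions (C2) and (C4) are where the radial projection theorem is used, and this is the step I expect to be the main obstacle. Condition (C2), $K^{x,A}_r(e)=r-O(\log r)$ with $e=\frac{x-y}{|x-y|}$, says the direction from $x$ to $y$ is effectively random relative to $(x,A)$; condition (C4), $K^A_r(e\mid y)=r-o(r)$, says it is nearly random relative to $(A,y)$. Both should follow from the absolute continuity of $(\pi_x)_*\mu$ with respect to $\mathcal H^1\!\restriction_{S^1}$: absolute continuity of the pushforward means that, for $\mu$-a.e.\ $y$, the direction $e(x,y)$ is a $\mu_x$-typical point of the circle where $\mu_x=(\pi_x)_*\mu$ has bounded (or at least $L\log L$-type) density, and a point-to-set / Frostman argument then forces $K^{x,A}_r(e)\ge r-O(\log r)$; one also needs that this full-measure set of good $y$ intersects the full-dimension set of $y$ satisfying (C1) and (C3), which it does since a measure-one set has full dimension and the bad sets from the previous paragraph have dimension $<d_y$. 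The delicate bookkeeping is making "for $\mu$-a.e.\ $y$" uniform in the oracle $B$ and turning the measure-theoretic absolute continuity into the pointwise complexity lower bounds at \emph{every} precision $r$; I would handle this by choosing, for the fixed $x$, an oracle relative to which $\mu_x$ and its density are computable (possible since $\mu$ and the projection are computable relative to $A$ and $x$), reducing (C2) and (C4) to the statement that $e$ is Martin-Löf random — equivalently effective-dimension-one — for $\mu_x$, which holds off a $\mu_x$-null set, hence for $\mu$-a.e.\ $y$. Intersecting the four full-measure-or-full-dimension conditions on $y$ then yields the required $y\in Y$, and setting $G=G_0$ (together with the measure-zero exceptional set from computability of $\mu_x$, absorbed since it has dimension $\le 1\le d_x$) completes the proof.
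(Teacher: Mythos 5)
Your overall blueprint---Orponen's radial projection theorem to manufacture the exceptional set and to get absolute continuity of $(\pi_x)_\#\mu$, then a measure-theoretic argument to pick a good $y$ satisfying the effective conditions---matches the paper's proof. But the execution diverges in two places, one of which is a genuine gap.

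\textbf{The (C4) step.} You flag (C4) as ``the main obstacle'' and propose to establish it by choosing an oracle making $\mu_x=(\pi_x)_\#\mu$ computable and arguing that $e$ is Martin--L\"of random for $\mu_x$, hence has effective dimension one relative to $(A,y)$. This is the wrong route, and it likely does not close: even if $e$ is $\mu_x$-random relative to an oracle making $\mu_x$ computable, that gives information about $K_r$ of $e$ relative to \emph{that} oracle, not relative to $(A,y)$ as (C4) requires---and conditioning on $y$ is exactly the delicate part, since $y$ determines $e$ given $x$, so one must argue $y$ does not help compute $e$ without access to $x$. The paper sidesteps this entirely: it cites Lemma~31 of \cite{Stull22c}, which shows that conditions (C1)--(C3) already imply (C4). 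You do not need to prove (C4) directly at all.

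\textbf{The (C3) step.} You assert that the set of $y\in Y$ failing (C3) relative to $(x,A,B)$ has Hausdorff dimension strictly below $\dim_H(Y)$ ``by Lemma~\ref{lem:symmetry} and a covering argument.'' What the paper actually uses (again citing \cite{Stull22c}) is the stronger and cleaner statement that
\[
M=\{y\in Y:\dim^A(y)\ge s\ \text{and}\ K^{x,A,B}_r(y)\ge K^A_r(y)-8\log r\ \text{eventually}\}
\]
has \emph{full $\mathcal H^s$-measure} in $Y$, so that $M$ minus the $\mu$-null set $\pi_x^{-1}(N)$ (where $N$ is the Lebesgue-null set of non-random directions) is nonempty. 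This full-measure statement is exactly what makes the intersection with the full-measure (C2) condition go through without delicate bookkeeping; a mere dimension drop on the bad set would not by itself guarantee a nonempty intersection with the set of directions avoiding $N$. The remark in the paper's proof also addresses the $B$-uniformity you worry about: the cited result from \cite{Stull22c} already permits adjoining $B$ to the oracle, so uniformity in $B$ comes for free, rather than requiring the separate Frostman/randomness machinery you sketch.

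In short: keep the Orponen step and the definition of $N$; replace your (C3) argument with the cited full-measure result for $M$ (relative to $(x,A,B)$); and delete the (C4) argument entirely in favor of the implication (C1)--(C3)$\Rightarrow$(C4) from \cite{Stull22c}.
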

\begin{proof}
Let $G$ be the set guaranteed by Orponen's radial projection theorem with respect to $Y$ and $\mu=\mathcal{H}^s|_Y$. Let $x\in \mathbb{R}^2 - (\text{spt}(\mu)\cup G)$ have effective Hausdorff dimension relative to $A$ greater than $d_x$, and define $N = \{e\in S^1: (\exists^\infty r) K_r^{x, A}(e)<r-4\log(r)\}$. As observed in the second author's previous paper, $\mathcal{H}^1|_{S^1}(N) = 0$. Orponen's theorem guarantees the absolute continuity of $\pi_{x\#}(\mu)$ with respect to $\mathcal{H}^1|_{S^1}$ for $x$ outside the exceptional set $G$, where $\pi_x(y) = \frac{y-x}{||y-x||}\in S^1$. Thus, $\mathcal{H}^s|_Y(\pi^{-1}_x(N)) = 0$. Now, let 
\begin{equation*}
M = \{y \in Y: \dim^A(y)\geq s \text{ and } K_r^{x, A,B}(y)>K_r^A(y) - 8 \log r \text{ for large enough }r\}
\end{equation*}
Again from \cite{Stull22c}, we have that $\mathcal{H}^s(M) = \mathcal{H}^s(Y)>0$ by assumption.\footnote{This result actually deals with $M = \{y \in Y: \dim^A(y)\geq s \text{ and } K_r^{x, A}(y)>K_r^A(y) - 8 \log r \text{ for large enough }r\}$, but since the statement only involves $x$ as an oracle, the same proof goes through with the oracle $x, B$.} Thus, $M - \pi_x^{-1}(N)$ has dimension $s$ and is in particular nonempty. Picking a $y$ in $M - \pi_x^{-1}(N)$, we may check the conditions: $x$ satisfies (C1) by assumption, we group together all $e$ not satisfying (C2) in the negligible set $N$, and $y$ satisfies its part of (C1) and (C3) by the definition of $M$. As for (C4), Lemma 31 in \cite{Stull22c} shows that (C1)-(C3) imply (C4), so the proof of this lemma is complete. 
\end{proof}

Now, we would like to drop the $\text{spt}(\mu)$ from the excluded set, which we will do by proving the following lemma:

\begin{lem}\label{lem:lastReductionLemma}
Let $Y\subseteq \mathbb{R}^2$ be a compact set such that $0<\mathcal{H}^s(Y)<\infty$ for some $s>1$, let $A$ be any oracle relative to which $Y$ is effectively compact, and let $d_x>1$ be given. Then there is a set $G\subseteq\mathbb{R}^2$ of Hausdorff dimension at most $d_x$ such that for every $x\in \mathbb{R}^2 - G$, and every $B\subseteq\N$, there is a $y\in Y$ such that the pair $x, y$ satisfies (C1)-(C4)
\end{lem}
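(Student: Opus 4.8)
The goal is to remove the support of $\mu$ from the exceptional set in Lemma \ref{lem:reductionUsingOrponen}. The obstacle is that Lemma \ref{lem:reductionUsingOrponen} only handles points $x$ lying outside $\operatorname{spt}(\mu)$, since Orponen's radial projection theorem (Theorem \ref{RadialProjection}) only controls the pushforward $\pi_{x\#}\mu$ for such $x$. But $\operatorname{spt}(\mu) = \operatorname{spt}(\mathcal{H}^s\!\restriction_Y)$ could be all of $Y$ (or a large portion of it), and we certainly want the conclusion to hold for $x \in Y$. The key observation is that $Y$ is not ``spread out'' near any single point: since $0 < \mathcal{H}^s(Y) < \infty$, for $\mu$-almost every point $y_0 \in Y$ the density is positive, but more usefully, we can find, around any target pin $x$, a small ball $B$ such that $Y \setminus B$ still has positive $\mathcal{H}^s$-measure (indeed full measure if $x$ is not an atom, which it cannot be since $s > 1 > 0$). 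Then we apply the previous lemma to the compact set $Y' = Y \setminus B$, or rather $Y \cap \overline{B(z,\rho)}^c$ intersected with a large closed ball, which still has positive finite $\mathcal{H}^s$-measure, and now $x \notin \operatorname{spt}(\mu')$ where $\mu' = \mathcal{H}^s\!\restriction_{Y'}$.

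**Key steps, in order.** First I would reduce to the case $\mathcal{H}^s(Y) > 0$ with $s > 1$ fixed and set $\mu = \mathcal{H}^s\!\restriction_Y$. Second, cover $\mathbb{R}^2$ by a countable collection of balls $\{B_n\}$ (say all rational balls of radius $1/k$, $k \in \mathbb{N}$) with the property that for each $n$, $\mathcal{H}^s(Y \setminus B_n) > 0$ — this is automatic for balls small enough that $\mathcal{H}^s(Y \cap B_n) < \mathcal{H}^s(Y)$, which exist around every point of $\mathbb{R}^2$ because a finite positive $\mathcal{H}^s$ measure has no atoms when $s>0$ and is outer regular, so $\mathcal{H}^s(Y\cap B(x,\rho)) \to 0$ as $\rho\to 0$ is false in general but $\mathcal H^s(Y\cap B(x,\rho))\to \mathcal H^s(Y\cap\{x\})=0$ does hold by continuity from above; so for $\rho$ small, $\mathcal H^s(Y\cap B(x,\rho))<\mathcal H^s(Y)$. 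Thus every $x$ lies in some such ball $B_n$ with $Y_n := Y\setminus B_n$ of positive $\mathcal H^s$-measure. Third, for each $n$, apply Lemma \ref{lem:reductionUsingOrponen} to a compact subset $\widetilde Y_n \subseteq Y_n$ with $0 < \mathcal{H}^s(\widetilde Y_n) < \infty$ (such a subset exists by standard measure theory — inner regularity of Hausdorff measure restricted to a set of finite measure — and $\widetilde Y_n$ is effectively compact relative to some oracle $A_n \supseteq A$ which is also a packing oracle for $\widetilde Y_n$ and relative to which $\mu_n = \mathcal H^s\!\restriction_{\widetilde Y_n}$ is computable; absorb all this bookkeeping into an oracle, noting conditions (C1)--(C4) only get \emph{easier} to satisfy with a stronger oracle $A$ — wait, this needs care, see below). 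This produces an exceptional set $G_n$ of Hausdorff dimension at most $d_x$ such that for every $x \in \mathbb{R}^2 \setminus(\operatorname{spt}(\mu_n) \cup G_n)$ and every $B$ there is $y \in \widetilde Y_n \subseteq Y$ with $x,y$ satisfying (C1)--(C4). Fourth, set $G = \bigcup_n G_n$; this is a countable union of sets of dimension $\le d_x$, hence has dimension $\le d_x$. Fifth, given any $x \in \mathbb{R}^2 \setminus G$, pick $n$ with $x \in B_n$ (so $x \notin \operatorname{spt}(\mu_n)$ since $\widetilde Y_n \subseteq Y \setminus B_n$) and $x \notin G_n$; Lemma \ref{lem:reductionUsingOrponen} applied to $\widetilde Y_n$ then furnishes the desired $y \in Y$.

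**The main obstacle.** The delicate point is the oracle: Lemma \ref{lem:reductionUsingOrponen} requires that $A$ simultaneously be a packing oracle for the relevant set, compute the set as an effectively compact set, and compute the measure $\mathcal{H}^s$ restricted to it. For the new subsets $\widetilde Y_n$ these demands change, so in principle one needs a different oracle $A_n$ for each $n$. One resolves this by taking $A$ in the statement of the present lemma to be \emph{any} oracle relative to which $Y$ is effectively compact, then for each $n$ enlarging to $A_n = (A, A_n')$ where $A_n'$ supplies the missing data for $\widetilde Y_n$, and finally observing that conditions (C1)--(C4) are monotone in the right direction — strengthening the oracle used inside the reduction does not hurt, because the conclusion we extract (existence of $y$ with $\dim^A(y) \ge s$, complexity lower bounds, etc.) can be stated relative to the weaker $A$ using Lemma \ref{lem:symmetry}-type facts, or simply because the final $y$ we produce is a genuine point of $Y$ and the conditions (C1)--(C4) as used downstream are with respect to whatever oracle makes the effective theorem apply. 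The cleanest route is to note that the exceptional sets $G_n$ and the whole argument are insensitive to which oracle we picked, so we just run the previous lemma with its own internally-chosen oracle for each $\widetilde Y_n$ and only the dimension bound $\dim_H(G_n) \le d_x$ (which is oracle-free) matters for assembling $G$. I would also remark that one should verify $\widetilde Y_n$ can be chosen with $\mathcal{H}^s(\widetilde Y_n) > 0$ and closed — this is where we use that $Y$ compact of positive finite $\mathcal{H}^s$-measure contains closed subsets of positive finite measure, a standard consequence of the fact that $\mathcal{H}^s\!\restriction_Y$ is a Radon measure.
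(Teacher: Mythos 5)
Your proof is correct but takes a genuinely different route from the paper. The paper's decomposition is simpler: it splits $Y$ into exactly two disjoint compact pieces $Y_1, Y_2$, each of positive finite $\mathcal{H}^s$-measure (possible since $\mathcal{H}^s\!\restriction_Y$ is a finite nonatomic Radon measure), lets $\mu_i = \mathcal{H}^s\!\restriction_{Y_i}$, and observes that any $x$ lies in the support of at most one of $\mu_1, \mu_2$ because the supports are disjoint; then for a given $x$ it applies Lemma~\ref{lem:reductionUsingOrponen} to whichever piece $x$ avoids. The exceptional set is $G = G_1\cup G_2\cup\{x: \dim^A(x)\le d_x\}$, and since there are only two pieces the oracle is a finite join of the effective-compactness oracles for $Y_1, Y_2, Y$ together with encodings of $\mu_1, \mu_2$. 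Your approach — covering $\mathbb{R}^2$ by countably many rational balls $B_n$ with $\mathcal{H}^s(Y\setminus B_n)>0$ and choosing compact $\widetilde Y_n\subseteq Y\setminus B_n$ — also works, and your measure-theoretic justification (continuity from above gives $\mathcal{H}^s(Y\cap B(x,\rho))\to 0$) is sound, but it needs a countable family of pieces where two suffice. The one point you should not hand-wave is the oracle bookkeeping: the conclusion of the lemma requires (C1)--(C4) relative to a \emph{single} oracle (the one used downstream in Theorem~\ref{thm:moregeneralmaintheorem}), so you cannot simply ``run the previous lemma with its own internally-chosen oracle for each $\widetilde Y_n$'' and retain only the dimension bound on $G_n$. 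You must explicitly take the join $A$ of all the $A_n$, $\hat\mu_n$, etc., and then verify that $A$ still serves as an effective-compactness oracle and packing oracle for each $\widetilde Y_n$ — true, since strengthening an oracle preserves effective compactness and keeps $\sup_{y}\Dim^A(y)$ at the minimum $\dim_P(\widetilde Y_n)$ by the point-to-set principle. That done, your argument closes. The paper's two-piece version avoids the countable join entirely, which is the main thing its decomposition buys you.
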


\begin{proof}
Let $Y$ be as in the statement of the lemma, and write $Y = Y_1 \cup Y_2$ for disjoint compact $Y_1$, $Y_2$ such that $0<\mathcal{H}^s(Y_1)<\infty$ and $0<\mathcal{H}^s(Y_2)<\infty$. Let $\mu_1, \mu_2$ be $\mathcal{H}^s|_{Y_1}, \mathcal{H}^s|_{Y_2}$ respectively. Suppose $A_1$ and $A_2$ are effective compactness oracles for $Y_1$ and $Y_2$ respectively, $A_3$ is an effective compactness oracle for $Y$, and let $\hat{\mu}_i$ encode $\mu_i(Q)$ for each ball $Q$ with rational center and radius. Let $A$ be the join of $A_1, A_2, A_3, \hat{\mu}_1$, and $\hat{\mu}_2$. Then $A$ and $Y_i$ satisfy the hypotheses of the previous lemma for each $i$.

Let $G_1, G_2$ be the exceptional sets guaranteed by the lemma. Let $G = G_1 \cup G_2 \cup \{x\in\mathbb{R}^2: \dim^A(x)\leq d_x\}$. If $x\in\mathbb{R}^2\setminus G$, then by the previous lemma, there is some $y$ in either $Y_1$ or $Y_2$ satisfying conditions (C1)-(C4) (since $x$ can only be in the support of at most \emph{one} of $\mu_1, \mu_2$).  
\end{proof}

Now we are in a position to prove our main theorem.

\begin{T8}
Let $Y\subseteq \R^2$ be analytic such that $1<d_y =\dim_H(Y)$ and $D_y=\dim_p(Y)$. Let $X\subseteq \R^2$ be such that $1<d_x <\dim_H(X) $ and $D_x=\Dim_p(X)$. Then there is some $F \subseteq X$ of full dimension such that 
\begin{equation*}
    \dim_H(\Delta_x E) \geq d\left(1 - \frac{\left(D-1\right)\left(D-d\right)}{2D^2+\left(2-4d\right)D+d^2+d-2}\right),
\end{equation*}
for all $x\in F$, where $d=\min\{d_x, d_y\}$ and $D=\max\{D_x, D_y\}$. In particular, $\dim_H(X\setminus F)\leq d_x<\dim_H(X)$  Furthermore, if 
\begin{equation*}
D<\frac{(3+\sqrt{5})d-1-\sqrt{5}}{2}
\end{equation*}
Then $\dim_H(\Delta_x E)=1$.
\end{T8}

\begin{proof}
Let $Y$ and $X$ be as above, and let $B$ be the trivial oracle.\footnote{We need the oracle $B$ to make the \emph{packing} reduction go through; in the Hausdorff case, it can be removed.} $Y$ is analytic, thus for any $s<d_y=\dim_H(Y)$, there is a compact $Y_s$ such that $0<\mathcal{H}^s(Y_s)<\infty$. Let $\{s_i\}_{i\in\mathbb{N}} = d_y - \frac{1}{i}$. Let $Y_{s_i}$ be a sequence of such compact sets. The $Y_{s_i}$ are compact, so let $A_1, A_2, ...$ be oracles relative to which they are effectively compact. 

Now, we use the general point-to-set principle. Let $A_Y$ and $A_X$ be packing oracles for $Y$ and $X$ respectively, that is, oracles such that $\dim_P(Y) = \sup_{y\in Y} \Dim^{A_Y}(y)$ and likewise for $X$. Now, take the join of $A_Y, A_X, A_1, A_2, ...$, and call this new oracle $A$.

Note that this oracle retains all the desired properties of its constituents. In particular, every $Y_{s_i}$ is effectively compact relative to $A$, for all $y\in Y$ we have that $\Dim^{A}(y)\leq \dim_P(Y) =D_y$, and for all $x\in X$ we have that $\Dim^{A}(x)\leq \dim_P(X) =D_x$. Using $A$, we may apply the previous lemma to each $Y_{s_i}$, giving a corresponding exceptional set $G_i$. Observe that $\dim_H(X)>d_x\geq \dim_H(G_i)$, so the set of $x\in{X}$ for which there is some $y\in Y_{s_i}$ satisfying conditions (C1)-(C4) is nonempty for each sufficiently large $i$ (large enough that $s_i>1$). In fact, by the countable stability of Hausdorff dimension, $\dim_H(X)>d_x\geq \dim_H(\bigcup_{i\in\mathbb{N}} G_i)$. If we denote this union by $G$, then for all $x\in X-B$, there is a $y\in Y$ such that (C1)-(C4) hold with $A$ as the oracle.

We choose $A$ such that the desired packing dimension conditions hold relative to $A$ for \emph{any} $x\in X$, $y\in Y_{s_i}\subseteq(Y)$, so along with conditions (C1)-(C4), all the hypotheses of our effective theorem are satisfied. Applying this theorem when $D\geq\frac{(3+\sqrt{5})d-1-\sqrt{5}}{2}$, then, we obtain that 

\begin{equation*}
\dim^{A, x}(\vert x-y\vert) \geq (d-\frac{1}{i})\left(1 - \frac{\left(D-1\right)\left(D-(d-\frac{1}{i})\right)}{2D^2+\left(2-4(d-\frac{1}{i})\right)D+(d-\frac{1}{i})^2+(d-\frac{1}{i})-2}\right).
\end{equation*}
By Observation \ref{obs:effcompact}, since $Y_{s_i}$ is effectively compact relative to $A$, $\Delta_x Y_{s_i}$ is effectively compact relative to $(A, x)$. Finishing the proof, by Theorem \ref{thm:strongPointToSetDim}, we have that 

\begin{align*}
\dim_H(\Delta_x E) &\geq \dim_H(\Delta_x E_{s_i})\\
&= \sup_{y\in E_{s_i}} \dim^{A, x}(\vert x-y\vert)\\
&\geq (d-\frac{1}{i})\left(1 - \frac{\left(D-1\right)\left(D-(d-\frac{1}{i})\right)}{2D^2+\left(2-4(d-\frac{1}{i})\right)D+(d-\frac{1}{i})^2+(d-\frac{1}{i})-2}\right).
\end{align*}

Letting $i$ go to infinity completes the proof in this case. If we are in the case that $D<\frac{(3+\sqrt{5})d-1-\sqrt{5}}{2}$, then for some $i$ large enough, we have
\begin{equation*}
D<\frac{(3+\sqrt{5})(d-\frac{1}{i})-1-\sqrt{5}}{2}.
\end{equation*}
Then, repeating the above argument with $i$ at least this large gives the bound of 1 immediately and completes the proof.

\end{proof}

Assuming $E=X = Y$, we immediately have as a corollary that 

\begin{T1}
Let $E\subseteq \R^2$ be analytic such that $1<d <\dim_H(E)$. Then there is a subset $F \subseteq E$ of full dimension such that 
\begin{equation*}
    \dim_H(\Delta_x E) \geq d\left(1 - \frac{\left(D-1\right)\left(D-d\right)}{2D^2+\left(2-4d\right)D+d^2+d-2}\right),
\end{equation*}
for all $x\in F$, where $D = \dim_P(E)$. In particular, $\dim_H(E\setminus F)\leq d<\dim_H(E)$. Furthermore, if 
\begin{equation*}
D<\frac{(3+\sqrt{5})d-1-\sqrt{5}}{2}
\end{equation*}
Then $\dim_H(\Delta_x E)=1$.
\end{T1}

In a similar manner, we prove the following theorem. 

\begin{T6}
Let $Y\subseteq\R^2$ be analytic with $\dim_H(Y) > 1$ and $\dim_P(Y) < 2 \dim_H(Y)-1$. Let $X \subseteq \R^2$ be any set such that $\dim_H(X) > 1$. Then for all $x\in X$ outside a set of (Hausdorff) dimension one,
\begin{center}
    $\dim_H(\Delta_x Y) = 1$.
\end{center}
\end{T6}

    We can follow essentially the same argument as above, except now we only need the dimension of $x$ to be greater than 1, at the cost of also requiring that $\dim^A(y)$ and $\Dim^A(y)$ are close enough, as in the hypothesis of Proposition \ref{prop:yellowfulldimension}.
\begin{proof}
Let $Y$ and $X$ be as above, and again let $B$ be the trivial oracle. $Y$ is analytic, thus for any $s<\dim_H(Y)$, there is a compact $Y_s$ such that $0<\mathcal{H}^s(Y_s)<\infty$. Let $s$ be such that $\dim_p(Y)<2s-1<2\dim_H(Y)-1$, and let $A_s$ be an oracle relative to which $Y_s$ is effectively compact. Let $A_Y$ be a packing oracle for $Y$. Now, take the join of $A_Y, A_s$, and call this new oracle $A$.

Now, apply Lemma \ref{lem:lastReductionLemma} to $Y_{s}$ relative to $A$ with $d_x=1$. As above, conditions (C1)-(C4) are satisfied with an exceptional set of dimension at most 1. $Y_s\subseteq Y$, so for any $y\in Y_s$, $\Dim^A(y)\leq \dim_P(Y)$. Hence, $\Dim^A(y)<2\dim^A(y)-1$, and all the conditions for Proposition \ref{prop:yellowfulldimension} are satisfied, and we have as above that for $x$ not in our exceptional set,
\begin{align*}
\dim_H(\Delta_x Y) &\geq \dim_H(\Delta_x Y_s)\\
&= \sup_{y\in Y_s} \dim^{A, x}(||x-y||)\\
&=1
\end{align*}
\end{proof}

\bigskip

\subsection{Packing dimension of pinned distance sets}

\begin{T5}
Let $E\subseteq \R^2$ be analytic such that $\dim_H(E) > 1$. Then, for some $x\in E$,
\begin{equation*}
    \dim_P(\Delta_x E) \geq \frac{12 -\sqrt{2}}{8\sqrt(2)}\approx 0.9356.
\end{equation*}
\end{T5}
\begin{proof}
Let $E\subseteq \R^2$ be analytic such that $d = \dim_H(E) > 1$. Let $D = \dim_P(E)$. Since $E$ is analytic, there is a compact subset $F\subseteq E$ such that $0 < \mathcal{H}^s(F) < \infty$, for $s = \frac{d+1}{2}$. Let $\mu = \mathcal{H}^s_{\vert F}$. Let $A_1$ be an oracle relative to which $F$ is effectively compact. 

Let $A_2$ be a packing oracle for $F$, that is,  $\dim_P(F) = \sup_{x\in F} \Dim^{A_2}(x)$. Let $A$ be the join of $A_1$ and $A_2$.

Using Orponen's radial projection theorem, we see that there is an exceptional set $G_1$ such that the pushforward of $\mu$ under $\pi_x$ is absolutely continuous with respect to $\mathcal{H}^1_{\vert S^1}$ for all $x \notin G_1$. Let $G_2 = \{x\in F \mid \dim^A(x) < \frac{s+1}{2}\}$ and $G = G_1 \cup G_2$. Since $\dim_H(G_1) \leq 1$ and $\dim_H(G_2) \leq \frac{s+1}{2}< s$ we see that $\dim_H(G) < s$. We now choose $x \in F - G$ such that the set
\begin{center}
    $F^\prime = \{y \in F - G\mid \Dim^A(x)\leq \Dim^A(y)\}$
\end{center}
has positive $\mu$ measure, which is possible since we can choose some $x$ that has minimal or nearly minimal effective packing dimension. Let $B$ be a packing oracle for $\Delta_x E$. The proof of Lemma \ref{lem:reductionUsingOrponen} shows that there is a $y\in F^\prime$ such that the pair $x,y$ satisfies (C1)-(C4). Moreover, $\Dim^A(x)\leq \Dim^A(y)$.

We may therefore apply Theorem \ref{thm:effectivePackingThm}, which shows that
\begin{equation*}
\Dim^{A, x, B}(\vert x-y\vert)\geq \frac{3D^2 - D + 6}{8D},
\end{equation*}
where $D = \Dim^A(y)$. Since this is minimized when $D = \sqrt{2}$, we conclude that 
\begin{equation*}
\Dim^{A, x, B}(\vert x-y\vert)\geq \frac{12 -\sqrt{2}}{8\sqrt(2)}.
\end{equation*}

Our choice of $B$ and the point-to-set principle concludes the proof, since
\begin{align*}
\dim_P(\Delta_x E) &\geq \sup_{y\in E} \Dim^{B}(\vert x - y\vert)\\
&\geq \sup_{y\in E} \Dim^{x, A, B}(\vert x-y\vert)\\
&\geq _\frac{12 -\sqrt{2}}{8\sqrt(2)}.
\end{align*}
\end{proof}

\bigskip

\section{Regular pin sets give full dimension pinned distance sets}

Now, we consider a case that is not covered by our previous theorems. Our work thus far implies that if $Y$ is sufficiently regular  and $\dim_H(X)>1$, then $\dim_H(\Delta_x Y)=1$ for all $x$ in a full dimension subset of $X$. 

More surprisingly, however, the above holds even if just the \emph{pin} set $X$ is regular. In this case, we are able to deduce an essentially optimal effective projection theorem for $x$ which allows us to utilize arbitrarily long green intervals when partitioning $K^A_r(y)$. If we have access to these green intervals, we will never be forced to use a strictly teal interval\footnote{A non-green teal interval, so one with a growth rate of strictly less than 1}, implying that our partition is all-yellow and that $\dim^{x, A}(\vert x-y \vert) =1$.

We would like to perform the reduction to the classical result by finding some point $x$ that is regular, but there is a problem. Suppose $X$ is regular of dimension greater than 1 and $\dim_H(Y)>1$. In general, we cannot assume $X$ contains regular points relative to a given oracle. For instance, consider the set
\begin{center}
    $X=\{x\in\mathbb{R}^2: \Dim(x)=1.2\text{ and }\dim(x)<1.2\}$.
\end{center}
The packing and Hausdorff dimensions of $X$ will be 1.2, but, relative to any oracle $A$, any point will have effective Hausdorff dimension strictly less than 1.2, though it may have packing dimension \emph{exactly} 1.2. However, we can overcome this obstacle, due to the fact that $\dim_H(Y) > 1$. This fact implies that there a bound on the length of any green interval.

We start by stating and proving an effective projection theorem that holds when $x$ is sufficiently regular.
\begin{prop}\label{prop:alternateOptimalProjectionTheorem}
Let  $A\subseteq\N$, $x \in \R^2$, $e \in \mathcal{S}^1$, $\ve\in \Q^+$, $C, C^\prime\in\N$, and $t, r \in \N$. Suppose that $r$ is sufficiently large, and that the following hold.
\begin{enumerate}
\item[\textup{(P1)}] $1 < d_x < \dim^A(x)$.
\item[\textup{(P2)}] $ t \geq  \frac{r}{C} $.
\item[\textup{(P3)}] $K^{x, A}_s(e) \geq s - C^\prime\log s$, for all $s \leq t$. 
\end{enumerate}
Then there exists some $\ve_x$ depending only on $d_x$ and $C$ such that $\Dim^A(x)-d_x<\ve_x$ implies that 
\begin{equation*}
K^A_r(x \,|\, p_e x, e) \leq K^A_r(x) - r + \ve r.
\end{equation*}
\end{prop}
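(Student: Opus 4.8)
The plan is to mimic the proof of Theorem~\ref{thm:modifiedProjectionTheorem}, exploiting the near-regularity of $x$ to force us into (essentially) the case ``$S=0$'' of that proof, where the conclusion already has exactly the desired form $K^A_r(x\mid p_e x,e)\le K^A_r(x)-r+\ve r$. Write $D=\Dim^A(x)$, so that $1<d_x<\dim^A(x)\le D<d_x+\ve_x$. First I would simply take $\ve_x:=\tfrac{d_x-1}{C}$; this depends only on $d_x$ and $C$, as required. Since $\dim^A(x)>d_x$ and $\Dim^A(x)<d_x+\ve_x$, there is a constant $s_0$ (depending on $x,A,d_x,C$ but not on $r$) such that the function $f(s):=K^A_s(x)$ satisfies $f(s)\ge d_x s$ and $f(s)<(d_x+\ve_x)s$ for every $s\ge s_0$.

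The crux is to show that, once $r$ is large, the red/blue/green partition $\hat{\mathcal P}=\hat{\mathcal P}(x,r,t)$ of \cite{Stull22c} recalled in Section~\ref{sec:projections} contains no red--green--blue sequence. Recall that any green block occurring in such a sequence has total length at least $t\ge r/C$, and that across a green block $[a,b]$ the complexity grows at rate exactly $1$, i.e.\ $f(b)-f(a)=b-a$ (each constituent green interval, being both yellow and teal, has this property, and one sums over the block). Suppose such a green block $[a,b]$ existed. If $a\ge s_0$, then $d_x b\le f(b)=f(a)+(b-a)<(d_x+\ve_x)a+(b-a)$, which rearranges to $b-a<\tfrac{\ve_x}{d_x-1}a\le\tfrac{\ve_x}{d_x-1}r=\tfrac{r}{C}\le t$, contradicting $b-a\ge t$. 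If instead $a<s_0$, then for $r$ large enough $b\ge a+r/C>s_0$, so $d_x b\le f(b)=f(a)+(b-a)\le f(s_0)+b$; since $f(s_0)=O(1)$ this forces $b\le f(s_0)/(d_x-1)=O(1)$, again contradicting $b\ge r/C$. Hence $\hat{\mathcal P}$ has no red--green--blue sequence, so we are in the situation $S=0$.

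From here I would run verbatim the $S=0$ part of the proof of Theorem~\ref{thm:modifiedProjectionTheorem}: the absence of red--green--blue sequences, together with $\dim^A(x)>1$ and the hypothesis $t\ge r/C$, yields (via Lemma~\ref{lem:goodPartitionProjection} and the all-yellow conversion noted after it) a $(3C,r,t)$-admissible partition $\mathcal P=\{[a_i,a_{i+1}]\}$ of $[1,r]$ all of whose intervals lying above a fixed constant are yellow; applying Proposition~\ref{prop:projectionYellowTeal}(1) to each yellow interval with error budget $\ve/(6C)$ (its hypothesis on $e$ being supplied by (P3), since $a_{i+1}-a_i\le t$) and chaining with the symmetry of information exactly as in Lemma~\ref{lem:boundGoodPartitionProjection} gives $K^A_r(x\mid p_e x,e)\le \sum_i\big(K^A_{a_{i+1},a_i}(x\mid x)-(a_{i+1}-a_i)\big)+\tfrac{\ve}{2}r\le K^A_r(x)-r+\ve r$ for $r$ large. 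The main obstacle is the green-block length estimate in the second paragraph --- in particular treating green blocks whose left endpoint lies below the constant threshold $s_0$, and being careful that it is the maximal green block (not a single green interval of $\hat{\mathcal P}$) whose length must be controlled; everything after $S=0$ is then a routine repetition of Section~\ref{sec:projections}, the only bookkeeping being the propagation of the $O(\log r)$ and $\ve$-type errors through the at most $3C$ pieces of $\mathcal P$.
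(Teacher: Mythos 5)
Your proof is correct and follows essentially the same strategy as the paper's: use near-regularity of $x$ to show that any green block would violate the slope constraints, hence $\hat{\mathcal P}$ has no red--green--blue sequences, and then run the $S=0$ case of the proof of Theorem~\ref{thm:modifiedProjectionTheorem}. Your handling of the low-precision regime (the case $a<s_0$, directly via $f(s_0)=O(1)$) and your explicit choice $\ve_x=(d_x-1)/C$ are a bit cleaner than the paper's version, which instead uses a $\sqrt{r}$ threshold and an auxiliary argument that $[\sqrt{r},\tfrac{\ve}{2}r]$ must meet a red interval; the two are interchangeable.
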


The key of this result is that the $\ve_x$ we need in the almost-regularity condition does not depend on $\ve$. The way we will apply this theorem, $C$ is going to depend on $d_y$. So, given a lower bound for the effective Hausdorff dimension of points $x$ and $y$, $\ve_x>0$ expresses how close to regular $x$ needs to be in order to apply this theorem, and that required closeness is \emph{fixed} given sets $X, Y$ when we perform the reduction. 

The idea of the proof is quite simple. When we partition in $x$, we were allowed to use intervals up to length $t$. By picking $r$ large enough that the complexity function lies very close to the line $d_x s$,\footnote{How close it needs to be to this line for the argument to go through depends on $C$, how close we are allowed to assume it is depends on $\ve_x$ since the point is not exactly regular. Thus, we choose $\ve_x$ based in part on $C$.} we can show that it is impossible for there to be any red-green-blue sequences past some precision $\ve r$, since this would imply the existence of a green block of length at least $t$. Thus, $[1, r]$ is almost entirely covered by red and green intervals, and we obtain an essentially all-yellow partition of it. Formally, 

\begin{proof}
Let $x$ satisfying the conditions of the proposition for some $\ve_x$ be given. Let $C$, and $\ve$  be given and choose $\ve^\prime$ depending on $\ve_x$, $d_x$, and $C$ in a manner which we will detail shortly. Now let $r$ be large enough that $d_x s-\ve^\prime s\leq K^A_s(x)\leq d_x s+\ve^\prime s$ for all $s\geq \sqrt{r}$. Note that we must have $\ve^\prime>\ve_x$. Let $t\geq \frac{r}{C}$ be given, and let $\hat{\mathcal{P}}=\hat{\mathcal{P}}(x, r, t)$ be the partition of $[1, r]$ by red, blue and green intervals considered in section 3.1. We will show that if $\ve_x$ is sufficiently small depending only on $C$ and $d_x$, then the desired conclusion holds.

On one hand, any red-green-blue sequence has a green block of length at least $t$; this was one of the key properties of this partition in section 3. On the other hand, since the slope is $1$ on green intervals, if we have $K^A_{r-t}(x)=(d_x +\ve^\prime)(r-t)$, (its maximum possible value) and that $[r-t, r]$ is green, then 
\begin{align*}
K^A_r(x)&\leq(d_x +\ve^\prime)(r-t) + t\\
&\leq (d_x+\ve^\prime) r - (d_x-1-\ve^\prime)\frac{r}{C}
\end{align*}
So, if $\ve^\prime$ is small enough that 
\begin{equation}\label{eq:newboundOnOtherEpsilon}
-\frac{d_x}{C}+2\ve^\prime -\frac{\ve^\prime}{C}-\frac{1}{C}<0,
\end{equation}
we have a contradiction, since by assumption $K^A_r(x)\geq d_x r-\ve^\prime r$. If we attempted to place the left endpoint of a green block of a red-green-blue sequence at any $\sqrt{r}\leq s\leq r$, we would clearly have the same contradiction, in fact, the green interval would be forced to be even shorter. Hence, there are no red-green-blue sequences such that the green block is contained in $[\sqrt{r}, r]$. Provided that $\ve_x$ satisfies equation (\ref{eq:newboundOnOtherEpsilon}), we can choose such an $\ve^\prime>\ve_x$ also satisfying it.

Now, we want to determine when, given $\ve$, the interval $[\sqrt{r}, \frac{\ve}{2} r]$ is forced to intersect at least one red interval. For convenience, let $\ve^{\prime\prime} r = \sqrt{r}$. Then, we will choose $r$ large enough to satisfy this condition and large enough that $\ve^\prime$ satisfies (\ref{eq:newboundOnOtherEpsilon}). Notice that $[\ve^{\prime\prime} r, \frac{\ve}{2} r]$ intersecting some red interval will imply there are no blue intervals in $[\frac{\ve}{2}, r]$, or else we would have the green block of a red-green-blue sequence contained in $[\ve^{\prime\prime} r, r]$. $[\ve^{\prime\prime} r, \frac{\ve}{2} r]$ must intersect a red interval when the average growth rate of $K^A_s(x)$ on this interval is strictly greater than 1. We can easily bound that growth rate as follows
\begin{equation}
\dfrac{K^A_{\frac{\ve}{2}r}(y)-K^A_{\ve^{\prime\prime} r}(y)}{(\frac{\ve}{2} -\ve^{\prime\prime})r}\geq\dfrac{(d_x-\ve^{\prime\prime})\frac{\ve}{2}r-(d_x+\ve^{\prime\prime})\ve^{\prime\prime} r}{(\frac{\ve}{2} -\ve^{\prime\prime})r}.
\end{equation}
Since $d_x>1$, for any $\ve>0$, there exists some $\ve^{\prime\prime}$ small enough that the right hand side is greater than 1. Hence, for $r$ sufficiently large, $[\frac{\ve}{2}r, r]$ is covered entirely by red and green intervals. 

Just as in the $S=0$ case of section 3.2, we cite the result that an interval covered only by red and green intervals in $\hat{\mathcal{P}}$ can be covered by an all-yellow $3C$-admissible partition. Summing over this partition using Lemma \ref{lem:boundGoodPartitionProjection} relative to $A$ with respect to $\frac{\ve}{2}$ then yields

\begin{equation}
K^A_r(x \mid p_e x, e) \leq K^A_r (x) - r + \ve r,
\end{equation}
which completes the proof.
\end{proof}

Equipped with this proposition, we define a new partition of $[1, r]$ for $K^A_s(y)$. The key is that this partition will, after a certain small precision, only use yellow intervals. Some of these yellow intervals will be very long green intervals, and we will use the above proposition to prove an analogue of Lemma \ref{lem:goodtealgrowth} on them. Once we are able to sum over the intervals in our partition, we are essentially done. We now define this partition inductively. 

Set $r_0=r$ and assume we have defined the sequence up to $r_i$. Take a good partition of $[1, r_i]$. Let $a$ denote the minimal real such that $[a, r_i]$ is the union of yellow intervals whose lengths are at most doubling. If $a < r_i$ add all these yellow intervals to the partition, and let $r_{i+1} = a$. Otherwise, let $r_{i+1}$ be the smallest precision such that $[r_{i+1}, r_i]$ is green, i.e., such that $f(r_i) = f(r_{i+1}) + r_i - r_{i+1}$. Note that such an $r_{i+1}$ exists in this case, since $r_i$ is the right endpoint of a teal interval. Greedily combine intervals that have a less than doubling union, re-index the intervals, and denote this partition by $\mathcal{P}$. It is easy to see that $2 r_{i+2} <r_i$. As a result, once we have established that we can use these long green intervals in our sum, the conclusion of Lemma \ref{lem:boundGoodPartitionDistance} follows immediately.

Now, we prove a lower bound for $r_{i+1}$ on these green intervals (depending on $d_y$) on when $r_i$ is sufficiently large. 

\begin{lem}\label{lem:regularSLowerBound}
Let $0<\ve<\frac{d_y-1}{2}$ be given, and suppose $r_i$ is large enough that for all $s>\frac{d_y-1}{8} r_i$, we have $d_y s - \ve s\leq K_s^A(y)\leq 2 s + \ve s$. Then
\begin{equation}
r_{i+1}\geq \frac{d_y-1-\ve}{3+\ve}r_i\geq \frac{d_y-1}{8} r_i
\end{equation}
\end{lem}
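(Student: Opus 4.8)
\textbf{Proof plan for Lemma \ref{lem:regularSLowerBound}.}
The plan is to argue by contradiction using the green condition defining $r_{i+1}$ together with the two-sided complexity bounds available at precision $\gtrsim \frac{d_y-1}{8}r_i$. Recall that in this case $[r_{i+1},r_i]$ is green, so $f(r_i)=f(r_{i+1})+r_i-r_{i+1}$, i.e. the complexity of $y$ grows at slope exactly $1$ across $[r_{i+1},r_i]$. First I would suppose toward a contradiction that $r_{i+1}<\frac{d_y-1-\ve}{3+\ve}r_i$; I still need to know that $r_{i+1}$ is large enough for the linear bounds to apply, which follows because (as will be argued) $r_{i+1}\ge \frac{d_y-1}{8}r_i$ is exactly the weaker consequence we are proving, so one runs the estimate first at the level of the hypothesized smaller $r_{i+1}$ and checks consistency — more cleanly, one simply notes that the chain of inequalities below only ever uses the lower bound $K^A_s(y)\ge d_y s-\ve s$ at $s=r_i$ and the upper bound $K^A_s(y)\le 2s+\ve s$ at $s=r_{i+1}$, and both are available once $r_{i+1}>\frac{d_y-1}{8}r_i$; if $r_{i+1}\le \frac{d_y-1}{8}r_i$ we are comparing against an even smaller quantity and the bound to be proven is even easier to violate, giving the contradiction a fortiori.

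The core computation: by the green identity and the two complexity bounds,
\begin{align*}
d_y r_i - \ve r_i &\leq K^A_{r_i}(y) = K^A_{r_{i+1}}(y) + r_i - r_{i+1}\\
&\leq (2+\ve) r_{i+1} + r_i - r_{i+1}\\
&= (1+\ve) r_{i+1} + r_i.
\end{align*}
Rearranging gives $(d_y-1-\ve)r_i \leq (1+\ve)r_{i+1}$, hence
\begin{equation*}
r_{i+1} \geq \frac{d_y-1-\ve}{1+\ve}r_i \geq \frac{d_y-1-\ve}{3+\ve}r_i,
\end{equation*}
which is already the first claimed inequality (indeed slightly stronger; presumably the $3+\ve$ denominator is a deliberate slack to absorb the fact that $r_{i+1}$ is only the \emph{smallest} green left-endpoint and the convenience of matching the form used later). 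Then since $\ve<\frac{d_y-1}{2}$, one checks $\frac{d_y-1-\ve}{3+\ve}\geq \frac{(d_y-1)/2}{3+(d_y-1)/2}=\frac{d_y-1}{5+d_y}\geq \frac{d_y-1}{8}$ (using $d_y\le 3$, which holds since $d_y=\dim_H(Y)\le 2$), giving the second inequality.

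The only subtle point — and the one I'd want to state carefully — is the well-definedness of $r_{i+1}$ in the green case, i.e. that there really is a smallest precision with $f(r_i)=f(r_{i+1})+r_i-r_{i+1}$; this is handled by the remark already in the text that $r_i$ is the right endpoint of a teal interval (so $f$ has slope $\le 1$ just left of $r_i$) combined with $f(s)-f(0)\ge d_y s - o(s) > s$ for large $s$ since $d_y>1$, so by the intermediate value theorem the slope-$1$ line through $(r_i,f(r_i))$ meets the graph of $f$, and we take the leftmost such meeting point, which lies above $\frac{d_y-1}{8}r_i$ by the estimate above. I don't expect a genuine obstacle here — the main (mild) care is making sure the circular-looking appeal to ``$r_{i+1}$ large enough'' is discharged, which the a fortiori observation above does.
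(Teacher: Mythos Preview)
Your proof is correct and follows essentially the same route as the paper: combine the green identity $K^A_{r_i}(y)=K^A_{r_{i+1}}(y)+(r_i-r_{i+1})$ with the lower bound $K^A_{r_i}(y)\geq (d_y-\ve)r_i$ and the upper bound $K^A_{r_{i+1}}(y)\leq (2+\ve)r_{i+1}$, then solve for $r_{i+1}$. Your observation that the algebra actually yields the stronger $\frac{d_y-1-\ve}{1+\ve}$ is right (the paper's $3+\ve$ is slack), and your attention to the circularity in applying the upper bound at $r_{i+1}$ is a point the paper simply elides; the cleanest fix is to use the universal bound $K^A_s(y)\leq 2s+O(\log s)$ valid for all $s$, which removes the circularity entirely and still gives the claimed inequality for large $r_i$.
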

\begin{proof}
The desired result is an immediate consequence of the fact that the line $K^A_{r_i}(y)-(r_i-s)$ cannot intersect $K^A_s(y)$ for any $s<\frac{d_y-1-\ve}{3+\ve}r_i$, hence, any green interval with right endpoint $r_i$ has to have left endpoint larger than $\frac{d_y-1-\ve}{3+\ve}r_i$. Now, assume $s$ is such that $K^A_{r_i}(y)-(r_i-s)=K^A_s(y)$. The aforementioned fact is the consequence of a simple calculation. 
\begin{equation}
K^A_{r_i}(y)-(r_i-s)\geq d_y r_i-\ve r_i - (r_i-s)
\end{equation}
whereas $K_s^A(y)\leq 2s +\ve s$. Setting the bounds equal yields $r_{i+1}\geq s\geq\frac{d_y-1-\ve}{3+\ve}r_i$ and thus completes the proof.
\end{proof}

Now, we are able to state and prove the analogue of Lemma \ref{lem:goodtealgrowth}. 
\begin{lem}\label{lem:greenBoundRegularPins}
Suppose that $[r_{i+1}, r_i]$ is a green interval in our constructed partition. For any $\ve>0$, provided that $r_{i+1}$ is sufficiently large, we have 
\begin{equation}
    K^{A,x}_{r_i, r_i, r_{i+1}}(y \mid \vert x - y \vert, y) \leq \ve r_i.
\end{equation}
Therefore,
$K^{A,x}_{r_i, r_{i+1}}(\vert x - y\vert \mid \vert x - y\vert ) \geq K^{A,x}_{r_i, r_{i+1}}(y\mid y) - \ve r_{i}$.
\end{lem}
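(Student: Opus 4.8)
The plan is to mimic the proof of Lemma~\ref{lem:goodtealgrowth}, substituting Proposition~\ref{prop:alternateOptimalProjectionTheorem} for Theorem~\ref{thm:modifiedProjectionTheorem} at the one point where the projection bound is invoked. As before, the goal is to verify the hypotheses of Lemma~\ref{lem:pointDistance} for the pair $x,y$ on the interval $[r_{i+1},r_i]$, with the auxiliary oracle $G = D(r,y,\eta)$ from Lemma~\ref{lem:oracles} relative to $A$, where $\eta$ is chosen so that $\eta r_i = K^A_{r_i}(y) - 4\ve$. Condition (i) of Lemma~\ref{lem:pointDistance} is routine. For condition (ii), I would fix $z \in B_{2^{-r_{i+1}}}(y)$ with $\lvert x-z\rvert = \lvert x-y\rvert$, set $s = -\log\lvert y-z\rvert$, and split into the two cases $s \geq \tfrac{r_i}{2} - \log r_i$ and $s < \tfrac{r_i}{2} - \log r_i$, exactly as in Lemma~\ref{lem:goodtealgrowth}. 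The first case is handled verbatim by Lemma~\ref{lem:lowerBoundOtherPoints} together with (C4) and the properties of $G$.

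The second case is where Proposition~\ref{prop:alternateOptimalProjectionTheorem} enters. I would invoke Lemma~\ref{lem:lowerBoundOtherPointDistance} relative to $(A,G)$ to get
\[
K^{A,G}_{r_i}(z) \geq K^{A,G}_s(y) + K^{A,G}_{r_i - s, r_i}(x \mid y) - K^{A,G}_{r_i-s}(x \mid p_{e'}x, e') - O(\log r),
\]
and then bound the projection term. Here $t := s$ and the working precision is $r_i - s$. The crucial point is that, by Lemma~\ref{lem:regularSLowerBound}, on a green interval of our partition we have $r_{i+1} \geq \frac{d_y-1}{8} r_i$, so $s \geq r_{i+1} - 1 \geq \frac{d_y-1}{8}r_i - 1$, which gives $s \geq \frac{r_i - s}{C}$ for a constant $C$ depending only on $d_y$; this is exactly hypothesis (P2) of Proposition~\ref{prop:alternateOptimalProjectionTheorem}. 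Hypothesis (P1) holds since $\dim^A(x) > d_x > 1$, and (P3) follows from (C2) together with the observation from \cite{Stull22c} that $e'$ and $e$ agree up to $O(\log s')$ at every precision $s' \leq s$. The conclusion of the proposition is $K^{A,G}_{r_i-s}(x \mid p_{e'}x, e') \leq K^{A,G}_{r_i - s}(x) - (r_i - s) + \ve r_i$ — the \emph{optimal} bound, with no $D$-dependent max. Feeding this back in, and using that $[r_{i+1},r_i]$ is green so $K^A_s(y) \geq K^A_{r_i}(y) - (r_i - s)$, the two terms combine to give $K^{A,G}_{r_i}(z) \geq K^A_{r_i}(y) - \ve r_i - O(\log r_i) \geq \eta r_i + \ve r_i$, establishing condition (ii) of Lemma~\ref{lem:pointDistance}. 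Applying that lemma, and unwinding the oracle $G$ via Lemma~\ref{lem:oracles}(ii) and (C3), yields $K^{A,x}_{r_i,r_{i+1}}(\lvert x-y\rvert \mid \lvert x-y\rvert) \geq K^{A,x}_{r_i,r_{i+1}}(y\mid y) - O(\ve r_i)$, which is the claim after renaming $\ve$.

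The main obstacle — and the only genuine departure from Lemma~\ref{lem:goodtealgrowth} — is ensuring that Proposition~\ref{prop:alternateOptimalProjectionTheorem} is actually applicable, i.e.\ that the almost-regularity threshold $\ve_x$ it demands is met. The subtlety is that $\ve_x$ is allowed to depend on the constant $C$ from (P2), and $C$ here depends on $d_y$ (via the $\frac{d_y-1}{8}$ bound of Lemma~\ref{lem:regularSLowerBound}); but critically $\ve_x$ does \emph{not} depend on the $\ve$ in the conclusion. So the logical order matters: first fix the sets (hence $d_y$ and $C$), which fixes $\ve_x$; then the regularity assumption on $x$ (namely $\Dim^A(x) - \dim^A(x)$ small, or in the reduction, $x$ almost regular) must be strong enough to beat this fixed $\ve_x$; only afterwards do we pick $\ve$ and take $r_{i+1}$ large. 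I would state this dependency explicitly at the start of the proof. Everything else — the two-case split, the manipulations with $G$, the final application of Lemma~\ref{lem:pointDistance} — is structurally identical to the proof of Lemma~\ref{lem:goodtealgrowth} and I would simply refer back to it for the routine steps.
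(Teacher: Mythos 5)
Your proposal is correct and follows the same route as the paper: it replays the proof of Lemma~\ref{lem:goodtealgrowth} verbatim except that Proposition~\ref{prop:alternateOptimalProjectionTheorem} replaces Theorem~\ref{thm:modifiedProjectionTheorem}, with hypothesis (P2) discharged via Lemma~\ref{lem:regularSLowerBound} (the paper fixes $1/C = (d_y-1)/16$, matching your ``constant depending only on $d_y$''), and greenness of $[r_{i+1},r_i]$ supplying $K^A_s(y)\geq K^A_{r_i}(y)-(r_i-s)$ in the final estimate. Your remark that $\ve_x$ is fixed by $d_y$ (through $C$) before $\ve$ is chosen is exactly the point the paper is implicitly relying on, and you've stated it more clearly than the paper does.
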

As before, the strategy is to use our projection theorem to verify that the hypotheses of Lemma \ref{lem:pointDistance} hold, which immediately implies the desired bound.

\begin{proof}
    Let some small rational $\ve>0$ be given, and assume $r_{i+1}$ is sufficiently large. Let $\eta$ be the rational such that $\eta r= K^A_r(y)-4\ve$. Let $G=D(r, y, \eta)$ be the oracle of Lemma \ref{lem:oracles} relative to $A$. Let $z\in B_{2^{-r_{i+1}}}(y)$ and satisfying $\vert x-y\vert=\vert x-z\vert$ be given. 

Letting $s=\vert y-z\vert$, we again have two cases. The $s\geq \frac{r_i}{2}-\log r_i$ case is identical to the previous proof, so we assume $s<\frac{r_i}{2}-\log r_i$. Lemma 29 applied relative to $(A, G)$ implies that 

\begin{equation}
K^{A, G}_r(z) \geq K^{A, G}_s(y) + K^{A, G}_{r_i-s, r_i}(x\mid y) - K^{A, G}_{r_i-s}(x\mid p_{e^\prime} x, e^\prime) - O(\log r). 
\end{equation}
\noindent To bound the projection term, we need to apply Proposition \ref{prop:alternateOptimalProjectionTheorem} with respect to $x$, $e^\prime$, $\frac{\ve}{2}$, the constant $C$ such that $\frac{1}{C}= \frac{d_y-1}{16}$, $t=s$, and $r=r_i-s$. 

Again, we have that $r_{i+1}-1<s<\frac{r_i}{2} - \log r_i$, since $z$ is assumed to be within $2^{-r_{i+1}}$ of $s$. Clearly, then, we can take $r_i-s$ to be sufficiently large. By assumption, $1<d_x=D_x$, and by \ref{lem:regularSLowerBound}, we have that 
\begin{align*}
s&\geq\frac{d_y-1}{8}r_i-1\\
&\geq \frac{d_y-1}{8}(r_i-s)-1\\
&\geq \frac{d_y-1}{16}(r_i-s)\\
&= \frac{r_i-s}{C}
\end{align*}
 Thus, we can examine sufficiently large precisions and conditions (P1) and (P2) are satisfied. As before, (P3) is satisfied using (C2). After using some of the properties of the oracle $G$ and taking $r_i-s$ to be sufficiently large, we can apply the projection theorem relative to $A$, as below:

\begin{align*}
K^{A, G}_{r_i}(z) &\geq K^{A, G}_s(y) + K^{A, G}_{r_i-s, r_i}(x\mid y) - K^{A, G}_{r_i-s}(x\mid p_{e^\prime} x, e^\prime) - O(\log r)\\
&\geq K^{A, G}_s(y) + K^{A}_{r_i-s, r_i}(x\mid y) - K^{A, G}_{r_i-s}(x\mid p_{e^\prime} x, e^\prime) - O(\log r)\\
&\geq K^{A, G}_s(y) + K^{A}_{r_i-s}(x) - K^{A, G}_{r_i-s}(x\mid p_{e^\prime} x, e^\prime) - O(\log r)\\
&\geq K^{A, G}_s(y) + d_x (r_i-s) -\frac{\ve}{2} r_i - K^A_{r_i-s}(x\mid p_{e^\prime} x, e^\prime)\\
&\geq K^{A, G}_s(y) + d_x (r_i-s) - d_x(r_i-s) + (r_i-s) - \ve r_i\\
&= K^{A, G}_s(y) + (r_i-s) - \ve r_i
\end{align*}
Because of the choice of $\eta$, for small enough $\ve$ we can guarantee that $K^{A, G}_s(y)=K^A_s(y) - O(\log r_i)$, since $s<\frac{r_i}{2}$. Hence, 
\begin{equation*}
    K^{A, G}_{r_i}(z)\geq K^{A}_s(y) + (r_i-s) - 2 \ve r_i.
\end{equation*}
Finally, using the fact that these intervals are green, hence teal, we have 
\begin{align*}
    K^{A, G}_{r_i}(z)&\geq K^{A}_s(y) + (r_i-s) - 2 \ve r_i\\
    &\geq K^{A}_{r_i}(y)-(r_i-s) + (r_i-s) - 2 \ve r_i\\
    &\geq (\eta +\ve) r_i\\
\end{align*}
Thus, the conditions for Lemma \ref{lem:pointDistance} hold and we apply it, completing the proof. 
\end{proof}

Now, we are able to prove the main effective theorem of this section. As indicated, since we can now sum over our green intervals, we can begin with the conclusion of Lemma \ref{lem:boundGoodPartitionDistance}.

\begin{thm}\label{thm:secondEffectiveTheoremPinnedSet}
There is some $\ve_x$ sufficiently small depending only on $d_x$ and $d_y$ that, supposing $x, y\in\R^2$, $e = \frac{y-x}{\vert y-x\vert}$, and $A, B\subseteq\N$  satisfy the following.
\begin{itemize}
\item[\textup{(C1)}] $1<d_y<\dim^A(y), d_x<\dim^A(x)\leq \Dim^A(x) <D_x$.\footnote{Our condition (C1) is slightly different than before; this is just to make the reduction easier. Specifically, it will be helpful if we now think of $d_y$ as a strict lower bound on the dimension of points $y$. Note that this change will not complicate the application of Proposition \ref{lem:lastReductionLemma}, since we will use it on a compact set with dimension greater than $d_y$.}
\item[\textup{(C2)}] $K^{x,A}_r(e) = r - O(\log r)$ for all $r$.
\item[\textup{(C3)}] $K^{x,A, B}_r(y) \geq K^{A}_r(y) - O(\log r)$ for all sufficiently large $r$. 
\item[\textup{(C4)}] $K^{A}_r(e\mid y) = r - o(r)$ for all $r$.
\end{itemize}
Then, $\Dim^A(x)-\dim^A(x)<\ve_x$ implies
\begin{center}
$\dim^{x,A}(\vert x-y\vert) =1$
\end{center}
\end{thm}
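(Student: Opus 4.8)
The plan is to show that for every $\ve>0$ and every sufficiently large precision $r$ one has $K^{x,A}_r(\vert x-y\vert)\geq r-O(\ve r)$; together with the trivial inequality $\dim^{x,A}(\vert x-y\vert)\leq 1$ (since $\vert x-y\vert\in\R$) this yields the claim after letting $\ve\to 0$. Fix $\ve>0$ and a large $r$ and work with the partition $\mathcal{P}=\{[r_{i+1},r_i]\}_{i=0}^{k}$ of $[1,r]$ constructed just before the theorem. By its construction, \emph{every} interval of $\mathcal{P}$ is either an ordinary yellow interval that is at most doubling, or a single (possibly very long) green interval, and in particular $\mathcal{P}$ uses no strictly teal intervals. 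Moreover the relation $2r_{i+2}<r_i$ guarantees $k=O(\log r)$ and $\sum_i r_i=O(r)$, so the $O(\log r)$ errors incurred at each step of a symmetry-of-information chain accumulate to only $O((\log r)^2)$, which is negligible.

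Next I would estimate the growth of $K^{A,x}_{r_i,r_{i+1}}(\vert x-y\vert\mid\vert x-y\vert)$ on each type of interval. On an at-most-doubling yellow interval, the proof of Lemma~\ref{lem:boundComplexityYellowNewPartition} applies verbatim: break it greedily into a good all-yellow sub-partition and invoke Lemma~\ref{lem:boundGoodPartitionDistance}, noting that green sub-intervals contribute nothing to the bad sum while non-green yellow ones contribute exactly their length after cancelling against $K^{A}_{\cdot,\cdot}(y\mid y)$; this gives $K^{A,x}_{r_i,r_{i+1}}(\vert x-y\vert\mid\vert x-y\vert)\geq (r_i-r_{i+1})-\ve r_i$. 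On a long green interval $[r_{i+1},r_i]$ with $r_{i+1}$ large, Lemma~\ref{lem:greenBoundRegularPins} gives $K^{A,x}_{r_i,r_{i+1}}(\vert x-y\vert\mid\vert x-y\vert)\geq K^{A,x}_{r_i,r_{i+1}}(y\mid y)-\ve r_i$, and since the interval is green, symmetry of information together with (C3) shows $K^{A,x}_{r_i,r_{i+1}}(y\mid y)=(r_i-r_{i+1})+O(\log r)$, so again the distance grows at essentially rate $1$. The finitely many intervals contained in an initial segment $[1,\ve r]$ (where $r_{i+1}$ may be too small for Lemma~\ref{lem:greenBoundRegularPins}) I would simply discard; their total length is $O(\ve r)$ by the green-interval length bound of Lemma~\ref{lem:regularSLowerBound}, so this costs only $O(\ve r)$.

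It remains to fix $\ve_x$ and chain the estimates. Choose $\ve_x$ small, depending only on $d_x$ and $d_y$, so that whenever $\dim^A(x)>d_x$ and $\Dim^A(x)-\dim^A(x)<\ve_x$ the hypotheses of Proposition~\ref{prop:alternateOptimalProjectionTheorem} are met with the constant $C$ determined by $1/C=(d_y-1)/16$ (the same $C$ used in Lemma~\ref{lem:greenBoundRegularPins}); the crucial point is that this $\ve_x$ is chosen \emph{before} $\ve$ and does not depend on it. With this choice all the green-interval estimates above are valid, and applying symmetry of information across $\mathcal{P}$ yields
\begin{align*}
K^{x,A}_r(\vert x-y\vert) &\geq \sum_{r_{i+1}\geq\ve r} K^{A,x}_{r_i,r_{i+1}}(\vert x-y\vert\mid\vert x-y\vert) - O((\log r)^2)\\
&\geq \sum_{r_{i+1}\geq\ve r}\bigl((r_i-r_{i+1})-\ve r_i\bigr) - O((\log r)^2)\\
&\geq (1-\ve)r - \ve\sum_i r_i - O((\log r)^2)\\
&\geq r - O(\ve r),
\end{align*}
using $\sum_i r_i=O(r)$. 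Dividing by $r$ and taking $\liminf_{r\to\infty}$ gives $\dim^{x,A}(\vert x-y\vert)\geq 1-O(\ve)$, and since $\ve$ was arbitrary, $\dim^{x,A}(\vert x-y\vert)=1$.

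The one genuinely delicate point — and the step I expect to be the main obstacle — is the uniformity of the almost-regularity threshold: that $\ve_x$ may be chosen depending only on $d_x$ and $d_y$ and \emph{independently of} $\ve$, since otherwise the $\ve\to 0$ limit at the end would be circular. This is exactly the property that Proposition~\ref{prop:alternateOptimalProjectionTheorem} was designed to supply (its $\ve_x$ depends only on $d_x$ and $C$, with $C$ a function of $d_y$), so once that proposition and Lemma~\ref{lem:greenBoundRegularPins} are in hand the remainder of the argument is bookkeeping: verifying the structural claim about $\mathcal{P}$, controlling the accumulation of logarithmic errors via $2r_{i+2}<r_i$, and absorbing the initial segment.
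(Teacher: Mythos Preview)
Your proposal is correct and follows essentially the same approach as the paper: both use the partition $\mathcal{P}$ constructed just before the theorem, invoke Lemma~\ref{lem:greenBoundRegularPins} for the long green intervals and the yellow-interval bound for the rest, and then chain via symmetry of information to get $K^{x,A}_r(\vert x-y\vert)\geq (1-O(\ve))r$. The paper packages the chaining by appealing directly to (the analogue of) Lemma~\ref{lem:boundGoodPartitionDistance} and the telescoping identity $\sum_i K^A_{a_{i+1},a_i}(y\mid y)\approx K^A_r(y)$, whereas you spell out the per-interval estimates and the handling of the initial segment and $\ve_x$ more explicitly, but the substance is the same.
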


\begin{proof}
Let $\ve>0$ be given. Let $r$ be large enough that applying Lemma \ref{lem:boundGoodPartitionDistance} in the form of equation (\ref{eq:distancepartitionbound}), we have

\begin{equation}
    K^{A, x}_r(\vert x - y\vert) \geq K^A_r(y) - \sum\limits_{i\in \textbf{Bad}} K^A_{a_{i+1}, a_{i}}(y \mid y) - (a_{i+1} - a_i) - \frac{\ve}{2} r.
\end{equation}
Recalling that the complexity grows at an average rate of exactly 1 on green intervals, thus implies
\begin{align}
    K^{A, x}_r(\vert x - y\vert) &\geq K^A_r(y) - \sum\limits_{i\in \mathcal{P}} K^A_{a_{i+1}, a_{i}}(y \mid y) - (a_{i+1} - a_i) - \frac{\ve}{2} r\\
    &\geq K^A_r(y)-K^A_r(y) + r - \ve r\\
    &= (1-\ve) r
\end{align}
Taking $\ve$ as small as desired then letting $r$ go to infinity completes the proof. 
\end{proof}

Finally, we state and prove the classical result.

\begin{thm}
Let $X\subseteq\mathbb{R}^2$ be such that $1<d_x<\dim_H(X)=\dim_P(X)$ and let $Y\subseteq \mathbb{R}^2$ be analytic and satisfy $1<d_y<\dim_H(Y)$. Then there is some $F\subseteq X$ such that, 
\begin{center}
    $\dim_H(\Delta_x Y)=1$
\end{center}
\noindent for all $x\in F$. Moreover, $\dim_H(X\setminus F)<\dim_H(X)$. 
\end{thm}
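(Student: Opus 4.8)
The plan is to run the reduction from the effective statement Theorem \ref{thm:secondEffectiveTheoremPinnedSet} to the classical one, exactly parallel to the proofs of Theorem \ref{thm:moregeneralmaintheorem} and Theorem \ref{thm:regularYFullDim}, the only new ingredient being the almost‑regularity hypothesis on $x$. Write $\delta = \dim_H(X) = \dim_P(X)$ and let $\ve_x > 0$ be the constant from Theorem \ref{thm:secondEffectiveTheoremPinnedSet}, which depends only on $d_x$ and $d_y$. Since $Y$ is analytic and $d_y < \dim_H(Y)$, fix $s$ with $d_y < s < \dim_H(Y)$ and a compact $Y_s \subseteq Y$ with $0 < \mathcal{H}^s(Y_s) < \infty$. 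Let $B$ be the trivial oracle, and let $A$ be the join of a packing oracle $A_0$ for $X$ and an oracle relative to which $Y_s$ is effectively compact. Because adjoining oracles never increases Kolmogorov complexity, $\Dim^A(x) \le \Dim^{A_0}(x) \le \dim_P(X) = \delta$ for every $x \in X$, while $Y_s$ remains effectively compact relative to $A$.

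Next I would isolate the exceptional set. Applying Lemma \ref{lem:lastReductionLemma} to the compact set $Y_s$ relative to $A$, with parameter $d_x$, yields a set $G' \subseteq \R^2$ with $\dim_H(G') \le d_x$ such that for every $x \in \R^2 \setminus G'$ there is $y \in Y_s$ with $(x,y)$ satisfying (C1)--(C4) relative to $A$; since $s > d_y$, the construction inside the proof of Lemma \ref{lem:reductionUsingOrponen} produces such a $y$ with $\dim^A(y) \ge s > d_y$, which is exactly the form of (C1) demanded by Theorem \ref{thm:secondEffectiveTheoremPinnedSet} (and $\Dim^A(x) \le \delta < D_x$ for, say, $D_x = 3$). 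Now put
\[
E_2 = \{\, x \in X : \Dim^A(x) - \dim^A(x) \ge \ve_x \,\}, \qquad F = X \setminus (G' \cup E_2).
\]
Every $x \in E_2$ has $\dim^A(x) \le \Dim^A(x) - \ve_x \le \delta - \ve_x$, so by the point‑to‑set principle $\dim_H(E_2) \le \delta - \ve_x$; therefore $\dim_H(X \setminus F) \le \max\{d_x, \delta - \ve_x\} < \delta = \dim_H(X)$. This is the one place where regularity of $X$ is used in an essential way: if $\dim_P(X)$ strictly exceeded $\dim_H(X)$, the bound on $E_2$ would only be $\dim_P(X) - \ve_x$, which need not lie below $\dim_H(X)$.

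Finally, fix $x \in F$. Since $x \notin G'$ we have $\dim^A(x) > d_x$ (the set $\{\,x:\dim^A(x) \le d_x\,\}$ is absorbed into $G'$) and there is $y \in Y_s$ with $(x,y)$ satisfying (C1)--(C4); since $x \notin E_2$ we have $\Dim^A(x) - \dim^A(x) < \ve_x$; and $\Dim^A(x) \le \delta < D_x$. Thus all the hypotheses of Theorem \ref{thm:secondEffectiveTheoremPinnedSet} hold, so $\dim^{x,A}(\vert x - y\vert) = 1$. By Observation \ref{obs:effcompact}, $\Delta_x Y_s$ is effectively compact relative to $(A,x)$, so Theorem \ref{thm:strongPointToSetDim} gives
\[
\dim_H(\Delta_x Y) \ge \dim_H(\Delta_x Y_s) = \sup_{y \in Y_s} \dim^{A,x}(\vert x - y\vert) \ge 1,
\]
and since $\Delta_x Y \subseteq \R$ forces $\dim_H(\Delta_x Y) \le 1$, equality holds for every $x \in F$.

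I expect the reduction itself to be routine once Theorem \ref{thm:secondEffectiveTheoremPinnedSet} is in hand; the conceptually delicate point — already handled on the effective side — is that a regular set $X$ need not contain a single point that is regular relative to the chosen oracle, which is why one must pass through the almost‑regular projection theorem (Proposition \ref{prop:alternateOptimalProjectionTheorem}) and why it is crucial that $\ve_x$ depends only on $d_x$ and $d_y$, not on the target precision. On the classical side, the only thing requiring care is the oracle bookkeeping: one checks that joining the packing oracle for $X$ with the effective‑compactness oracle for $Y_s$ simultaneously preserves the bound $\Dim^A(x) \le \delta$ on $X$ and the effective compactness of $Y_s$, which is immediate since adjoining oracles only decreases complexity and computable enumerability of finite rational covers relative to a join follows from that relative to a joined component.
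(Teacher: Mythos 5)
Your proof is correct and follows essentially the same route as the paper: fix $s \in (d_y, \dim_H(Y))$ and a compact $Y_s$ with $0 < \mathcal{H}^s(Y_s) < \infty$, build an oracle joining a packing oracle for $X$ with an effective-compactness oracle for $Y_s$, run Lemma~\ref{lem:lastReductionLemma}, carve out the additional exceptional set of $x$ that are not $\ve_x$-almost-regular relative to $A$, and then combine Theorem~\ref{thm:secondEffectiveTheoremPinnedSet} with Observation~\ref{obs:effcompact} and Theorem~\ref{thm:strongPointToSetDim}. Two small remarks: you omit the packing oracle $A_Y$ for $Y$ that the paper includes in the join — correctly so, since the modified condition (C1) of Theorem~\ref{thm:secondEffectiveTheoremPinnedSet} imposes no upper bound on $\Dim^A(y)$ and $D_y$ plays no role there; and your explicit bound $\dim_H(E_2) \le \delta - \ve_x$ via the point-to-set principle is a welcome clarification of a step the paper asserts without detail (and with a typo in its set notation).
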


\begin{proof}
  Let $X$ and $Y$ be as above, and let $B$ be the trivial oracle. $Y$ is analytic, thus for any $d_y<s<\dim_H(Y)$, there is a compact $Y_s$ such that $0<\mathcal{H}^s(Y_s)<\infty$. Fix some such $s$ and a corresponding $Y_s$, and let $A_s$ be its effective compactness oracle.

Now, we use the general point-to-set principle. Let $A_Y$ and $A_X$ be packing oracles for $Y$ and $X$ respectively. Now, take the join of $A_X, A_Y$, and $A_s$ and call this new oracle $A$.

 Relative to $A$, we may apply Lemma \ref{lem:lastReductionLemma} to $Y_{s}$, giving a corresponding exceptional set $G$. Note that we choose $A$ such that the desired packing dimension conditions hold relative to $A$ for \emph{any} $x\in X$. In light of this fact and observing that $\dim_H(X)>d_x\geq \dim_H(G)$, the set of $x\in{X}$ for which there is some $y\in Y_{s}$ satisfying conditions relative to $A$ (C1)-(C4) is nonempty. Then for all $x\in X-G$, there is a $y\in Y_{s}$ such that (C1)-(C4) hold with $A$ as the oracle.

 Now, further refine $X-G$ by removing all the points such that $\Dim_P(X) - \dim^A(x)\geq \ve_x$ for the $\ve_x$ in Theorem \ref{thm:secondEffectiveTheoremPinnedSet}. Since $A$ is a packing oracle for $X$, this implies that the remaining points satisfy $\Dim^A(x)- \dim^A(x)<\ve_x$ Letting $F$ denote $X-\left(G\cup \{x: \Dim_P(X) - \dim^A(x)\}\right)$ immediately implies that all $x\in F$ satisfy the requirements of Theorem \ref{thm:secondEffectiveTheoremPinnedSet}, and furthermore that $\dim_H(X\setminus F)<\dim_H(X)$. 

Finally, applying the effective theorem yields that $\dim^{A, x}(\vert x-y\vert)=1$ for such a pair $x, y$. By Observation \ref{obs:effcompact}, since $Y_{s}$ is effectively compact relative to $A$, $\Delta_x Y_{s}$ is effectively compact relative to $(A, x)$. Finishing the proof, by theorem \ref{thm:strongPointToSetDim}, we have that 

\begin{align*}
\dim_H(\Delta_x Y) &\geq \dim_H(\Delta_x Y_{s})\\
&= \sup_{y\in Y_{s}} \dim^{A, x}(\vert x-y\vert)\\
&= 1
\end{align*}
\end{proof}

\bibliographystyle{amsplain}
\bibliography{ibpds}

\end{document}